\let\old@tocline\@tocline
\let\section@tocline\@tocline
\newcommand{\subsection@dotsep}{4.5}
\newcommand{\subsubsection@dotsep}{4.5}
	\leaders\hbox{$\m@th
		\mkern \subsection@dotsep mu\hbox{.}\mkern \subsection@dotsep mu$}\hfill
\let\subsection@tocline\@tocline
\let\@tocline\old@tocline
	\leaders\hbox{$\m@th
		\mkern \subsubsection@dotsep mu\hbox{.}\mkern \subsubsection@dotsep mu$}\hfill
\let\subsubsection@tocline\@tocline
\let\@tocline\old@tocline
\let\old@l@subsection\l@subsection
\let\old@l@subsubsection\l@subsubsection
\def\@tocwriteb#1#2#3{%
	\begingroup
	\@xp\def\csname #2@tocline\endcsname##1##2##3##4##5##6{%
		\ifnum##1>\c@tocdepth
		\else \sbox\z@{##5\let\indentlabel\@tochangmeasure##6}\fi}%
	\csname l@#2\endcsname{#1{\csname#2name\endcsname}{\@secnumber}{}}%
	\endgroup
	\addcontentsline{toc}{#2}%
	{\protect#1{\csname#2name\endcsname}{\@secnumber}{#3}}}%
\newlength{\@tocsectionindent}
\newlength{\@tocsubsectionindent}
\newlength{\@tocsubsubsectionindent}
\newlength{\@tocsectionnumwidth}
\newlength{\@tocsubsectionnumwidth}
\newlength{\@tocsubsubsectionnumwidth}
\newcommand{\settocsectionnumwidth}[1]{\setlength{\@tocsectionnumwidth}{#1}}
\newcommand{\settocsubsectionnumwidth}[1]{\setlength{\@tocsubsectionnumwidth}{#1}}
\newcommand{\settocsubsubsectionnumwidth}[1]{\setlength{\@tocsubsubsectionnumwidth}{#1}}
\newcommand{\settocsectionindent}[1]{\setlength{\@tocsectionindent}{#1}}
\newcommand{\settocsubsectionindent}[1]{\setlength{\@tocsubsectionindent}{#1}}
\newcommand{\settocsubsubsectionindent}[1]{\setlength{\@tocsubsubsectionindent}{#1}}
\renewcommand{\l@section}{\section@tocline{1}{\@tocsectionvskip}{\@tocsectionindent}{}{\@tocsectionformat}}%
\renewcommand{\l@subsection}{\subsection@tocline{2}{\@tocsubsectionvskip}{\@tocsubsectionindent}{}{\@tocsubsectionformat}}%
\renewcommand{\l@subsubsection}{\subsubsection@tocline{3}{\@tocsubsubsectionvskip}{\@tocsubsubsectionindent}{}{\@tocsubsubsectionformat}}%
\newcommand{\@tocsectionformat}{}
\newcommand{\@tocsubsectionformat}{}
\newcommand{\@tocsubsubsectionformat}{}
\def\csname toc@1format\endcsname{\@tocsectionformat}
\def\csname toc@2format\endcsname{\@tocsubsectionformat}
\def\csname toc@3format\endcsname{\@tocsubsubsectionformat}
\newcommand{\settocsectionformat}[1]{\renewcommand{\@tocsectionformat}{#1}}
\newcommand{\settocsubsectionformat}[1]{\renewcommand{\@tocsubsectionformat}{#1}}
\newcommand{\settocsubsubsectionformat}[1]{\renewcommand{\@tocsubsubsectionformat}{#1}}
\newlength{\@tocsectionvskip}
\newcommand{\settocsectionvskip}[1]{\setlength{\@tocsectionvskip}{#1}}
\newlength{\@tocsubsectionvskip}
\newcommand{\settocsubsectionvskip}[1]{\setlength{\@tocsubsectionvskip}{#1}}
\newlength{\@tocsubsubsectionvskip}
\newcommand{\settocsubsubsectionvskip}[1]{\setlength{\@tocsubsubsectionvskip}{#1}}
\patchcmd{\tocsection}{\indentlabel}{\makebox[\@tocsectionnumwidth][l]}{}{}
\patchcmd{\tocsubsection}{\indentlabel}{\makebox[\@tocsubsectionnumwidth][l]}{}{}
\patchcmd{\tocsubsubsection}{\indentlabel}{\makebox[\@tocsubsubsectionnumwidth][l]}{}{}
\newcommand{\@sectypepnumformat}{}
\renewcommand{\contentsline}[1]{%
	\expandafter\let\expandafter\@sectypepnumformat\csname @toc#1pnumformat\endcsname%
	\csname l@#1\endcsname}
\newcommand{\@tocsectionpnumformat}{}
\newcommand{\@tocsubsectionpnumformat}{}
\newcommand{\@tocsubsubsectionpnumformat}{}
\newcommand{\setsectionpnumformat}[1]{\renewcommand{\@tocsectionpnumformat}{#1}}
\newcommand{\setsubsectionpnumformat}[1]{\renewcommand{\@tocsubsectionpnumformat}{#1}}
\newcommand{\setsubsubsectionpnumformat}[1]{\renewcommand{\@tocsubsubsectionpnumformat}{#1}}
\renewcommand{\@tocpagenum}[1]{%
	\hfill {\mdseries\@sectypepnumformat #1}}
\let\oldappendix\appendix
\renewcommand{\appendix}{%
	\leavevmode\oldappendix%
	\addtocontents{toc}{%
		\protect\settowidth{\protect\@tocsectionnumwidth}{\protect\@tocsectionformat\sectionname\space}%
		\protect\addtolength{\protect\@tocsectionnumwidth}{2em}}%
}
\let\oldtableofcontents\tableofcontents
\renewcommand{\tableofcontents}{%
	\vspace*{-\linespacing}
	\oldtableofcontents}
\newcommand{\mathsc}[1]{{\normalfont\textsc{#1}}}
\newcommand{\ZZ}{\mathbb{Z}}
\newcommand{\QQ}{\mathbb{Q}}
\newcommand{\RR}{\mathbb{R}}
\newcommand{\PP}{\mathbb{P}}
\newcommand{\CC}{\mathbb{C}}
\newcommand{\kk}{\Bbbk}
\newcommand{\cA}{\mathcal{A}}
\newcommand{\cB}{\mathcal{B}}
\newcommand{\cC}{\mathcal{C}}
\newcommand{\cD}{\mathcal{D}}
\newcommand{\cF}{\mathcal{F}}
\newcommand{\cG}{\mathcal{G}}
\newcommand{\cL}{\mathcal{L}}
\newcommand{\cM}{\mathcal{M}}
\newcommand{\cO}{\mathcal{O}}
\newcommand{\cQ}{\mathcal{Q}}
\newcommand{\cS}{\mathcal{S}}
\newcommand{\cT}{\mathcal{T}}
\newcommand{\cV}{\mathcal{V}}
\newcommand{\cW}{\mathcal{W}}
\newcommand{\mh}{}
\mathchardef\mh="2D
\DeclareMathOperator{\coker}{coker}
\DeclareMathOperator{\Ext}{Ext}
\DeclareMathOperator{\Hom}{Hom}
\DeclareMathOperator{\RHom}{RHom}
\DeclareMathOperator{\End}{End}
\DeclareMathOperator{\Cone}{Cone}
\DeclareMathOperator{\mmod}{mod}
\DeclareMathOperator{\Ob}{Ob}
\DeclareMathOperator{\Mod}{Mod}
\DeclareMathOperator{\Perf}{Perf}
\DeclareMathOperator{\Gr}{Gr}
\newcommand{\Vect}{\mathrm{Vect}}
\newcommand{\del}{\partial}
\newcommand{\circnec}{\underset{\mathrm{nec}}{\circ}}
\newcommand{\nec}{\mathrm{nec}}
\newcommand{\cyc}{\mathrm{cyc}}
\newcommand{\cl}{\mathrm{cl}}
\newcommand{\ev}{\mathrm{ev}}
\newcommand{\id}{\mathrm{id}}
\newcommand{\Sgn}{\mathrm{Sgn}}
\newcommand{\Bord}{\mathrm{Bord}}
\DeclareMathOperator{\sheafHom}{\mathcal{H}\kern -1.2pt \mathit{om}}
\newtheorem{theorem}{Theorem}
\newtheorem{proposition}[theorem]{Proposition}
\newtheorem{corollary}[theorem]{Corollary}
\newtheorem{lemma}[theorem]{Lemma}
\newtheorem*{theorem*}{Theorem}
\theoremstyle{definition}
\newtheorem{definition}{Definition}
\theoremstyle{remark}
\newtheorem*{remark}{Remark}
\newtheorem*{example}{Example}
\title[Pre-CY algebras and TQFT]{Pre-Calabi-Yau algebras and topological quantum field theories}
\author{Maxim Kontsevich, Alex Takeda and Yiannis Vlassopoulos}
\begin{document}
\tikzset{>={Stealth[scale=1.2]}}
\tikzset{->-/.style={decoration={
			markings,
			mark=at position #1 with {\arrow{>}}},postaction={decorate}}}
\tikzset{-w-/.style={decoration={
			markings,
			mark=at position #1 with {\arrow{Stealth[fill=white,scale=1.4]}}},postaction={decorate}}}
\tikzset{->-/.default=0.65}
\tikzset{-w-/.default=0.65}
\tikzstyle{bullet}=[circle,fill=black,inner sep=0.5mm]
\tikzstyle{circ}=[circle,draw=black,fill=white,inner sep=0.5mm]
\tikzstyle{vertex}=[circle,draw=black,thick,inner sep=0.5mm]
\tikzset{darrow/.style={double distance = 4pt,>={Implies},->},
	darrowthin/.style={double equal sign distance,>={Implies},->},
	tarrow/.style={-,preaction={draw,darrow}},
	qarrow/.style={preaction={draw,darrow,shorten >=0pt},shorten >=1pt,-,double,double
		distance=0.2pt}}

\begin{abstract}
	We introduce a notion generalizing Calabi-Yau structures on A-infinity algebras and categories, which we call pre-Calabi-Yau structures. This notion does not need either one of the finiteness conditions (smoothness or compactness) which are required for Calabi-Yau structures to exist. In terms of noncommutative geometry, a pre-CY structure is as a polyvector field satisfying an integrability condition with respect to a noncommutative analogue of the Schouten-Nijenhuis bracket. We show that a pre-CY structure defines an action of a certain PROP of chains on decorated Riemann surfaces. In the language of the cobordism perspective on TQFTs, this gives a partially defined extended 2-dimensional TQFT, whose 2-dimensional cobordisms are generated only by handles of index one. We present some examples of pre-CY structures appearing naturally in geometric and topological contexts. 
\end{abstract}

\maketitle
\tableofcontents

\section{Introduction}
The complex of Hochschild chains $C_*(A)$ of a compact Calabi-Yau algebra $A$ has the structure of an algebra over a differential graded (dg) \textsc{prop}, which encodes the topology of certain moduli spaces of topological surfaces. This structure has been studied from many perspectives \cite{kontsevich2009notes, costello2007topological,costello2005gromovwitten,wahl2016hochschild}, and is given by a collection of operations $C_*(A)^{\otimes m} \to C_*(A)^{\otimes n}$ for $m \ge 1, n \ge 0$; the spaces of such operations are parametrized by chains on the moduli space $\cM_{g,\vec{m},\vec{n}}$ of Riemann surfaces, decorated with $m \ge 1$ incoming and $n \ge 0$ outgoing marked points and choices of a real tangent direction on each marked point.

Over the past decade, there have been various developments in the study of Calabi-Yau structures, so we recapitulate the general outline of this theory, in the context of $A_\infty$-algebras. An $A_\infty$-algebra $(A,\mu=\sum_{i\geq 1} \mu^i)$ is called \emph{compact} if its cohomology $H^*(A,\mu^1)$ is finite dimensional. A Calabi-Yau structure of dimension $d$ on a compact $A_\infty$-algebra $A$ is a class $\omega \in \Hom_\kk(CC_*(A),\kk)[-d]$, that is, in the dual of the cyclic complex of $A$, whose projection to
\[ \Hom(C_*(A),\kk[-d]) \cong \RHom_{A\mh\mmod\mh A}(A[d],A^\vee) \]
defines a quasi-isomorphism of $A$-bimodules between a shift of the diagonal bimodule $A[d]$ and the linear dual $A^\vee$. This has been called a \emph{compact, proper} or \emph{right} Calabi-Yau structure in the existing literature.

The action of the \textsc{prop} of chains on moduli spaces of decorated surfaces has been interpreted by Lurie in the context of the cobordism hypothesis; using a certain unfolding construction \cite{lurie2009classification}, one shows that a compact CY structure produces a `non-compact' fully extended 2d \textsc{tqft}, whose 2-dimensional cobordisms are required to have at least one input. In the case of cyclic $A_\infty$-algebras, that is, finite-dimensional $A_\infty$-algebras with a cyclically compatible pairing, one can concretely write down formulas for these operations.

When $A$ is homologically smooth \cite{kontsevich2009notes}, there is a dual notion of Calabi-Yau structure, which has appeared in the literature by the name of \emph{smooth} or \emph{left Calabi-Yau structure}. It is given by a class $\omega \in CC^-_*(A)[-d]$ in the negative cyclic homology of $A$, whose projection to
\[ C_*(A)[-d] \cong \RHom_{A\mh\mmod\mh A}(A^!,A[-d]) \]
gives a quasi-isomorphism between the \emph{left} or \emph{bimodule dual} $A^! = \RHom_{A\mh\mmod\mh A}(A,A^e)$ and the shifted diagonal bimodule $A[d]$. The idea that one should consider such quasi-isomorphisms appeared initially in \cite{ginzburg2006calabiyau}, where it was not required that this quasi-isomorphism come from a class negative cyclic homology; this weaker has been called a `weak CY structure' or `Ginzburg CY structure' in the literature.

Each one of these types of structures, compact and smooth CY, requires a certain finiteness condition in order to exist: a compact CY structure on $A$ can only exist if the cohomology of $(A,\mu^1)$ is finite-dimensional, and a smooth CY structure, only if $A$ is homologically smooth, that is, if the diagonal bimodule $A$ has a finite-length resolution. In many applications, it is necessary to go beyond the finite case, for instance in string topology, certain types of Fukaya categories and categories of coherent sheaves appearing in homological mirror symmetry.

The purpose of this paper is to describe as explicitly as possible a type of structure that generalizes compact and smooth CY structures. We call this a \emph{pre-Calabi-Yau structure}; it can exist on an $A_\infty$-category $\cA$ that may be neither compact nor smooth, and defines a certain type of partial 2d \textsc{tqft} structure. Before explaining what are these structures, let us mention our main result following from their definition of this type of structure.
\begin{theorem}\label{thm:MainPROP}
	The Hochschild chain complex $C_*(\cA)$ of a pre-CY category $\cA$ of dimension $d$ has the action of the \textsc{prop} $Q^d$, containing operations
	\[ C_*(\cA)^{\otimes m} \otimes Q^d \to C_*(\cA)^{\otimes n} \]
	for $m \ge 1, n \ge 1$, that is, operations \emph{with at least one input and at least one output}. The \textsc{prop}s $Q^d$ calculate the cohomologies of the moduli spaces $\cM_{g,\vec{m},\vec{n}}$, with coefficients in powers $\cL^d$ of a certain rank one local system.
	
	Moreover, there is an open-closed extension of this \textsc{prop} (as a colored \textsc{prop}) acting on both $C_*(\cA)$ and the morphism spaces $\cA(X,Y)$ of the $A_\infty$-category $\cA$.
\end{theorem}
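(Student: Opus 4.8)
The plan is to realize $Q^d$ as an explicit differential graded \textsc{prop} built from a cellular model of the moduli spaces $\cM_{g,\vec m,\vec n}$, and then to produce the action by a sum-over-graphs construction in which the components of the pre-CY structure are inserted at the vertices of ribbon graphs. First I would give $Q^d$ a combinatorial description: its space of operations with $m$ inputs and $n$ outputs in genus $g$ should be a ribbon-graph (fatgraph) complex computing $H^*(\cM_{g,\vec m,\vec n};\cL^d)$, where $\cL$ is the rank-one local system whose monodromy records the sign picked up by rotating the real tangent direction at a marked point, and the $d$-th power appears because the operations are sensitive to these framings to the degree fixed by the dimension. The \textsc{prop} composition is gluing of ribbon graphs along boundary circles, and the differential is the cellular edge-contraction differential. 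That this complex genuinely computes the twisted cohomology of $\cM_{g,\vec m,\vec n}$ is a purely topological input, which I would deduce from the standard identification of the ribbon-graph model with decorated moduli space, tracking how the framing data at the marked points produces the twist by $\cL^d$.

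The second step is algebraic: constructing the action itself. A pre-CY structure of dimension $d$ is an integrability (Maurer-Cartan) element $m=\sum_n m_n$ in the dg Lie algebra of noncommutative polyvector fields, whose first component recovers the $A_\infty$ structure $\mu$ and which satisfies $[m,m]=0$ for the noncommutative Schouten-Nijenhuis bracket built from the necklace product $\circnec$; each $m_n$ is a cyclically arranged operation with several inputs and outputs. To a ribbon graph $\Gamma$ I would assign an operation by decorating every vertex with the component of $m$ of matching valence, using the cyclic order of half-edges at a vertex to match the cyclic symmetry of $m_n$, contracting the two half-edges of each internal edge, and reading the $m$ inputs and $n$ outputs off the boundary circles of the fattening. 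The Hochschild chains enter because each boundary circle carries a cyclic word, i.e. an element of $C_*(\cA)$.

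The crucial point is then to check that this assignment is a chain map from $(Q^d,\partial)$ into the endomorphism \textsc{prop} of $C_*(\cA)$, so that it really is an action. Under the graph-to-operation map, the cellular boundary of a cell — a sum over contractions of single edges of $\Gamma$ — reproduces exactly the quadratic terms $\tfrac12[m,m]$ together with the terms coming from the Hochschild differential on the inputs and outputs; the equation $[m,m]=0$ is precisely what cancels the internal terms and matches the two differentials. I expect the bulk of the work, and the main obstacle, to be the sign and orientation bookkeeping behind this identification: one must fix compatible orientations on the cells of the ribbon-graph model, reconcile them with the Koszul signs of the cyclic and Hochschild differentials and of $\circnec$, and verify that the twist by $\cL^d$ is exactly the local system needed to make the cell orientations globally consistent. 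This is where the dimension $d$ genuinely enters, and it is also where the restriction to operations with at least one input and one output becomes visible: a pre-CY structure supplies no component with zero inputs or zero outputs, so there is no decoration that would create or destroy a boundary circle, which is the algebraic shadow of the statement that only handles of index one are generated.

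Finally, for the open-closed extension I would enlarge the combinatorial model to ribbon graphs on surfaces with boundary arcs labelled by objects $X,Y,\dots$ of $\cA$, introducing a second color whose state space is the morphism complex $\cA(X,Y)$. The same insertion construction, now also applying the components of $m$ along the boundary arcs, yields the action of the resulting colored \textsc{prop}, and the same Maurer-Cartan cancellation shows that it too is a chain map.
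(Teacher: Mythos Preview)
Your overall strategy---build $Q^d$ as a graph complex, insert the pre-CY structure maps at vertices, and use the Maurer-Cartan equation $m\circnec m=0$ to show the result is a chain map---matches the paper's architecture. But there is a genuine gap in the combinatorial model you propose, and it is exactly the point where the pre-CY setting departs from the cyclic $A_\infty$ setting.

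You describe the action as ``contracting the two half-edges of each internal edge'' of an undirected ribbon graph. This only works when you have a nondegenerate pairing to identify inputs with outputs, i.e.\ for cyclic $A_\infty$-algebras; that is precisely the case treated by the black-and-white graph \textsc{prop} of Wahl--Westerland. A general pre-CY structure has components $m_{(k)}\in C^*_{(k,d)}(\cA)$ with $k$ \emph{outputs} and some number of \emph{inputs}, and no mechanism to dualize one into the other. The paper's fix is to work with \emph{acyclic ribbon quivers}: the edges carry directions, every internal edge feeds an output of one vertex into an input of another, and the graph has no oriented cycles. This directed structure is what allows the evaluation procedure of \cref{sec:action} to make sense for infinite-dimensional $\cA$, and it forces a strictly finer cell decomposition than the usual fatgraph model. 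The differential is then given by \emph{separating} vertices along a new directed edge (dual to your edge contraction), and the sign/orientation bookkeeping is organized by the notion of $d$-orientation on the set $V(\Gamma)\sqcup E(\Gamma)$, which is where the twist by $\cL^d$ enters.

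On the topological side, your appeal to ``the standard identification of the ribbon-graph model with decorated moduli space'' is not sufficient either, because the cells here are labeled by directed quivers, not ribbon graphs. The paper supplies this identification by a new uniformization argument using \emph{meromorphic Strebel differentials with higher-order poles} (via the Gupta--Wolf extension of Hubbard--Masur), together with a perimeter-shrinking map that converts the resulting metric ribbon graph into a marked ribbon quiver via the gradient flow of a height function. This is what produces the cell decomposition of $\cM_{g,\vec m,\vec n}$ indexed by the same quivers used to define $Q^d$, and it is also what makes the open-closed extension transparent.
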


The proof of \cref{thm:MainPROP} relies on a new cell complex describing the spaces $\cM_{g,\vec{m},\vec{n}}$. This cell complex is obtained by using a modified form of uniformization of surfaces by Strebel differentials; we use quadratic differentials with higher order poles. We rely on the description of moduli spaces of meromorphic differentials given by \cite{gupta2016quadratic,gupta2019meromorphic}, itself a generalization of the Hubbard-Masur theorem \cite{hubbard1979quadratic}. The use of higher-order poles allows us to easily describe an open-closed extension of this \textsc{prop}.

The classical theory of Strebel differential is related to the topology of moduli spaces of Riemann surfaces as explained in \cite{kontsevich1992intersection}. A Riemann surface with a Strebel differential determines the data of a metric ribbon graph; this gives a cell decomposition of the corresponding moduli space into cells labeled by topological types of ribbon graphs. In our case, we will obtain instead \emph{acyclic marked ribbon quivers}, i.e. ribbon graphs endowed with directed edges in an acyclic manner, and with some markings. Each such ribbon quiver gives a cell in the open-closed moduli space; this cell decomposition is similar to, but finer than, the cell decomposition given by `black-and-white graphs' appearing in e.g. \cite{wahl2016hochschild,ega2015comparing}. The combinatorial data of the quiver gives the action of this colored open-closed \textsc{prop} on the morphism spaces of a pre-CY category $A$ and its Hochschild complex $C_*(A)$; this action reduces to the black-and-white graph PROP action discussed in \emph{op.cit.} when $A$ is a cyclic $A_\infty$-algebra.

We can motivate the definition of pre-Calabi-Yau structures from the general principle that a noncommutative version of some type of geometric structure $S$ should be a structure on an algebra $A$ that induces a structure of type $S$ on the representation spaces $\mathrm{Rep}(A,n)$ of $n$-dimensional representations of $A$, for all $n$. In the framework of this principle, we propose that \emph{a pre-Calabi-Yau structure is a  noncommutative shifted Poisson structure}. This has been explored elsewhere in the literature: in \cite{yeung2018precalabiyau}, it is proven that for a (nonpositively-graded, dg) algebra $A$, pre-CY structures on $A$ induce shifted Poisson structures on the representation spaces $\mathrm{Rep}(A,n)$.

\subsection{Definition and some examples}
Let us now describe this structure, first for an $A_\infty$-algebra. For any graded vector space $A$, we define its space of `order $k$ higher Hochschild cochains'
\[ C^*_{(k)}(A) = \prod_{n_1,\dots,n_k \geq 0} \Hom(\bigotimes_{i=1}^k A[1]^{\otimes{n_i}}, A^{\otimes k}) \]

\[\begin{tikzpicture}
\node [vertex] (phi) at (0,0) {$\phi$};
\node (a11) at (.7,1) {};
\node (a12) at (1.2,0) {};
\node (a21) at (0,-1.3) {};
\node (a31) at (-1.4,-0.2) {};
\node (a32) at (-1.3,0.6) {};
\node (a33) at (-0.7,1) {};
\node (b1) at (0,1.5) {};
\node (b2) at (1.5,-0.9) {};
\node (b3) at (-1.5,-0.9) {};

\draw [->-] (phi) to (b1);
\draw [->-] (phi) to (b2);
\draw [->-] (phi) to (b3);
\draw [->, shorten <=-2mm, shorten >=2mm] (a11) to (phi);
\draw [->, shorten <=-2mm, shorten >=2mm] (a12) to (phi);
\draw [->, shorten <=-1mm, shorten >=2mm] (a21) to (phi);
\draw [->, shorten <=-2mm, shorten >=2mm] (a31) to (phi);
\draw [->, shorten <=-1mm, shorten >=2mm] (a32) to (phi);
\draw [->, shorten <=-2mm, shorten >=2mm] (a33) to (phi);
\end{tikzpicture}\]
Each order $k$ higher cochain $\phi$ can be visualized as a vertex with $k$ outgoing arrows, and $n_i$ incoming arrows on each angle, as in the figure above. For any choice of integer $d$, we define the space of `order $k$ dimension $d$ cyclic Hochschild cochains'
\[ C_{(k, d)}^*(A) := (C_{(k)}^*(A))^{(\ZZ_k,d)}[(d-2)(k-1)] \]
where $(\ZZ_k,d)$ denotes invariants under a certain action of the cyclic group rotating the vertex, with signs depending on the parity of $d$. We endow the space
\[ C^*_{[d]}(A) := \prod_k C^*_{(k,d)}(A) \]
with a binary operation we call the \emph{necklace product} $\circnec$. Its associated \emph{necklace bracket} $[-,-]_\nec$ gives $C^*_{[d]}(A)[1]$ the structure of a dg Lie algebra.
\begin{definition}\label{def:main}
	A pre-Calabi-Yau structure of dimension $d$ on the graded vector space $A$ is a solution $m = \sum_k m_{(k)}$ to the Maurer-Cartan equation $m \circnec m = 0$, of degree one in the dg Lie algebra $C^*_{[d]}(A)[1]$.
\end{definition}

Even though our purpose for defining pre-CY structures is for dealing with $A_\infty$-algebras/categories $A$ that may be neither smooth nor proper, it is useful for purposes of intuition to describe them in the simpler case where $A$ is finite-dimensional: a pre-Calabi Yau structure of dimension $d$ on a finite-dimensional graded vector space $A$ is equivalent to a cyclic $A_\infty$ structure of dimension $d-1$ on $A \oplus A^\vee[1-d]$ (cyclic with respect to the standard pairing, in the sense of \cite{kontsevich2009notes}), such that $A$ is an $A_\infty$-subalgebra \cref{prop:finiteDimensional}. That is, a pre-CY structure is an extension of an $A_\infty$-structure on $A$, by operations with inputs and outputs in both $A$ and its linear dual $A^\vee$, satisfying an appropriate cyclic symmetry.

We describe some examples of pre-CY structures appearing in algebra, geometry and topology. Directly from the definition above, an easy example of pre-CY category is a cyclic $A_\infty$-category itself (\cref{prop:cyclictopre}). Using a noncommutative perspective on $A_\infty$- and pre-CY structures, in \cref{thm:Minimal} we describe how to produce pre-CY structures on minimal models of $A_\infty$-categories starting from a \emph{noncommutative Lagrangian} inside of a noncommutative symplectic space, that is, a certain type of subspace of a proper Calabi-Yau algebra.

Other sources of examples of pre-CY structures are constructions of `relative orientations' in topology and algebraic geometry. In \cref{sec:finDimManifolds}, we describe how to produce a pre-CY structure on the de Rham cohomology of a manifold $M$ oriented relative to its boundary $\del M$. An algebraic analogue of this class of examples is given by algebraic varieties endowed with sections of their anti-canonical bundles. In \cref{thm:varietyWithSection} we show that, given such a variety $X$ and a generator of its derived category $E$, from the data of a section $s$ of $\omega_X^{-1}$ one can construct a pre-CY structure on the algebra $A_X = \End(E)$. This includes, as a special case, Calabi-Yau varieties endowed with the the inverse of the trivialization of their canonical bundles.

\subsection{Algebraic and geometric aspects}

The general definition of a pre-CY structure can also be phrased operadically: recall that an $A_\infty$-structure is the data of a module structure over chains on the topological operad of Stasheff associahedra; each cell on this chain complex is labeled by a \emph{corolla}, a disc with many inputs and one output. The definition of pre-CY structure is an extension of that: a pre-CY structure is a collection of maps giving an algebra structure over a \emph{dioperad} $MC^d_{n_1,\dots,n_k}$ of chains on spaces of \emph{multicorollas}, that is, discs with many inputs and many outputs; we discuss this perspective later in \cref{sec:multicorolla}. 

A discussion of the homotopy theory of this dioperad is the object of the work of Leray and Vallette \cite{leray2022pre}, where it is shown that one can obtain the dioperad we describe from a Koszul duality-type construction on the dioperad parametrizing algebras with double Poisson brackets. Moreover, the notion of morphisms of pre-CY algebras one obtains from this general procedure agrees with the notion we define in \cref{sec:categoryPreCYalgebras}. This perspective partially explains the relation between pre-CY structure and other versions of noncommutative Poisson structures which have appeared in the literature, such as the notions of double Poisson algebras and double Poisson quasi-algebras of van den Bergh \cite{van2008double}. These algebras turn out to be particular cases of pre-CY algebras; this is proven in \cite{iyudu2021pre} for double Poisson algebras and in \cite{fernandez2021cyclic} for double Poisson quasi-algebras.

As for the relationship to derived geometry, a general statement relating Calabi-Yau structures and derived symplectic geometry has been proven in \cite{brav2021relative}: from a smooth Calabi-Yau structure of dimension $d$ on a smooth, finite-type dg category $\cC$, one gets a shifted symplectic structure on the associated `moduli space of objects' $\cM_\cC$, which is a derived stack defined by To\"en and Vaqui\'e \cite{toen2007moduli}. By fixing the dimensions, one recovers the symplectic structures on representation spaces, which sit inside of $\cM_\cC$. More broadly, the authors of \cite{brav2021relative} actually prove a relative version of this statement, giving derived Lagrangians inside of the spaces $\cM_\cC$. 

We expect that an analogous statement holds in general for pre-Calabi-Yau structures and shifted Poisson structures as defined in \cite{calaque2017shifted}, generalizing the result about representation spaces from \cite{yeung2018precalabiyau}; to our knowledge a proof of this statement in full generality, directly using the stacks $\cM_\cC$, has not appeared in the literature. Nevertheless, some version of this statement follows from the recent work of Pridham \cite{pridham2020shifted}, using a more abstract formalism of noncommutative derived stacks; there, the author mentions that a pre-CY structure on a finite-dimensional pre-CY algebra $A$ gives rise to a shifted Poisson structure on its moduli stack, using the relation between pre-CY structures and relative CY structures.

Without passing to spaces of representations, the definition of pre-CY structures can also be given in the language of noncommutative geometry developed in \cite{kontsevich2009notes}. In that setting, an $A_\infty$-structure on $A$ is equivalent to a solution of a certain Maurer-Cartan problem in the dg Lie algebra given by the (shifted) Hochschild cochains $C^*(A)[1]$, with Lie bracket given by the Gerstenhaber bracket $[-,-]_G$; this is the data of a \emph{homological vector field} on the noncommutative pointed scheme associated to $A$.  In this interpretation, the space $C^*_{[d]}(A)$ is the space of shifted polyvector fields on the noncommutative space associated to $A$, and the necklace bracket is the analog of the Schouten-Nijenhuis bracket. The $k=1$ component of the Maurer-Cartan equation shows that a pre-CY algebra is also an $A_\infty$-algebra; a pre-CY structure extending a given $A_\infty$ structure is a system of integrable polyvector fields extending the corresponding vector field.

Pre-CY structures have a well-behaved deformation theory, extending the deformation theory of $A_\infty$-structures. In \cref{sec:deformation}, we describe how to calculate the relevant obstructions for smooth or compact $A_\infty$ categories, using the diagonal bimodule and its duals. \\

\subsection{Topological aspects}
Let us briefly explain how this work fits in the cobordism perspective on \textsc{tqft}s. As we already mentioned, that a (compact) CY structure leads to an action of a \textsc{prop} of noncompact surfaces has been interpreted by Lurie to be the characterization of a certain type of fully extended oriented 2d \textsc{tqft}. Namely, there is a notion of Calabi-Yau object in any symmetric monoidal $(\infty,2)$-category $\cA$, such that the data of such an object in $\cA$ is equivalent to the data of a `non-compact' fully extended oriented 2d \textsc{tqft}  $Z: \Bord^{nc}_2 \to \cA$. Such a \textsc{tqft} only assigns values to cobordisms given by surfaces with $\ge 1$ incoming boundary components, which form the $(\infty,2)$-category of `noncompact' 2d cobordisms $\Bord^{nc}_2 $. The precise relation between the spaces $\cM_{g,\vec{m},\vec{n}}$ and the $\Bord^{nc}_2$ is given by a certain `unfolding construction' \cite{lurie2009classification}. 

We expect that \cref{thm:MainPROP} can be given a similar interpretation, using instead a variation $\Bord^{mid}_2$ of $\Bord^{nc}_2 $, made up of `middle-index oriented bordisms', that is, 2-bordisms that can be generated by handles of index one only; every such bordism has at least one input and at least one output. Then, if $\cC$ is a `good' symmetric monoidal $(\infty,2)$-category, in the sense of \cite{lurie2009classification}, linear over $\QQ$ (for example, an $(\infty,2)$-category of algebras and bimodules over a field $\kk$ of characteristic zero), a pre-CY object in $\cC$ (in the example, a pre-CY algebra over $\kk$) should determine a middle-index oriented 2d \textsc{tqft} valued in $\cC$: a symmetric monoidal functor $Z: \Bord^{mid}_2 \to \cC$. Proving such a correspondence with all its $\infty$-categorical technicalities is beyond the scope of this paper, but we propose this perspective as a heuristic explanation of why, for example, compact and smooth CY structures should give rise to pre-CY structures, essentially by restriction of the \textsc{tqft} structure to middle-index cobordisms. We postpone these discussions about the relations between smooth CY structures and pre-CY structures to \cite{KTV2}.

\subsection{Relation to existing literature}
Both this paper and \cite{KTV2} are updated and expanded versions of unfinished preprints that were written by the first and third named authors in 2013, and which have since then circulated among the community in their unfinished form, being cited by other articles and lectures such as \cite{iyudu2020precalabiyau,iyudu2021pre,yeung2018precalabiyau,fernandez2021cyclic,seidel2021fukaya}; since the appearance of the first complete version of this paper in 2021, our formalism has also found applications in high-energy physics \cite{sharapov2023chiral,sharapov2023more}.

Due to this state of affairs, some results and definitions here have also been discussed elsewhere in the literature, often citing that unfinished version of this paper. Throughout the text, we have made an attempt to be thorough in referring to those articles, and to be clear about what results in this expanded version are new. Let us give a non-extensive overview of related work. In the finite-dimensional case, pre-CY algebras have been defined and studied before by another name: some time between 2013 and now we learned from T. Tradler and M. Zeinalian that they had made an equivalent definition in \cite{tradler2007infinity} where this structure was called a \emph{$V_\infty$-algebra}. Moreover, P. Seidel also informed us that in \cite{seidel2012fukaya} he gave the same definition (Definition 3.5), calling them \emph{boundary algebras}. In the infinite dimensional case, some relations between pre-CY structures and symplectic/Poisson geometry appear in \cite{yeung2018precalabiyau,iyudu2020precalabiyau}.

Tradler-Zeinalian also describe the proof of an analogous result to our \cref{thm:MainPROP}, describing a \textsc{prop} they call $\cD\cG_\infty$ (for directed graphs); this turns out to be the closed part of our \textsc{prop}, and in the finite-dimensional case their proof is equivalent to ours. On the other hand, the uniformization of open-closed moduli space by meromorphic Strebel differentials is new, and gives an answer the question posed in Remark 5.6 of \cite{tradler2007infinity}.

Another place where a similar \textsc{prop} appears is in the work of Wahl-Westerland \cite{wahl2016hochschild} on \emph{black-and-white graphs}, which are similar objects to ours but without directions along the edges. The moduli spaces we describe here are homotopic to the ones proved by \cite{ega2015comparing} to give classifying spaces for open-closed cobordisms. Our quiver structure gives a finer stratification; the induced action on $A$ and $C_*(A)$ agrees with their description in the case where $A$ is a cyclic $A_\infty$-algebra, but the finer stratification allows us to generalize away from the finite-dimensional case.

This model of black-and-white graphs, acting on a cyclic $A_\infty$-algebra, has appeared in the recent papers \cite{caldararu2020categorical1,caldararu2020categorical2,caldararu2024effective}, where the authors refer to its use in the context of categorical Gromov-Witten invariants. We believe that the formalism of this paper can be used to extend their formalism to calculate invariants of categories that are not cyclic $A_\infty$. Though we make no further steps in that direction, we emphasize that one of the points of developing this theory is to give some understanding of this PROP action, together with tools to calculate it, in cases where $A$ is \emph{not compact}; in those cases it will be impossible to find a cyclic $A_\infty$-model of it. 

\subsection{Current and future research}
In the follow-up paper \cite{KTV2}, we discuss how to produce pre-CY structures on some categories that fail to be compact, but have instead smooth Calabi-Yau structures. There, we explain how the relation between pre-CY structures and smooth CY structures should be seen as a noncommutative version of the Legendre transform perspective on Poisson and symplectic structures.

Following this connection, one can produce pre-CY structures on algebras that are smooth but not proper, such as the algebra of chains on the based loop space of an orientable manifold. One fruitful line of research is the application of the graphical calculus governing pre-CY structures to the study of string topology operations; this is being currently developed by a collaboration involving the second-named author in \cite{rivera2023algebraic,RivTak}.

Another current line of research regards the properadic aspects of pre-CY structures, made possible by the use of properadic tools such as in \cite{leray2022pre}. Together with C. Emprin, the second-named author of this paper has been developing the theory of formality for pre-CY structures, using a properadic version of Kaledin classes \cite{emprin2024kaledin}. Combined with the PROP developed in this paper, one can use these formality results for pre-CY algebras to address formality of the corresponding genus zero structure, which is a framed $E_2$-algebra structure.\\ 

\noindent \textit{Acknowledgments:} AT would like to thank Dori Bejleri, Kai Cieliebak, Sheel Ganatra, Subhojoy Gupta, Aaron Mazel-Gee, Paul Seidel, Vivek Shende, Bruno Vallette and Michael Wolf for helpful conversations, and IHES and Institut Mittag-Leffler for excellent working conditions. YV would like to thanks IHES for continued support and excellent working conditions. He would also like to thank Ludmil Katzarkov for support at the University of Vienna and the University of Miami and Marie Claude Vergne for her expert help with making several of the pictures. This work was supported by the Simons Collaboration on Homological Mirror Symmetry.

\section{Background material}\label{sec:Background}
Throughout this paper, we work over a fixed field $\kk$ of characteristic zero. We will denote by $\Vect$ the symmetric monoidal categories of $\ZZ$-graded vector spaces, with monoidal structure given by the tensor product $\otimes$. We will use cohomological grading, and denote by $[1]$ the shift functor acting on objects of $\Vect$ as $(V[1])^n = V^{n+1}$. For any homogeneous element $a \in V$, we write $\deg(a)$ for its degree and denote by $\bar a = \deg(a)-1$ its degree in $V[1]$. In fact, all our statements hold equally for the $\ZZ/2$-graded case, and we will simply write `graded vector space'.

\subsection{A-infinity algebras and categories}\label{subsec:Ainfinity}
Let us denote by $\cA$ the data of a set of objects $\Ob(\cA)$ and for any two objects $X,Y \in \Ob(\cA)$, a graded vector space $\cA(X,Y)$.

The space of Hochschild cochains on $\cA$ of length $n$ is the graded vector space
\[ C^*(\cA)^n := \prod_{X_0,\dots, X_n \in \Ob(\cA)} \Hom(\cA(X_0,X_1)[1] \otimes \dots \otimes \cA(X_{n-1},X_n)[1], \cA(X_0,X_n)) \]

\begin{definition}\label{def:HochschildCochains}
The space of \emph{Hochschild cochains} of $\cA$ is the graded vector space given by
\[ C^*(\cA) = \prod_{n \ge 0} C^*(\cA)^n \]
\end{definition}
Note that in our notation the superscript $ ^*$ will always indicate total cohomological degree.

\begin{example}
If $\cA$ has a single object $X$ with a graded vector space $\cA(X,X) = A$ concentrated in degree zero, $C^*(\cA)$ is given by $\Hom(A^{\otimes n},A)$ in degree $+n$.
\end{example}

We endow $C^*(\cA)$ with the Gerstenhaber product $\circ$, a non-associative operation defined by
\[ f \circ g(a_1, \dots, a_n) = \sum_{i,j} (-1)^\# f(a_1, \dots , a_i, g(a_{i+1},\dots), a_j, \dots, a_n) \]
where $\# = \bar{g}\sum_{k=1}^i \overline{a_k}$, with $\bar g = \deg(g)-1$. This gives a product of degree $-1$, that is, a morphism of graded vector spaces
\[ C^*(\cA) \otimes C^*(\cA) \to C^*(\cA)[-1]. \]

\begin{definition}\label{def:GerstenhaberBracket}
The \emph{Gerstenhaber bracket} $[-,-]$ is the binary operation on $C^*(\cA)$ defined by
\[ [f,g] = f \circ g - (-1)^{\bar{f}\bar{g}} g \circ f\]
\end{definition}
The Gerstenhaber bracket endows the shifted space of Hochschild cochains $C^*(\cA)[1]$ with the structure of a graded Lie algebra.

\begin{definition}\label{def:AinftyStructure}
An $A_\infty$ structure on $\cA$ is an element $\mu \in C^2(\cA)$ satisfying $\mu \circ \mu = 0$, with vanishing length zero component $\mu^0 = 0$.
\end{definition}
We will often refer to $\cA$ as an $A_\infty$-category while leaving the $A_\infty$ structure $\mu$ implicit. For any $n \ge 1$, we denote by $\mu^n \in C^2(\cA)^n$ the length $n$ component of such a structure; this is the data of maps
\[ \mu^n: \cA(X_0,X_1)[1] \otimes \dots \otimes \cA(X_{n-1},X_n)[1] \to \cA(X_0,X_n)  \]
for all $(n+1)$-tuple of objects $X_i$. The first component $\mu^1$ squares to zero, and endows $C^*(\cA)$ with the structure of a cochain complex.

We recall a noncommutative geometry perspective of $A_\infty$-algebras \cite{kontsevich2009notes}. In this interpretation, the Gerstenhaber bracket corresponds to the bracket of vector fields on a noncommutative space determined by the graded vector space $A$, and an $A_\infty$-structure on $A$ is an integrable vector field of degree one (or \emph{homological vector field}) on that space. 

The notions above, together with definitions of morphisms, quasi-isomorphisms etc. have appeared in many places in the literature, often with different sign conventions. We refer the reader to \cite[Sec.3.2]{sheridan2020formulae} for the notions of $A_\infty$-functors and unitality conditions, with signs that are consistent with the ones we use.

\subsubsection{Modules and bimodules}
Throughout this paper we will make use of modules and bimodules over $A_\infty$-categories. Again, there are detailed expositions of this theory, for example \cite{ganatra2013symplectic,sheridan2020formulae}. We will use the same sign conventions as \cite{seidel2008subalgebras}; though this reference deals with $A_\infty$-algebras only, it is straightforward to generalize the formulas to $A_\infty$-categories. For example, for the definition of bimodules, we have:

\begin{definition}
A $(\cA,\cB)$-bimodule $M$ over a pair of $A_\infty$-categories $(\cA,\mu_\cA)$ and $(\cB,\mu_\cB)$ is a graded vector space $M(X,X')$ for every pair of object $X \in \cB, X' \in \cA$, along with maps of degree one
\begin{align*}
	\hspace{-1.5cm} \mu^{r,1,s}_M&: \cB(X_1, X_2)[1] \otimes \dots \otimes \cB(X_{r-1},X_r)[1] \otimes M(X_r,X'_1) \otimes \cA(X'_1,X'_2)[1] \otimes \dots \otimes \cA(X'_{s-1}, X'_s)[1] \\
	& \to M(X_1,X'_s)
\end{align*}
for $r,s \ge 0$ and tuples of objects $X_i,X'_j$, satisfying equation 2.5 of \cite{seidel2008subalgebras}.
\end{definition}

From now on, we will work only with homologically unital $A_\infty$-categories and homologically unital modules over them. Given any pair $(\cA,\cB)$ of $A_\infty$-categories, $(\cA,\cB)$-bimodules form a dg category, with morphisms given by collections of maps satisfying Equation 2.8 of \cite{seidel2008subalgebras}.
Moreover, given any three $A_\infty$-categories $(\cA,\cB,\cC)$, there is an operation of tensor product over $\cB$ given by a dg functor
\[ -\otimes_\cB -: \cA\mh\Mod\mh\cB \times \cB\mh\Mod\mh\cC \to \cA\mh\Mod\mh\cC\]
(Equation 2.15 of \cite{seidel2008subalgebras}) and a two-sided tensor functor 
\[ -\otimes_{\cA\mh\cB} -: \cA\mh\Mod\mh\cB \times \cB\mh\Mod\mh\cA \to \Vect \]
which is a special case of the cyclic tensor product \cite[Sec.5]{seidel2008subalgebras}.

We denote by $\cA_\Delta \in \cA\mh\Mod\mh\cA$, the \emph{diagonal bimodule} of $\cA$, whose spaces $\cA_\Delta(X,Y)$ are equal to the morphism spaces of the $A_\infty$-category $\cA$ itself; this object carries a natural quasi-isomorphism $\cA_\Delta \otimes_\cA \cM \xrightarrow{\sim} \cM$ for any left $\cA$-module $\cM$ (and similarly for right modules and bimodules) \cite[Prop.2.2]{ganatra2013symplectic}.

\subsubsection{Hochschild co/homology}
Let $(\cA,\mu)$ be an $A_\infty$ category; let us recall the definitions of Hochschild homology and cohomology of $\cA$.
\begin{definition}\label{def:HochschildCohomology}
The \emph{Hochschild cochain complex} is the graded vector space $C^*(\cA)$ endowed with the differential $d := [\mu, -]$ of degree +1; its cohomology $HH^*(\cA) := H^*(C^*(\cA), d)$ is called the \emph{Hochschild cohomology} of the $A_\infty$ category $\cA$.
\end{definition}
Note that by definition, $[\mu,\mu] = 0$ so $d^2=0$ and we have a class $[\mu] \in HH^2(\cA)$.

We define the space of Hochschild chains $C_*(\cA)$ as the graded vector space
\[ C_n(\cA) := \prod_{X_0,\dots, X_n \in \Ob(\cA)} \cA(X_0,X_1) \otimes \dots \otimes \cA(X_{n-1},X_n) \otimes \cA(X_n,X_0)[n] \]
\begin{definition}\label{def:HochschildHomology}
The \emph{Hochschild chain complex} is the graded vector space $C_*(\cA)$ endowed with a differential $b$ of degree +1 defined on generators as follows
\begin{align*}
    b(a_0 \otimes a_1 \otimes \dots \otimes a_n)   &= \sum_{i,j} (-1)^{\#_1} \mu^{i+j}(a_{j+1},\dots,a_0,\dots,a_i) \otimes a_{i+1} \otimes \dots \otimes a_j \\
         &+ \sum_{i,j} (-1)^{\#_2} a_0 \otimes \dots \otimes \mu^{j-i}(a_{i+1},\dots,a_j) \otimes \dots \otimes a_n,
\end{align*}
where $\#_1 = (\bar a_1 + \dots + \bar a_j)(\bar a_{j+1} + \dots + \bar a_n)$ and $\#_2 = \overline{a_0} + \dots + \overline{a_i}$. This complex calculates the \emph{Hochschild homology} $HH_*(\cA) := H^*(C_*(\cA), b)$ of the $A_\infty$ category $\cA$.
\end{definition}

\begin{remark}
Note that even though we denote Hochschild homology with a subscript, we are still using the cohomological grading convention for it; the differential $b$ has degree +1.
\end{remark}

\subsection{Graphical calculus for A-infinity categories and bimodules}\label{sec:graphical}
Anyone who tries to do calculations with $A_\infty$-structures will eventually encounter the annoying appearance of signs everywhere. We now present a unified graphical calculus describing $A_\infty$-categories and -bimodules, which allows one to write the relevant formulas with a minimum of explicit signs. The resulting sign conventions are in conformity with the signs appearing in e.g. \cite{seidel2008subalgebras,ganatra2013symplectic}. For simplicity we work with an $A_\infty$-algebra $A$, but the procedure is easily extended to categories.

\subsubsection{Representation for Hochschild cochains}\label{sec:RepHochChains}
Let $\phi \in C^*(A)$ denote a Hochschild cochain of $A$. We interpret $\phi = \sum_n \phi^n$ as a collection of maps
\[ \phi^n: A[1]^{\otimes n} \to A[1] \]
(note the shift in the target) of degree $\bar \phi = \deg(\phi) - 1$ and denote it graphically by the diagram
\[\begin{tikzpicture}[baseline={([yshift=-.5ex]current bounding box.center)}]
	\node [vertex] (a) at (0,0) {$\phi$};
	\draw [->-] (a) to (0,-1);
\end{tikzpicture}\]

One can use this graphical representation to calculate the action of the $E_2$-operad on Hochschild cochains described in \cite{kontsevich2000deformations}; given any rooted tree, which we visualize as sitting inside a circle with the root at the bottom, we can insert Hochschild cochains into the vertices and assign a Hochschild cochain to the whole diagram. We will later extend this construction to more general directed trees (\cref{sec:graphicalhigher}) and then to `ribbon quivers' (\cref{sec:PROPs}), and for that we need a coherent way of calculating signs. This can be deduced from the paper cited above, but for concreteness let us work through an example. Consider the following diagram
\[ \Gamma = \begin{tikzpicture}[baseline={([yshift=-.5ex]current bounding box.center)}]
	\draw (0,0.7) circle (1.7);
	\node [vertex] (a) at (-0.6,1.4) {$\phi_1$};
	\node [vertex] (b) at (0.6,0.8) {$\phi_2$};
	\node [vertex] (c) at (0,0) {$\phi_3$};
	\draw [->-] (a) to (c);
	\draw [->-] (b) to (c);
	\draw [->-] (c) to (0,-1);
\end{tikzpicture}\]

An \emph{ordering} on $\Gamma$ is a \emph{linear extension} of the partial ordering induced by the directions along the arrows. In the diagram above, we have a partial ordering
\[ \phi_1 > \phi_3, \quad \phi_2 > \phi_3 \]
and we can choose for instance the ordering
\[ (\phi_3 \phi_2 \phi_1), \text{\ meaning\ } \phi_3 < \phi_2 < \phi_1 \]
which we suggested in the figure above by drawing $\phi_1$ above $\phi_2$.

Each diagram is embedded in a disc, with a marked point on the boundary corresponding to the outgoing arrow at the bottom. To the ordered diagram above we assign a Hochschild cochain $\left(\Gamma, (\phi_3 \phi_2 \phi_1)\right)$ whose value on $a_1 \otimes a_2 \otimes \dots \otimes a_n \in A[1]^{\otimes n}$ is calculated as follows.

\begin{enumerate}
	\item We write the elements $a_i$ as incoming arrows around the circle, \emph{clockwise} starting from the bottom. We then choose some way of connecting these arrows to the vertices while respecting the cyclic order \emph{without crossings}. For example, given an element $a_1 \otimes a_2 \otimes a_3 \otimes a_4 \otimes a_5 \in A[1]^{\otimes 5}$, one such diagram is
	\[\begin{tikzpicture}[baseline={([yshift=-.5ex]current bounding box.center)}]
		\draw (0,0.7) circle (1.7);
		\node [vertex] (phi1) at (-0.6,1.4) {$\phi_1$};
		\node [vertex] (phi2) at (0.6,0.8) {$\phi_2$};
		\node [vertex] (phi3) at (0,0) {$\phi_3$};

		\draw [->-] (phi1) to (phi3);
		\draw [->-] (phi2) to (phi3);
		\draw [->-] (phi3) to (0,-1);

		\draw [->-] (-1.55,0) to (phi3);
		\draw [->-] (-1.68,1) to (phi1);
		\draw [->-] (0,2.4) to (phi1);
		\draw [->-] (1.68,1) to (phi2);
		\draw [->-] (1.55,0) to (phi3);

		\node at (-1.75,0) {$a_1$};
		\node at (-1.88,1) {$a_2$};
		\node at (0,2.6) {$a_3$};
		\node at (1.88,1) {$a_4$};
		\node at (1.75,0) {$a_5$};
	\end{tikzpicture}\]

	\item We write the ordering and the element in $A[1]^{\otimes n}$ next to each other:
	\[ (\phi_3 \phi_2 \phi_1)(a_1 a_2 a_3 a_4 a_5) \]

	\item We permute the $a_i$ such that $\phi_1$ precedes its inputs immediately, recording the Koszul sign:
	\[ (\phi_3 \phi_2 \phi_1)(a_1 a_2 a_3 a_4 a_5) = (-1)^{\bar a_1 (\bar a_2 + \bar a_3)} (\phi_3 \phi_2 \phi_1)(a_2 a_3 a_1 a_4 a_5) \]

	\item We evaluate $\phi_1(a_2,a_3)$ and write the result in the place occupied by $\phi_1$ and its inputs:
	\[ (-1)^{\bar a_1 (\bar a_2 + \bar a_3)} (\phi_3 \phi_2) (\phi_1(a_2, a_3) a_1 a_4 a_5) \]

	\item Repeat steps (3) and (4) until we are left with an element of $A[1]$, which we then interpret as an element of $A$.	In the case above, this result is
	\[ (-1)^{\bar \phi_1 \bar a_1 + \bar \phi_2(\bar a_1 + \bar a_2 + \bar a_3) + \bar \phi_1 \bar \phi_2} \phi_3(a_1, \phi_1(a_2, a_3),\phi_2(a_4), a_5) \]
	In the sign, $\bar \phi$ as usual denotes the degree of $\phi$ as a map from copies of $A[1]$ to copies of $A[1]$; that is, $\bar \phi = \deg(\phi) -1$.
\end{enumerate}

In the example above, we could have chosen the ordering $(\phi_3 \phi_1 \phi_2)$ instead, switching $\phi_1$ and $\phi_2$. The exponent in the sign then would have been
\[ \bar \phi_1 \bar a_1 + \bar \phi_2(\bar a_1 + \bar a_2 + \bar a_3) \]
which differs by $\bar \phi_1 \bar \phi_2$ from the one above. One can check that this is a general fact: maps determined by any two orderings differ by a minus sign with exponent given by $\sum_{(ij)} \bar \phi_i \bar \phi_j$, where we sum over transpositions $(ij)$ in the permutation between the orderings.

The Gerstenhaber product and bracket are part of the $E_2$-algebra structure on $C^*(\cA)$, and are given by:
\[ \phi \circ \psi = \begin{tikzpicture}[baseline={([yshift=-.5ex]current bounding box.center)}]
	\draw (0,0) circle (1);
	\node [vertex] (psi) at (0,0.55) {$\psi$};
	\node [vertex] (phi) at (0,-0.4) {$\phi$};

	\draw [->-=0.8] (psi) to (phi);
	\draw [->-=0.8] (phi) to (0,-1);
\end{tikzpicture}, \qquad
[\phi, \psi] = \begin{tikzpicture}[baseline={([yshift=-.5ex]current bounding box.center)}]
	\draw (0,0) circle (1);
	\node [vertex] (psi) at (0,0.55) {$\psi$};
	\node [vertex] (phi) at (0,-0.4) {$\phi$};

	\draw [->-=0.8] (psi) to (phi);
	\draw [->-=0.8] (phi) to (0,-1);
\end{tikzpicture} - (-1)^{\bar\psi\bar\phi}\begin{tikzpicture}[baseline={([yshift=-.5ex]current bounding box.center)}]
	\draw (0,0) circle (1);
	\node [vertex] (phi) at (0,0.55) {$\phi$};
	\node [vertex] (psi) at (0,-0.4) {$\psi$};

	\draw [->-=0.8] (phi) to (psi);
	\draw [->-=0.8] (psi) to (0,-1);
\end{tikzpicture}\]
and the $A_\infty$-structure equations are just $\begin{tikzpicture}[baseline={([yshift=-.5ex]current bounding box.center)}]
	\draw (0,0) circle (1);
	\node [vertex] (phi) at (0,0.55) {$\mu$};
	\node [vertex] (psi) at (0,-0.4) {$\mu$};

	\draw [->-=0.8] (phi) to (psi);
	\draw [->-=0.8] (psi) to (0,-1);
\end{tikzpicture} = 0$,
with $\deg(\mu) = \bar \mu + 1 = 2$ and $\mu^0 = 0$. One can check that the analogous algorithm to what we described above reproduces the correct signs.

\subsubsection{Hochschild chains}
It is also possible to include Hochschild chains in this type of graphical calculus, and describe operations such as the action of Hochschild cochains on Hochschild chains; this perspective appears in \cite{kontsevich2009notes}.

Above, we placed Hochschild cochains inside of disks on the plane. For Hochschild chains, one should imagine instead that it \emph{travels along a cylinder with a distinguished point on each boundary}. We squash the cylinder into an annulus for ease of representation, with the input end around the outside. For example, the identity map on Hochschild chains is
\[ \begin{tikzpicture}[baseline={([yshift=-.5ex]current bounding box.center)}]
	\draw (0,0) circle (1);
	\draw (0,0) circle (0.4);
	\draw [->-] (0,1) to (0,0.4);
	\node at (0,0.98) {$\bullet$};
	\node at (0,0.38) {$\bullet$};
\end{tikzpicture}\]
We then send a chain $a_0 \otimes a_1 \otimes \dots \otimes a_p$ from the outside, with $a_0$ along the arrow leaving the marked point $\bullet$ and the $a_i$ around in \emph{clockwise order}, i.e.
\[ \begin{tikzpicture}[baseline={([yshift=-.5ex]current bounding box.center)}]
	\draw (0,0) circle (1);
	\draw (0,0) circle (0.4);
	\node at (0,0.98) {$\bullet$};
	\node at (0,0.38) {$\bullet$};

	\draw [->-] (0,1) to (0,0.4);
	\draw [->-] (0.84,0.55) to (0.33, 0.22);
	\draw [->-] (0.84,-0.55) to (0.33, -0.22);
	\draw [->-] (-0.84,0.55) to (-0.33, 0.22);
	\draw [->-] (-0.84,-0.55) to (-0.33, -0.22);
	\node at (0,1.2) {$a_0$};
	\node at (1.1,0.6) {$a_1$};
	\node at (1.1,-0.6) {$a_2$};

	\node at (-1.3,-0.6) {$a_{p-1}$};
	\node at (-1.1,0.6) {$a_p$};
	\node at (0,-0.7) {$\dots$};
\end{tikzpicture}\]
which obviously gives the identity map.

Given a directed tree embedded in this cylinder, we can analogously define the action of diagrams of Hochschild cochains, as we did before in the disc. First we pick an ordering of the vertices as above and evaluate
	\[ (\phi_N \dots \phi_1) (a_0 a_1 \dots a_p) \]
with the same signs rules as above, and at the end, we permute the resulting outputs to put them in clockwise order starting from the marking $\bullet$.

The natural operations on Hochschild chains can then be described very simply. For example, the Hochschild differential corresponding to some $A_\infty$-structure $\mu \in C^2(A)$ is
\[ b = \begin{tikzpicture}[baseline={([yshift=-.5ex]current bounding box.center)}]
	\draw (0,0) circle (1.3);
	\draw (0,-0.7) circle (0.4);
	\node at (0,1.28) {$\bullet$};
	\node at (0,-0.27) {$\bullet$};

	\node [vertex] (mu) at (0,0.5) {$\mu$};
	\draw [->-] (0,1.3) to (phi);
	\draw [->-] (mu) to (0,-0.3);
\end{tikzpicture} + \begin{tikzpicture}[baseline={([yshift=-.5ex]current bounding box.center)}]
	\draw (0,0) circle (1.3);
	\draw (0,0.4) circle (0.4);
	\node at (0,1.28) {$\bullet$};
	\node at (0,0.78) {$\bullet$};
	\node [vertex] (mu) at (0,-0.8) {$\mu$};
	\draw [->-] (0,1.3) to (0,0.8);
	\draw [->-] (mu) to (0,0);
\end{tikzpicture}\]
and the cap product giving an action of $C^*(A)$ and $C_*(A)$, is
\[ \begin{tikzpicture}[baseline={([yshift=-.5ex]current bounding box.center)}]
	\draw (0,0) circle (1.3);
	\draw (0,-0.7) circle (0.4);
	\node at (0,1.28) {$\bullet$};
	\node at (0,-0.27) {$\bullet$};

	\node [vertex] (mu) at (0,0.5) {$\mu$};
	\node [vertex] (phi) at (-0.8,0.5) {$\phi$};
	\draw [->-=0.8] (phi) to (mu);
	\draw [->-] (0,1.3) to (mu);
	\draw [->-] (mu) to (0,-0.3);
\end{tikzpicture}
\]

In the case where $A$ has a strict unit $e_A$, we can also describe the Connes differential
\[ B = \begin{tikzpicture}[baseline={([yshift=-.5ex]current bounding box.center)}]
	\draw (0,0) circle (1.3);
	\draw (0,0.4) circle (0.4);
	\node at (0,1.28) {$\bullet$};
	\node at (0,-0.03) {$\bullet$};
	\node [vertex] (mu) at (0,-0.8) {$1$};
	\draw [->-] (0,1.3) to (0,0.8);
	\draw [->-] (mu) to (0,0);
\end{tikzpicture}\]
where $1 \in C^0(A)$ denotes the constant Hochschild cochain that evaluates $\kk \to e_A$ and $T_+(A[1]) \to 0$.

\subsubsection{Morphisms and bimodules}
Later in this paper, we will need to describe some operations analogous to the above, but using bimodules (that is, Hochschild chains/cochains of $\cA$ valued in some bimodule $\cM$). So let us describe an extension of the graphical calculus above that also includes bimodules. Though in this paper we will only be using $(\cA,\cA)$-bimodules (that is, where the $A_\infty$-algebras acting on the left and the right are the same), it is natural to allow them to be different. \footnote{This will also be useful in a future paper, where we plan to address relative Calabi-Yau structures in the sense of \cite{brav2019relative,brav2021relative} and therefore must deal with more than one $A_\infty$-category.}

Again for simplicity we describe the case of $A_\infty$-algebras. Let $(A,\mu_A)$ and $(B,\mu_B)$ be two $A_\infty$-algebras. We interpret an $(A,B)$-bimodule $M$ as a `boundary condition' (bold line) between planar regions labeled by $B$ (white) and $A$ (shaded), together with a `boundary point operator' $\mu_M$ which sits on the bold line, as follows:
\[\begin{tikzpicture}[baseline={([yshift=-.5ex]current bounding box.center)}]
	\draw (0,0) circle (1);
	\filldraw[fill=black!20!white] (0,-1) arc (-90:90:1) -- cycle;
	\node [vertex] (muM) at (0,0) {$\mu_M$};

	\draw [->-=0.8, very thick] (0,1) to (muM);
	\draw [->-=0.8, very thick] (muM) to (0,-1);
	\node at (-1.2,0) {$B$};
	\node at (1.2,0) {$A$};
\end{tikzpicture}
\]

Note that the bold arrow representing an $(A,B)$-bimodule $M$ has $A$ to \emph{its left} (as seen by someone walking along the arrow) and $B$ to \emph{its right}. On some element $(a_1,\dots,a_s,b_1,\dots,b_r) \in T(A[1]) \otimes T(B[1])$, we evaluate the vertex by inserting $b_i$ elements on the left and $a_i$ elements on the right, clockwise, and $m \in M$ on the top. We require that $\deg(\mu_M) = 1$, seen as a map
\[ B[1]^{\otimes r} \otimes M \otimes A[1]^{\otimes s} \to M \]

In order to properly use the sign convention, we need to specify how to do steps (2) and (3), i.e. how to assign degrees for the arrows and for the regions in the presence of a bimodule line. The prescription is essentially the same, except that we count elements $m \in M$ according to their degree $\deg(m)$ in $M$.

With this convention in mind, we can express the $A_\infty$ structure equation for bimodules as
\[\begin{tikzpicture}[baseline={([yshift=-.5ex]current bounding box.center)}]
	\draw (0,0) circle (1.5);
	\filldraw[fill=black!20!white] (0,-1.5) arc (-90:90:1.5) -- cycle;
	\node [vertex] (muM) at (0,-0.6) {$\mu_M$};
	\node [vertex] (muB) at (-0.8,0.3) {$\mu_B$};

	\draw [->-=0.8, very thick] (0,1.5) to (muM);
	\draw [->-=0.8, very thick] (muM) to (0,-1.5);
	\draw [->-=0.8] (muB) to (muM);
\end{tikzpicture} +
\begin{tikzpicture}[baseline={([yshift=-.5ex]current bounding box.center)}]
	\draw (0,0) circle (1.5);
	\filldraw[fill=black!20!white] (0,-1.5) arc (-90:90:1.5) -- cycle;
	\node [vertex] (muM) at (0,-0.6) {$\mu_M$};
	\node [vertex] (mutop) at (0,0.8) {$\mu_M$};

	\draw [->-=0.8, very thick] (0,1.5) to (mutop);
	\draw [->-=0.8, very thick] (mutop) to (muM);
	\draw [->-=0.8, very thick] (muM) to (0,-1.5);
\end{tikzpicture} +
\begin{tikzpicture}[baseline={([yshift=-.5ex]current bounding box.center)}]
	\draw (0,0) circle (1.5);
	\filldraw[fill=black!20!white] (0,-1.5) arc (-90:90:1.5) -- cycle;
	\node [vertex] (muM) at (0,-0.6) {$\mu_M$};
	\node [vertex] (muA) at (0.8,0.3) {$\mu_A$};

	\draw [->-=0.8, very thick] (0,1.5) to (muM);
	\draw [->-=0.8, very thick] (muM) to (0,-1.5);
	\draw [->-=0.8] (muA) to (muM);
\end{tikzpicture} = 0
\]

The dg category $A\mh\Mod\mh B$ of $(A,B)$-bimodules has objects given by boundary conditions with a $\mu$ vertex as above, that is, a pair of a graded vector space $M$ together with a set of maps $\mu_M: B[1]^{\otimes r} \otimes M \otimes A[1]^{\otimes s} \to M$. Morphisms between bimodules are given by `boundary condition changing operators': a morphism $F:M \to N$ is represented by vertex between a $M$-line and an $N$-line, and the differential on the space of morphisms is given by $F \mapsto [\mu,F]$, where $[\mu,F]$ is the vertex given by the sum of diagrams
\[
\hspace{-1.2cm}
\begin{tikzpicture}[baseline={([yshift=-.5ex]current bounding box.center)}]
\draw (0,0) circle (1.5);
\filldraw[fill=black!20!white] (0,-1.5) arc (-90:90:1.5) -- cycle;
\node [vertex] (muM) at (0,-0.6) {$\mu_N$};
\node [vertex] (mutop) at (0,0.8) {$F$};

\draw [->-=0.8, very thick] (0,1.5) to (mutop);
\draw [->-=0.8, very thick] (mutop) to (muM);
\draw [->-=0.8, very thick] (muM) to (0,-1.5);
\end{tikzpicture}
\ -(-1)^{\bar F} \left(
\begin{tikzpicture}[baseline={([yshift=-.5ex]current bounding box.center)}]
\draw (0,0) circle (1.5);
\filldraw[fill=black!20!white] (0,-1.5) arc (-90:90:1.5) -- cycle;
\node [vertex] (muM) at (0,-0.6) {$F$};
\node [vertex] (muB) at (-0.8,0.3) {$\mu_B$};

\draw [->-=0.8, very thick] (0,1.5) to (muM);
\draw [->-=0.8, very thick] (muM) to (0,-1.5);
\draw [->-=0.8] (muB) to (muM);
\end{tikzpicture} +
\begin{tikzpicture}[baseline={([yshift=-.5ex]current bounding box.center)}]
\draw (0,0) circle (1.5);
\filldraw[fill=black!20!white] (0,-1.5) arc (-90:90:1.5) -- cycle;
\node [vertex] (muM) at (0,-0.6) {$F$};
\node [vertex] (mutop) at (0,0.8) {$\mu_M$};

\draw [->-=0.8, very thick] (0,1.5) to (mutop);
\draw [->-=0.8, very thick] (mutop) to (muM);
\draw [->-=0.8, very thick] (muM) to (0,-1.5);
\end{tikzpicture} +
\begin{tikzpicture}[baseline={([yshift=-.5ex]current bounding box.center)}]
\draw (0,0) circle (1.5);
\filldraw[fill=black!20!white] (0,-1.5) arc (-90:90:1.5) -- cycle;
\node [vertex] (muM) at (0,-0.6) {$F$};
\node [vertex] (muA) at (0.8,0.3) {$\mu_A$};

\draw [->-=0.8, very thick] (0,1.5) to (muM);
\draw [->-=0.8, very thick] (muM) to (0,-1.5);
\draw [->-=0.8] (muA) to (muM);
\end{tikzpicture}
\right)
\]

Left- and right-modules over some $A_\infty$-category can be analogously described by setting one of the sides to be the rank one $A_\infty$-algebra given by the ground field $\kk$ in degree zero.

\subsubsection{Tensor products of bimodules}\label{sec:TensorProductsBimodules}
We can also use this graphical calculus to describe the tensor product of bimodules. Let $(A,\mu_A), (B,\mu_B), (C,\mu_C)$ be three $A_\infty$-algebras, and $M \in A\mh\Mod\mh B, N \in B\mh\Mod\mh C$ two bimodules. We have the following graphical `definition' of $M \otimes_B N$:
\[\begin{tikzpicture}[baseline={([yshift=-.5ex]current bounding box.center)}]
	\node (mn) at (0,1.4) {$M\otimes_B N$};
	\draw [->-,very thick] (0,1.2) to (0,0);
	\end{tikzpicture} =
	\begin{tikzpicture}[baseline={([yshift=-.5ex]current bounding box.center)}]
		\node (m) at (0,1.4) {$N$};
		\node (n) at (0.8,1.4) {$M$};
		\filldraw[fill=black!20!white] (0,0) -- (0.8,0) -- (0.8,1.2) -- (0,1.2) -- cycle;
		\draw [->-,very thick] (0,1.2) to (0,0);
		\draw [->-,very thick] (0.8,1.2) to (0.8,0);
		\node at (-0.4,0.7) {$C$};
		\node at (0.4,0.7) {$B$};
		\node at (1.2,0.7) {$A$};
		\end{tikzpicture}
\]
where the shaded area is where $B[1]$-lines travel. That is, as a graded vector space it is $M \otimes T(B[1]) \otimes N$, with structure map $\mu = \mu_{M \otimes_B N}$ given by the expression:
\[
\begin{tikzpicture}[baseline={([yshift=-.5ex]current bounding box.center)}]
\node (mn) at (0,2.5) {$M\otimes_B N$};
\node [vertex] (mumn) at (0,1.25) {$\mu$};
\draw [->-,very thick] (0,2.3) to (mumn);
\draw [->-, very thick] (mumn) to (0,0);
\end{tikzpicture} =
\begin{tikzpicture}[baseline={([yshift=-.5ex]current bounding box.center)}]
\node (m) at (0,2.5) {$N$};
\node (n) at (1.2,2.5) {$M$};
\filldraw[fill=black!20!white] (0,0) -- (1.2,0) -- (1.2,2.3) -- (0,2.3) -- cycle;

\node [vertex] (mun) at (0,1.25) {$\mu_N$};
\draw [->-,very thick] (0,2.3) to (mun);
\draw [->-, very thick] (mun) to (0,0);
\draw [->-,very thick] (1.2,2.3) to (1.2,0);
\end{tikzpicture} +
\begin{tikzpicture}[baseline={([yshift=-.5ex]current bounding box.center)}]
\node (m) at (0,2.5) {$N$};
\node (n) at (1.2,2.5) {$M$};
\filldraw[fill=black!20!white] (0,0) -- (1.2,0) -- (1.2,2.3) -- (0,2.3) -- cycle;

\node [vertex] (mub) at (0.6,1.25) {$\mu_B$};
\draw [->-,very thick] (0,2.3) to (0,0);
\draw [->-] (mub) to (0.6,0);
\draw [->-,very thick] (1.2,2.3) to (1.2,0);
\end{tikzpicture} +
\begin{tikzpicture}[baseline={([yshift=-.5ex]current bounding box.center)}]
\node (m) at (0,2.5) {$N$};
\node (n) at (1.2,2.5) {$M$};
\filldraw[fill=black!20!white] (0,0) -- (1.2,0) -- (1.2,2.3) -- (0,2.3) -- cycle;

\node [vertex] (mum) at (1.2,1.25) {$\mu_M$};
\draw [->-,very thick] (0,2.3) to (0,0);
\draw [->-, very thick] (mum) to (1.2,0);
\draw [->-,very thick] (1.2,2.3) to (mum);
\end{tikzpicture}
\]

Given two $A_\infty$-algebras $A,B$, we can tensor two bimodules $M \in A\mh\Mod\mh B$ and $N \in B \mh \Mod \mh A$ simultaneously over $A$ and $B$, and get a differential graded vector space $M \otimes_{A-B} N$, the \emph{two-sided tensor product}. As a graded vector space this is given by $M \otimes T(B[1]) \otimes N \otimes T(A[1])$. We can express this as traveling along a cylinder (or annulus) with two marked lines along which elements of $M$ and $N$ travel. The differential $d_{M \otimes_{A \otimes B} N}$ is then given by the diagram:
\[
\hspace{-1.4cm}
\begin{tikzpicture}[baseline={([yshift=-.5ex]current bounding box.center)}]
\draw (0,0) circle (1.8);
\draw (0,0) circle (0.5);
\filldraw[fill=black!20!white] (0,-1.8) arc (-90:90:1.8) -- (0,0.5) arc (90:-90:0.5) -- cycle;

\node at (0,2) {$N$};
\node at (0,-2) {$M$};
\node [vertex] (mu) at (0,-1.2) {$\mu_M$};

\draw [->-=1, very thick] (0,-1.8) to (mu);
\draw [->-=1, very thick] (mu) to (0,-0.5);
\draw [->-=0.5, very thick] (0,1.8) to (0,0.5);
\end{tikzpicture} +
\begin{tikzpicture}[baseline={([yshift=-.5ex]current bounding box.center)}]
\draw (0,0) circle (1.8);
\draw (0,0) circle (0.5);
\filldraw[fill=black!20!white] (0,-1.8) arc (-90:90:1.8) -- (0,0.5) arc (90:-90:0.5) -- cycle;

\node at (0,2) {$N$};
\node at (0,-2) {$M$};
\node [vertex] (mu) at (-1.2,0) {$\mu_A$};

\draw [->-=1, very thick] (0,1.8) to (0,0.5);
\draw [->-] (mu) to (-0.5,0);
\draw [->-=0.5, very thick] (0,-1.8) to (0,-0.5);
\end{tikzpicture} +
\begin{tikzpicture}[baseline={([yshift=-.5ex]current bounding box.center)}]
\draw (0,0) circle (1.8);
\draw (0,0) circle (0.5);
\filldraw[fill=black!20!white] (0,-1.8) arc (-90:90:1.8) -- (0,0.5) arc (90:-90:0.5) -- cycle;

\node at (0,2) {$N$};
\node at (0,-2) {$M$};
\node [vertex] (mu) at (0,1.2) {$\mu_N$};

\draw [->-=1, very thick] (0,1.8) to (mu);
\draw [->-=1, very thick] (mu) to (0,0.5);
\draw [->-=0.5, very thick] (0,-1.8) to (0,-0.5);
\end{tikzpicture} +
\begin{tikzpicture}[baseline={([yshift=-.5ex]current bounding box.center)}]
\draw (0,0) circle (1.8);
\draw (0,0) circle (0.5);
\filldraw[fill=black!20!white] (0,-1.8) arc (-90:90:1.8) -- (0,0.5) arc (90:-90:0.5) -- cycle;

\node at (0,2) {$N$};
\node at (0,-2) {$M$};
\node [vertex] (mu) at (1.2,0) {$\mu_B$};

\draw [->-=1, very thick] (0,1.8) to (0,0.5);
\draw [->-] (mu) to (0.5,0);
\draw [->-=0.5, very thick] (0,-1.8) to (0,-0.5);
\end{tikzpicture}
\]
seen as an operation from $M \otimes_{A \otimes B} N$ (outside the annulus) to itself (inside the annulus) of degree $+1$.

\subsection{Calabi-Yau structures}

\subsubsection{The diagonal bimodule and its duals}\label{sec:diagonalBimodule}
Recall that given any $A_\infty$-algebra $A$ there is a canonical object of $A \mh \Mod \mh A$, its diagonal bimodule $A_\Delta$. As a graded vector space, it is equal to $A$, and its structure maps are produced from the structure maps of $A$ with sign changes to account for shifts.

Given graded vector spaces $V_0, V_1, \dots V_n$, consider the graded vector space $\Hom(V_1\otimes \dots \otimes V_n, V_0)$. Shifting one of the factors by $[-1]$, say $V_i$, gives an isomorphism
\[ \Hom(V_1 \otimes \dots \otimes V_i [-1] \otimes \dots V_n, V_0) \cong \Hom(V_1 \otimes \dots \otimes V_n,V_0) \]
sending $\phi$ to $\phi'$ given by
\[ \phi'(v_1, \dots v_n) = (-1)^\# \phi(v_1,\dots, v_n) \]
where $\# = \sum_{j=1}^{i-1} \deg(v_j)$; recall also that in the category of dg vector spaces, the differential on the shift $d_{V[1]}$ is given by $-d_V$.

The structure map of the bimodule $A_\Delta$, that is, a morphism $\mu_{A_\Delta}: T(A[1]) \otimes A \otimes T(A[1]) \to A$ is given by the structure map $\mu_A: T(A[1]) \to A[1] $ of the $A_\infty$-algebra, but with the appropriate shift coming from the considerations above:
\[ \mu_{A_\Delta}(a_1,\dots,a_n, a, a'_1,\dots, a'_m) = (-1)^\#  \mu_{A}(a_1,\dots,a_n, a, a'_1,\dots, a'_m), \]
with $\# = \bar a_1 + \dots \bar a_n + 1$, and on the right-hand side $a$ is seen as an element of $A[1]$.

We now describe two other canonical objects in the category $A\mh\Mod\mh A$, the \emph{linear dual bimodule} $A^\vee_\Delta$ and the \emph{inverse dualizing bimodule} $A^!_\Delta$.

The linear dual bimodule $A^\vee_\Delta$, as a graded vector space, is given by the  dual $\Hom_\kk(A,\kk)$, and it has structure map $\mu_{A^\vee_\Delta}: T(A[1]) \otimes A^\vee \otimes T(A[1]) \to A^\vee$ given by
\[ \mu_{A^\vee_\Delta}(a_1,\dots, a_m,a^\vee,a'_1,\dots,a'_n)(a) =
(-1)^\# a^\vee \left( \mu_A(a'_1,\dots,a'_n, a, a_1,\dots,a_m) \right) \]
where $\# = (\bar a_1 + \dots \bar a_m + \deg(a^\vee))\cdot (\bar a'_1 + \dots + \bar a'_n + \deg(a)) + \bar a'_1 + \dots + \bar a'_n$.
The proof that these maps satisfy the structure conditions can be obtained by adapting the proof of \cite[Lem.3.9]{tradler2007infinity} to our sign conventions.

Using the two-sided tensor product of bimodules, we can extend the canonical evaluation $A \otimes A^\vee \to \kk$ to an evaluation morphism
\[ \ev_A: A_\Delta^\vee \otimes_{A\mh A} A_\Delta \to \kk \]
which satisfies the following graphical equation in $\Hom_\kk(A_\Delta^\vee \otimes_{A\mh A} A_\Delta, \kk)$:
\[
\begin{tikzpicture}[baseline={([yshift=-.5ex]current bounding box.center)}]
\node (toplabel) at (0,3.6) {$A^\vee_\Delta$};
\node (botlabel) at (0,0) {$A_\Delta$};
\node [vertex] (top) at (0,2.4) {$\mu_{A^\vee_\Delta}$};
\node [vertex] (bot) at (0,1.2) {$\ev_A$};
\draw [->-=0.9,very thick] (toplabel) to (top);
\draw [->-=0.9, very thick] (top) to (bot);
\draw [->-=0.9, very thick] (botlabel) to (bot);
\end{tikzpicture} \quad + \quad
\begin{tikzpicture}[baseline={([yshift=-.5ex]current bounding box.center)}]
\node (toplabel) at (0,3.6) {$A^\vee_\Delta$};
\node (botlabel) at (0,0) {$A_\Delta$};
\node [vertex] (top) at (0,2.4) {$\ev_A$};
\node [vertex] (bot) at (0,1.2) {$\mu_{A_\Delta}$};
\draw [->-=0.9,very thick] (toplabel) to (top);
\draw [->-=0.9, very thick] (bot) to (top);
\draw [->-=0.9, very thick] (botlabel) to (bot);
\end{tikzpicture} = 0
\]

We use the linear dual bimodule when $A$ is compact, that is, when $H^*(A,\mu_A)$ is finite-rank as a vector space. Let $M$ be any compact object of $A\mh\Mod\mh A$, and consider some morphism $F \in \Hom_{A\mh A}(M, A^\vee_\Delta)$. Let $\tilde F$ be the map of graded vector spaces $M \otimes_{A\mh A} A_\Delta \to \kk$ given by
\[ \tilde F(a_1,\dots,a_k,m,a'_1,\dots,a'_l, a) = \ev(F(a_1,\dots,a_k,m,a'_1,\dots,a'_l), a) \]
Recall that $M \otimes_{A\mh A} A_\Delta$ is a model for the Hochschild chain complex $C_*(A,M)$ of $A$ with coefficients in the bimodule $M$. We can express the relevant tensor-hom adjunction using bimodules as follows.
\begin{lemma} \label{lemma:IsoCompact}
	When $A$ and $M$ are compact, the map $\Hom_{A\mh A}(M,\cA_\Delta^\vee) \to \Hom_\kk(M \otimes_{A\mh A} A_\Delta, \kk)$ given by $F \mapsto \tilde F$ is a quasi-isomorphism.
\end{lemma}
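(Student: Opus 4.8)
The plan is to recognize the assignment $F \mapsto \tilde F$ as an incarnation of the tensor--hom adjunction over $\kk$ and then to promote it to a quasi-isomorphism. First I would check that
\[ \Phi\colon \Hom_{A\mh A}(M,A_\Delta^\vee)\longrightarrow \Hom_\kk(M\otimes_{A\mh A}A_\Delta,\kk),\qquad F\mapsto \tilde F, \]
is a map of complexes. By construction $\tilde F$ is obtained by composing $F$ (viewed as a morphism landing in $A_\Delta^\vee$) with the evaluation pairing $\ev_A$, i.e.\ graphically by stacking the $\ev_A$ vertex beneath the output of $F$. The only nontrivial point is that $\Phi$ intertwines the differential $[\mu,-]$ on the source with the $\kk$-dual of the Hochschild-type differential on $M\otimes_{A\mh A}A_\Delta$, and this is exactly where the closedness identity for $\ev_A$ displayed just above the statement is used: the summand of $[\mu,-]$ that applies $\mu_{A_\Delta^\vee}$ to the output of $F$ is converted, via that identity, into the summand applying $\mu_{A_\Delta}$ on the chain side, while all remaining terms (those acting through the bimodule structure of $M$ and through the two tensor factors of $A[1]$) match one another termwise. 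The residual sign bookkeeping is routine and is precisely what the graphical calculus of Section~\ref{sec:graphical} is designed to absorb.

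Next I would observe that, as a map of graded vector spaces, $\Phi$ is a bijection, so that once the previous paragraph is in place it is an isomorphism of complexes. Writing out components, an element of the source is a family $F^{k,l}\colon A[1]^{\otimes k}\otimes M\otimes A[1]^{\otimes l}\to A^\vee$ indexed by $k,l\ge 0$; uncurrying the target $A^\vee=\Hom_\kk(A,\kk)$ produces exactly a family of functionals on $A[1]^{\otimes k}\otimes M\otimes A[1]^{\otimes l}\otimes A$, which is the generic element of $\Hom_\kk(M\otimes_{A\mh A}A_\Delta,\kk)$ because the linear dual turns the direct sum over $(k,l)$ defining the chain complex into the corresponding product. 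Thus the content of the lemma is not the existence of the identification but its compatibility with the differentials together with the care needed when the indexing sums are completed.

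The one place where genuine content, and the compactness hypotheses, enter is in reconciling these completions. In the length-filtered setting $M\otimes_{A\mh A}A_\Delta$ is a (completed) sum while $\Hom_{A\mh A}(M,A_\Delta^\vee)$ is a product, so identifying the latter with the $\kk$-dual of the former is only as good as the comparison between $\bigoplus$ and $\prod$ along the length filtration. Here I would run the length-filtration spectral sequence on both sides: compactness of $A$ makes each associated graded piece finite-dimensional, and compactness of $M$ controls the contributing range, so that the natural comparison between the sum and its completion is a quasi-isomorphism and both spectral sequences converge to the same limit. I expect this convergence/comparison to be the main obstacle, since it is exactly where finiteness cannot be avoided. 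A clean alternative that sidesteps the explicit filtration is a dévissage argument: both $M\mapsto \Hom_{A\mh A}(M,A_\Delta^\vee)$ and $M\mapsto\Hom_\kk(M\otimes_{A\mh A}A_\Delta,\kk)$ are exact contravariant functors preserving quasi-isomorphisms, and $\Phi$ is natural, so $\{M:\Phi_M \text{ is a quasi-isomorphism}\}$ is a thick subcategory; it contains the free bimodule $A\otimes A$—for which both sides collapse to $A^\vee\cong\Hom_\kk(A,\kk)$ and $\Phi$ is the tautological isomorphism—and hence contains every compact $M$, which by definition lies in the thick subcategory these generate.
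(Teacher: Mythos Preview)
The paper does not actually give a proof of this lemma; it merely introduces it with the sentence ``We can express the relevant tensor-hom adjunction using bimodules as follows,'' treating it as a standard fact. Your first two paragraphs supply exactly that argument and are correct: $\Phi$ is a chain map by the closedness of $\ev_A$, and it is a bijection of graded vector spaces by uncurrying $\Hom_\kk(-,A^\vee)\cong\Hom_\kk(-\otimes A,\kk)$ and using that the linear dual turns the direct sum over bar-lengths into the corresponding product. At this point you are done: an isomorphism of complexes is in particular a quasi-isomorphism.

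Your third paragraph is where things go astray. The worry about ``reconciling completions'' is misplaced: in the paper's conventions the two-sided tensor product $M\otimes_{A\mh A}A_\Delta$ has underlying graded vector space $M\otimes T(A[1])\otimes A\otimes T(A[1])$, an honest direct sum (see Section~\ref{sec:TensorProductsBimodules}), so its $\kk$-dual is literally the product you already identified with the source of $\Phi$ in paragraph two. No spectral sequence or finiteness is needed here; the compactness hypothesis in the statement is contextual (it is the setting in which $A^\vee_\Delta$ is a useful object and in which the lemma is applied), not a logical prerequisite for this particular isomorphism. Separately, your d\'evissage alternative has a genuine gap: you assume $M$ lies in the thick subcategory generated by the free bimodule $A\otimes A$, i.e.\ that $M$ is \emph{perfect}, whereas ``compact'' in this paper means only that $H^*(M,\mu_M^{0|1|0})$ is finite-dimensional. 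These are different hypotheses, and the latter does not imply the former without smoothness of $A$, so that route would not establish the lemma as stated.
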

\begin{proof}
	Follows straightforwardly from tensor-hom adjunction for $A_\infty$-bimodules \cite[Prop.2.10]{ganatra2013symplectic} together with the fact that the double dual of $\cA$ is quasi-equivalent to $\cA$ when $\cA$ is compact.
\end{proof}

Graphically, the lemma above says that the following local replacement
\[
\begin{tikzpicture}[baseline={([yshift=-.5ex]current bounding box.center)}]
\node (toplabel) at (0,3) {$M$};
\node (botlabel) at (0,0) {$A_\Delta^\vee$};
\node [vertex] (top) at (0,1.5) {$F$};
\draw [->-=0.6,very thick] (toplabel) to (top);
\draw [->-=0.6, very thick] (top) to (botlabel);
\end{tikzpicture} \qquad \rightsquigarrow \qquad
\begin{tikzpicture}[baseline={([yshift=-.5ex]current bounding box.center)}]
\node (toplabel) at (0,3) {$M$};
\node (botlabel) at (0,0) {$A_\Delta$};
\node [vertex] (top) at (0,1.5) {$\tilde F$};
\draw [->-=0.6,very thick] (toplabel) to (top);
\draw [->-=0.6, very thick] (botlabel) to (top);
\end{tikzpicture} \quad := \quad
\begin{tikzpicture}[baseline={([yshift=-.5ex]current bounding box.center)}]
\node (toplabel) at (0,3) {$M$};
\node (botlabel) at (0,0) {$A_\Delta$};
\node [vertex] (top) at (0,2) {$F$};
\node [vertex] (bot) at (0,1) {$\ev$};
\draw [->-=0.9,very thick] (toplabel) to (top);
\draw [->-=0.9, very thick] (top) to (bot);
\draw [->-=0.9, very thick] (botlabel) to (bot);
\end{tikzpicture}
\]
of a part of some larger diagram induces an quasi-isomorphism.

We also have the \emph{inverse dualizing bimodule} $A_\Delta^!$, defined following Ginzburg's definition in \cite{ginzburg2006calabiyau} for the dg case. We recall its definition in the case of an $A_\infty$-algebra; the $A_\infty$-category case is analogous, and the relevant formulas can be found in \cite{ganatra2013symplectic}. As a graded vector space, $A_\Delta^!$ is given by
\[ A_\Delta^! = \Hom_\kk(T(A[1]) \otimes A \otimes T(A[1]), A \otimes A) \]
with structure maps given schematically by
\[ \mu^{r|1|0} =
\begin{tikzpicture}[baseline={([yshift=-.5ex]current bounding box.center)}]
\node (toplabel) at (0,3) {$A$};
\node (botlabel) at (0,0) {$A \otimes A$};
\node [vertex] (mid) at (0,2) {$F$};
\draw [->-=0.6,very thick] (toplabel) to (mid);
\draw [->-] (0.17,1.8) to (0.17,0.2);
\draw [->-] (-0.17,1.8) to (-0.17,0.2);
\end{tikzpicture} \quad \rightsquigarrow \quad
\begin{tikzpicture}[baseline={([yshift=-.5ex]current bounding box.center)}]
\node (toplabel) at (0,3) {$A$};
\node (botlabel) at (0,0) {$A \otimes A$};
\node [vertex] (mid) at (0,2) {$F$};
\node [vertex] (mu) at (-0.17,1) {$\mu$};
\draw [->-=0.9,very thick] (toplabel) to (mid);
\draw [->-] (0.17,1.8) to (0.17,0.2);
\draw [->-] (-0.17,1.8) to (mu);
\draw [->-] (mu) to (-0.17, 0.2);
\end{tikzpicture}, \qquad
\mu^{0|1|s} =
\begin{tikzpicture}[baseline={([yshift=-.5ex]current bounding box.center)}]
\node (toplabel) at (0,3) {$A$};
\node (botlabel) at (0,0) {$A \otimes A$};
\node [vertex] (mid) at (0,2) {$F$};
\draw [->-=0.6,very thick] (toplabel) to (mid);
\draw [->-] (0.17,1.8) to (0.17,0.2);
\draw [->-] (-0.17,1.8) to (-0.17,0.2);
\end{tikzpicture} \quad \rightsquigarrow \quad
\begin{tikzpicture}[baseline={([yshift=-.5ex]current bounding box.center)}]
\node (toplabel) at (0,3) {$A$};
\node (botlabel) at (0,0) {$A \otimes A$};
\node [vertex] (mid) at (0,2) {$F$};
\node [vertex] (mu) at (0.17,1) {$\mu$};
\draw [->-=0.9,very thick] (toplabel) to (mid);
\draw [->-] (-0.17,1.8) to (-0.17,0.2);
\draw [->-] (0.17,1.8) to (mu);
\draw [->-] (mu) to (0.17, 0.2);
\end{tikzpicture}
\]
and $\mu^{r|1|s} = 0$ when $r,s > 0$. \footnote{Note we can analogously define the bimodule dual of any bimodule $M$ by substituting $M$ on top instead of $A$.} This is an $A_\infty$-analog of the definition of the bimodule dual $M^! = \Hom_{A^e}(M, A^e)$ over an associative algebra $A$; and can also be defined by this formula using the formalism of $n$-modules over an $A_\infty$-algebra.

Recall that an $A_\infty$-algebra $A$ is \emph{(homologically) smooth} \cite{kontsevich2009notes} if the diagonal bimodule $A_\Delta$ is perfect as a $(A,A)$-bimodule, i.e. if it is quasi-isomorphic to a direct summand of a finite extension of copies of the bimodule $A \otimes A$.

\begin{lemma}\cite[Prop.2.16]{ganatra2013symplectic} \label{lemma:IsoSmooth}
	If $A$ is homologically smooth and $M$ is perfect, then there is a natural quasi-isomorphism
	\[ A_\Delta^! \otimes_{A\mh A} M \xrightarrow{\simeq} \Hom_{A\mh A}(A_\Delta, M) \]
	for any $(A,A)$-bimodule $M$.
\end{lemma}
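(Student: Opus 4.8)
The plan is to deduce the lemma from the standard duality stating that, for a \emph{perfect} object, the bimodule dual computes internal $\Hom$. The only feature of $A_\Delta$ that I will use is that, by homological smoothness of $A$, the diagonal bimodule is perfect, i.e. (as recalled above) it lies in the thick subcategory generated by the free bimodule $A\otimes A$. Accordingly I will prove the more general statement that the comparison map below is a quasi-isomorphism for \emph{every} perfect bimodule $P$ in place of $A_\Delta$, and then specialise to $P = A_\Delta$.

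First I would write the comparison map as a natural transformation. It is convenient to view $(A,A)$-bimodules as modules over the enveloping algebra $A^e = A\otimes A^{op}$, so that $\Hom_{A\mh A} = \Hom_{A^e}$ and the two-sided tensor $\otimes_{A\mh A}$ is $\otimes_{A^e}$; the free bimodule $A\otimes A$ is the rank-one free $A^e$-module. Writing $P^! := \RHom_{A^e}(P, A^e)$, which is exactly the inverse dualizing bimodule $A^!_\Delta$ when $P = A_\Delta$, there is a canonical map
\[ \eta_P \colon P^! \otimes_{A^e} M \longrightarrow \Hom_{A^e}(P, M), \qquad f\otimes m \longmapsto \bigl(\, p \mapsto f(p)\cdot m \,\bigr), \]
where $f(p)\in A^e$ acts on $m\in M$ through the bimodule structure. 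For $P = A_\Delta$ this is precisely the map in the statement, and it is natural in both $P$ (contravariantly) and $M$.

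The heart of the argument is a d\'evissage in the variable $P$. Both assignments $P \mapsto P^!\otimes_{A^e} M$ and $P \mapsto \Hom_{A^e}(P,M)$ are contravariant triangulated functors on the homotopy category of bimodules --- here I use the propositions above, that the two-sided tensor preserves quasi-isomorphisms and that the dg morphism complexes compute derived $\Hom$, so that these functors are well defined and carry exact triangles to exact triangles --- and $\eta$ is a natural transformation between them. I would then let $\cT$ be the full subcategory of those $P$ for which $\eta_P$ is a quasi-isomorphism. On the free module $P = A\otimes A$ both sides reduce to $M$, since $A^e\otimes_{A^e} M \simeq M$ and $\Hom_{A^e}(A^e, M)\simeq M$, and $\eta$ becomes the identity; hence $A\otimes A\in\cT$. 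Because $\eta$ is a natural transformation of triangulated functors, $\cT$ is closed under shifts, cones and retracts, so it contains the entire thick subcategory generated by $A\otimes A$, that is, every perfect bimodule. Since $A$ is homologically smooth, $A_\Delta$ is perfect, and therefore $\eta_{A_\Delta}$ is a quasi-isomorphism, which is the assertion.

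The step I expect to be the main obstacle is the homotopical bookkeeping needed to make $\eta$ available on the nose. One must check that the explicit chain models of the excerpt --- the complex $A^!_\Delta = \Hom_\kk(T(A[1])\otimes A\otimes T(A[1]), A\otimes A)$ together with the two-sided tensor product $-\otimes_{A\mh A}-$ --- genuinely compute $\RHom_{A^e}(A_\Delta, A^e)$ and its derived tensor with $M$, so that $\eta$ is defined at the cochain level and its compatibility with cones is visible rather than merely homotopical. Concretely I would resolve $A_\Delta$ by the reduced bar bimodule $A\otimes T(A[1])\otimes A$, under which $\Hom_{A^e}(A_\Delta, N)\simeq \Hom_\kk(T(A[1]), N)$ functorially in $N$; in this model both the map $\eta$ and the base-case identification $A^e\otimes_{A^e} M\simeq M$ are explicit, and the signs are exactly those governed by the graphical calculus of the previous subsection. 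Once these identifications are in place, the d\'evissage applies verbatim.
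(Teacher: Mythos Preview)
Your proof is correct and is the standard d\'evissage argument. The paper does not give its own proof of this lemma; it simply cites \cite{ganatra2013symplectic}, and the argument there is essentially the one you outline: reduce to the free bimodule $A^e$ using that $A_\Delta$ is perfect (by smoothness), after checking that the chain-level models compute the derived functors. One small remark: your argument nowhere uses the hypothesis that $M$ is perfect, only that $A_\Delta$ is, and indeed that hypothesis is superfluous for this direction of the statement.
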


For $M = A_\Delta$, we pick an inverse of the quasi-isomorphism above to obtain a distinguished element $\ev^!$ in the graded vector space $A_\Delta^! \otimes_{A\mh A} A_\Delta$. We then have a map $\Hom_{A\mh A}(A_\Delta, M) \to A_\Delta^! \otimes_{A\mh A} A_\Delta$ given graphically by:
\[
\begin{tikzpicture}[baseline={([yshift=-.5ex]current bounding box.center)}]
\node (toplabel) at (0,3.2) {$A_\Delta^!$};
\node (botlabel) at (0,0) {$M$};
\node [vertex] (mid) at (0,1.5) {$F$};
\draw [->-=0.6,very thick] (toplabel) to (mid);
\draw [->-=0.6, very thick] (mid) to (botlabel);
\end{tikzpicture} \qquad \rightsquigarrow \qquad
\begin{tikzpicture}[baseline={([yshift=-.5ex]current bounding box.center)}]
\node (toplabel) at (0,3.2) {$A_\Delta$};
\node (botlabel) at (0,0) {$M$};
\node [vertex] (mid) at (0,1.5) {$\tilde F$};
\draw [->-=0.6,very thick] (mid) to (toplabel);
\draw [->-=0.6, very thick] (mid) to (botlabel);
\end{tikzpicture} \quad := \quad
\begin{tikzpicture}[baseline={([yshift=-.5ex]current bounding box.center)}]
\node (toplabel) at (0,3.2) {$A_\Delta$};
\node (botlabel) at (0,0) {$M$};
\node [vertex] (top) at (0,2.1) {$\ev^!$};
\node [vertex] (bot) at (0,1) {$F$};
\draw [->-=0.9,very thick] (top) to (toplabel);
\draw [->-=0.9, very thick] (top) to (bot);
\draw [->-=0.9, very thick] (bot) to (botlabel);
\end{tikzpicture}
\]
which gives a quasi-inverse to the map in the Lemma above.

\subsubsection{Compact and smooth Calabi-Yau structures}\label{sec:CYstructures}
We now recall two notions of Calabi-Yau structures on $A_\infty$-categories.

\begin{definition}
	If $\cA$ is compact, a dual cycle $\theta: C_*(\cA) \to \kk[-d]$ is a \emph{weak compact Calabi-Yau structure} of dimension $d$ on $\cA$ if it maps to an isomorphism of bimodules $\cA_\Delta \to \cA_\Delta^\vee$ under the map of \cref{lemma:IsoCompact}.

	If $\cA$ is smooth, a cycle $\omega: \kk[d] \to C_*(\cA)$ is a \emph{weak smooth Calabi-Yau structure} of dimension $d$ on $\cA$ if it maps to an isomorphism of bimodules $\cA_\Delta^! \to \cA_\Delta$ under the inverse to the map in \cref{lemma:IsoSmooth}.
\end{definition}
See e.g. \cite{ginzburg2006calabiyau,kontsevich2009notes,brav2019relative,ganatra2019cyclic}.

There is a canonical $S^1$-action on the Hochschild complex, whose homotopy fixed points are modeled by the negative cyclic complex $CC^-_*(\cA) = (C_*(\cA)[[u]], b+uB)$ and whose homotopy orbits are modeled by the (positive) cyclic complex $CC_*(\cA) = (C_*(A)[u,u^{-1}]/\kk[u], b+uB)$; there are canonical maps \[ CC^-_*(\cA) \to C_*(\cA), \qquad C_*(\cA) \to CC_*(\cA) \]

\begin{definition}\label{def:strongCYstructure}
	A \emph{(strong) compact Calabi-Yau structure} is a lift of a weak compact Calabi-Yau structure to a dual class in (positive) cyclic homology
	\[ CC_*(\cA) \to \kk[-d], \]
	and a \emph{(strong) smooth Calabi-Yau structure} is a lift of a weak smooth Calabi-Yau structure to a class in negative cyclic homology
	\[ \kk[d] \to CC^-_*(\cA). \]
\end{definition}

\subsection{Cyclic A-infinity structures}\label{sec:cyclicAinfty}
There is another notion of Calabi-Yau structure on an $A_\infty$-category, which is closely related with the definition of compact Calabi-Yau structure given above; this was defined in \cite{kontsevich2009notes} under the name of $A_\infty$-algebra/category with nondegenerate scalar product.

\begin{definition}\label{def:cyclicCY}
	A \emph{cyclic $A_\infty$-structure} of degree $d$ on an $A_\infty$-category $\cA$ is a collection of (chain-level) nondegenerate $\kk$-linear pairings
	\[ \langle -,-\rangle: \cA(X,Y) \otimes \cA(Y,X) \to \kk[-d] \]
	for any objects $X,Y$ of $\cA$, such that
	\[ \langle \mu^n(a_1,\dots,a_n), a_{n+1} \rangle =  (-1)^{\bar a_1 + 1} \langle a_1, \mu^n(a_2,\dots,a_n,a_0,\dots,a_{n+1}) \rangle \]
	for any collection of objects $X_1,\dots,X_n, X_{n+1} = X_0$ and morphisms $a_i \in \hom(X_i,X_{i+1})$.
\end{definition}
Note that nondegeneracy of the pairing implies that $A$ is finite-dimensional. A cyclic $A_\infty$-structure should be seen a strictification of a compact CY structure, in the sense that given any compact CY structure on a compact $A_\infty$-algebra, one can find a quasi-isomorphic finite-dimensional cyclic $A_\infty$-algebra; we discuss more about this equivalence in \cref{sec:ncLagrangian}, see also \cite{ganatra2019cyclic}.

\subsubsection{The necklace bracket for cyclic A-infinity algebras}
Recall that the data of an $A_\infty$-structure $\mu$ on $\cA$ is given by a solution to a Maurer-Cartan equation on the space $C^*(\cA)$ of its Hochschild cochains. We now explain an analogous description of cyclic $A_\infty$-structures.

Recall that the space of Hochschild chains of $\cA$ is defined as $C_*(\cA) = \prod_{s \ge 1} C_*(\cA)^s$, where
\[ C_*(\cA)^s = \prod_{X_0,\dots, X_s \in \Ob(\cA)} \cA(X_0,X_1)[1] \otimes \dots \otimes \cA(X_{s-1},X_s)[1] \otimes \cA(X_s,X_0)[1] [-1] \]
This space has a $\ZZ_s$ action rotating the factors of $\cA(\dots)[1]$ with a Koszul sign; let us denote by $C^{cyc}_*(\cA)^s = \left( C_*(\cA)^s \right)^{\ZZ_s}$, and $C^{cyc}_*(\cA) = \bigoplus_{s \ge 1} CC_*(\cA)^s$.

\begin{definition}
	Let $\cA$ have a pairing $\langle-,-\rangle$ of degree $d$. The \emph{necklace bracket} of $(\cA,\langle-,-\rangle)$ is the map
	\[ [-,-]_\nec: C^{cyc}_*(\cA)[1] \otimes C^{cyc}_*(\cA)[1] \to C^{cyc}_*(\cA)[1] \]
	defined by summing up over pairings, using $<,>$, of two cyclic words in elements of $A$, in all possible ways. Explicitly,
	\begin{align*} 
	[(a_0 &\otimes \dots \otimes a_r)_\mathrm{cyc}, (b_0 \otimes \dots \otimes b_s)_\mathrm{cyc}]_\nec = \\
	&\sum_{i,j} (-1)^\# \langle a_i,b_j \rangle (a_0 \otimes \dots\otimes a_{i-1},b_{j+1},\otimes \dots \otimes b_s \otimes b_0 \otimes \dots \otimes b_{j-1} \otimes a_{i+1} \otimes \dots \otimes a_{r})_\mathrm{cyc},
	\end{align*}
	where $(-)_\mathrm{cyc}$ denotes we take the sum over all cyclic permutations, and $(-1)^\#$ is the Koszul sign associated with the permutation in each term.
\end{definition}

We then have the following characterization of a cyclic $A_\infty$-structure.
\begin{lemma}
	If $\langle -,-\rangle$ is non-degenerate, there is an equivalence between the data of a cyclic $A_\infty$-structure on $\cA$ and the data of a solution $\omega \in C^{cyc}_*(\cA)$ of homogeneous degree $2$ of the Maurer-Cartan equation $[\omega,\omega]_\nec = 0$.
\end{lemma}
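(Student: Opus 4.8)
The plan is to use the nondegenerate pairing to set up an explicit dictionary between the structure maps $\mu^n$ and a single cyclic chain $\omega=\sum_n\omega_n$, and then to verify two things: that under this dictionary the cyclic symmetry condition of \cref{def:cyclicCY} is exactly the condition that each $\omega_n$ is $\ZZ$-invariant (so that $\omega$ lands in $CC_*(\cA)$ rather than merely in $C_*(\cA)$), and that the $A_\infty$-relation $\mu\circ\mu=0$ translates into the Maurer--Cartan equation $[\omega,\omega]_\nec=0$. Since the pairing is nondegenerate, $\cA$ is finite-dimensional, so for each pair $X,Y$ there is an inverse coform (Casimir) $c_{XY}\in\cA(X,Y)\otimes\cA(Y,X)$ dual to $\langle-,-\rangle$, and raising and lowering indices against $\langle-,-\rangle$ are mutually inverse operations; this is what makes the dictionary a bijection.

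Concretely, I would define $\omega_n$ to be the cyclic chain corresponding, under the pairing-induced identification of cyclic multilinear functionals with cyclic chains, to the functional $(a_1,\dots,a_{n+1})\mapsto\langle\mu^n(a_1,\dots,a_n),a_{n+1}\rangle$ on the $n+1$ cyclic factors $\cA(X_0,X_1)\otimes\cdots\otimes\cA(X_n,X_0)$. Conversely, given $\omega\in CC_*(\cA)$, one recovers each $\mu^n$ by contracting all but one adjacent pair of slots against $\langle-,-\rangle$. A degree count then has to be carried out: the degree $2-n$ of $\mu^n$, the shifts $[1]$ on each of the $n+1$ factors together with the overall $[-1]$ in the definition of $C_*(\cA)^s$, and the degree $d$ of the pairing conspire so that every $\omega_n$ has one and the same total degree, namely $2$ in the grading of $CC_*(\cA)$; equivalently $\omega$ is degree $1$ in the shifted Lie algebra $CC_*(\cA)[1]$, as required for a Maurer--Cartan element.

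The two essentially tautological steps are the symmetry/invariance correspondence and the bijectivity. The defining identity $\langle\mu^n(a_1,\dots,a_n),a_{n+1}\rangle=(-1)^{\bar a_1+1}\langle a_1,\mu^n(a_2,\dots,a_{n+1})\rangle$ is literally the statement that the functional above is invariant under the cyclic generator with the prescribed Koszul sign; transporting it through the pairing identifications shows $\omega_n\in CC_*(\cA)^n$, and conversely any element of $CC_*(\cA)$ produces maps $\mu^n$ that automatically satisfy cyclic symmetry. Bijectivity is then immediate because raising and lowering indices are inverse to one another. The subtlety to watch is purely one of signs introduced by these shift and raising/lowering conventions.

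The main work, and the expected obstacle, is the quadratic matching $\mu\circ\mu=0\Leftrightarrow[\omega,\omega]_\nec=0$. The necklace bracket $[\omega,\omega]_\nec$ sums, over ordered pairs of cyclic words coming from $\mu^r$ and $\mu^s$ and over all choices of a slot $a_i$ in the first and $b_j$ in the second, the term $\langle a_i,b_j\rangle$ times the spliced cyclic word; I would show that contracting one output slot of one operation (raised by the Casimir) against one input slot of the other realizes precisely a single nested composition $\mu^{r+1+t}(\dots,\mu^s(\dots),\dots)$, so that the full necklace sum reproduces, term for term and including the $\mu^1$-terms, the left-hand side of the $A_\infty$-relations after cyclic symmetrization. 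The hard part will be checking that the signs generated by the necklace bracket (its intrinsic Koszul signs, the signs from the $(-)_\cyc$ summation, and the raising/lowering signs) agree with the $(-1)^\#$ in the $A_\infty$-relations, and that the cyclic symmetrization matches with no over- or under-counting; here the graphical calculus of \cref{sec:graphical}, in which both $\mu\circ\mu$ and a necklace gluing are drawn as the same disc/annulus diagram, is the most efficient device, reducing the sign check to comparing two orderings of one underlying graph. Once this identity is in place, nondegeneracy (a fully contracted expression vanishes if and only if the uncontracted operation does) completes the equivalence.
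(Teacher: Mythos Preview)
Your proposal is correct and follows essentially the same approach as the paper. The paper's proof is much terser: it simply draws the diagram expressing $\mu^s$ as $\omega^s$ with the pairing applied to $s$ of its $s+1$ outgoing legs, and then asserts in one sentence that the $A_\infty$-relations and cyclicity follow from nondegeneracy and the cyclic symmetry of $\omega$; your outline spells out exactly those steps (the degree count, the cyclic-invariance correspondence, and the matching of $\mu\circ\mu=0$ with $[\omega,\omega]_\nec=0$ via the graphical calculus) that the paper leaves implicit.
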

\begin{proof}
	The equivalence between the $A_\infty$-structure maps and the solution $\omega$ is given by dualizing the inputs:
	\[
	\begin{tikzpicture}[baseline={([yshift=-.5ex]current bounding box.center)}]
	\node [vertex] (mu) at (0,-0.2) {$\mu^s$};
	\draw [->-] (mu) to (0,-1);
	\draw [->-] (-1.5,1.6) to (mu);
	\draw [->-] (1.5,1.6) to (mu);
	\node at (0,0.7) {$\dots$};
	\end{tikzpicture} \quad = \quad
	\begin{tikzpicture}[baseline={([yshift=-.5ex]current bounding box.center)}]
	\node [vertex] (omega) at (0,-0.2) {$\omega^s$};
	\node [vertex] (p1) at (-0.8,0.7) {$\langle,\rangle$};
	\node [vertex] (pn) at (0.8,0.7) {$\langle,\rangle$};
	\draw [->-] (omega) to (0,-1);
	\draw [->-] (-1.5,1.6) to (p1);
	\draw [->-=0.8] (omega) to (p1);
	\draw [->-] (1.5,1.6) to (pn);
	\draw [->-=0.8] (omega) to (pn);
	\node at (0,0.7) {$\dots$};
	\end{tikzpicture}
	\]
	i.e., using the pairing on the first $s$ of the $s+1$ outgoing legs of $\omega^s$. The fact that such maps $\mu^s$ satisfy the $A_\infty$-relations and are compatible with the pairing follows from the non-degeneracy of the pairing and the cyclic symmetry of $\omega$.
\end{proof}

\section{Pre-Calabi-Yau algebras and categories}
We now present the main definition of this paper. A pre-CY structure on an $A_\infty$-algebra, or more generally on an $A_\infty$-category, is an extension of its $A_\infty$-structure given by a solution to a Maurer-Cartan equation on a certain dg Lie algebra containing the Hochschild cochains as a subcomplex. This equation is defined by a necklace bracket, generalizing the case of cyclic $A_\infty$-structures we saw in \cref{sec:cyclicAinfty} to algebras/categories that are not finite-dimensional.

\subsection{Higher Hochschild cochains}\label{sec:higherHochschildCohomology}
For any integer $\ell \ge 1$, and any $\ell$-tuple of non-negative integers $n_1,\dots,n_\ell$, we denote by $\{X^i_{j_i}\}$ a collection of $\sum (n_i+1)$ objects of $\cA$, indexed by $i= 1,\dots,\ell$ and $j_i = 0,\dots, n_i$.

We then sum over all such collections to define a graded vector space
\[ \hspace{-1.5cm} C^*_\ell(\cA;n_1,\dots,n_\ell) := \prod_{\{X^i_{j_i}\}} \Hom\left( \bigotimes_{i=1}^\ell (\cA(X^i_0, X^i_1)[1] \otimes \dots \otimes \cA(X^i_{n_i - 1}, X^i_{n_i})[1]), \ \bigotimes_{i=1}^\ell \cA(X^i_0,X^{i-1}_{n_i}) \right)  \]
Note the indices in the target of the $\Hom$ above. To make sense of this when $i=0$, we set $X^{0}_j = X^{\ell}_j$ and $n_0=n_\ell$.

\begin{example}
	Let us give an example to demonstrate how to organize the objects $X^i_j$. Consider the case $\ell = 2$ with $n_1=2, n_2 = 1$. We then pick five objects $X^1_0,X^1_1,X^1_2,X^2_0,X^2_1$, and organize them around the circle in clockwise fashion.
	
	We can depict an element $\phi \in C^*_{(3)}$ as a vertex inside of this circle which takes in elements of $\cA(X^i_0,X^i_{1}),\dots$ and then outputs an element of $\cA(X^2_0,X^1_2) \otimes \cA(X^1_0,X^2_1)$.
	\[
	\begin{tikzpicture}[baseline={([yshift=-.5ex]current bounding box.center)}]
	\node [vertex] (a) at (0,0) {$\phi$};
	\draw [->-] (a) to (0,1);
	\draw [->-] (a) to (0,-1);
	\draw [->-] (-1,0.6) to (a);
	\draw [->-] (-1,-0.6) to (a);
	\draw [->-] (1,0) to (a);
	\node at (-0.4,-0.7) {$X^1_0$};
	\node at (-0.6,0) {$X^1_1$};
	\node at (-0.4,+0.7) {$X^1_2$};
	\node at (0.5,0.5) {$X^2_0$};
	\node at (0.5,-0.5) {$X^2_1$};
	\node at (0,-1.3) {$\cA(X^1_0,X^2_1)$};
	\node at (0,1.3) {$\cA(X^2_0,X^1_2)$};
	\node at (-2,-0.6) {$\cA(X^1_0,X^1_1)$};
	\node at (-2,0.6) {$\cA(X^1_1,X^1_2)$};
	\node at (2,0) {$\cA(X^2_0,X^2_1)$};
	\end{tikzpicture}\]
	The objects $X^i_j$ label regions around the vertex, and each arrow with an $X$ region to its right and a $Y$ region to its left carries an element of $\cA(X,Y)$.
\end{example}

\begin{definition}\label{def:HigherHochschildChains}
	For any integer $\ell \ge 1$, the space of $\ell$-higher Hochschild cochains is the graded vector space
	\[ C^*_{(\ell)}(\cA) := \prod_{n_1,\dots,n_\ell} C^*_\ell(\cA;n_1,\dots,n_\ell)  \]
\end{definition}
Note that when $\ell = 1$ this is the usual space of Hochschild cochains, and when $\cA = A$ is some $A_\infty$-algebra we have
\[ C_{(\ell)}^*(A) = \prod_{n_1,\dots, n_k} \Hom_\kk\left(A[1]^{\otimes n_1} \otimes \dots \otimes A[1]^{\otimes n_k}, A^{\otimes k}\right). \]

\begin{remark}
	There is another concept of generalized Hochschild invariants which sometimes appears with this same name, described by \cite{pirashvili2000hodge} under the name of `higher order Hochschild homology'. This is a distinct notion from our definitions.
\end{remark}

\subsubsection{Graphical representation of higher Hochschild cochains}\label{sec:graphicalhigher}
We now extend the graphical calculus for (ordinary) Hochschild cochains to include higher Hochschild cochains; this will allow us to evaluate any directed tree with $k$ outgoing arrows, and higher Hochschild cochains in each vertex, into an element of $C^*_(k)(\cA)$. 

In analogy with usual Hochschild cochains, we visualize an element $\phi \in C^*_{(k)}(A)$ as a vertex drawn on a plane, with $k$ arrows coming out of the vertex. As an example, a cochain $\phi \in C^*_{(3)}(A)$ is drawn as a vertex:
\[
\begin{tikzpicture}[baseline={([yshift=-.5ex]current bounding box.center)}]
\node [vertex] (a) at (0,0) {$\phi$};
\draw [->-] (a) to (1,0.6);
\draw [-w-] (a) to (0,-1);
\draw [->-] (a) to (-1,0.6);
\end{tikzpicture}\]
with the white arrow marking the first factor of the (output) tensor product, and the other factors are read from the outgoing arrows in \emph{clockwise} direction. The vertex above represents a collection of maps
\[ \phi^{(n_1,n_2,n_3)} : A[1]^{\otimes n_2} \otimes A[1]^{\otimes n_2} \otimes A[1]^{\otimes n_3} \to A[1] \otimes A[1] \otimes A[1] \]
(note the shifts on the outputs) for all choices of $n_i \ge 0$. We visualize the input factors as arrows incoming into the vertex, starting next to the white arrow, and the outputs as arrows going out of the vertex, starting on the white arrow, all clockwise. For example, if the cochain $\phi$ evaluates
\[ \phi^{(2,1,3)}(a^1_1,a^1_2;\;  a^2_1;\; a^3_1, a^3_2, a^3_3) = (b_1,b_2,b_3),  \]
we visualize this operation as
\[\begin{tikzpicture}
\node [vertex] (phi) at (0,0) {$\phi$};
\node (a11) at (-.7,-1) {$a^1_1$};
\node (a12) at (-1.2,-0) {$a^1_2$};
\node (a21) at (-0,1.3) {$a^2_1$};
\node (a31) at (1.4,0.2) {$a^3_1$};
\node (a32) at (1.3,-0.6) {$a^3_2$};
\node (a33) at (0.7,-1) {$a^3_3$};
\node (b1) at (0,-1.5) {$b_1$};
\node (b2) at (-1.5,0.9)  {$b_2$};
\node (b3) at (1.5,0.9) {$b_3$};

\draw [-w-] (phi) to (b1);
\draw [->-] (phi) to (b2);
\draw [->-] (phi) to (b3);
\draw [->, shorten <=-2mm, shorten >=2mm] (a11) to (phi);
\draw [->, shorten <=-2mm, shorten >=2mm] (a12) to (phi);
\draw [->, shorten <=-1mm, shorten >=2mm] (a21) to (phi);
\draw [->, shorten <=-2mm, shorten >=2mm] (a31) to (phi);
\draw [->, shorten <=-1mm, shorten >=2mm] (a32) to (phi);
\draw [->, shorten <=-2mm, shorten >=2mm] (a33) to (phi);
\end{tikzpicture}\]
This notation extends by linear combination to inputs/outputs that are general tensors, and also to $A_\infty$-categories; in that case we must label the regions between the arrows by objects of the category.

We now describe how to evaluate a directed tree diagram composed of vertices like the one above, extending the procedure from \cref{sec:graphical}. For example, for higher cochains
\[ \phi \in C^*_{(3)}(A), \quad \psi \in C^*_{(2)}(A), \quad \lambda \in C^*_{(3)}(A) \]
we would like to interpret a diagram such as
\[\begin{tikzpicture}[baseline={([yshift=-.5ex]current bounding box.center)}]
\draw (0,0) circle (2);
\node [vertex] (phi) at (-0.9,0.8) {$\phi$};
\node [vertex] (psi) at (0,0) {$\psi$};
\node [vertex] (lambda) at (0,-1) {$\lambda$};
\draw  [->-] (phi) to (-2.2,-0.1);
\draw [->-] (phi) to (-0.6,+2.2);
\draw [->-] (phi) to (psi);
\draw [->-] (psi) to (+1.7,1.55);
\draw [->-] (psi) to (lambda);
\draw [->-] (lambda) to (0,-2.2);
\draw [->-] (lambda) to (-1.7,-1.55);
\draw [->-] (lambda) to (2.4,0.1);
\draw [-w-=1] (lambda) to (2.4,0.1);
\end{tikzpicture}\]
as giving an element in $C^*_{(6)}(A)$. The white arrow to the right denotes that we want to read the output of this diagram starting clockwise from there.

We again must order our diagram, by choosing an ordering of the vertices compatible with the partial order given by the arrows. For example, on the diagram above we can choose the ordering $(\lambda \psi \phi)$ (in ascending order as before).

We interpret the diagram above as giving maps
\[ A[1]^{\otimes n_1} \otimes \dots \otimes A[1]^{\otimes n_6} \to A^{\otimes 6} \]
as follows:
\begin{enumerate}
	\item We draw incoming lines around the circle for each of the factors in the source, starting right after the marked white arrow tip, then connect them to the internal vertices of the tree in all possible ways \emph{without crossing}. For example, here is one such diagram:
\[\begin{tikzpicture}[baseline={([yshift=-.5ex]current bounding box.center)}]
\draw (0,0) circle (2);
\node [vertex] (phi) at (-0.9,0.8) {$\phi$};
\node [vertex] (psi) at (0,0) {$\psi$};
\node [vertex] (lambda) at (0,-1) {$\lambda$};

\draw  [->-] (phi) to (-2.2,-0.1);
\draw [->-] (phi) to (-0.6,+2.2);
\draw [->-] (phi) to (psi);
\draw [->-] (psi) to (+1.7,1.55);
\draw [->-] (psi) to (lambda);
\draw [->-] (lambda) to (0,-2.2);
\draw [->-] (lambda) to (-1.7,-1.55);
\draw [->-] (lambda) to (2.4,0.1);
\draw [-w-=1] (lambda) to (2.4,0.1);

\draw [->-,bend left=20] (0,2) to (phi);
\draw [->-,bend left=20] (0.5,1.945) to (phi);
\draw [->-,bend right=20] (1,1.75) to (psi);
\draw [->-,bend left=20] (1.88,0.7) to (psi);
\draw [->-,bend left=10] (1.88,-0.7) to (lambda);
\draw [->-,bend left=40] (1.05,-1.7) to (lambda);
\draw [->-,bend right=20] (-1.05,-1.7) to (lambda);
\draw [->-,bend left=10] (-1.75,-1) to (lambda);
\draw [->-] (-1.95,-0.5) to (psi);
\draw [->-,bend right=30] (-1.5,1.33) to (phi);
\node at (2.15,-0.7) {$a^1_1$};
\node at (1.2,-1.9) {$a^1_2$};
\node at (-1.2,-2) {$a^2_1$};
\node at (-2.05,-1.1) {$a^3_1$};
\node at (-2.15,-0.5) {$a^3_2$};
\node at (-1.7,1.55) {$a^4_1$};
\node at (0,2.3) {$a^5_1$};
\node at (0.6,2.2) {$a^5_2$};
\node at (1.2,2) {$a^5_3$};
\node at (2.15,0.8) {$a^6_1$};
\end{tikzpicture}\]

\item We then write the vertices in their order next to the inputs:
\[ (\lambda\psi\phi)(a^1_1 a^1_2 \dots a^6_1)  \]

\item Permute the entries with Koszul signs until the inputs of the last vertex are next to it, in order:
\[ (-1)^\# (\lambda\psi\phi)(a^4_1 a^5_1 a^5_2 a^1_1 \dots a^6_1) \]
for the appropriate sign $\#$.

\item Evaluate the vertex and then write its outputs, also in order. Note that in general the output is not a simple tensor, but instead a combination, such as
\[ \phi(\varnothing; a^4_1; a^5_1, a^5_2) = b_1 \otimes b_2 \otimes b_3 + b'_1 \otimes b'_2 \otimes b'_3 \]
so we sum over all terms. So we have
\[ (-1)^\# \left((\lambda\psi)(b_1 b_2 b_3 a^1_1 \dots  a^6_1) + (\lambda\psi)(b'_1 b'_2 b'_3 a^1_1 \dots  a^6_1) + \dots \right) \]

\item Repeat steps (3) and (4) until all vertices are gone.
\end{enumerate}

Note that in the drawing above we marked the rightmost outgoing arrow with a white arrowhead; this indicates the first factor of $A$ in the output $A^{\otimes 6}$. After the steps above we ended up with 6 elements of $A[1]$; so we  permute them with Koszul sign to match the outgoing order, going clockwise and starting from the white arrow, and then applying an overall shift to read the output in $A^{\otimes 6}$.

By construction, the description above matches what we already established for ordinary Hochschild cochains, when all vertices have exactly one outgoing arrow.

\subsubsection{Cyclic actions}
We now describe cyclic actions on spaces of higher Hochschild cochains. Recall that an element $\phi \in C^*_{(k)}(\cA)$ is a collection of maps given by
\[ \phi(a^1_1, \dots, a^1_{n_1}; \dots; a^k_1,\dots, a^k_{n_k}) = b_1 \otimes \dots b_k + b'_1 \otimes \dots b'_k + \dots \]
We now choose an integer $d$ and write $t$ for the generator $1$ in the cyclic group $\ZZ_k$ of order $k$.
\begin{definition}\label{def:actionDimensionD}
	The action of dimension $d$ of the cyclic group $\ZZ_k$ on $C^*_{(k)}$ is given by
	\[ \hspace{-1cm} (t \phi)(a^2,\dots, a^2_{n_2}; \dots;a^k_1,\dots, a^k_{n_k}; a^1_1, \dots, a^1_{n_1}) = (-1)^{\#_a} (-1)^{(d-1)(k-1)}\left((-1)^{\#_b} b_2 \otimes b_3 \otimes b_3 + \dots \right) \]
	where $\#_a = (\bar a^1_1 + \dots + \bar a^1_{n_1})(\bar a^2_1 +\dots + \bar a^k_{n_k})$ and $\#_b = \bar b_1(\bar b_2 + \dots + \bar b_k)$ are the Koszul signs for permuting the factors of $a$ and $b$, seen as elements of $\cA[1]$. That is, the action of dimension $d$ has an extra sign of $(d-1)(k-1)$ compared to the usual Koszul sign.
\end{definition}

We denote the action of dimension $d$ by $(\ZZ_k,d)$. It only depends on the parity of $d$.
\begin{definition}\label{def:cyclicCochains}
	The space of \emph{cyclic $k$-cochains} of dimension $d$ on $\cA$ is defined as
	\[ C_{(k, d)}^*(\cA) := (C_{(k)}^*(\cA))^{(\ZZ_k,d)}[(d-2)(k-1)] \]
	that is, the $(d-2)(k-1)$ shift of the higher Hochschild cochains that are invariant under the action of dimension $d$.

	We assemble all these spaces into the \emph{tangent complex}
	\[ C_{[d]}^*(\cA) := \prod_{k \ge 1} C_{(k, d)}^*(\cA) \]
\end{definition}

Now that we have all these different complexes, related by shifts, let us define some notation that will simplify the calculation of signs later.
\begin{definition}
	Given a cochain $\phi \in C_{(k, d)}^*(\cA)$, we denote by
	\begin{itemize}
		\item $\deg(\phi)$ its degree in $C^*_{(k)}(\cA)$, i.e. as a map from copies of $\cA[1]$ to copies of $\cA$.
		\item $\bar \phi$ its degree as a map from copies of $\cA[1]$ to copies of $\cA[1]$.
		\item $|\phi|$ its degree in $C_{(k, d)}^*(\cA)$, or equally, in $C_{[d]}(\cA)$.
	\end{itemize}
	These degrees are related by $\deg(\phi) = \bar \phi + k = |\phi| + (d-2)(k-1)$.
\end{definition}

\subsubsection{The necklace bracket}
We now use the graphical notation to define some operations on higher cyclic cochains. Let $\phi \in C^*_{(k,d)}(\cA), \psi \in C^*_{(\ell,d)}(\cA)$ be two higher cyclic cochains of dimension $d$ on $\cA$.

\begin{definition}\label{def:NecklaceProduct}
	The \emph{necklace product} $\phi \circnec \psi$ is the element of $C^*_{(k+\ell-1)}(\cA)$ given by the following expression:
	\[\hspace{-1cm} \phi \circnec \psi = \sum_{n=1}^{k} (-1)^{r_n} \begin{tikzpicture}[baseline={([yshift=-.5ex]current bounding box.center)}]
	\draw (0,0) circle (1.5);
	\node [vertex] (psi) at (0,0.6) {$\psi$};
	\node [vertex] (phi) at (0,-0.5) {$\phi$};
	\draw [-w-=1] (phi) to (0,-1.8);
	\draw [-w-] (phi) to (0,-1.5);
	\node at (0,-2) {$1$};
	\draw [->-] (phi) to (-1.4,-1);
	\node at (-1.5,-1.1) {$2$};
	\draw [->-] (phi) to (-1.6,0.5);
	\node at (-1.8,0.55) {$n$};
	\node at (-0.8,-0.3) {$\vdots$};
	\draw [->-] (psi) to (-1.1,1.1);
	\node at (-1.6,1.2) {$n+1$};
	\node at (0,1.1) {$\dots$};
	\draw [->-] (psi) to (1.1,1.1);
	\node at (2,1.2) {$n+\ell-1$};
	\draw [->-] (phi) to (1.4,-1);
	\node at (2.2,-1.1) {$k+\ell-1$};
	\node at (0.8,-0.3) {$\vdots$};
	\draw [->-] (phi) to (1.6,0.5);
	\node at (2.1,.55) {$n+\ell$};
	\draw [-w-] (psi) to (phi);
	\end{tikzpicture} + \sum_{m=1}^{k-1} (-1)^{s_n}
	\begin{tikzpicture}[baseline={([yshift=-.5ex]current bounding box.center)}]
	\draw (0,0) circle (1.5);
	\node [vertex] (phi) at (0,0.6) {$\phi$};
	\node [vertex] (psi) at (0,-0.5) {$\psi$};
	\draw [-w-=1] (psi) to (0,-1.8);
	\draw [-w-] (psi) to (0,-1.5);
	\node at (0,-2) {$1$};
	\draw [->-] (psi) to (-1.4,-1);
	\node at (-1.5,-1.1) {$2$};
	\draw [->-] (psi) to (-1.6,0.5);
	\node at (-1.8,0.55) {$m$};
	\node at (-0.8,-0.3) {$\vdots$};
	\draw [-w-] (phi) to (-1.1,1.1);
	\node at (-1.6,1.2) {$m+1$};
	\node at (0,1.1) {$\dots$};
	\draw [->-] (phi) to (1.1,1.1);
	\node at (1.6,1.2) {$m+\ell$};
	\draw [->-] (psi) to (1.4,-1);
	\node at (2.2,-1.1) {$k+\ell-1$};
	\node at (0.8,-0.3) {$\vdots$};
	\draw [->-] (psi) to (1.6,0.5);
	\node at (2.5,.55) {$m+\ell+1$};
	\draw [->-] (psi) to (phi);
	\end{tikzpicture}\]
	where the sign exponents are given by
	\[ r_n = (\ell -1)(d-1)(k-n + |\phi|+1), \qquad s_m = (k-1)(d-1)(|\phi|+1) + (\ell-1)(d-1)n \]
\end{definition}
Intuitively, the necklace product $\phi \circ \psi$ is given by placing $\psi$ in all possible ways around $\phi$ (making a `necklace') connecting $\psi \to \phi$, summing over all possibilities with appropriate signs.

\begin{example}
	Let us discuss the cases where $k$ or $\ell = 1$. When $k = 1$, i.e., $\phi$ is an ordinary Hochschild chain, we sum over putting $\phi$ along all the $\ell$ outgoing arrows of $\psi$, always with the same sign $(-1)^{(\ell-1)(d-1)(|\phi|+1)}$.

	When $\ell = 1$, then we get a sum over all ways of putting $\psi$ in the regions around $\phi$, all with sign $+1$. Finally, when $k=\ell=1$ this is just the ordinary Gerstenhaber product.
\end{example}

Note that the necklace product has degree $-1$, since it involves interpreting an output (element in $\cA(X,Y)$) as an input (element in $\cA(X,Y)[1]$).
\begin{definition}\label{def:NecklaceBracket}
	The \emph{necklace bracket} of dimension $d$ is the map
	\[ [-,-]_\nec: C^*_{(k,d)}(\cA)[1] \otimes C^*_{(\ell,d)}(\cA)[1] \to C^*_{(k+\ell-1,d)}(\cA)[1] \]
	defined by $[\phi,\psi]_\nec = \phi \circnec \psi - (-1)^{(|\phi|-1)(|\psi|-1)} \psi \circnec \phi$.
\end{definition}
When restricted to ordinary Hochschild cochains $C^*(\cA)$, this gives the usual notion of Gerstenhaber bracket.

The following proposition could be proven by doing an explicit computation of the signs involved. However, there is a more conceptual way of organizing the signs which we will discuss in \cref{sec:action}, so we postpone the proof until then.
\begin{proposition}\label{prop:necklaceDGLie}
	The map defined above does land in $C^*_{(k+\ell-1,d)}(\cA)[1]$, that is, its image satisfies the appropriate cyclic invariance under $\ZZ_{k +\ell-1}$, and also gives $C^*_{[d]}(\cA)[1]$ the structure of a dg Lie algebra.
\end{proposition}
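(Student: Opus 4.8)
The statement has two independent parts: that $\phi\circnec\psi$ is actually cyclically invariant, so that the bracket lands in $C^*_{(k+\ell-1,d)}(\cA)[1]$, and that the resulting operation satisfies the graded Lie axioms. Graded antisymmetry is immediate from \cref{def:NecklaceBracket}: swapping the roles of $\phi$ and $\psi$ in $[\phi,\psi]_\nec = \phi\circnec\psi - (-1)^{(|\phi|-1)(|\psi|-1)}\psi\circnec\phi$ returns $-(-1)^{(|\phi|-1)(|\psi|-1)}[\psi,\phi]_\nec$. The real content is therefore cyclic invariance and the Jacobi identity, both of which are sign computations; the plan is to organize these signs uniformly rather than checking them term by term, using the orientation conventions that will be set up for the \textsc{prop} action in \cref{sec:action}.

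For cyclic invariance I would argue as follows. Each summand of $\phi\circnec\psi$ is a two-vertex directed tree obtained by gluing one outgoing leg of $\psi$ into one angular region of $\phi$; the resulting configuration is a ``necklace'' whose $k+\ell-1$ remaining outgoing legs inherit a canonical cyclic order. The two families of diagrams, together with the sums over $n$ and $m$, are precisely the sum over all positions of the marked output leg (the white arrow) relative to the gluing edge and the two beads. Applying the generator $t\in\ZZ_{k+\ell-1}$ shifts the marked leg by one; I would check that $t$ carries each summand to the adjacent summand (passing between the two families exactly when the marked leg crosses from a $\phi$-sector to a $\psi$-sector), and that the mismatch between $r_n$ (resp.\ $s_m$) for consecutive terms is cancelled by the dimension-$d$ Koszul sign of \cref{def:actionDimensionD} together with the hypothesis that $\phi$ and $\psi$ are themselves invariant in $C^*_{(k,d)}(\cA)$ and $C^*_{(\ell,d)}(\cA)$. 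This is a finite, if delicate, check; conceptually it expresses that rotating the basepoint of a necklace is a symmetry once one already averages over all basepoints.

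For the Jacobi identity the model is the classical necklace Lie algebra attached to a pairing, which appears in the finite-dimensional case of \cref{sec:cyclicAinfty}. I would expand the graded Jacobiator $[[\phi,\psi]_\nec,\chi]_\nec + (\text{graded cyclic permutations})$ and observe that every resulting term is a three-bead necklace carrying two gluing edges. Sorting these terms by the underlying directed graph, each graph type arises exactly twice, coming from two of the three cyclic summands, and the two occurrences carry opposite induced signs, so they cancel. The subtlety compared with the Gerstenhaber case is that the two families in $\circnec$ make insertions either ``nested'' or ``disjoint'': disjoint double insertions of $\psi$ and $\chi$ into $\phi$ are graded symmetric in $\psi,\chi$ and are killed by antisymmetrization, while the nested insertions are exactly the ones that pair up and cancel. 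Rather than verifying the pre-Lie--type relation underlying this cancellation by hand, the cleanest route is to realize the necklace operations as the endomorphism image of the graph dioperad of \cref{sec:action}: in that formalism the cells are oriented graphs, the bracket is graph insertion, and both cyclic invariance and the Lie relations are forced by the orientation conventions, so the proposition reduces to checking that the endomorphism map matches those conventions with the explicit signs $r_n, s_m$.

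Finally, the differential: once the graded Lie structure is in place, the internal differential induced on $C^*_{[d]}(\cA)$ by the differential $\mu^1$ on $\cA$ (equivalently, bracketing with the length-one part $m_{(1)}$ of a Maurer--Cartan element) is a derivation of $[-,-]_\nec$ and squares to zero by the Jacobi identity, which upgrades the graded Lie algebra to a dg Lie algebra. The main obstacle throughout is the sign cancellation in the Jacobiator and the compatibility of the cyclic signs with the gluing signs; this is exactly what the orientation formalism of \cref{sec:action} is designed to make transparent, which is why I would defer the full computation to that setting.
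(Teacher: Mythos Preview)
Your proposal is correct and ultimately converges on the paper's own approach. The paper explicitly postpones the proof to \cref{sec:signsNecklace}, where it first re-expresses the necklace product as the cyclic symmetrization of a \emph{single} oriented two-vertex ribbon quiver (with orientation $(e_1 e_2 \dots e_{k+\ell-1}\,\phi\, e\, \psi)$); once this is done, cyclic invariance is just the observation that rotating the outgoing edge labels introduces exactly the sign $(k+\ell-2)(d-1)$ required by \cref{def:actionDimensionD}, and Jacobi is dispatched in one line by noting that every three-vertex term arises with the same orientation expression.

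Your direct term-matching sketch (adjacent summands under $t$, three-bead necklaces cancelling in pairs) is a valid alternative and is in fact more explicit than what the paper writes down, but it is exactly the ``explicit computation of the signs'' that the paper declines to perform. Your final paragraph correctly identifies the orientation formalism of \cref{sec:action} as the cleaner route, which is the route the paper takes. One small point: the ``dg'' in the statement is not really addressed in the paper's proof either---at this stage no $A_\infty$-structure is fixed, so the differential is either zero or the terminology is slightly loose; the actual content proved in \cref{sec:signsNecklace} is only cyclic invariance plus graded Jacobi, and your remark about the differential being $[m_{(1)},-]_\nec$ once a Maurer--Cartan element is chosen is correct but belongs to the subsequent discussion.
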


\subsection{Pre-Calabi-Yau structures}
Using the necklace bracket we now come to the main definition of this paper.
\begin{definition}
	A pre-Calabi Yau structure of dimension $d$ on $\cA$ is an element
	\[ m = \sum_{k = 1}^\infty m_{(k)} \in C_{[d]}^*(\cA) \]
	of degree $|m|=2$ (that is, of degree 1 in the dg Lie algebra $C_{[d]}^*(\cA)[1]$) solving the Maurer-Cartan equation $m \circnec m = 0$.
\end{definition}

We will say that $\cA$ is a pre-CY category to mean that there exists a pre-CY structure $m$ as above. Restricting the equation $m \circnec m = 0$ to the component $C^*_{(1)}(\cA)[1] = C^*(\cA)[1]$ gives the equation $m_{(1)} \circ m_{(1)} = 0$, whose solution $\mu = m_{(1)}$ is an $A_\infty$ structure on $\cA$.

For concreteness, let us repeat the definition above in more detail, for the case of an $A_\infty$-algebra $A$. The data of a pre-CY structure of dimension $d$ on $A$ is then a collection of maps
\[ m_{(k)}^{n_1,\dots,n_k}: A[1]^{\otimes n_1} \otimes \dots \otimes A[1]^{\otimes n_k} \to  A^{\otimes k} \]
of degree $d k - d - 2k +4$, cyclically invariant or anti-invariant (depending on the parity of $(k-1)(d-1)$), satisfying
\[ \sum_{k + \ell = n+1} m_{(k)} \circnec m_{(\ell)} = 0 \]
for every $n \ge 1$.

\subsubsection{Unitality}
Recall that if $(\cA,\mu)$ is a nonunital $A_\infty$-category, one can adjoin an unit $1_X$ to the endomorphism space of each object $X$ to get a strictly unital $A_\infty$-category $\cA^+$, which is moreover quasi-equivalent to $\cA$ when $\cA$ is homologically unital \cite{lefevre2003}.

\begin{definition}
	A pre-CY category $(\cA,m=\sum_{k \geq 1} m_{(k)})$ is called strictly unital if and only if, for every object $X$ of $\cA$, there is an element $1_X \in \hom_\cA(X,X)$ such that $m_{(1)}^2(1_X,a)=a$, $(-1)^{\bar b} m_{(1)}^2(b,1_X)=b$ for all $a\in  \cA(X,Y), b\in  \cA(Y,X)$ and any object $Y$, and every higher structure map $m_{(k)}, k \ge 2$, evaluates to zero on any sequence containing $1_X$.
\end{definition}
Note that by the definition above a strictly unital pre-CY category is also a strictly unital $A_\infty$-category. The following lemma follows directly from the definitions.
\begin{proposition}
	Let $(\cA, \mu = m_{(1)})$ be an $A_\infty$-category, not necessarily strictly unital, and $(\cA^+, \mu^+)$ its strictly unital augmentation. Then any pre-CY structure $m = \{m_{(k)}\}$ of dimension $d$ on $\cA$ extends to a pre-CY structure $m^+  = \{m^+_{(k)}\}$ of dimension $d$ on $\cA^+$, given by setting
	\[ m^+_{(1)} = \mu^+, \quad m^+_{(k)}|_\cA = m_{(k)}, \forall k \ge 2 \]
	and $m_{(k \ge 2)}$ evaluated on any sequence containing a unit $1_X$ gives zero.
\end{proposition}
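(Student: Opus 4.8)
The plan is to verify directly that $m^+=\sum_k m^+_{(k)}$ is a well-defined degree-two element of $C^*_{[d]}(\cA^+)$ solving the Maurer--Cartan equation $m^+\circnec m^+=0$. Well-definedness is immediate: each of the two conditions cutting out $m^+_{(k)}$ — agreeing with $m_{(k)}$ on tensors of morphisms of $\cA$, and vanishing as soon as one input factor is a unit $1_X$ — is preserved by the dimension-$d$ cyclic action of \cref{def:actionDimensionD}, since that action merely permutes the input angles and output factors and therefore sends pure-$\cA$ inputs to pure-$\cA$ inputs and unit-containing inputs to unit-containing inputs. As $m_{(k)}$ is $(\ZZ_k,d)$-invariant, so is $m^+_{(k)}$, and its degree is inherited from $m_{(k)}$, with $m^+_{(1)}=\mu^+$ sitting in degree two because $\cA^+$ is an $A_\infty$-category (cf.\ \cref{prop:Unitality}). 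It then remains to check the Maurer--Cartan identity, which I would do by evaluating $\sum_{k+\ell=n+1} m^+_{(k)}\circnec m^+_{(\ell)}$ on an arbitrary tensor of morphisms of $\cA^+$ and splitting into cases according to how many factors are units.

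The two extreme cases are the easy ones. If no factor is a unit, then since every higher map outputs morphisms of $\cA$ and $\mu^+$ creates a unit only from two unit inputs, no unit can ever arise along the internal edge of a two-vertex necklace; hence on such inputs every term of $m^+\circnec m^+$ agrees termwise with the corresponding term of $m\circnec m$, which vanishes because $m$ is pre-CY on $\cA$. If two or more factors are units, the term in fact vanishes identically: to contribute, a unit must enter a length-two component $\mu^{+,2}$ paired with a non-unit, as a unit fed into any $m^+_{(k)}$ with $k\ge2$ kills the term, a unit fed into $\mu^{+,j}$ with $j\neq2$ kills it, and the pairing $\mu^{+,2}(1_X,1_X)=1_X$ only produces a further unit that dies downstream. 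Since for $n\ge2$ a two-vertex necklace has at most one $\mu^+$-vertex, which can usefully absorb at most one unit, any term with $\ge2$ units vanishes; the case $n=1$ is just $\mu^+\circ\mu^+=0$, valid on all inputs.

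This leaves the essential case of exactly one unit $1_X$. Here every term $m^+_{(k)}\circnec m^+_{(\ell)}$ with $k,\ell\ge2$ vanishes, the single unit being forced into one of the two higher vertices, so only $\mu^+\circnec m^+_{(n)}$ and $m^+_{(n)}\circnec\mu^+$ can survive and the identity reduces to the vanishing of this pair on the given input. By strict unitality the surviving contributions are precisely those in which $\mu^{+,2}$ absorbs $1_X$ together with one of its two cyclic neighbours in the disc picture of the necklace — either an adjacent external input (a pre-composition, from $m^+_{(n)}\circnec\mu^+$) or an adjacent output leg of $m^+_{(n)}$ (a post-composition, from $\mu^+\circnec m^+_{(n)}$) — each absorption returning, up to sign, $m^+_{(n)}$ evaluated on the remaining inputs. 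The two absorptions at the two neighbours of $1_X$ produce the same underlying evaluation of $m^+_{(n)}$ with opposite sign and cancel, exactly as in the standard check that a strictly unital $A_\infty$-structure satisfies its relations; the matching of an input-side with an output-side absorption across a corner of the disc is where the cyclic $(\ZZ_n,d)$-invariance of $m^+_{(n)}$ is used.

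I expect the only real obstacle to be the sign bookkeeping in this final cancellation: one must confirm that the necklace-product signs $r_n,s_m$ of \cref{def:NecklaceProduct}, the dimension-$d$ cyclic signs, and the strict-unit signs $\mu^{+,2}(1_X,a)=a$, $\mu^{+,2}(b,1_X)=(-1)^{\bar b}b$ combine so that the two neighbouring absorptions enter with opposite sign. Given the graphical sign conventions this is routine, and it is the single step demanding an explicit computation; all the structural vanishing is forced by the defining property that the higher maps $m^+_{(k\ge2)}$ kill units.
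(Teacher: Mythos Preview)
Your proposal is correct and in fact considerably more detailed than what the paper offers: the paper gives no proof at all, merely asserting that the proposition ``follows directly from the definitions.'' Your case analysis on the number of units appearing among the inputs is exactly the right way to unpack that assertion, and you have correctly isolated the one-unit case as the only place requiring a genuine (if routine) sign computation. One small remark: the appeal to cyclic $(\ZZ_n,d)$-invariance in the corner-matching is not strictly necessary --- both the input-side and output-side absorptions already produce the same evaluation of $m^+_{(n)}$ on the inputs with $1_X$ deleted, and the cancellation is again purely a Koszul/necklace-sign check --- but this does not affect the validity of your argument.
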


\subsubsection{The category of pre-CY algebras}\label{sec:categoryPreCYalgebras}
Recall that $A_\infty$-algebras over $\kk$ form a category, with morphisms $(A,\mu_A) \to (B,\mu_B)$ given by $A_\infty$-functors $f = \{f^n\}$, that is, collections of maps
\[ f^n: A[1]^{\otimes n} \to B[1], \]
(of degree zero) satisfying a compatibility condition with the maps $\mu_A,\mu_B$, which can be expressed graphically as the following equation in $\prod_n \Hom(A[1]^{\otimes n},B[2])$.
\[ \begin{tikzpicture}[baseline={([yshift=-.5ex]current bounding box.center)}]
\draw (0,0) circle (1);
\node [vertex] (mu) at (0,0.55) {$\mu$};
\node [vertex] (f) at (0,-0.4) {$f$};
\draw [->-=0.8] (mu) to (f);
\draw [->-=0.8] (f) to (0,-1);
\end{tikzpicture} \quad = \quad \sum_{i\ge 1} \begin{tikzpicture}[baseline={([yshift=-.5ex]current bounding box.center)}]
\draw (0,0) circle (1.2);
\node [vertex] (f1) at (-0.85,0.2) {$f$};
\node [vertex] (f2) at (-0.25,0.2) {$f$};
\node at (0.3,0.2) {$\dots$};
\node [vertex] (f3) at (0.85,0.2) {$f$};
\node [vertex] (mu) at (0,-0.7) {$\mu$};
\draw [->-=0.8] (f1) to (mu);
\draw [->-=0.8] (f2) to (mu);
\draw [->-=0.8] (f3) to (mu);
\draw [->-=0.8] (mu) to (0,-1.2);
\end{tikzpicture}
\]
where the $i$th term on the right-hand side has $i$ vertices labeled $f$. As for the signs in this equation, there is a single possible ordering for the diagram on the left-hand side, and for the right-hand side, the ordering of the $f$s does not matter since they all have even degree (zero) as maps $A[1]^{\otimes n} \to B[1]$. Likewise, composition of $A_\infty$-morphisms $f$ and $g$ is given graphically by
\[ g \circ f = \sum_{i\ge 1} \begin{tikzpicture}[baseline={([yshift=-.5ex]current bounding box.center)}]
\draw (0,0) circle (1.2);
\node [vertex] (f1) at (-0.85,0.2) {$f$};
\node [vertex] (f2) at (-0.25,0.2) {$f$};
\node at (0.3,0.2) {$\dots$};
\node [vertex] (f3) at (0.85,0.2) {$f$};
\node [vertex] (mu) at (0,-0.7) {$g$};
\draw [->-=0.8] (f1) to (mu);
\draw [->-=0.8] (f2) to (mu);
\draw [->-=0.8] (f3) to (mu);
\draw [->-=0.8] (mu) to (0,-1.2);
\end{tikzpicture} \]
where the ordering of the $f$ also does not matter.

We now extend this description to pre-CY algebras. Let $(A,m), (B,n)$ be two pre-CY algebras of dimension $d$ over $\kk$. Let us consider collections of maps $f = \{f_{(k)}^{n_1,\dots,n_k}\}$, for all $k \ge 1$ and $n_i \ge 1$, where
\[ f_{(k)}^{n_1,\dots,n_k} \in \Hom(A[1]^{\otimes n_1} \otimes \dots A[1]^{\otimes n_k}, B[1]^{\otimes k})^{(\ZZ/k,d)}[(d-3)(k-1)] \]
where as before $(\ZZ_k,d)$ indicates the dimension $d$ action of the cyclic group, as in \cref{def:actionDimensionD}.

A morphism $f: (A,m) \to (B,n)$ is a collection of maps as above that satisfy a compatibility condition with respect to the structure maps $m$ and $n$. To describe this condition, and also the composition of morphisms, we use \emph{planar tree quivers with height structure} in the disc, with outgoing arrows. The \emph{height} of such a planar tree quiver is the number of edges of the longest directed path. We require that given a planar tree quiver of height $N$, its vertices are partitioned into vertices of \emph{height} 1 through $N$, such that an arrow $v_i \to v_j$ means that $\mathrm{height}(v_j) > \mathrm{height}(v_i)$. This is what we call `height structure'. We will not allow any vertex to be a sink.

We are now ready to state the compatibility condition, with one caveat. Unlike in the case of $A_\infty$-morphisms, where the maps $f^n$ are all degree zero (seen as maps $A[1]^{\otimes\dots} \to B[1]^{\otimes\dots}$) and their ordering of application does not matter for signs, for pre-CY morphisms that is not the case when $d$ is even, since the degree $(d-3)(k-1)$ can be odd. It turns out that there is an easy way of correctly specifying the signs for the equation below, but it requires the formalism of orientations on ribbon quivers, that will be discussed later in \cref{sec:orientations}. So we postpone the definition of signs in \cref{def:compatibility,def:composition}, and the proof of \cref{prop:preCYcategory}, to \cref{sec:signsNecklace}.
\begin{definition}\label{def:compatibility}
	The collection of maps $f = \{f_{(k)}^{n_1,\dots,n_k}\}$ is a pre-CY morphism $(A,m) \to (B,n)$ if the following equation is satisfied
	\[ \sum \pm \begin{tikzpicture}[baseline={([yshift=-.5ex]current bounding box.center)}]
		\draw (0,0) circle (2.1);
		\node [vertex] (m) at (0,0) {$m$};
		\node [vertex] (f1) at (-1.3,-0.5) {$f$};
		\node [vertex] (f3) at (0,+1.2) {$f$};
		\node [vertex] (f4) at (1.1,0) {$f$};
		\node [vertex] (f6) at (0,-1.3) {$f$};
		\draw [->-] (m) to (f1);
		\draw [->-] (f1) to (-1.35,-1.6);
		\draw [->-] (f1) to (-2,-0.6);
		\draw [->-] (f1) to (-2,0.6);
		\draw [->-] (m) to (f3);
		\draw [->-] (f3) to (-1,1.85);
		\draw [->-] (f3) to (1,1.85);
		\draw [->-] (m) to (f4);
		\draw [->-] (f4) to (1.85,1);
		\draw [->-] (f4) to (1.85,-1);
		\draw [->-] (m) to (f6);
		\draw [->-] (f6) to (0,-2.1);
	\end{tikzpicture} = \sum \pm \quad
	\begin{tikzpicture}[baseline={([yshift=-.5ex]current bounding box.center)}]
		\draw (0,0) circle (2.1);
		\node [vertex] (n) at (0,0) {$n$};
		\node [vertex] (f1) at (-1,-0.3) {$f$};
		\node [vertex] (f3) at (0,+1.2) {$f$};
		\node [vertex] (f4) at (1,0.5) {$f$};
		\node [vertex] (f6) at (0,-1.3) {$f$};
		\draw [->-] (n) to (-2, 0.6);
		\draw [->-] (f1) to (n);
		\draw [->-] (n) to (-1.4,-1.6);
		\draw [->-] (n) to (1.85,-1);
		\draw [->-] (f1) to (-2,-0.6);
		\draw [->-] (f3) to (n);
		\draw [->-] (f3) to (-1,1.85);
		\draw [->-] (f3) to (1,1.85);
		\draw [->-] (f4) to (n);
		\draw [->-] (f4) to (1.85,1);
		\draw [->-] (f6) to (n);
		\draw [->-] (f6) to (0,-2.1);
	\end{tikzpicture}
	\]
	where the sum on the left-hand side is over all planar tree quivers of height two with \emph{a single vertex at height one}; we place an $m$ on that vertex and $f$s on the vertices at height two. On the right-hand side, the sum is over all planar tree quivers of height two with \emph{a single vertex at height two}; we place an $n$ on that vertex and $f$s on the vertices at height one.
\end{definition}

Using the degrees of $m,n$ and $f$ we calculate that each term in the expression above has degree only depending on the overall number $k$ of outgoing arrows, given by $(d-3)(k-1)+1$. So the condition above is an equation in the space
\[ \prod_{k=1}^\infty \prod_{n_1,\dots n_k \ge 0} \Hom(A[1]^{\otimes n_1} \otimes \dots A[1]^{\otimes n_k}, B[1]^{\otimes k})[(d-3)(k-1) + 1] \]
and it can be seen as many equations, one for each $k \ge 1$. For instance, the two diagrams we depicted for the sake of example are elements of
\[ \prod_{n_1,\dots n_8 \ge 0} \Hom(A[1]^{\otimes n_1} \otimes \dots A[1]^{\otimes n_8}, B[1]^{\otimes 8})[7(d-3) + 1].\]

We now define the composition of two such pre-CY morphisms $f:(A,m) \to (B,n)$ and $g:(B,n) \to (C,p)$.
\begin{definition}\label{def:composition}
	The composition $h = g \circ f$ is given by a sum
	\[ h = \sum \pm \quad \begin{tikzpicture}[baseline={([yshift=-.5ex]current bounding box.center)}]
		\draw (0,0) circle (2);
		\node [vertex] (f1) at (-0.5,-0.2) {$f$};
		\node [vertex] (f2) at (0.5,-0.2) {$f$};
		\node [vertex] (f3) at (1,-1) {$f$};
		\node [vertex] (f4) at (0,1.5) {$f$};
		\node [vertex] (g1) at (0,-1) {$g$};
		\node [vertex] (g2) at (-1.5,0) {$g$};
		\node [vertex] (g3) at (0,0.7) {$g$};
		\node [vertex] (g4) at (1.3,0) {$g$};
		\draw [->-] (f1) to (g1);
		\draw [->-] (f2) to (g1);
		\draw [->-] (f1) to (g2);
		\draw [->-] (f1) to (g3);
		\draw [->-] (f3) to (g4);
		\draw [->-] (f2) to (g4);
		\draw [->-] (f4) to (g3);
		\draw [->-] (g1) to (0,-2);
		\draw [->-] (g2) to (-1.75,-1);
		\draw [->-] (g2) to (-1.75,1);
		\draw [->-] (g3) to (1,1.75);
		\draw [->-] (g3) to (-1,1.75);
		\draw [->-] (g4) to (2,0);
	\end{tikzpicture}\]
	where the sum is over all planar tree quivers of height two where there are no arrows leaving the disc from vertices of height one, which we label with $f$s, just from vertices of height two, which we label with $g$.
\end{definition}

For each diagram appearing in the definition of $h$, we see that the degree of the resulting map is independent of the number of $f$ and $g$ vertices, and only depends on the number of arrows $k$ leaving the disc, being given by $(d-3)(k-1)$, as desired. 
\begin{proposition}\label{prop:preCYcategory}
	For each integer $d$, the compatibility relation for pre-CY morphisms and the composition defined above give a category of pre-CY algebras of dimension $d$.
\end{proposition}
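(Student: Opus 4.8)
The plan is to establish the four things a category requires: that the composition of \cref{def:composition} is well-defined (lands in the correct space, with the correct cyclic symmetry), that it is associative, that identity morphisms exist, and — the essential point — that the composite of two pre-CY morphisms again satisfies the compatibility equation of \cref{def:compatibility}. Throughout, the combinatorial engine is the set of planar tree quivers in the disc, and every sign is to be read off from the orientation formalism of \cref{sec:orientations}; this is exactly why the argument must be carried out in tandem with the sign conventions fixed there, and hence why the proof is deferred to \cref{sec:signsNecklace}. I would begin with well-definedness: the degree of each term of $h = g \circ f$ was already computed to be $(d-3)(k-1)$ in the number $k$ of arrows leaving the disc, independent of the internal combinatorics, so $h$ has components in the correct $\Hom$-spaces. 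For the $\ZZ/k$-(anti)invariance I would note that the planar tree quivers contributing to $h_{(k)}$, with one outgoing leg marked as first, carry a free $\ZZ/k$-action rotating the marked leg; summing over each orbit and comparing with the dimension-$d$ cyclic action of \cref{def:actionDimensionD} shows that $h$ is invariant up to precisely the sign dictated by the parity of $(k-1)(d-1)$. The same observation shows that both sides of the compatibility equation lie in the stated space.

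The conceptual core is to recast the whole structure in a convolution-type framework, mirroring the description of $A_\infty$-morphisms as dg coalgebra maps. Writing $\star$ for the vertical grafting of one layer of vertices on top of another (read bottom-to-top), a morphism $f$ of pre-CY algebras is best viewed as an operation on the analog of the cofree coalgebra over the multi-corolla dioperad, whose arity in each layer is allowed to vary. In this language the composition of \cref{def:composition} is $g \circ f = f \star g$, and the compatibility equation of \cref{def:compatibility} for $f\colon(A,m)\to(B,n)$ reads simply $m \star f = f \star n$, the direct analog of the chain-map relation $F b_A = b_B F$. The benefit of this reformulation is that associativity of the category and preservation of compatibility both become formal consequences of the associativity of $\star$ together with the two commutation relations.

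Granting the framework, associativity of $\circ$ I would prove by introducing planar tree quivers of \emph{height three}: expanding $(h\circ g)\circ f$ and $h\circ(g\circ f)$ each yields a sum over such quivers carrying $f$'s at height one, $g$'s at height two and $h$'s at height three, and the two bracketings are the two readings of a height-three quiver as an iterated grafting of height-two ones, matched by the identity bijection on quivers — this is associativity of $\star$. The identity $\id_A$ is the morphism with $(\id_A)_{(1)}^{1}=\id_A$ and all other components zero; in \cref{def:composition} the only surviving contributions when one factor is an identity have univalent identity-labelled vertices, which collapse the grafting and give $g\circ\id_A=g$ and $\id_B\circ g = g$. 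Preservation of compatibility is then the formal computation
\[ m \star (f\star g) = (m\star f)\star g = (f\star n)\star g = f\star (n\star g) = f\star (g\star p) = (f\star g)\star p, \]
using associativity of $\star$, the relation $m\star f = f\star n$ for $f$, and $n\star g = g\star p$ for $g$; this is exactly the compatibility equation for $h=g\circ f\colon(A,m)\to(C,p)$.

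The genuine obstacle, and where I expect essentially all of the real work to lie, is justifying that these graphical manipulations are honest algebraic identities once signs are taken into account. The grafting relations above are valid only after the orientations of \cref{sec:orientations} pin down a coherent sign for each quiver, and unlike the $A_\infty$-morphism case — where all components $f^n$ have even degree and the order of application is irrelevant — here the structure maps $m_{(k)},n_{(k)},p_{(k)}$ and the morphism components carry the shift $(d-3)(k-1)$, which is odd when $d$ is even, so the order in which vertices are applied genuinely affects signs. Concretely, I must check that the associativity bijection on height-three quivers and the telescoping from $m$ through the $f$-layer to $n$, and from $n$ through the $g$-layer to $p$, preserve the induced orientations; this is the content that the orientation bookkeeping on ribbon quivers is designed to supply, and it is the step I would expect to be the most delicate, carried out in full in \cref{sec:signsNecklace}.
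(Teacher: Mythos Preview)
Your approach is essentially the paper's: both arguments reduce compatibility of $g\circ f$ and associativity of composition to sums over height-three planar tree quivers, and both identify the orientation/sign check as the only nontrivial step. Your $\star$-notation is a clean repackaging of exactly the telescoping the paper performs.

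The one place where the paper goes further than you is the sign check itself, which you correctly flag as the crux but leave deferred. The paper's observation is short and worth knowing: with the orientation conventions of \cref{sec:signsNecklace}, each $f$-vertex (or $g$-vertex) with $k$ outgoing arrows has degree $(d-3)(k-1)\equiv (d-1)k+(d-1)\pmod 2$, so in the ordering of edges and vertices every $f$- or $g$-vertex and every edge carries the \emph{same} weight $d-1$. Consequently, passing between the two natural orderings on a height-three quiver always permutes an (edge, vertex) pair past another (edge, vertex) pair, contributing $(-1)^{(2(d-1))^2}=+1$. This single parity observation disposes of both the associativity signs and the signs in the telescoping $m\rightsquigarrow n\rightsquigarrow p$, so the ``most delicate'' step you anticipate is in fact a one-line weight computation once the orientation formalism is in place.
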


We note that restricting attention to the structure maps $m_{(1)}$ gives a definition of morphism that agrees with the definition of $A_\infty$-morphism, so we have the following result.
\begin{proposition}
	The functor $(A,m) \mapsto (A, \mu = m_{(1)})$ gives a forgetful functor from the category of pre-CY algebras of any dimension $d$ over $\kk$ to the category of $A_\infty$-algebras over $\kk$.
\end{proposition}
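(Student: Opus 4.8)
The plan is to define the functor on objects by $(A,m) \mapsto (A,\mu = m_{(1)})$ and on morphisms by $f = \{f_{(k)}\} \mapsto f_{(1)}$, and then to check that each of these restrictions to the ``order one'' component is well-defined, is $A_\infty$, and is compatible with identities and composition. The object assignment is already justified in the text: restricting the Maurer--Cartan equation $m \circnec m = 0$ to the component $C^*_{(1)}(\cA)[1] = C^*(\cA)[1]$ yields $m_{(1)} \circ m_{(1)} = 0$, so $\mu = m_{(1)}$ is an $A_\infty$-structure. For a morphism $f$, note that the degree shift $(d-3)(k-1)$ vanishes when $k=1$, so $f_{(1)} = \{f_{(1)}^n : A[1]^{\otimes n} \to B[1]\}$ is a collection of degree-zero maps, of exactly the shape of an $A_\infty$-morphism.

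The key step is to extract the $k=1$ component (a single arrow leaving the disc) from the compatibility equation of \cref{def:compatibility} and to check that it is precisely the $A_\infty$-morphism equation for $f_{(1)}$. I would argue this combinatorially. On the left-hand side all legs leaving the disc emanate from the height-two vertices carrying $f$; acyclicity of the tree quiver forces each such vertex to receive exactly one internal arrow from the unique height-one vertex carrying $m$, and since no vertex is a sink, a single outgoing leg forces there to be exactly one $f$-vertex with one output, fed by $m = m_{(1)} = \mu_A$. Summing over the input slot into which $\mu_A$ is inserted produces exactly $f_{(1)} \circ \mu_A$ in the Gerstenhaber notation. Symmetrically, on the right-hand side the unique height-two vertex carries $n$, and since $n$ outputs into copies of $B$ it must contribute at least one boundary leg; a single total leg then forces $n = n_{(1)} = \mu_B$ with all $f$-vertices feeding into it and contributing no boundary legs, whence each is an $f_{(1)}$ and the sum is $\sum \mu_B(f_{(1)} \otimes \dots \otimes f_{(1)})$. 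Equating the two sides reproduces the $A_\infty$-morphism equation stated in \cref{sec:categoryPreCYalgebras}, so $f_{(1)} : (A,\mu_A) \to (B,\mu_B)$ is an $A_\infty$-morphism.

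Finally I would verify functoriality. Preservation of identities is immediate once one observes that the identity pre-CY morphism produced in \cref{prop:preCYcategory} has $(1)$-component equal to the identity $A_\infty$-morphism ($f_{(1)}^1 = \id_A$ and $f_{(1)}^{n \ge 2} = 0$), with all higher $f_{(k)}$ vanishing. For composition, I would run the same $k=1$ reduction on the formula of \cref{def:composition}: a single boundary leg, together with the requirement that no leg leaves from a height-one ($f$) vertex, forces a single height-two vertex carrying $g_{(1)}$ fed only by $f_{(1)}$-vertices, giving $(g \circ f)_{(1)} = \sum g_{(1)}(f_{(1)} \otimes \dots \otimes f_{(1)}) = g_{(1)} \circ f_{(1)}$, which is exactly $A_\infty$-composition. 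The main obstacle is the bookkeeping of signs: the $\pm$ in \cref{def:compatibility,def:composition} are governed by the orientation-on-ribbon-quivers formalism deferred to \cref{sec:signsNecklace}, so one must confirm that this orientation datum restricts, on the one-output locus, to the standard $A_\infty$ signs. I would deduce this from the same comparison that identifies the necklace bracket with the Gerstenhaber bracket on $C^*(\cA)$, invoking it rather than recomputing the signs by hand.
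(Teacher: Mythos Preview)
Your argument is correct and follows the same approach as the paper: restrict the compatibility and composition formulas to the locus of tree quivers with a single outgoing leg, and observe that this locus reproduces exactly the $A_\infty$-morphism relations and $A_\infty$-composition. The paper, however, treats this proposition as essentially immediate, offering only the one-line remark preceding it (``restricting attention to the structure maps $m_{(1)}$ gives a definition of morphism that agrees with the definition of $A_\infty$-morphism'') in lieu of a proof. Your careful enumeration of the height-two tree quivers with $k=1$, and your check that identities and composition survive the restriction, simply unpack that remark explicitly; nothing in your write-up diverges from the intended argument.
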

Finally we note that the results above also generalize immediately to the setting of pre-CY categories (i.e. with multiple objects), in the same way that functors between $A_\infty$-categories are defined.

\begin{remark}
	Leray and Vallette \cite{leray2022pre} have recently used the formalism of properads to describe a certain notion of morphism of (curved) pre-CY algebras. This morphism is given in terms of the decomposition map of some ``Koszul dual coproperad'', which they describe using certain decompositions of the disc into two types of regions, which they call `partitioned bangles'. In the non-curved case, their definition agrees with the category of pre-CY algebras we defined above, as shown in a more recent version of their preprint \cite[Prop.4.7]{leray2022pre}.
\end{remark}

\subsection{Pre-CY algebras in noncommutative geometry}
We argue now that pre-CY algebras should be seen as giving a notion of Lagrangian subspaces inside a noncommutative symplectic space.

\subsubsection{The finite-dimensional case}
We start with the case where $A$ is a finite dimensional graded vector space. In that case, the notion of a pre-CY structure can be rephrased in terms of cyclic $A_\infty$ structures.

\begin{proposition}\label{prop:finiteDimensional}
	Let $A$ be a finite-dimensional graded vector space. Then the data of a pre-Calabi Yau structure of dimension $d$ on $A$ is equivalent to the data of a cyclic $A_\infty$ structure of dimension $d-1$ on the space $A \oplus A^\vee[1-d]$, such that the subspace $A$ is an $A_\infty$-subalgebra.
\end{proposition}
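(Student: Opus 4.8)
The plan is to realize both structures as Maurer--Cartan elements for the \emph{same} necklace bracket, once the relevant complexes are identified. I equip $B = A \oplus A^\vee[1-d]$ with the standard hyperbolic pairing, in which $A$ and $A^\vee[1-d]$ are isotropic and are paired against one another by the evaluation $A \otimes A^\vee \to \kk$ shifted into $\kk[1-d]$. Since $A$ is finite-dimensional this pairing is nondegenerate of dimension $d-1$, so by the characterization of cyclic $A_\infty$-structures in \cref{sec:cyclicAinfty} a cyclic $A_\infty$-structure of dimension $d-1$ on $B$ is the same as an element $\omega \in CC_*(B)$ of homogeneous degree $2$ solving $[\omega,\omega]_\nec = 0$. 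The proof then consists of matching this data with a pre-CY structure $m \in C^*_{[d]}(A)$.

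First I would set up the combinatorial dictionary. Decomposing $B[1] = A[1] \oplus A^\vee[2-d]$ splits every cyclic word occurring in $CC_*(B)$ according to the number $k$ of legs lying in the dual summand $A^\vee[2-d]$; I call these the \emph{output legs} and the remaining ones the \emph{input legs}. Reading a cyclic word with $k$ output legs and $N$ input legs starting from an output leg records the gaps $n_1,\dots,n_k$ of input legs between consecutive output legs and, using $A^{\vee\vee} \cong A$ (where finite-dimensionality is essential), converts each output leg in $A^\vee[2-d]$ into an output valued in $A$ and each input leg in $A[1]$ into a genuine argument. This exhibits a linear isomorphism between the $k$-output part of $CC_*(B)$ and the higher Hochschild cochains $C^*_{(k)}(A)$ of \cref{def:HigherHochschildChains}. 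The genuinely delicate point is to check that, under this dictionary, (i) the residual rotational symmetry of a cyclic word with $k$ output legs (rotating the output legs together with their gaps) is exactly the $\ZZ_k$-action of \cref{def:actionDimensionD}, including its sign twist $(-1)^{(d-1)(k-1)}$, and (ii) the homogeneous degree-$2$ condition on $\omega$ matches $|m_{(k)}| = 2$, i.e. $\deg(m_{(k)}) = dk-d-2k+4$; both the sign $(d-1)(k-1)$ and the shift $[(d-2)(k-1)]$ built into \cref{def:actionDimensionD,def:cyclicCochains} are precisely the bookkeeping needed to absorb the Koszul signs coming from the $[2-d]$ shifts on the output legs and the overall $[-1]$ in $C_*(B)$. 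Carrying out this sign verification is the main obstacle, and I would handle it by tracking the effect of a single cyclic rotation of a word on one output leg.

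With the identification in hand, I would match the two necklace brackets. The bracket on $CC_*(B)$ from \cref{sec:cyclicAinfty} sums, over pairs of legs $(a_i,b_j)$ from the two words, the pairing $\langle a_i,b_j\rangle$ times the concatenated cyclic word. As the pairing is nonzero only when one leg is an output leg and the other an input leg, each surviving term glues an output leg of one word into an input leg of the other; summing over all such gluings is exactly the prescription ``place $\psi$ around $\phi$ connecting output to input'' defining the necklace product $\circnec$ of \cref{def:NecklaceProduct}, and the two orientations of the gluing assemble into the antisymmetrized bracket of \cref{def:NecklaceBracket}. Thus the isomorphism of the previous paragraph intertwines the two necklace brackets (this also reproves the bracket statement of \cref{prop:necklaceDGLie}).

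Finally I would account for the subalgebra condition and conclude. Choosing any one leg of a cyclic word as the output of the corresponding $B$-operation, an operation with all inputs in $A$ and output in $A^\vee[1-d]$ corresponds precisely to a word all of whose legs are input legs, i.e. to the $k=0$ part; hence ``$A$ is an $A_\infty$-subalgebra of $B$'' is equivalent to the vanishing of the $k=0$ component of $\omega$. Since $C^*_{(k,d)}(A) \circnec C^*_{(\ell,d)}(A) \subseteq C^*_{(k+\ell-1,d)}(A)$ with $k+\ell-1 \ge 1$ whenever $k,\ell \ge 1$, the subspace $\bigoplus_{k\ge1} C^*_{(k,d)}(A) = C^*_{[d]}(A)$ is closed under the bracket; so an $\omega$ with vanishing $k=0$ part is exactly an element $m \in C^*_{[d]}(A)$, and $[\omega,\omega]_\nec = 0$ reduces to $m \circnec m = 0$ (its $k=0$ component being automatically satisfied). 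This yields the desired bijection between cyclic $A_\infty$-structures of dimension $d-1$ on $B$ for which $A$ is a subalgebra and pre-CY structures of dimension $d$ on $A$; under it the $k=1$ part of $m$ is the $A_\infty$-structure on $A$, consistent with the structure induced on the subalgebra.
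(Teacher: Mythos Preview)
Your overall strategy---realize both structures as Maurer--Cartan elements for the necklace bracket and identify the underlying graded Lie algebras---is sound, and it is genuinely different from the paper's argument, which instead writes down the $A_\infty$-maps $\mu^N$ on $B=A\oplus A^\vee[1-d]$ explicitly from the components $m_{(k)}$ (the Type~1/Type~2 formulas in the proof) and then checks cyclicity and the $A_\infty$-relations directly.

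However, your dictionary is inverted. The legs of a cyclic word $\omega\in CC_*(B)$ lying in $A[1]$ are the ones that correspond to the $k$ \emph{outputs} of $m_{(k)}$, and the legs in $A^\vee[2-d]$ correspond to its \emph{inputs}. This is forced by the Lemma in \S\ref{sec:cyclicAinfty}: passing from $\omega$ to $\mu$ pairs each input of $\mu$ against a leg of $\omega$ via the hyperbolic pairing, which is nonzero only between $A$ and $A^\vee[1-d]$; hence an input in $A[1]$ is paired with a leg in $A^\vee[2-d]$, an input in $A^\vee[2-d]$ with a leg in $A[1]$, and the remaining (output) leg of $\omega$ equals the output of $\mu$. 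Comparing with the paper's Type~1 formula (which produces from $m_{(k)}$ a component of $\mu$ with $k{-}1$ inputs in $A^\vee[2-d]$ and output in $A$), the corresponding $\omega$ has exactly $k$ legs in $A[1]$ and $\sum n_i$ legs in $A^\vee[2-d]$. The invocation of $A^{\vee\vee}\cong A$ does not by itself produce this; the identification really passes through the pairing on $B$. The Remark after the paper's proof confirms the direction: the component $m^{0,0}_{(2)}:\kk\to A\otimes A$ corresponds to $\mu^1:A^\vee[1-d]\to A$, i.e.\ to a word with two legs in $A[1]$ and none in $A^\vee[2-d]$.

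This swap breaks your final step as written. The subalgebra condition kills the components of $\mu$ with all inputs in $A$ and output in $A^\vee[1-d]$; under the pairing these are the words with \emph{all} legs in $A^\vee[2-d]$, which in your labeling is the maximal-$k$ piece, not $k=0$. If you relabel so that $k$ counts the $A[1]$-legs, then ``subalgebra $\Leftrightarrow$ the $k=0$ part vanishes'' is correct, your matching of the two necklace brackets is unaffected, and the closure of $\prod_{k\ge1}$ under the bracket gives the desired bijection. With that fix your proof is complete; compared to the paper's hands-on construction, it trades explicit formulas for a cleaner structural statement, at the cost of hiding the shape of the resulting $A_\infty$-maps on $B$.
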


\begin{proof}
	Recall that by definition a cyclic $A_\infty$-structure of dimension $d-1$ on a graded vector space $B$ is a pair $(\mu_B, \langle,\rangle)$ of an $A_\infty$-structure on $B$ and a nondegenerate pairing
	\[ \langle,\rangle: B \otimes B \to \kk[1-d] \]
	such that the tensor $\langle \mu_B^n(-,\dots,-),-\rangle$ is (graded) invariant under the cyclic action.

	Let $m$ be a pre-CY structure of degree $d$ on $A$. We now produce an $A_\infty$-structure $\mu$ on $B = A \oplus A^\vee[1-d]$; by definition this is the data of maps
	\[ \mu^N: ((A \oplus A^\vee[1-d])[1])^{\otimes N} \to (A \oplus A^\vee[1-d])[2] \]
	We produce all these maps from $m$ by dualizing the appropriate map $m^{n_1,\dots,n_k}_{(k)}$. Let us be explicit: consider the component
	\[ m^{n_1,\dots,n_k}_{(k)}: A[1]^{\otimes n_1} \otimes \dots \otimes A[1]^{\otimes n_k} \to A^{\otimes k} \]
	We then produce components of $\mu$ from it, in the following way. For simplicity we denote
	\[ m^{n_1,\dots,n_k}_{(k)}(a^1_1,\dots,a^k_{n_k}) = b_0 \otimes \dots \otimes b_{k-1} \]
	and regard all factors as living in $A[1]$.
	\begin{enumerate}
		\item We make a map
		\[ A[1]^{\otimes n_1} \otimes (A^\vee[2-d]) \otimes A[1]^{\otimes n_2} \otimes \dots \otimes A^\vee[2-d] \otimes A[1]^{\otimes n_k} \to A \]
		which on $(a^1_1,\dots,a^1_{n_1},c_1,\dots, c_{k-1},\dots, a^k_{n_k})$ first permutes all the $c_i$ factors (elements of $A^\vee[2-d]$) to the end, evaluates $ m^{n_1,\dots,n_k}_{(k)}$ on the $a$ factors, then permutes the outputs $b_i$ to pair $b_i$ with $c_i$, giving the result
		\[ \langle b_1, c_1 \rangle \dots \langle b_{k-1} c_{k-1} \rangle b_0 \]
		with the Koszul sign coming from all the permutations.
		\item If $n_k \ge 1$, we also make a map
		\[ A[1]^{\otimes n_1} \otimes (A^\vee[2-d]) \otimes A[1]^{\otimes n_2} \otimes \dots \otimes A^\vee[2-d] \otimes A[1]^{\otimes n_k} \to A^\vee[1-d] \]
		in the same way as above, but dualizing the last incoming factor of $A[1]$ instead of the first outgoing factor of $A$.
	\end{enumerate}

	One then has to check that the resulting structure is cyclic with respect to the canonical pairing of degree $d-1$ on $B$, and that it satisfies the $A_\infty$-relations, by performing a computation of the signs. Cyclicity follows from the cyclic invariance of the $m$ maps, and the $A_\infty$-relations follow from the necklace Maurer-Cartan equation. Recall that in the definition of the necklace product there are two sums; these correspond respectively to the terms in the $A_\infty$-relation for $\mu$ given by types (1) and (2) above.
	
	To see that this map is an equivalence, one constructs an inverse map to the one above; for each structure map of $A \oplus A^\vee[1-d]$, we reverse each $A^\vee$ arrow into a $A$ arrow with opposite orientation; cyclicity implies that the elements of $C^*_{(k)}(\cA)$ so obtained, for all $k \ge 2$, have the right behavior under the cyclic action; one can then check that these maps compose to the identity.
\end{proof}

\begin{remark}
	It is instructive to
	consider the differential $\mu^1:A \oplus A^\vee[1-d] \to (A \oplus A^\vee[1-d])[1]$: the component $A \to A^\vee[-d]$ (or equivalently, a pairing $A \otimes A[d] \to \kk$) is identically zero because we require that $A$ be an $A_\infty$-subalgebra. The component $A^\vee[-d] \to A$ may be nonzero; in terms of the pre-CY structure this is the copairing on $A$ given by the component $m^{0,0}_{(2)}: \kk \to A \otimes A[d]$.
\end{remark}

Another way of stating \cref{prop:finiteDimensional} is to say that a finite-dimensional pre-CY algebra is a Lagrangian $A_\infty$-subalgebra. As remarked in the Introduction, the definition of pre-CY structure already appeared in the work of Tradler and Zeinalian \cite{tradler2007infinity} (under the name of $\cV_\infty$-algebras), and in the work of Seidel \cite{seidel2012fukaya, seidel2021fukaya} under the name of boundary algebras; these definitions do not apply to infinite-dimensional algebras. In the case where $A$ is not finite-dimensional, but still compact (that is, $H^*A$ is finite dimensional), we will relate this notion to cyclic $A_\infty$-structures in \cref{sec:ncLagrangian}.

We have yet another simple relation between pre-CY structures and cyclic $A_\infty$-structures:
\begin{proposition}\label{prop:cyclictopre}
A cyclic $A_\infty$-structure of dimension $d$ on a finite-dimensional graded vector space $A$ also defines a pre-CY structure of dimension $d$ on $A$.
\end{proposition}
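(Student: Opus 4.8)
The plan is to avoid working with the necklace formalism directly and instead reduce everything to \cref{prop:finiteDimensional}. By that proposition, a pre-CY structure of dimension $d$ on $A$ is the same datum as a cyclic $A_\infty$-structure of dimension $d-1$ on $B = A \oplus A^\vee[1-d]$, cyclic with respect to the standard pairing and for which the summand $A$ is an $A_\infty$-subalgebra. So it suffices to produce such a cyclic structure on $B$ out of the given cyclic $A_\infty$-structure $(\mu,\langle-,-\rangle)$ of dimension $d$ on $A$.

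The key observation is that, since $A$ is finite-dimensional and $\langle-,-\rangle$ is nondegenerate of degree $d$, the pairing gives an isomorphism $A \cong A^\vee[-d]$, hence $A^\vee[1-d] \cong A[1]$ and $B \cong A \oplus A[1] \cong A \otimes \Lambda$, where $\Lambda = \kk[\epsilon]/(\epsilon^2)$ with $\deg(\epsilon) = -1$ is the cohomology algebra of $S^1$. This $\Lambda$ is a strictly associative, graded-commutative, unital Frobenius algebra, cyclic of dimension $-1$ via $\langle 1,\epsilon\rangle = \langle\epsilon,1\rangle = 1$ and $\langle 1,1\rangle = \langle\epsilon,\epsilon\rangle = 0$. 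I would then endow $B = A\otimes\Lambda$ with the tensor product structure $\mu^n_{A\otimes\Lambda}(a_1\otimes\lambda_1,\dots,a_n\otimes\lambda_n) = \pm\,\mu^n_A(a_1,\dots,a_n)\otimes(\lambda_1\cdots\lambda_n)$, where $\lambda_1\cdots\lambda_n$ is the iterated product in $\Lambda$. Because $\Lambda$ is strictly associative, this is unproblematic (it is extension of scalars along $\kk \to \Lambda$, so none of the usual difficulties with tensoring two genuine $A_\infty$-algebras arise): the common $\Lambda$-factor, namely the product of all the inserted scalars, factors out of every term of an $A_\infty$-relation for $B$, which therefore reduces to the corresponding relation for $A$.

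It then remains to verify the three conditions of \cref{prop:finiteDimensional}. For cyclicity, the pairing on $B$ is $\langle-,-\rangle_A \otimes \langle-,-\rangle_\Lambda$, which is nondegenerate of degree $d + (-1) = d-1$; one checks that a tensor product of two cyclic structures is again cyclic, so $(B,\mu_{A\otimes\Lambda})$ is cyclic of dimension $d-1$. For the subalgebra condition, $1 \in \Lambda$ is the unit and spans a subalgebra, so $A\otimes 1$ is closed under $\mu_{A\otimes\Lambda}$ and carries exactly the original $\mu_A$; thus $A \hookrightarrow B$ is an $A_\infty$-subalgebra. For the compatibility of pairings, under $A\epsilon \cong A^\vee[1-d]$ the tensor pairing restricts to the canonical evaluation between $A\otimes 1 = A$ and $A\otimes\epsilon = A^\vee[1-d]$ and vanishes on $A\otimes 1$ against itself, matching the standard pairing on $A \oplus A^\vee[1-d]$ (and consistent with the Remark after \cref{prop:finiteDimensional}, where the self-pairing of $A$ vanishes).

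The main obstacle is purely one of signs: confirming that the tensor product of the cyclic $A_\infty$-algebra $A$ with the cyclic algebra $\Lambda$ is cyclic with the correct dimension shift, and that the induced pairing agrees with the standard one up to the sign conventions of \cref{def:cyclicCY}. Since $\Lambda$ is concentrated in degrees $0$ and $-1$ and squares to zero, these checks are short, but one must track the Koszul signs produced by moving the degree $-1$ generator $\epsilon$ past elements of $A$; I expect no conceptual difficulty beyond this. Alternatively one can bypass \cref{prop:finiteDimensional} and build the pre-CY operations by hand, taking $m_{(1)} = \mu$, setting $m_{(2)}^{0,0}$ to be the copairing inverse to $\langle-,-\rangle$, and obtaining the remaining components by dualizing the outputs of the $\mu^n$ through the pairing; the Maurer-Cartan equation $m \circnec m = 0$ then follows from the cyclic Maurer-Cartan equation $[\omega,\omega]_\nec = 0$ of \cref{sec:cyclicAinfty}, each necklace gluing corresponding to a single contraction by $\langle-,-\rangle$.
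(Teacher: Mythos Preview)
Your $A\otimes\Lambda$ construction is correct, but it is a considerably longer route than the one the paper takes. The paper's argument is a two-line direct definition: set $m_{(1)}=\mu_A$, set $m_{(2)}$ equal to the copairing $m_{(2)}^{0,0}:\kk\to A\otimes A[d]$ inverse to $\langle-,-\rangle$ (with every other component $m_{(2)}^{n_1,n_2}$ and every $m_{(k\ge 3)}$ equal to zero), and observe that the Maurer-Cartan equation $m\circnec m=0$ decomposes into $\mu\circ\mu=0$, the relation $[\mu,m_{(2)}^{0,0}]_\nec=0$ (which is exactly cyclic invariance of $\mu$), and $m_{(2)}^{0,0}\circnec m_{(2)}^{0,0}=0$ (trivial, since $m_{(2)}^{0,0}$ has no inputs at all). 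Your ``alternative'' at the end is close to this, but you unnecessarily suggest obtaining remaining components by dualizing outputs of the $\mu^n$; the paper simply takes all of those to be zero.

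Comparing the two routes: the paper's definition yields the most economical pre-CY structure compatible with the given cyclic structure, and the verification costs essentially nothing beyond unpacking the definition of $\circnec$. Your approach via $B\cong A\otimes H^*(S^1)$ is more conceptual, making visible the well-known principle that tensoring with $H^*(S^1)$ shifts Calabi-Yau dimension by one, but it produces a \emph{different} pre-CY structure (under the dictionary of \cref{prop:finiteDimensional}, your $\mu^1_B$ has no cross-term $A^\vee[1-d]\to A$, so $m_{(2)}^{0,0}=0$, while the higher $m_{(2)}^{n_1,n_2}$ carry the $\mu^n$). Both are valid, but your route incurs exactly the sign bookkeeping you flag as the obstacle, whereas the paper's does not.
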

\begin{proof}
We simply set $m_{(1)} = \mu_A$ to be the $A_\infty$-structure on $A$, $m_{(2)} = m^{0,0}_{(2)}: \kk \to A \otimes A[d]$ to be the inverse of the pairing coming from the cyclic $A_\infty$-structure, and $m_{(k \ge 3)} = 0$. The cyclic relation for $\mu$ then implies that $m \circnec m = 0$, as desired.
\end{proof}
As usual, the proposition above also holds in the setting with multiple objects, where we replace the graded vector space $A$ with a `pre-$A_\infty$-category' $\cA$, that is a set of objects and graded vector spaces $\cA(X,Y)$ for every pair of objects.

\subsubsection{Cyclic forms}
The result of \cref{prop:finiteDimensional} is not compatible with quasi-equivalences of $A_\infty$-structures; for applications it will be useful to relax the cyclicity condition so that it is only required to hold up to homotopy. Since we will make use of $A_\infty$-minimal models, in this section we will restrict our attention to $A_\infty$-algebras, which is the setting where the theory of minimal models is more readily available in the literature.

These results are more naturally understood in the language of noncommutative formal manifolds, as developed in \cite{kontsevich2009notes}. Recall that the data of a nonunital $A_\infty$-algebra $A$ is the same as the data of noncommutative formal pointed dg-manifold $(X,x_0)$ with a homological vector field $Q$ and an isomorphism $A[1] \cong T_{x_0}X$.

The space of functions $\cO(X)$ on $X$ is then identified with the tensor algebra $\overline T(A[1]^\vee)$; its space of \emph{cyclic 0-forms} is then
\[ \Omega^0_\cyc(X) = \cO(X)/[\cO(X),\cO(X)]_\mathrm{top}, \]
where $[,]_\mathrm{top}$ is the topological completion of the algebraic commutator. Roughly, if $\{x_i\}$ are coordinates on $A[1]$, $\Omega^0_\cyc(X)$ is composed of cyclic formal series $f(\{x_i\})$ on free variables $\{x_i\}$.

The functions on the odd tangent bundle $\cO(\overline T[1]X)$ are then formal series on free variables $\{x_i, dx_i\}$, with $\deg(dx_i) = \deg(x_i)+1$. The spaces of cyclic nc differential forms $\Omega^m_\cyc(X)$ are given by the decomposition
\[ \Omega^0_\cyc(\overline T[1]X) = \prod_{m \ge 0} \Omega^m_\cyc(X) \]
into spaces spanned by expressions with exactly $m$ variables of the type $dx_i$.

We have two distinct differentials acting on cyclic nc differential forms: the cyclic de Rham differential $d_\cyc$ and the Lie derivative $Lie_Q$ with respect to the homological vector field $Q$. We denote by $\Omega^{m,\cl}_\cyc(X)$ the closed forms with respect to $d_\cyc$; the Lie derivative descends to this subcomplex so we can consider the complexes $(\Omega^{m,\cl}_\cyc(X), Lie_Q)$.

We can express the action of the Lie differential graphically by relating to the definitions of \cref{sec:Background} in terms of bimodules. Translating the definitions, we have an isomorphism
\[ \Omega^m_\cyc(X) = \left( (\overbrace{A_\Delta \otimes_A A_\Delta}^{m-1} \otimes_{A-A} A_\Delta)^\vee \right)^{\ZZ_m} \]
that is, a element $\omega \in \Omega^k_\cyc(X)$ can be seen as a vertex receiving $m$ cyclically ordered $A_\Delta$ arrows and any $k$ numbers of $A[1]$ arrows between them, for example:
\[\begin{tikzpicture}
\node [vertex] (phi) at (0,0) {$\omega$};
\node (a11) at (.7,1) {};
\node (a12) at (1.2,0) {};
\node (a21) at (0,-1.3) {};
\node (a31) at (-1.4,-0.2) {};
\node (a32) at (-1.3,0.6) {};
\node (a33) at (-0.7,1) {};
\node (b1) at (0,1.5) {};
\node (b2) at (1.5,-0.9) {};
\node (b3) at (-1.5,-0.9) {};

\draw [->-=0.7, very thick] (b1) to (phi);
\draw [->-=0.7, very thick] (b2) to (phi);
\draw [->-=0.7, very thick] (b3) to (phi);
\draw [->, shorten <=2mm, shorten >=2mm] (a11) to (phi);
\draw [->, shorten <=2mm, shorten >=2mm] (a12) to (phi);
\draw [->, shorten <=1mm, shorten >=2mm] (a21) to (phi);
\draw [->, shorten <=2mm, shorten >=2mm] (a31) to (phi);
\draw [->, shorten <=1mm, shorten >=2mm] (a32) to (phi);
\draw [->, shorten <=2mm, shorten >=2mm] (a33) to (phi);
\end{tikzpicture}\]
where the bold arrows label the $A_\Delta$ arrows. The Lie derivative $Lie_Q$ is then given by `circling' this vertex with a vertex corresponding to the $A_\infty$-structure maps $\mu_A$ and $\mu_{A_\Delta}$, as in \cref{sec:TensorProductsBimodules}.

\subsubsection{Symplectic structures and minimal models}\label{sec:SymplecticStructures}
From now on we assume the $A_\infty$ algebra $A$ is homologically unital and compact. Recall from \cref{def:strongCYstructure} that a compact CY structure on $A$ is a morphism of complexes
\[ \omega: CC_*(A) \to \kk[-d] \]
satisfying a nondegeneracy condition.

With the above assumptions on $A$, we have quasi-isomorphisms between the following three complexes, all of which calculate the cyclic cohomology  $HC^*(A)$:
\begin{enumerate}
	\item The dual of the `cyclic Cuntz-Quillen complex' \cite{kontsevich2009notes}
	\[
		CC^*_\mathrm{mod}(A) = ((C_*(A))^\vee[u^{-1}],b^* + u^{-1} B^*)
	\]
	The dual of the canonical map $HH_*(A) \to HC_*(A)$ is realized by the map $CC^*_\mathrm{mod}(A) \to (C_*(A))^\vee$ sending $u^{-1} \mapsto 0$, and a class
	\[
		\omega = \sum_{n\ge0} \omega_n u^{-n} \in CC^*(A)
	\]
	represents a compact CY structure if when $\omega_0$ induces a quasi-isomorphism $A \to A^\vee[-d]$.
	\item The complex of closed cyclic nc 2-forms $(\Omega^{2,\cl}_{\cyc}(X),Lie_Q)$ defined above. Recall that elements of this space are cyclically symmetric combinations of expressions of the form $f(x_1,\dots)dx_i g(x_1,\dots)dx_j$ where $f,g$ are formal power series in the free variables $x_i$. Recall that basis one-forms $dx_i$ give functions on the shifted tangent space $T_{x_0}X[-1] \cong A$; a class $\omega \in \Omega^{2,\cl}_{\cyc}(X)$ is a compact CY structure if its evaluation at zero $\omega|_0$ induces a quasi-isomorphism $A \to A^\vee[-d]$.
	\item Finally, we have the complex $(\Omega^0_\cyc(X)/\kk,Lie_Q)$ of cyclic nc 0-forms modulo constants. Taking the length one part gives a map $\Omega^0_\cyc(X)/\kk \to (A/[A,A])^\vee$ which we compose with the natural map $(A/[A,A])^\vee \to (\mathrm{Sym}^2 A)^\vee$ given by $\phi \mapsto \phi(\mu^2(-,-))$. A class $\omega \in \Omega^0_\cyc(X)/\kk$ gives a compact CY structure if its image under this map induces a quasi-isomorphism $A \to A^\vee[-d]$.
\end{enumerate}

We now recall the theory of $A_\infty$-minimal models. An $A_\infty$-algebra $(A_0,\mu_0)$ is minimal if the differential $\mu_0^1$ is zero; if there is an $A_\infty$ quasi-isomorphism $(A_0,\mu_0) \to (A,\mu)$, we say that $A_0$ is a \emph{minimal model} of $A$.

One can prove that any $A_\infty$-algebra has a minimal model $A_0$, which as a vector space is $H^*(A,\mu^1)$. Moreover, the structure maps on $A_0$ can be algorithmically constructed by the procedure known as \emph{homological perturbation}. Given a section $i:A_0 \hookrightarrow A$ of the projection $\pi:A \to A_0$, together with a homotopy $H: A \to A[-1]$, satisfying
\[ \id_A - i\circ \pi = \mu^1 \circ H - H \circ \mu^1, \]
there is a minimal $A_\infty$-structure $\mu_{A_0} = \{ \mu_{A_0}^{k \ge 2}\}$ and a quasi-isomorphism $i_A = \{i_A^{k \ge 1}: A_0 \to A$, extending $i_A^1 = i$. These maps can be constructed from $i,\pi,H$ by an appropriate sum over tree diagrams with those maps along the edges.

In geometric terms, the quasi-isomorphism $A_0 \simeq A$ corresponds to performing a change of coordinates around the base point $x_0 \in X$ in the corresponding formal noncommutative manifold. The induced action of $i_A^*$  on $(\Omega^{2,\cl}_{\cyc}(X),Lie_Q)$ is the transformation of forms induced by that change of coordinates. The following result says that one can always find such a change of coordinates which makes a given nondegenerate cyclic nc 2-form $\omega$ constant.
\begin{proposition}\cite[Cor.10.10]{kontsevich2009notes}
	Let $(X,x_0)$ be a formal noncommutative pointed dg manifold with $\dim(H^*(T_{x_0}X)) < \infty$. Then any class $\omega \in H^*(\Omega^{2,\cl}_{\cyc}(X),Lie_Q)$ which is nondegenerate (i.e., a compact CY structure on $A = T_{x_0}X[-1]$) gives a constant nondegenerate class $\omega_0$ on a minimal model (i.e., a cyclic $A_\infty$-structure on $A_0$).
\end{proposition}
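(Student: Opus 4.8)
The plan is to prove this as a noncommutative, cyclic analogue of the formal Darboux theorem: a nondegenerate closed cyclic $2$-form on a formal nc manifold can be brought to its constant (quadratic) part by a formal change of coordinates fixing $x_0$, and on a minimal model a constant nondegenerate cyclic $2$-form compatible with $Q$ is exactly a cyclic $A_\infty$-structure. The argument has three movements: (i) reduce to the minimal model so that $Q$ has vanishing linear part; (ii) run a Moser-type homotopy argument to make the form constant; (iii) translate the compatibility $Lie_Q\omega = 0$ into the cyclic symmetry of the resulting $A_\infty$-structure.

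First I would use the homological perturbation lemma recalled above: since $\dim H^*(T_{x_0}X) < \infty$, there is a quasi-isomorphism $i_A\colon A_0 \to A$ with $A_0 = H^*(A,\mu^1)$ minimal, realized as a formal change of coordinates on $X$ fixing $x_0$. Replacing $\omega$ by $i_A^*\omega$ and $Q$ by the transported homological vector field, I may assume from the outset that $A = A_0$ is finite-dimensional, $\mu^1 = 0$, and $Q$ has no linear part. Writing $\omega_0 := \omega|_0$ for the constant (coefficient-degree zero) part of $\omega$, nondegeneracy of the class is precisely nondegeneracy of the pairing $\omega_0$ on $A_0$.

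Next, the Moser step. Choose a representative with $d_\cyc\omega = 0$ and $Lie_Q\omega = 0$. The difference $\omega - \omega_0$ is a $d_\cyc$-closed cyclic $2$-form whose coefficients vanish at $x_0$; by the cyclic Poincar\'e lemma (contraction with the Euler vector field $E = \sum_i x_i\,\partial_{x_i}$ scaling the coordinates provides an explicit homotopy $\iota_E/(p+q)$ for $d_\cyc$ on reduced forms of positive coefficient-degree $p$ and form-degree $q$) it is exact, $\omega - \omega_0 = d_\cyc\alpha$ with $\alpha$ a cyclic $1$-form vanishing to order $\ge 2$. Set $\omega_t = \omega_0 + t(\omega-\omega_0)$; since $\omega_t$ differs from the nondegenerate constant form $\omega_0$ by higher-order terms, it is invertible as a formal cyclic $2$-form, so there is a unique time-dependent vector field $v_t$ with $\iota_{v_t}\omega_t = -\alpha$, vanishing to order $\ge 2$ at $x_0$. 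Integrating $\tfrac{d}{dt}\Phi_t = v_t\circ\Phi_t$, $\Phi_0 = \id$, yields a formal nc diffeomorphism $\Phi_1 = \id + (\text{coefficient-degree} \ge 2)$ (the flow converges in the adic topology because $v_t$ raises coefficient-degree), and Cartan's formula $Lie_{v_t} = d_\cyc\iota_{v_t} + \iota_{v_t}d_\cyc$ together with $d_\cyc\omega_t = 0$ gives $\tfrac{d}{dt}\Phi_t^*\omega_t = \Phi_t^*\big(d_\cyc\iota_{v_t}\omega_t + (\omega-\omega_0)\big) = 0$. Hence $\Phi_1^*\omega = \omega_0$. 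Because $\Phi_1$ fixes $x_0$ with identity differential, it preserves the vanishing of the linear part of $Q$, so the transported field $Q'$ is still a minimal homological vector field, defining a minimal $A_\infty$-structure $\mu_{A_0}$ quasi-isomorphic to the original.

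Finally, after these changes of coordinates I have a constant nondegenerate cyclic $2$-form $\omega_0$ and a minimal homological vector field $Q'$ satisfying $Lie_{Q'}\omega_0 = 0$ (the compatibility $Lie_Q\omega = 0$ is transported by naturality of the Lie derivative under the coordinate changes). A constant cyclic $2$-form is just a graded (anti)symmetric pairing $\langle -,-\rangle$ on $A_0$ of degree $-d$, and unwinding $Lie_{Q'}\omega_0 = 0$ in the graphical bimodule calculus above is exactly the statement that $\langle \mu_{A_0}^n(-,\dots,-),-\rangle$ is cyclically invariant; together with nondegeneracy this is a cyclic $A_\infty$-structure on $A_0$, as claimed. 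I expect the main obstacle to be the rigorous bookkeeping of the cyclic Poincar\'e lemma and the well-definedness of the Moser flow in the completed, $\ZZ$-graded cyclic setting—in particular checking that the Euler-field homotopy descends to cyclic forms and that all the graded signs conspire so that the final $Lie_{Q'}\omega_0 = 0$ reproduces the precise cyclicity sign of \cref{def:cyclicCY}; the analytic content (invertibility of $\omega_t$, convergence of the flow) is standard once phrased order-by-order in coefficient-degree.
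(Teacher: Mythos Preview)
The paper does not prove this proposition; it cites it from \cite{kontsevich2006notes} (the Kontsevich--Soibelman notes on $A_\infty$-algebras and noncommutative geometry) and uses it as input for the next theorem. So there is no in-paper proof to compare against.

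Your argument is the standard noncommutative Darboux/Moser proof, and it is essentially the one given in the cited reference. The three movements---pass to a minimal model, run the Moser homotopy using the cyclic Poincar\'e lemma to kill the higher-order part of $\omega$, then read off cyclicity from $Lie_{Q'}\omega_0=0$---are correct in outline. A couple of points deserve a little more care than you give them: the cyclic Poincar\'e lemma (exactness of $d_\cyc$ on $\Omega^{\ge 1}_\cyc$ modulo constants) is where the nc setting differs from the commutative one, and in \cite{kontsevich2006notes} it is proved separately rather than by a naive Euler-field contraction, so you should check that your homotopy really descends to the cyclic quotient; and the inversion of $\omega_t$ to produce $v_t$ uses that a nondegenerate cyclic $2$-form identifies vector fields with cyclic $1$-forms, which in the nc case is a statement about the double dual $T_{x_0}X \cong (T_{x_0}X)^{\vee\vee}$ and needs finite-dimensionality of $A_0$ (which you have after step (i)). Modulo these bookkeeping items, your proof is correct and matches the source the paper is quoting.
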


\subsubsection{A noncommutative Lagrangian neighborhood theorem}\label{sec:ncLagrangian}
We now come to the main result of this Subsection, an extension of the proposition above which can be seen as a noncommutative version of the Lagrangian neighborhood theorem.
\begin{theorem}\label{thm:Minimal}
	Let $(A,\mu_A)$ be a (not necessarily unital) compact $A_\infty$-algebra, and $A_0 \to A$ a minimal model of $A$. Then $A_0$ has a pre-CY structure of dimension $d$, compatible with the transferred $A_\infty$-structure, if and only if there is a tuple $((B,\mu_B),f, \omega_A,\omega_B)$ where
	\begin{enumerate}
		\item $(B,\mu_B)$ is an $A_\infty$-algebra, $f = \{f^n\}$ is an $A_\infty$-morphism $f:A \to B$,
		\item $\omega_A,\omega_B$ are elements of $\Omega^{2,cl}_{cyc}(A),\Omega^{2,cl}_{cyc}(B)$ such that
		\[ Lie_{Q_B} \omega_B = 0, \quad f^* \omega_B = Lie_{Q_A} \omega_A, \]
		\item $\omega_B$ is a compact Calabi-Yau structure on $B$; inducing a symplectic form $\omega_{B,0}$ on $H^*(B,m_B)$,
		\item The image $f^1(H^*(A,\mu^1_A))$ is Lagrangian in $H^*(B,\mu^1_B)$ with respect to $\omega_{B,0}$, and
		\item The quadratic form $\omega_{A,0}$ on $H^*(A,\mu^1_A)$ is a perfect pairing when restricted to $\ker(f^1) \otimes \ker(f^1)$.
	\end{enumerate}
	Moreover, if $A$ is homologically unital then
	$(B,f)$ is also homologically unital and $f$ is a morphism of homologically unital $A_\infty$ algebras.
\end{theorem}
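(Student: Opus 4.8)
The plan is to route both implications through the finite-dimensional minimal model $A_0 = H^*(A,\mu_A^1)$, which is finite-dimensional precisely because $A$ is compact, and to use \cref{prop:finiteDimensional} as the central bridge. That proposition identifies a pre-CY structure of dimension $d$ on $A_0$ with a cyclic $A_\infty$-structure of dimension $d-1$ on the double $D_0 := A_0 \oplus A_0^\vee[1-d]$ for which $A_0$ is a Lagrangian $A_\infty$-subalgebra. Under the dictionary of \cref{sec:SymplecticStructures}, such a cyclic structure is the same datum as a constant nondegenerate closed cyclic $2$-form on the formal manifold attached to $D_0$, i.e.\ a compact Calabi-Yau structure on $D_0$ of dimension $d-1$. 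This reformulation is what makes the two sides of the equivalence directly comparable.

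For the direction $(\Rightarrow)$ I would take $B := D_0$ equipped with the cyclic $A_\infty$-structure furnished by \cref{prop:finiteDimensional}, and $\omega_B$ the associated constant symplectic form, which is a compact CY structure and gives (3). I define $f\colon A \to B$ as the composite of a quasi-inverse $A \to A_0$ of the given minimal model (which exists by the homological inverse function theorem) with the Lagrangian inclusion $A_0 \hookrightarrow D_0$. On cohomology $f^1$ is the inclusion of the Lagrangian summand $A_0 \subset A_0 \oplus A_0^\vee[1-d]$, so (4) holds and $\ker f^1 = 0$ renders (5) vacuous. Finally, the isotropy of the Lagrangian summand means $f^*\omega_B$ is canonically $Lie_{Q_A}$-exact; the primitive $\omega_A$ implementing (2) is read off from the homotopy data of the quasi-inverse (and vanishes outright when $A$ is already minimal).

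The direction $(\Leftarrow)$ is where the real content lies. First I would pass to the minimal model $B_0 = H^*(B,\mu_B^1)$: since $\omega_B$ is a compact CY structure and $H^*B$ is finite-dimensional, the proposition of Kontsevich recalled in \cref{sec:SymplecticStructures} lets me change coordinates so that $\omega_B$ becomes the constant symplectic form $\omega_{B,0}$, making $B_0$ a cyclic $A_\infty$-algebra of dimension $d-1$. Transporting $f$ to a morphism $f_0\colon A_0 \to B_0$, hypotheses (4) and (5) say that $\mathrm{im}(f_0^1)$ is Lagrangian and that $\omega_{A,0}$ restricts to a perfect pairing on $\ker f_0^1$. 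The crux is then a \emph{relative straightening}, the noncommutative analogue of the Weinstein Lagrangian neighborhood theorem: using the relative datum $f^*\omega_B = Lie_{Q_A}\omega_A$ together with the transverse nondegeneracy (5), I would run a Moser-type argument, implemented by homological perturbation on the relative two-form, to produce a cyclic $A_\infty$-structure of dimension $d-1$ on $A_0 \oplus A_0^\vee[1-d]$ in which $A_0$ sits as a Lagrangian subalgebra. Applying \cref{prop:finiteDimensional} in reverse then yields the desired pre-CY structure on $A_0$. The hard part will be exactly this straightening — simultaneously identifying the normal directions of the Lagrangian with the cotangent directions of $A_0$ while keeping the form closed — and condition (5) is precisely the input guaranteeing that the relevant transverse pairing is invertible.

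For the final clause I would track units through these constructions. If $A$ is homologically unital then $A_0 = H^*(A,\mu_A^1)$ is a unital minimal $A_\infty$-algebra, and the double $A_0 \oplus A_0^\vee[1-d]$ inherits a homological unit from that of $A_0$ (the dual summand contributes no new unit, and the standard pairing is compatible with it), so $B$ is homologically unital; since $f$ is built from unital maps it is a morphism of homologically unital $A_\infty$-algebras, and the same bookkeeping, together with \cref{prop:Unitality}, handles the reverse construction.
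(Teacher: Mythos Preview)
Your overall strategy is correct and matches the paper: route through \cref{prop:finiteDimensional}, and for the hard direction aim to exhibit $A_0$ as a Lagrangian $A_\infty$-subalgebra of a cyclic $A_\infty$-algebra quasi-isomorphic to $B$. The easy direction is essentially identical to the paper's (and note you can simply take $\omega_A = 0$: the Lagrangian inclusion $j$ pulls back the constant symplectic form to zero on the nose, not merely up to exactness).

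Where you diverge is in the execution of the hard direction. You invoke a ``Moser-type argument implemented by homological perturbation on the relative two-form,'' but the paper does something rather different and more explicit. After passing to minimal models and making $\omega_B$ constant, it proceeds in four concrete steps: (1) enlarge $B_0$ to a non-minimal cyclic algebra $D = B_0 \oplus K \oplus K^\vee[1-d]$ where $K = \ker f^1$, with differential on the added summands coming from the pairing $\omega_{A,0}|_K$, and lift $f$ to an \emph{injective} $A_\infty$-morphism $g\colon A_0 \to D$; (2) inductively construct an $A_\infty$-automorphism $t$ of $D$ (landing in a new structure $\nu$) forcing all components of $h = t\circ g$ to land in the Lagrangian $L\oplus K \cong A_0$; (3) verify $\omega_D$ remains cyclic for $\nu$, which is exactly where the relation $f^*\omega_B = Lie_{Q_A}\omega_A$ enters; (4) check $\nu$ preserves the Lagrangian subspace.

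One point to be careful with: you describe condition~(5) as guaranteeing invertibility of the ``transverse pairing'' to the Lagrangian. In the paper's argument (5) plays a different role: it is used in Step~1 to handle the kernel of $f^1$, both to split $A_0 \cong K \oplus L$ and to make the added acyclic piece $K \oplus K^\vee[1-d]$ carry a nondegenerate form compatible with $\omega_B$. The normal-direction nondegeneracy on the Lagrangian $L$ itself comes for free from $\omega_{B,0}$. Your Moser-style sketch does not obviously address the non-injectivity of $f_0^1$, and that enlargement step is where much of the work lies.

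For the unitality clause, the paper does not track units through the construction as you suggest, but simply cites a general strictification result for units in cyclic $A_\infty$-categories.
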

Note that the equations on the Lie derivatives of $\omega_A,\omega_B$ above are equivalent to the statement that the element
\[ \omega = (\omega_A, \omega_B) \in \Cone\left(f^*:(\Omega^{2,cl}_{cyc}(B),Lie_{Q_B}) \to (\Omega^{2,cl}_{cyc}(A),Lie_{Q_A}) \right) \]
of the cone on cyclic nc 2-forms is closed.

\begin{proof}
Let us prove the easy direction first. We pick $B_0 = A_0 \oplus A_0^\vee[1-d]$; by \cref{prop:finiteDimensional} the pre-CY structure of dimension $d$ on $A_0$ gives a cyclic $A_\infty$-structure of dimension $(d-1)$ on $B_0$, or equivalently a constant symplectic form $\omega_0$ giving a perfect pairing $B_0 \otimes B_0 \to \kk[1-d]$, and a compatible $A_\infty$-structure $\mu_{B_0}$ such that $A_0$ is an $A_\infty$-subalgebra. We then pick a quasi-inverse $s = \sum s^n$ to the $A_\infty$-morphism $i_A:A_0 \to A$, and declare $f$ to be the $A_\infty$ composition $j \circ s$, where $j = j^1$ is the inclusion of $A_0$. By assumption, $j^*(\omega_0) = 0$, so the tuple $((B_0,\mu_{B_0}), f, \omega_A = 0, \omega_0)$ gives the desired structure.

For the other direction, we must produce a pre-CY structure on $A_0$ from a tuple satisfying conditions $(1)-(5)$. We start by noting that these conditions are invariant under quasi-isomorphism, so we take $A$ and $B$ to be already minimal, and use an automorphism of the minimal $A_\infty$-algebra $B$ to make $\omega_B$ constant. We then have a morphism of graded vector spaces $f^1:A \to B$. Let us denote $K = \ker(f^1)$ and $L = f^1(A)$; we then have the short exact sequence of graded vector spaces
\[ K \overset{i}{\hookrightarrow} A \overset{f^1}{\twoheadrightarrow} L \subset B \]

We now split the proof into four steps:

\noindent\textbf{Step 1:} We find a non-minimal $A_\infty$-algebra $D$ quasi-isomorphic to $B$ such that $f^1$ lifts to an injective map $A \to D$. We construct this explicitly as follows. The quadratic form $\omega_{A,0}$ defines maps
\[ A \to A^\vee[-d], \qquad K \xrightarrow{\sim} K^\vee[-d], \]
the latter being an isomorphism by condition (5). We use these maps to define a projection $\pi: A \twoheadrightarrow K$ given by
\[ A \to A^\vee[-d] \xrightarrow{\mathrm{res}} K^\vee[-d] \to K \]
such that $\pi \circ i = \id_K$. This gives a section $s:L \to A$ and a splitting
\[ K \substack{i \\ \hookrightarrow \\ \twoheadleftarrow \\\pi} A \substack{f^1 \\ \twoheadrightarrow \\ \hookleftarrow \\ s} L \]
We also pick any complementary subspace to the Lagrangian $L$; this gives us a decomposition $B \cong L \oplus L^\vee[1-d]$.

We now define a nonminimal $A_\infty$-algebra
\[ D := B \oplus K \oplus K^\vee[1-d] = L \oplus L^\vee[1-d] \oplus K \oplus K^\vee[1-d] \]
with a differential given by $m^1_D: K^\vee[1-d] \to K[1]$ given by $\omega_{A,0}$, and zero on $B \oplus K$. The higher structure maps are given by $\mu^n_D = \mu^n_B$ on $B$ and zero on $K \oplus K^\vee[1-d]$; one checks promptly that these maps satisfy the $A_\infty$-relations. By definition, the inclusion $B \hookrightarrow D$ is a quasi-isomorphism of $A_\infty$-algebras.

We now define a constant 2-form $\omega_D: D \otimes D \to \kk[1-d]$ on $D$ by using $\omega_B$ on the subspace $B \otimes B$ plus the standard pairing between $K$ and $K^\vee$. We can pick the sign for the differential $\mu^1_D$ to satisfy the following relation for any $x,y \in K^\vee[1-d]$
\[ \omega_A(\mu^1_D(x),\mu^1_D(y)) = -\omega_D(\mu^1_D(x),y) = (-1)^{\bar x} \omega_D(x, \mu^1_D(y)) \]
Together with cyclicity of $\omega_B$ with respect to $\mu_B$, we then get
\[ \omega_D(\mu_D(a_1,\dots,a_n),a_{n+1}) - (-1)^{\bar a_1}\omega_D(a_1,\mu_D(a_2,\dots a_{n+1})) = 0 \]
for any $n$, establishing cyclicity of $\omega_D$. Using the graphical calculus for signs from \cref{sec:graphical} we can concisely express this relation as
\[ Lie_{D} \omega_D =
\begin{tikzpicture}[baseline={([yshift=-.5ex]current bounding box.center)}]
\node [vertex] (mu) at (0,0) {$\mu$};
\node [vertex] (omega) at (0.4,-0.8) {$\omega$};
\draw [->-] (mu) to (omega);
\draw [->-] (-0.6,0.8) to (mu);
\draw [->-] (0.6,0.8) to (mu);
\draw [->-] (1.4,0.8) to (omega);
\node at (0,1) {$\dots$};
\node at (-0.6, 1) {$a_1$};
\node at (0.6, 1) {$a_n$};
\node at (1.4, 1) {$a_{n+1}$};
\end{tikzpicture}-
\begin{tikzpicture}[baseline={([yshift=-.5ex]current bounding box.center)}]
\node [vertex] (mu) at (0.8,0) {$\mu$};
\node [vertex] (omega) at (0.4,-0.8) {$\omega$};
\draw [->-] (mu) to (omega);
\draw [->-] (-0.6,0.8) to (omega);
\draw [->-] (0.2,0.8) to (mu);
\draw [->-] (1.4,0.8) to (mu);
\node at (0.8,1) {$\dots$};
\node at (-0.6, 1) {$a_1$};
\node at (0.2, 1) {$a_2$};
\node at (1.5, 1) {$a_{n+1}$};
\end{tikzpicture} = 0 \]

We now define an $A_\infty$-morphism $g: A \to D$ such that $g^1$ is injective, explicitly as follows. The first map $g^1$ embeds $A$ as $L \oplus K$, in other words $g$ is equal to $f$ plus a correction in $K$:
\begin{align*}
	g^1: A &\to B \oplus K \oplus K^\vee[1-d] \\
	a &\mapsto (f^1(a),\pi(a), 0)
\end{align*}
and the higher maps are equal to $f$ but with a correction in $K^\vee[1-d]$:
\begin{align*}
g^n: A^{\otimes n} &\to B \oplus K \oplus K^\vee[1-d] \\
\vec a & \mapsto (f^n(\vec a),0, (\mu^1_D)^{-1} \pi \mu^n_A(\vec a))
\end{align*}
where $(\mu^1_D)^{-1}: K[1] \to K^\vee[1-d]$ is the inverse of the differential. Using the $A_\infty$-relations for $A$, one can check that this indeed defines a morphism of $A_\infty$-algebras $g: A \to D$.

\noindent\textbf{Step 2:} Now that we have an $A_\infty$-embedding, we further calculate an $A_\infty$-automorphism
\[ t: (D,\mu_D) \to (D, \nu) \]
such that the composition $h = t \circ g: (A,\mu_A) \to (D,\nu)$ satisfies the property that $\mathrm{Im}(h^n) \subseteq \mathrm{Im}(h^1)$ for all $n$. Note that the $A_\infty$-structure $\nu$ is different by quasi-isomorphic to $\mu_D$; also we will require $t^1 = \id_D$.

We must now calculate the higher maps $t^n$ and the structure maps $\nu^n$. The maps $t^n, n \ge 2$ are only nonzero on elements of $L \oplus K$, and map to elements of $L^\vee[1-d] \oplus K^\vee[1-d]$. We define them inductively; let us suppose that we have defined them up to $t^{n-1}$.

Now for $n$, first we define a map $\tau^n: A^{\otimes n} \to L \oplus L^\vee[1-d] \oplus K^\vee[1-d]$ given by
\[\begin{tikzpicture}[baseline={([yshift=-.5ex]current bounding box.center)}]
\node [vertex] (tau) at (0,0) {$\tau^n$};
\draw [->-] (tau) to (0,-1);
\draw [->-] (-0.6,0.8) to (tau);
\draw [->-] (0.6,0.8) to (tau);
\node at (0,1) {$\dots$};
\node at (-0.6, 1) {$a_1$};
\node at (0.6, 1) {$a_n$};
\end{tikzpicture} = -
\sum_{k = 2}^{n-1} \sum_{\substack{n_1,\dots, n_k \\ \sum_i n_i = n}}
\begin{tikzpicture}[baseline={([yshift=-.5ex]current bounding box.center)}]
\node [vertex] (a) at (-0.8,1) {$g^{n_1}$};
\node [vertex] (b) at (0.8,1) {$g^{n_k}$};
\node [vertex] (c) at (0,0) {$t^k$};
\node at (0,1) {$\dots$};
\node at (0,1.7) {$\dots$};
\node at (-1, 1.7) {$a_1$};
\node at (1, 1.7) {$a_n$};
\draw [->-] (-1,1.5) to (a);
\draw [->-] (-0.5,1.5) to (a);
\draw [->-] (0.5,1.5) to (b);
\draw [->-] (1,1.5) to (b);
\draw [->-] (a) to (c);
\draw [->-] (b) to (c);
\draw [->-] (c) to (0,-1);
\end{tikzpicture} -
\begin{tikzpicture}[baseline={([yshift=-.5ex]current bounding box.center)}]
\node [vertex] (tau) at (0,0) {$g^n$};
\draw [->-] (tau) to (0,-1);
\draw [->-] (-0.6,0.8) to (tau);
\draw [->-] (0.6,0.8) to (tau);
\node at (0,1) {$\dots$};
\node at (-0.6, 1) {$a_1$};
\node at (0.6, 1) {$a_n$};
\end{tikzpicture}\]
We then define $t^n$ to be zero on any sequence with factors in $L^\vee[1-d]$ and $K^\vee[1-d]$, and on powers of the subspace $L \oplus K$ to be given by
\[ \tau = \pi_{L^\vee [1-d] \oplus K^\vee[1-d]} \tau^n, \]
that is, using the identification $A = L \oplus K$ on the source, applying $\tau$ and projecting to $L^\vee[1-d] \oplus K^\vee[1-d]$. This definition implies that the all the maps of the composition $h = t \circ g$ have image in $L$.

We use the sequence of maps $t^n$ above to calculate the new $A_\infty$-structure $\nu$, also inductively, by requiring that $t$ be an $A_\infty$-morphism. Explicitly, the following formula (with Koszul signs coming from the graphical calculus) gives an $A_\infty$-structure $\nu$:
\[\begin{tikzpicture}[baseline={([yshift=-.5ex]current bounding box.center)}]
\node [vertex] (nu) at (0,0) {$\nu^n$};
\draw [->-] (nu) to (0,-1);
\draw [->-] (-0.6,0.8) to (nu);
\draw [->-] (0.6,0.8) to (nu);
\node at (0,1) {$\dots$};
\node at (-0.6, 1) {$d_1$};
\node at (0.6, 1) {$d_n$};
\end{tikzpicture} =
\sum_{k = 2}^{n-1} \sum_{\substack{n_1,\dots, n_k \\ \sum_i n_i = n}}
\begin{tikzpicture}[baseline={([yshift=-.5ex]current bounding box.center)}]
\node [vertex] (a) at (-0.8,1) {$t^{n_1}$};
\node [vertex] (b) at (0.8,1) {$t^{n_k}$};
\node [vertex] (c) at (0,0) {$\nu^k$};
\node at (0,1) {$\dots$};
\node at (0,1.7) {$\dots$};
\node at (-1, 1.7) {$d_1$};
\node at (1, 1.7) {$d_n$};
\draw [->-] (-1,1.5) to (a);
\draw [->-] (-0.5,1.5) to (a);
\draw [->-] (0.5,1.5) to (b);
\draw [->-] (1,1.5) to (b);
\draw [->-] (a) to (c);
\draw [->-] (b) to (c);
\draw [->-] (c) to (0,-1);
\end{tikzpicture} +
\sum_{k = 1}^{n-1} \sum_{0 \le i \le n-k}
\begin{tikzpicture}[baseline={([yshift=-.5ex]current bounding box.center)}]
\node [vertex] (tau) at (0,-0.1) {$t^n$};
\node [vertex] (mu) at (0,0.7) {$\mu^k$};
\draw [->-] (tau) to (0,-1);
\draw [->-] (-1.2,1.5) to (tau);
\draw [->-] (1.2,1.5) to (tau);
\draw [->-] (-0.6,1.5) to (mu);
\draw [->-] (0.6,1.5) to (mu);
\draw [->-] (mu) to (tau);
\node at (0,1.7) {$\dots$};
\node at (-1.2, 1.7) {$a_1$};
\node at (-0.6, 1.7) {$a_{i+1}$};
\node at (0.6,1.7) {$a_{i+k}$};
\node at (1.2, 1.7) {$a_n$};
\end{tikzpicture}\]
where the terms in the second sum have signs coming from the graphical calculus, namely $(-1)^{\bar a_1 + \dots \bar a_i}$ for the term shown.

\noindent\textbf{Step 3:} We now calculate that with the definitions above, the constant 2-form $\omega_D$ is indeed cyclic for $\nu$; this is a consequence of the relation $f^* \omega_B = Lie_{A}\omega_A$.

We first make an auxiliary calculation using the transformation $t$: we calculate the following identity for all elements $a_1, \dots, a_{n+1}$ in the image of $h^1: A \hookrightarrow D$.
\begin{align*}
t^* \omega_D(a_1,\dots, a_{n+1}) &=
\begin{tikzpicture}[baseline={([yshift=-.5ex]current bounding box.center)}]
\node [vertex] (t) at (0,0) {$t$};
\node [vertex] (omega) at (0.4,-0.8) {$\omega_D$};
\draw [->-] (t) to (omega);
\draw [->-] (-0.6,0.8) to (t);
\draw [->-] (0.6,0.8) to (t);
\draw [->-] (1.4,0.8) to (omega);
\node at (0,1) {$\dots$};
\node at (-0.6, 1) {$d_1$};
\node at (0.6, 1) {$d_n$};
\node at (1.4, 1) {$d_{n+1}$};
\end{tikzpicture} +
\begin{tikzpicture}[baseline={([yshift=-.5ex]current bounding box.center)}]
\node [vertex] (t) at (0.8,0) {$t$};
\node [vertex] (omega) at (0.4,-0.8) {$\omega_D$};
\draw [->-] (t) to (omega);
\draw [->-] (-0.6,0.8) to (omega);
\draw [->-] (0.2,0.8) to (t);
\draw [->-] (1.4,0.8) to (t);
\node at (0.8,1) {$\dots$};
\node at (-0.6, 1) {$d_1$};
\node at (0.2, 1) {$d_2$};
\node at (1.5, 1) {$d_{n+1}$};
\end{tikzpicture} \\
&= - \sum_{k = 2}^{n-1} \sum_{i = 0}^{n-k+1} (-1)^\# \begin{tikzpicture}[baseline={([yshift=-.5ex]current bounding box.center)}]
\node [vertex] (omega) at (0,-0.1) {$\omega_A$};
\node [vertex] (mu) at (0,0.7) {$\mu_A^k$};
\draw [->-] (-1.2,1.5) to (omega);
\draw [->-] (1.2,1.5) to (omega);
\draw [->-] (-0.6,1.5) to (mu);
\draw [->-] (0.6,1.5) to (mu);
\draw [->-] (mu) to (omega);
\node at (0,1.7) {$\dots$};
\node at (-1.2, 1.7) {$d_1$};
\node at (-0.6, 1.7) {$d_{i+1}$};
\node at (0.6,1.7) {$d_{i+k}$};
\node at (1.2, 1.7) {$d_n$};
\end{tikzpicture}
\end{align*}
where $\# = \deg(a_1)+ \bar a_2 + \dots + \bar a_i + 1$, which equals $\sum_{j=1}^i \bar a_j$ when $i \ge 1$ and $1$ when $i=0$. Note also that
\[ t^*\omega_D(\dots,x,\dots) = 0 \]
on a sequence of length $\ge 3$ containing an element $x \in L^\vee[1-d] \oplus K^\vee[1-d]$, since all the $t^{n\ge 2}$ vanishes on those elements.

By the bounds on the last sum, there are no terms when $n=1,2$; also for every $n$ the sum only depends only on the nonconstant part of $\omega_A$ (i.e. $\omega_A$ vertices with $\ge 3$ incoming arrows). The calculation above follows from the definition of $t^n$ and the relation between $\omega_A$ and $\omega_B$.

We then calculate
\[
\begin{tikzpicture}[baseline={([yshift=-.5ex]current bounding box.center)}]
\node [vertex] (mu) at (0,0) {$\nu$};
\node [vertex] (omega) at (0.4,-0.8) {$\omega_D$};
\draw [->-] (mu) to (omega);
\draw [->-] (-0.6,0.8) to (mu);
\draw [->-] (0.6,0.8) to (mu);
\draw [->-] (1.4,0.8) to (omega);
\node at (0,1) {$\dots$};
\node at (-0.6, 1) {$d_1$};
\node at (0.6, 1) {$d_n$};
\node at (1.4, 1) {$d_{n+1}$};
\end{tikzpicture}-
\begin{tikzpicture}[baseline={([yshift=-.5ex]current bounding box.center)}]
\node [vertex] (mu) at (0.8,0) {$\nu$};
\node [vertex] (omega) at (0.4,-0.8) {$\omega_D$};
\draw [->-] (mu) to (omega);
\draw [->-] (-0.6,0.8) to (omega);
\draw [->-] (0.2,0.8) to (mu);
\draw [->-] (1.4,0.8) to (mu);
\node at (0.8,1) {$\dots$};
\node at (-0.6, 1) {$d_1$};
\node at (0.2, 1) {$d_2$};
\node at (1.5, 1) {$d_{n+1}$};
\end{tikzpicture}\]
by using the inductive definition of $\nu^n$ in terms of $t^{k < n}, \nu^{k < n}$ and plugging in the equation for $t^*\omega_D$; this reduces the expression above to similar expressions for $k < n$, down to the base cases $k=1,2$ which can be calculated explicitly to be zero. Therefore $\omega_D$ is cyclic for $\nu$.

\noindent\textbf{Step 4:} Now we have a cyclic $A_\infty$-algebra $(D,\nu)$ with an $A_\infty$-map $h: A \to D$ all of whose components land on the Lagrangian subspace $L \oplus K$. It remains to prove that the structure maps $\nu$ preserve this subspace; we can express this as
\[ \omega_D(\nu^n(h^1(-),\dots, h^1(-)), h^1(-)) = 0 \]
This can also be proved by induction; the $A_\infty$-relation between $h$ and $\nu$ implies that the expression above can be reduced to similar expressions for $\nu^{k < n}$. The base case with $n=1$ then follows by assumption, since $\nu^1 = \mu^1$ lands in $K$, a subset of the Lagrangian subspace. By \cref{prop:finiteDimensional}, this is equivalent to a pre-CY structure of dimension $d$ on the minimal $A_\infty$-algebra $A$. \\

As for the last part of the statement, regarding unitality of the minimal model, it is a general fact that units in cyclic $A_\infty$-categories can always be strictified by a cyclic $A_\infty$-quasi-isomorphism; see \cite[Prop.4.8]{davison2024purity}.
\end{proof}

\begin{remark}
	Recall that when $A$ is homologically unital we have a quasi-isomorphism
	\[ (\Omega^{2,\cl}_{\cyc}(A),Lie_{Q_A}) \cong (\Omega^0_\cyc(A)/\kk,Lie_{Q_A}) \]
	so \cref{thm:Minimal} can be rephrased in terms of cyclic 0-forms with no constant term, with an entirely analogous statement.
\end{remark}

\subsubsection{Pre-CY structures as noncommutative integrable polyvector fields}
We continue in the setting of a compact $A_\infty$-algebra $(A,\mu)$. Recall that an $A_\infty$-structure is a homological vector field $Q$ on the pointed formal dg manifold $X$ corresponding to $A$. The extension of this $A_\infty$-algebra to a pre-CY algebra $(A, m)$ with $m_{(1)} = \mu$ should be seen as an extension of $Q$ to a polyvector field on $X$ satisfying an integrability condition expressed by the vanishing of a noncommutative Schouten-Nijenhuis bracket.

For that, we go to a minimal model $A_0$ of $A$; that is, to a particular coordinate system around $x_0$.
\begin{definition}
	The shifted degree $(2-d)$-cotangent bundle $\Pi T^*[2-d]X$ is the pointed formal dg manifold corresponding to the graded vector space $A_0 \oplus A_0^\vee[1-d]$.
\end{definition}
From \cref{thm:Minimal}, we get a cyclic $A_\infty$-structure on $A_0 \oplus A_0[1-d]$, with $A_0$ as an $A_\infty$-subalgebra. Thus we get a homological vector field $Q'$ on $\Pi T^*[2-d]X$ which preserves the zero section $X$, restricting to $Q$ on it.

The vector field $Q'$ is Hamiltonian with respect to the constant two-form $\omega_0$ given by the standard pairing, that is, $Lie_{Q'}\omega = 0$. As in \cite{kontsevich2009notes}, there is a cyclic function $H \in \Omega^0_\cyc(\Pi T^*[2-d]X)$ satisfying $i_Q \omega = dH$.

In analogy with the commutative world, note that the space of polyvector fields is identified with the space of functions on the shifted cotangent bundle. Thus, we set the space of noncommutative $(2-d)$-shifted polyvector fields to be given by this latter space of cyclic functions $ \Omega^0_\cyc(\Pi T^*[2-d]X)$. One sees that the Poisson bracket $\{,\}$ on this space coming from $\omega_0$ is the analogue of the Schouten-Nijenhuis bracket acting on polyvector fields; which gives the following characterization.
\begin{lemma}
	The data of a pre-CY structure of dimension $d$ on a compact $A_\infty$-algebra $A$ is a polyvector field $H$ on the degree $d$ cotangent bundle $T^*[2-d]X$, satisfying the Maurer-Cartan equation $\{H,H\} = 0$.
\end{lemma}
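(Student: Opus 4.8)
The plan is to deduce this Lemma directly from \cref{thm:Minimal} together with the standard dictionary of noncommutative symplectic geometry from \cite{kontsevich2006notes}; the substance has already been established, and what remains is to repackage it in Hamiltonian terms. First I would pass to the minimal model $A_0$ and invoke \cref{thm:Minimal} (through \cref{prop:finiteDimensional}) to identify a pre-CY structure of dimension $d$ on $A$ with a cyclic $A_\infty$-structure of dimension $d-1$ on $B_0 = A_0 \oplus A_0^\vee[1-d]$ for which $A_0$ is an $A_\infty$-subalgebra. Recognizing $B_0$ as the underlying graded vector space of $\Pi T^*[2-d]X$, equipped with its constant standard pairing $\omega_0$, turns this into a symplectic formal noncommutative manifold.

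Next I would recall the correspondence, in this symplectic setting, between cyclic $A_\infty$-structures and Hamiltonian homological vector fields. A homological vector field $Q'$ on $\Pi T^*[2-d]X$ is cyclic (that is, $Lie_{Q'}\omega_0 = 0$) precisely when it is Hamiltonian, $i_{Q'}\omega_0 = dH$ for a cyclic function $H \in \Omega^0_\cyc(\Pi T^*[2-d]X)$, unique up to a constant; this $H$ is exactly the noncommutative polyvector field of the statement. Under the identification of functions on the shifted cotangent bundle with polyvector fields on $X$, the decomposition of $H$ by its degree $k$ in the fiber coordinates dual to $A_0^\vee$ matches the decomposition $m = \sum_k m_{(k)}$ by order, with the order-one part of $H$ equal to $Q$; the condition that $A_0$ be a subalgebra is precisely the statement that $Q'$ preserve the zero section and restrict to $Q$ on it.

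It then remains to translate the two integrability conditions into one another. Since $Q' = \{H,-\}$ is odd, one has $(Q')^2 = \tfrac12 \{\{H,H\},-\}$, so $(Q')^2 = 0$ holds iff $\{H,H\}$ is central, i.e. constant; as $\{H,H\}$ has strictly positive order in the coordinates it must vanish, giving $\{H,H\} = 0$. On the other side, $(Q')^2 = 0$ is the assertion that the structure maps $\mu_{B_0}$ satisfy the $A_\infty$-relations, which under \cref{prop:finiteDimensional} is exactly the necklace Maurer-Cartan equation $m \circnec m = 0$. Thus the pre-CY condition and $\{H,H\} = 0$ coincide.

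The main obstacle I anticipate is not in any single step but in verifying that the Poisson bracket $\{,\}$ induced by $\omega_0$ is literally the noncommutative Schouten-Nijenhuis bracket, matching the necklace bracket under the order-by-order identification $H_k \leftrightarrow m_{(k)}$ -- in other words, tracking the shifts and Koszul signs carefully enough that $\{H,H\} = 0$ and $m \circnec m = 0$ agree term by term. This is the same sign bookkeeping already carried out in the proof of \cref{prop:finiteDimensional}, so I would dispatch it by appealing to that computation rather than redoing it.
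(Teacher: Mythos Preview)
Your proposal is correct and follows essentially the same route as the paper: the Lemma is stated there as a direct consequence of the preceding paragraph, which passes to a minimal model, invokes \cref{thm:Minimal} and \cref{prop:finiteDimensional} to obtain the cyclic $A_\infty$-structure on $A_0 \oplus A_0^\vee[1-d]$, and then applies the standard Hamiltonian dictionary from \cite{kontsevich2006notes}. You have simply spelled out the translation $(Q')^2=0 \Leftrightarrow \{H,H\}=0$ in more detail than the paper does.
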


\section{Deformation theory of pre-Calabi-Yau structures}\label{sec:deformation}
Each infinitesimal deformation problem of algebraic structures such as $A_\infty$ and pre-CY structures is governed by some type of Maurer-Cartan equation in an appropriate dg Lie algebra.

For $A_\infty$-structures on $\cA$, that Lie algebra is the shifted Hochschild cochains $C^*(\cA)[1]$ with differential given by the Gerstenhaber bracket $[\mu,-]_G$ with the $A_\infty$-structure maps. More abstractly, there is a formal derived stack $\mathrm{Def}_{A_\infty}(\cA)$ over $\kk$ parametrizing $A_\infty$-structures on $\cA$ whose derived tangent complex at a given point $\mu$ is calculated by the Hochschild cohomology $HH^*(\cA, \mu)$.

There is a similar description for the deformation theory of pre-CY structures, which is the topic of this section. In the special cases where $A$ is smooth or compact, we present methods to effectively compute the relevant deformation spaces.

\subsection{Higher Hochschild invariants}
Let $(\cA, \mu)$ be an $A_\infty$-category. The $A_\infty$ structure $\mu$ is an element of $C^*_{(1)}(\cA)$, and therefore $[\mu,-]_\nec$ defines a map $C^*_{(\ell)}(\cA) \to C^*_{(\ell)}(\cA)[1]$ for every $\ell$, which squares to zero as a consequence of \cref{prop:necklaceDGLie}.
\begin{definition}\label{def:HigherHH}
	The $\ell$th \emph{higher Hochschild cohomology} of the $A_\infty$-category $(\cA,\mu)$ is the graded vector space
	\[ HH^*_{(\ell)}(\cA) := H^*(C^*_{(\ell)}(\cA), [\mu,-]_\nec). \]
\end{definition}
This definition agrees with the usual Hochschild cohomology of $A_\infty$-categories when $\ell=1$.

For fixed $\ell \ge 1, d \in \ZZ$, recall the space of higher cyclic cochains from \cref{def:cyclicCochains}. Taking the necklace bracket with the $A_\infty$-structure $\mu$ preserves $d$ and $\ell$ and cyclic invariance, so we define:
\begin{definition}
	The $(\ell,d)$-higher cyclic cohomology of the $A_\infty$-category $(\cA,\mu)$ is the graded vector space
	\[ HC^*_{(\ell,d)}(\cA) :=  H^*(C^*_{(\ell,d)}(\cA), [\mu,-]_\nec). \]
\end{definition}
Recall that we introduced a shift depending on $d$ between the gradings for higher cyclic cochains and higher Hochschild cochains: an element of degree $n$ in $C^*_{(\ell,d)}(\cA)$ is an element of degree $n + (d-2)(\ell-1)$ in $C^*_{(\ell)}(\cA)$, cyclically invariant or anti-invariant depending on the parity of $(d-1)(\ell-1)$.
\begin{proposition}\label{prop:symmetrization}
	If for some $n \ge 0, k \ge 1$ we have $HH^n_{(k)}(\cA) = 0$, then 
	\[ HC^{(n - (d-2)(\ell-1))}_{(\ell,d)}(\cA) = 0. \]
\end{proposition}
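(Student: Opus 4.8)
The plan is to exhibit $HC^*_{(\ell,d)}(\cA)$ as a direct summand of $HH^*_{(\ell)}(\cA)$, cut out by the cyclic group action, and then observe that a summand of a vanishing group must vanish. (I read the hypothesis with $k=\ell$, so that the indices match the conclusion.) The whole argument hinges on the fact that we work over a field $\kk$ of characteristic zero, so that averaging over the finite group $\ZZ_\ell$ furnishes a projector.

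First I would record the structural input. By \cref{def:cyclicCochains}, $C^*_{(\ell,d)}(\cA)$ is a shift, by $(d-2)(\ell-1)$, of the subspace of $\ZZ_\ell$-invariants of $C^*_{(\ell)}(\cA)$ for the dimension $d$ action of \cref{def:actionDimensionD}. By the discussion preceding \cref{prop:symmetrization}, taking the necklace bracket $[\mu,-]_\nec$ with the $A_\infty$-structure $\mu \in C^*_{(1)}(\cA)$ preserves this cyclic invariance. Writing $t$ for the generator of the dimension $d$ action, this says exactly that the differential $\partial = [\mu,-]_\nec$ on $C^*_{(\ell)}(\cA)$ is $\ZZ_\ell$-equivariant, i.e. $\partial \circ t = t \circ \partial$.

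Next I would form the averaging projector
\[ p = \frac{1}{\ell}\sum_{i=0}^{\ell-1} t^i, \]
which is well-defined since $\kk$ has characteristic zero. Because $\partial$ is equivariant, $p$ is a chain map with $p^2 = p$, so it splits the complex as a direct sum $C^*_{(\ell)}(\cA) = \operatorname{im}(p) \oplus \ker(p)$, where $\operatorname{im}(p)$ is the invariant subcomplex, i.e. a shift of $C^*_{(\ell,d)}(\cA)$. Since cohomology commutes with finite direct sums and an idempotent chain map induces an idempotent on cohomology, I would conclude that passing to cyclic invariants commutes with taking cohomology: $H^*(\operatorname{im} p) = (HH^*_{(\ell)}(\cA))^{(\ZZ_\ell,d)}$. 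Accounting for the degree shift of $(d-2)(\ell-1)$ in \cref{def:cyclicCochains}, this gives
\[ HC^{m}_{(\ell,d)}(\cA) \cong \left( HH^{\,m+(d-2)(\ell-1)}_{(\ell)}(\cA) \right)^{(\ZZ_\ell,d)}. \]

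Finally I would specialize: setting $m = n-(d-2)(\ell-1)$, so that $m+(d-2)(\ell-1)=n$, the hypothesis $HH^n_{(\ell)}(\cA)=0$ forces the invariant summand on the right to vanish, yielding $HC^{n-(d-2)(\ell-1)}_{(\ell,d)}(\cA)=0$. The one step worth checking carefully — and the main obstacle — is the equivariance $\partial\circ t = t\circ\partial$, which is the sign-sensitive content of the statement that $[\mu,-]_\nec$ preserves cyclic invariance; this ultimately follows from \cref{prop:necklaceDGLie} together with the observation that $\mu$ itself lives in the $\ell=1$ component, where the cyclic action is trivial. Everything else is the standard fact that taking invariants under a finite group action is exact in characteristic zero.
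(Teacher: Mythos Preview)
Your argument is correct and is essentially the same as the paper's: both exploit that in characteristic zero the averaging projector $p=\tfrac{1}{\ell}\sum t^i$ commutes with $[\mu,-]_\nec$, so cyclic invariants form a direct summand of $C^*_{(\ell)}(\cA)$ at the level of complexes. The paper phrases this one step more concretely---given a closed cyclic cochain $\phi$, it picks an arbitrary primitive $\psi$ in $C^*_{(\ell)}(\cA)$ and then symmetrizes $\psi$ to obtain a cyclic primitive---but this is exactly the content of your splitting argument, just unpacked.
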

\begin{proof}
	Suppose that we have a cochain $\phi \in C^{n - (d-2)(\ell-1))}_{(\ell,d)}(\cA)$; this is a cyclically invariant/anti-invariant element of $C^n_{(\ell)}(\cA)$, which by assumption is exact, i.e. $\phi = [\mu, \psi]_\nec$ for some $\psi$ which might not be cyclically invariant. We then take the symmetrization/antisymmetrization $\frac{1}{\ell}\sum_{\sigma} \pm \sigma(\psi)$ over cyclic permutations $\sigma$, which now lives in higher cyclic cochains and is also a primitive of $\phi$, since the differential also has cyclic symmetry.
\end{proof}

Let $m$ be a pre-CY structure of dimension $d$ extending the $A_\infty$-structure $\mu$ on $\cA$. The map $[m,-]_\nec$ mixes the spaces above for different $\ell$; it now defines a differential on the space
\[ C^*_{[d]}(\cA) := \bigoplus_{\ell \ge 1} C^*_{(\ell,d)}(\cA) \]
\begin{definition}
	The tangent cohomology of the pre-CY category $(\cA,m)$ is the graded vector space
	\[ H^*_{[d]}(\cA) :=  H^*(C^*_{[d]}(\cA), [m,-]_\nec). \]
\end{definition}
By the general theory of deformations, if $\cM_\mathrm{preCY}$ denotes the (derived) moduli stack of pre-CY structures on $\cA$, the tangent complex of $(\cA,m)$ models the tangent space $T_m\cM_\mathrm{preCY}$.

\subsubsection{The higher cyclic to tangent cohomology spectral sequence}
Remembering only the $\mu = m_{(1)}$ component of a pre-CY structure gives a map
\[ \cM_\mathrm{preCY} \to \cM_{A_\infty} \]
which on tangent spaces at any given point $m$ is $C^*_{[d]}(\cA, m) \to C^*(\cA,\mu)$.

Consider the decreasing filtration
\[ F^k_{[d]}(\cA) := \prod_{n \geq k} C^*_{(n,d)}(\cA) \]
We note that the differential $d = [m,-]_\nec$ preserves this filtration: more precisely, the bracket with the component $m_{(\ell)}$ increases the number of outgoing arrows by $\ell-1$, giving a map $F^k_{[d]}(\cA) \to F^{k+\ell-1}_{[d]}(\cA)$ of cohomological degree one. So we have a spectral sequence associated to this filtered cochain complex, which abuts to the tangent cohomology.

The associated graded of the filtration on cochain is
\[ \Gr^k C^*_{(n,d)}(\cA) :=  F^k_{[d]}(\cA)/ F^{k+1}_{[d]}(\cA) =  C^*_{(k,d)}(\cA)\]
and the differential induced on $\Gr^k C^*_{(n,d)}(\cA)$ agrees with the bracket $[\mu,-]_\nec$ with the $A_\infty$-structure $\mu = m_{(1)}$. The standard theory of filtered complexes then gives:
\begin{proposition}
	Given any pre-CY structure on $\cA$, there is a spectral sequence $E_r^{p,q}$, starting from the higher cyclic cohomology
	\[ E_1^{p,q} = HC^{p+q}_{(p,d)}(\cA) \]
	and converging to the tangent cohomology
	\[ E_\infty^{p,q} = \Gr^p H^{p+q}_{[d]}(\cA). \]
\end{proposition}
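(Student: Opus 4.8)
The plan is to present this as the spectral sequence of the filtered cochain complex $\bigl(C^*_{[d]}(\cA),\, d_m\bigr)$ with $d_m = [m,-]_\nec$, filtered by $F^\bullet_{[d]}(\cA)$. Almost all of the structural input is already in place above, so what remains is to identify the first page, match the limit object, and -- the only genuinely delicate point -- verify that the filtration satisfies the hypotheses needed for strong convergence. First I would record that $d_m$ really is a differential: since $m$ solves $m \circnec m = 0$, \cref{def:NecklaceBracket} gives $[m,m]_\nec = 2\,m\circnec m = 0$, and because $\kk$ has characteristic zero the graded Jacobi identity for the dg Lie algebra of \cref{prop:necklaceDGLie} forces $d_m^2 = \tfrac12\,[[m,m]_\nec,-]_\nec = 0$.

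Next I would read off the low pages. By the definition of the filtration, $E_0^{p,q} = \Gr^p C^{p+q}_{[d]}(\cA) = C^{p+q}_{(p,d)}(\cA)$, and the induced $E_0$-differential is the part of $d_m$ preserving filtration degree. Since bracketing with a component $m_{(\ell)}$ raises the number of outgoing arrows by $\ell-1$, the only contribution that preserves filtration degree comes from $m_{(1)} = \mu$, whence $d_0 = [\mu,-]_\nec$. Taking cohomology and applying the definition of higher cyclic cohomology yields
\[ E_1^{p,q} = H^{p+q}\bigl(C^*_{(p,d)}(\cA),\,[\mu,-]_\nec\bigr) = HC^{p+q}_{(p,d)}(\cA), \]
which is the claimed starting page; the higher differentials $d_r$ are then the usual ones, governed by the components $m_{(2)}, m_{(3)}, \dots$ of the pre-CY structure.

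The main obstacle is convergence, because the filtration is \emph{not} bounded above: the spaces $C^*_{(k,d)}(\cA)$ are generically nonzero for every $k$, so the elementary convergence theorem for bounded filtrations does not apply. The feature that rescues the statement is the convention of \cref{sec:Background}, by which $C^*_{[d]}(\cA)$ is the completion of $\bigoplus_\ell C^*_{(\ell,d)}(\cA)$ along the length filtration. Thus $F^\bullet_{[d]}(\cA)$ is complete and Hausdorff, with $C^*_{[d]}(\cA) = \varprojlim_k C^*_{[d]}(\cA)/F^{k}_{[d]}(\cA)$ and $\bigcap_k F^k_{[d]}(\cA) = 0$, and it is bounded below in the sense that $F^1_{[d]}(\cA) = C^*_{[d]}(\cA)$, so that $E_0^{p,q}$ vanishes for $p < 1$. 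I would then invoke the complete convergence theorem for filtered complexes: left-boundedness makes the differentials entering each bidegree vanish for $r$ large, while completeness is exactly the hypothesis controlling the differentials exiting to ever higher filtration degree, together giving strong convergence $E_\infty^{p,q} = \Gr^p H^{p+q}_{[d]}(\cA)$ onto the tangent cohomology. The point I would be careful to get right is to appeal to the completeness-based convergence statement rather than the first-quadrant one, since here it is completeness -- not any boundedness -- that does the essential work.
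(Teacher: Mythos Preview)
Your proposal is correct and follows the same approach as the paper, which simply writes ``The standard theory of filtered complexes then gives:'' with no further argument; everything you prove (that $d_m$ is a differential, that $d_0 = [\mu,-]_\nec$, and the identification of $E_1$) is already set up in the paragraphs preceding the statement. You actually go further than the paper by treating convergence carefully via completeness and left-boundedness of the filtration, a point the paper does not address at all.
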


\subsubsection{Extending an A-infinity structure to a pre-CY structure}
We now give an interpretation of the relation between the higher cyclic and tangent cohomologies, in terms of extending a solution to the $A_\infty$ Maurer-Cartan equation to a solution of the pre-CY Maurer-Cartan equation.

\begin{proposition}
	Given an $A_\infty$ category $(\cA,\mu)$, for each $k \ge 3$, the higher cyclic cohomology in degree two $HC^2_{(k,d)}(\cA)$ is the group of obstructions to extending a solution of the Maurer-Cartan equation from
	\[ C^*_{[d]}(\cA)/F^{k}_{[d]}(\cA) \text{\ to\ }  C^*_{[d]}(\cA)/F^{k+1}_{[d]}(\cA). \]
\end{proposition}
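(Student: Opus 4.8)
The plan is to run the standard order-by-order obstruction argument for the Maurer--Cartan equation, organized by the filtration $F^\bullet_{[d]}$, working inside the graded Lie algebra $C^*_{[d]}(\cA)[1]$ with bracket $[-,-]_\nec$ supplied by \cref{prop:necklaceDGLie}. The key structural input is that the filtration is compatible with the bracket: since $[-,-]_\nec$ maps $C^*_{(k,d)}(\cA) \otimes C^*_{(\ell,d)}(\cA)$ into $C^*_{(k+\ell-1,d)}(\cA)$, we get $[F^a_{[d]}, F^b_{[d]}]_\nec \subseteq F^{a+b-1}_{[d]}$, and bracketing with the leading term $\mu = m_{(1)}$ induces on each associated-graded piece $\mathrm{gr}^\ell = C^*_{(\ell,d)}(\cA)$ precisely the differential $[\mu,-]_\nec$ computing $HC^*_{(\ell,d)}(\cA)$.

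First I would fix a solution $\bar m = \sum_{\ell=1}^{k-1} m_{(\ell)}$ of $\bar m \circnec \bar m \equiv 0$ in $C^*_{[d]}(\cA)/F^k_{[d]}(\cA)$, with underlying $A_\infty$-structure $\mu = m_{(1)}$. Any lift to $C^*_{[d]}(\cA)/F^{k+1}_{[d]}(\cA)$ has the form $m = \bar m + m_{(k)}$ with $m_{(k)} \in C^*_{(k,d)}(\cA)$ arbitrary, and I would compute the single new component of the curvature $m \circnec m$ lying in $F^k_{[d]}/F^{k+1}_{[d]} \cong C^*_{(k,d)}(\cA)$. Because $\bar m$ already solves the equation modulo $F^k$, and because the mixed terms $m_{(i)} \circnec m_{(k)}$, $m_{(k)} \circnec m_{(i)}$ for $i \ge 2$ and the term $m_{(k)} \circnec m_{(k)}$ all land in $F^{k+1}$, this component equals
\[ [\mu, m_{(k)}]_\nec + \Omega_k, \qquad \Omega_k := \sum_{\substack{i+j = k+1 \\ i,j \ge 2}} m_{(i)} \circnec m_{(j)} . \]
Hence an extension exists exactly when $m_{(k)}$ can be chosen with $[\mu, m_{(k)}]_\nec = -\Omega_k$, i.e. exactly when $\Omega_k$ is a coboundary for $[\mu,-]_\nec$; and replacing $m_{(k)}$ by $m_{(k)} + \delta$ merely shifts this equation by $[\mu,\delta]_\nec$, so the well-defined datum is the class of $\Omega_k$ modulo the image of $[\mu,-]_\nec$.

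The heart of the argument is to show that $\Omega_k$ is a cocycle, so that it determines a genuine class $[\Omega_k] \in HC^2_{(k,d)}(\cA)$. Rather than expand signs, I would use the graded Bianchi identity: since $m$ is an odd element of $C^*_{[d]}(\cA)[1]$ with $[m,m]_\nec = 2\, m \circnec m$, the graded Jacobi identity of \cref{prop:necklaceDGLie} gives $[m, m \circnec m]_\nec = \tfrac12 [m,[m,m]_\nec]_\nec = 0$. Writing $m = \mu + (\text{filtration} \ge 2)$ and $m \circnec m = \Omega_k + (\text{filtration} \ge k+1)$ and extracting the $C^*_{(k,d)}(\cA)$-component of this identity, every pairing except $[\mu,\Omega_k]_\nec$ raises the filtration past $k$, so $[\mu, \Omega_k]_\nec = 0$. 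Combining with the previous paragraph, the extension of $\bar m$ exists if and only if $[\Omega_k] = 0$, which exhibits $HC^2_{(k,d)}(\cA)$ as precisely the group receiving this obstruction class.

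The main obstacle is purely one of signs: verifying that the filtration compatibility $[F^a_{[d]},F^b_{[d]}]_\nec \subseteq F^{a+b-1}_{[d]}$ and the graded Jacobi identity hold with the exact conventions of \cref{def:NecklaceProduct,def:NecklaceBracket}. These are exactly the assertions packaged into \cref{prop:necklaceDGLie}, so once that proposition is granted the filtration bookkeeping above is routine; the only care needed is to confirm that the mixed and quadratic terms in $m_{(k)}$ really fall into $F^{k+1}$, which holds for every $k \ge 2$. Finally I would note that the double sum defining $\Omega_k$ ranges over $i,j \ge 2$ with $i+j = k+1$, which is nonempty exactly when $k \ge 3$; this explains why the statement is phrased for $k \ge 3$, the case $k=2$ carrying no obstruction since then $\Omega_2 = 0$ and $m_{(2)} = 0$ always extends.
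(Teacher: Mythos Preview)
Your proof is correct and follows essentially the same strategy as the paper: isolate the filtration-$k$ component $\Omega_k$ of the curvature, show it is $[\mu,-]_\nec$-closed (the paper does this by applying graded Leibniz term by term, you package it as the Bianchi identity $[m,m\circnec m]_\nec=0$), and identify its class in $HC^2_{(k,d)}$ as the obstruction. One small slip: in the Bianchi paragraph the filtration-$k$ part of $m\circnec m$ is $\Omega_k + [\mu,m_{(k)}]_\nec$, not just $\Omega_k$; either run that step with $\bar m$ in place of $m$, or observe that $[\mu,[\mu,m_{(k)}]_\nec]_\nec=0$ so the extra term vanishes after bracketing with $\mu$, and the argument goes through unchanged.
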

\begin{proof}
	Let us simply write $C^*$ etc. and leave $\cA$ implicit, for conciseness. We first describe the induction step when $k=3$. Suppose that we have a solution of the Maurer-Cartan equation modulo $F^3_{[d]}$; that is, elements $m_{(1)}, m_{(2)}$ such that
	\[ (m_{(1)}+m_{(2)}) \circnec (m_{(1)}+m_{(2)}) \equiv 0 \pmod{F^3_{[d]}} \]
	which is equivalent to requiring $[m_{(1)},m_{(2)}]_\nec = 0$. We then have
	\[ (m_{(1)}+m_{(2)}) \circnec (m_{(1)}+m_{(2)}) = m_{(2)} \circnec m_{(2)} \]
	We see that this is $[m_{(1)},-]_\nec$-closed, since
	\[ [m_{(1)},m_{(2)} \circnec m_{(2)}]_\nec = m_{(1)} \circnec m_{(2)} \circnec m_{(2)} - m_{(2)} \circnec m_{(2)} \circnec m_{(1)} = 0 \]
	in $C^2_{(3,d)}$. Therefore, if $HC^2_{(3,d)} = 0$ this must also be exact; if we take $m_{(3)}$ such that
	\[ [m_{(1)},m_{(3)}]_\nec = - m_{(2)} \circnec m_{(2)}\]
	we then have
	\[ (m_{(1)}+m_{(2)}+m_{(3)}) \circnec (m_{(1)}+m_{(2)}+m_{(3)}) \equiv 0 \pmod{F^4_{[d]}(\cA)}, \]
	that is, an extension of our solution to $C^*_{[d]}(\cA)/F^4_{[d]}$.

	In general, if we know that
	\[ (m_{(1)}+\dots+m_{(k-1)}) \circnec (m_{(1)}+\dots+m_{(k-1)}) \equiv 0 \pmod{F^k_{[d]}} \]
	then we have
	\[ [m_{(1)}, (m_{(1)}+\dots+m_{(k-1)}) \circnec (m_{(1)}+\dots+m_{(k-1)})]_\nec \equiv 0 \pmod{F^k_{[d]}} \]
	We write all the terms that appear in $F^k_{[d]}/F^{k+1}_{[d]}$, giving
	\begin{align*}
		[m_{(1)}, & (m_{(1)}+\dots+m_{(k-1)}) \circnec (m_{(1)}+\dots+m_{(k-1)})]_\nec  \\
		&\equiv \sum_{i=1}^{k-1} [m_{(1)}, [m_{(i)},m_{(k-i)}]_\nec]_\nec \pmod{F^{k+1}_{[d]}}
	\end{align*}
	which after applying graded Leibniz and cancellations gives
	\[ [m_{(1)}, (m_{(1)}+\dots+m_{(k-1)}) \circnec (m_{(1)}+\dots+m_{(k-1)})]_\nec \equiv 0 \pmod{F^{k+1}_{[d]}} \]
	Thus if $HC^2_{(k,d)} = 0$ we can find a primitive $m_{(k)}$ of $(m_{(1)}+\dots+m_{(k-1)})$ modulo $F^{k+1}_{[d]}$ implying
	\[ (m_{(1)}+\dots+m_{(k)}) \circnec (m_{(1)}+\dots+m_{(k)}) \equiv 0 \pmod{F^{k+1}_{[d]}}. \]
\end{proof}

We combine the proposition above with \cref{prop:symmetrization} to give a sufficient condition in terms of higher Hochschild cohomology.
\begin{corollary}\label{cor:extend}
	If $HH^{d\ell-d-2\ell+4}_{(\ell)}(\cA) = 0$ for every $\ell \ge 3$, then any cocycle $m_{(2)} \in C^2_{(2,d)}(\cA)$ can be extended to a pre-CY structure on $\cA$.
\end{corollary}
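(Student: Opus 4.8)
The plan is to build the pre-CY structure $m = \sum_{k\ge 1} m_{(k)}$ one order at a time along the length filtration $F^\bullet_{[d]}(\cA)$, feeding the obstruction analysis of the preceding proposition into the vanishing criterion of \cref{prop:symmetrization}. The base of the induction is already in place: setting $m_{(1)} = \mu$, the hypothesis that $m_{(2)} \in C^2_{(2,d)}(\cA)$ is a cocycle for $[\mu,-]_\nec$ means exactly that $[m_{(1)}, m_{(2)}]_\nec = 0$, and, as recorded in the $k=3$ step of the preceding proposition's proof, this identity is equivalent to saying that $m_{(1)} + m_{(2)}$ solves the necklace Maurer-Cartan equation modulo $F^3_{[d]}(\cA)$.

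For the inductive step, suppose I have produced $m_{(1)}, \dots, m_{(k-1)}$ with $(m_{(1)} + \dots + m_{(k-1)}) \circnec (m_{(1)} + \dots + m_{(k-1)}) \equiv 0 \pmod{F^k_{[d]}(\cA)}$. By the preceding proposition, the obstruction to finding $m_{(k)}$ whose bracket $[m_{(1)}, m_{(k)}]_\nec$ cancels the order-$k$ part of the curvature --- and hence to upgrading the solution so that it holds modulo $F^{k+1}_{[d]}(\cA)$ --- is a well-defined class in $HC^2_{(k,d)}(\cA)$. It remains to see that each of these groups vanishes, and here the degree bookkeeping is the only thing to check: a degree-two element of $C^*_{(k,d)}(\cA)$ has degree $2 + (d-2)(k-1) = dk - d - 2k + 4$ when viewed inside $C^*_{(k)}(\cA)$, so applying \cref{prop:symmetrization} with $\ell = k$ and $n = dk - d - 2k + 4$ gives $HC^{n-(d-2)(k-1)}_{(k,d)}(\cA) = HC^2_{(k,d)}(\cA) = 0$ precisely when $HH^{dk-d-2k+4}_{(k)}(\cA) = 0$.

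Since the hypothesis supplies exactly this vanishing for every $\ell \ge 3$, while $m_{(1)}$ and $m_{(2)}$ already take care of the two lowest orders, all obstructions vanish and the induction runs to all orders, assembling an element $m = \sum_{k\ge 1} m_{(k)} \in \prod_{k} C^*_{(k,d)}(\cA) = C^*_{[d]}(\cA)$ that solves $m \circnec m = 0$ and extends $m_{(1)} + m_{(2)}$. I should note that there is no convergence issue, since $C^*_{[d]}(\cA)$ is a product over $k$ and the induction produces a single component $m_{(k)}$ at each stage. The genuinely substantive inputs --- the identification of the obstruction groups with $HC^2_{(k,d)}(\cA)$ and the symmetrization argument translating Hochschild vanishing into cyclic vanishing --- are already established above, so the only real care needed is the index matching of the previous paragraph; I do not expect a separate hard step here.
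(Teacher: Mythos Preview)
Your proof is correct and follows exactly the approach the paper intends: the corollary is stated immediately after the obstruction-theoretic proposition with the remark that one combines it with \cref{prop:symmetrization}, and you have carried out precisely that combination, including the degree bookkeeping $2 + (d-2)(k-1) = dk - d - 2k + 4$ needed to match the hypothesis on $HH^*_{(\ell)}$ to the required vanishing of $HC^2_{(k,d)}$. There is nothing to add.
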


\subsection{Calculating higher Hochschild cohomology}
It becomes important therefore to compute $HH_{(\ell)}(\cA)$. Recall from \cref{sec:diagonalBimodule} that, in the case where the category $\cA$ is compact and/or smooth, one can express the (ordinary) Hochschild invariants in terms of certain dual bimodules, which are related to Serre functors. Here we extend that description to higher Hochschild cohomology groups.

\subsubsection{Compact A-infinity categories}
Let $\cA$ be a compact $A_\infty$-category. Recall the two canonical objects in $\cA\mh\Mod\mh\cA$ given by the diagonal bimodule $\cA_\Delta$ and its linear dual $\cA^\vee$. The linear dual has the property that for any perfect $\cA$-bimodule $\cM$, there is a quasi-isomorphism of complexes
\[ (\cM \otimes_{\cA-\cA} \cA_\Delta)^\vee \cong \Hom_{\cA-\cA}(\cM, \cA^\vee) \]
picking $\cM = \cA^\vee$, the preimage of the identity gives an element $\ev_\cA \in (\cA^\vee \otimes_{\cA-\cA} \cA_\Delta)^\vee$. In our graphical notation, we picture $\ev_\cA$ as a vertex
\[\begin{tikzpicture}[baseline={([yshift=-.5ex]current bounding box.center)}]
\node (left) at (0,0) {$\cA^\vee$};
\node (right) at (3,0) {$\cA_\Delta$};
\node [vertex] (mid) at (1.5,0) {$\ev_A$};
\draw [->-=0.9,very thick] (left) to (mid);
\draw [->-=0.9, very thick] (right) to (mid);
\end{tikzpicture}\]
taking two bimodule arrows and any number of $\cA[1]$ arrows along the top and bottom (not pictured).

Let $\phi \in C^*_{{k}}(\cA)$ be a $k$th higher Hochschild cochain. We define an element
\[ \tilde\phi \in \Hom_{\cA-\cA}(\cA_\Delta \otimes_\cA \overbrace{\cA^\vee \otimes_\cA \dots \otimes_\cA \cA^\vee}^{k-1}, \cA_\Delta) \]
by evaluating the following diagram:
\[\begin{tikzpicture}[baseline={([yshift=-.5ex]current bounding box.center)}]
\node (a) at (0,5) {$\cA_\Delta$};
\node (avee1) at (1,5) {$\cA^\vee$};
\node (avee2) at (2,5) {$\cA^\vee$};
\node (avee3) at (4,5) {$\cA^\vee$};
\node [vertex] (eva1) at (1,3.5) {$\ev_\cA$};
\node [vertex] (eva2) at (2,3.5) {$\ev_\cA$};
\node at (3,4) {$\dots$};
\node [vertex] (eva3) at (4,3.5) {$\ev_\cA$};
\node [vertex] (phi) at (2.2,2) {$\phi$};
\node [vertex] (mu) at (0,1.5) {$\mu_{\cA_\Delta}$};
\node (bot) at (0,0) {$\cA_\Delta$};

\draw [->-=0.6,very thick] (a) to (mu);
\draw [->-=0.9,very thick] (avee1) to (eva1);
\draw [->-=0.9,very thick] (avee2) to (eva2);
\draw [->-=0.9,very thick] (avee3) to (eva3);
\draw [->-=0.9,very thick] (phi) to (eva1);
\draw [->-=0.9,very thick] (phi) to (eva2);
\draw [->-=0.9,very thick] (phi) to (eva3);
\draw [-w-=0.6,very thick] (phi) to (mu);
\draw [->-=0.6,very thick] (mu) to (bot);
\end{tikzpicture}\]
that is, we use the evaluation element to convert the last $k-1$ outgoing elements of $\phi$ into incoming elements of $\cA^\vee$.

If $\phi$ is closed under the differential on $C^*_{{k}}(\cA)$, that is, $[\mu,\phi]_\nec = 0$, then by closedness of $\ev_\cA$ and the structure equation for $\mu_{\cA_\Delta}$ we see that the map above satisfies the structure equations to be a morphism of bimodules. We now precompose that morphism with some quasi-isomorphism
\[ (\cA^\vee)^{\otimes_\cA (k-1)} \cong \cA_\Delta \otimes_\cA (\cA^\vee)^{\otimes_\cA (k-1)}\]
to get a map which we also denote by $\tilde\phi \in \Hom_{A-A}((\cA^\vee)^{\otimes_\cA (k-1)}, \cA_\Delta)$.

\begin{proposition}\label{prop:calculatingCompact}
	When $\cA$ is homologically unital and compact, the map $\phi \mapsto \tilde\phi$ gives a quasi-isomorphism
	\[ C^*_{(k)}(\cA) \overset{\sim}{\longrightarrow} \Hom_{A-A}((\cA^\vee)^{\otimes_\cA (k-1)}, \cA_\Delta) \]
	for any $k \ge 1$.
\end{proposition}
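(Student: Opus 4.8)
The plan is to recognize the assignment $\phi \mapsto \tilde\phi$ as an iterated application of the compact duality recorded in \cref{lemma:IsoCompact}, and to prove the statement by peeling off one factor of $\cA^\vee$ at a time (equivalently, by induction on the number $k-1$ of dual legs). The base case $k=1$ is the standard identification: using the two-sided bar resolution of $\cA_\Delta$ as a semifree $(\cA,\cA)$-bimodule, the derived self-Hom $\Hom_{\cA\mh\cA}(\cA_\Delta,\cA_\Delta)$ is computed by $\prod_n \Hom(A[1]^{\otimes n},A) = C^*(\cA)$ with differential $\phi \mapsto [\mu,\phi]$, which agrees with $[\mu,-]_\nec$ on $C^*_{(1)}(\cA)$. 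For the inductive step I would first unfold the right-hand side into an explicit reduced form.

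The key computation is the following explicit description. Since $M \otimes_\cA N$ has underlying graded space $M \otimes T(A[1]) \otimes N$, the bimodule $(\cA^\vee)^{\otimes_\cA(k-1)}$ is, as a graded space, a string of $k-1$ copies of $A^\vee$ separated by $k-2$ factors of $T(A[1])$; a bimodule map into $\cA_\Delta$ is then determined by its reduced part, a family of maps
\[ \prod_{n_1,\dots,n_k}\Hom\big(A[1]^{\otimes n_1}\otimes A^\vee\otimes A[1]^{\otimes n_2}\otimes\dots\otimes A^\vee\otimes A[1]^{\otimes n_k},\ A\big), \]
where the two extra $T(A[1])$ blocks come from the left and right module actions, so that $k$ input strings alternate with the $k-1$ copies of $A^\vee$. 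I would then move each of the $k-1$ copies of $A^\vee$ from the source to the target, using the quasi-isomorphism $\Hom(A^\vee\otimes X,A)\simeq\Hom(X,A\otimes A)$ afforded by compactness of $\cA$ (so that $A^{\vee\vee}\simeq A$ at the level of cohomology). Each such move pairs one outgoing $A$-leg against one incoming $A^\vee$-leg through $\ev_\cA$, and this is exactly the local replacement of \cref{lemma:IsoCompact}, hence a quasi-isomorphism. After the $k-1$ moves the target has become $A^{\otimes k}$ and the source is the $k$ strings $A[1]^{\otimes n_i}$, so we land precisely on $C^*_{(k)}(\cA)$; tracking the $\ev_\cA$ insertions identifies the composite of these moves with the inverse of $\phi \mapsto \tilde\phi$.

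It remains to verify compatibility with the differentials, so that the above quasi-isomorphisms are chain maps. On the right-hand side the differential is the bimodule-Hom differential $F \mapsto [\mu,F]$; the observation already made preceding the proposition — that the defining closedness of $\ev_\cA$ together with the structure equation for $\mu_{\cA_\Delta}$ sends a $[\mu,-]_\nec$-closed $\phi$ to an honest bimodule morphism $\tilde\phi$ — upgrades at the chain level to the statement that $\phi \mapsto \tilde\phi$ intertwines $[\mu,-]_\nec$ with the Hom differential, so each local move is a morphism of complexes and the computation is consistent stage by stage.

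The main obstacle is ensuring that \cref{lemma:IsoCompact} genuinely applies at each stage of the peeling, which requires the intermediate bimodules $(\cA^\vee)^{\otimes_\cA j}$ to be compact. Here one uses that $\cA$ is compact, so that $H^*(\cA^\vee)\cong H^*(\cA)^\vee$ is finite-dimensional and the homologies of the iterated tensor products remain finite-dimensional (controlled by the relevant bar/$\Tor$ spectral sequence); I would isolate this as the one place where the hypothesis is truly used. A secondary, purely bookkeeping difficulty is to make the reduced description above an honest isomorphism of graded vector spaces before passing to cohomology: one must correctly track the cyclic interleaving of the $k$ output legs with the $k$ input strings, the Koszul signs from permuting each $A^\vee$ past the intervening $A[1]$ strings, and the distinction between the product $\prod$ and the length-completed sum, so that the final identification reproduces $C^*_{(k)}(\cA)$ on the nose.
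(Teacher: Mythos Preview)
Your approach differs from the paper's. The paper does not peel off dual legs inductively; instead it forms the mapping cone of $\Psi:C^*_{(k)}(A)\to\Hom_{A\mh A}(A_\Delta\otimes_A(A^\vee)^{\otimes_A(k-1)},A_\Delta)$ directly, equips it with a length filtration (total number of $A[1]$ inputs), and runs the associated spectral sequence. On the $E_1$ page only $\mu^1$ survives, which replaces $A$ by the finite-dimensional associative algebra $H=H^*(A,\mu^1)$; a second filtration then identifies the resulting total complex with a sum of bar complexes for a left $H$-module, which is acyclic by unitality. So the compactness hypothesis is used once, to make $H$ finite-dimensional, rather than repeatedly to control intermediate bimodules.

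There is also a genuine gap in your argument: the local replacement of \cref{lemma:IsoCompact} is not the move you need. That lemma converts an $A^\vee$-\emph{output} into an $A$-\emph{input} (it is the tensor--hom adjunction $\Hom_{A\mh A}(M,A^\vee)\simeq(M\otimes_{A\mh A}A_\Delta)^\vee$), whereas the map $\phi\mapsto\tilde\phi$ converts $A$-\emph{outputs} into $A^\vee$-\emph{inputs}. The relevant duality is instead the biduality map $A\to A^{\vee\vee}$, applied to each of the last $k-1$ output legs; this is indeed a quasi-isomorphism when $\cA$ is compact, but it is not the content of \cref{lemma:IsoCompact}, and you would still need to argue that it survives the product over all length tuples $(n_1,\dots,n_k)$ and is compatible with the bimodule-Hom differential. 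Your ``main obstacle'' paragraph about compactness of $(\cA^\vee)^{\otimes_\cA j}$ is addressing the hypotheses of the wrong lemma. The paper's filtration-then-pass-to-$H$ strategy sidesteps both issues at once, since after reducing to finite-dimensional $H$ the two dualities coincide on the nose and the completions are harmless.
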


\begin{proof}
Note that for $k=1$ we get an isomorphism $CC^*(\cA) \cong \Hom_{A-A}(\cA_\Delta,\cA_\Delta)$ which in this formalism of $A_\infty$-bimodules is proven in \cite[Sec.2.6]{ganatra2013symplectic}. Let us prove the case where $\cA$ is an $A_\infty$-algebra $A$ so we can omit the sums over tuples of objects; we will follow the same strategy used \emph{op.cit.}. Namely, we will use length filtrations to reduce the calculation to bar complexes for associative algebras.

Consider the cone of the morphism $\Psi: CC^*_{(k)}(A) \to \Hom_{A-A}(A_\Delta \otimes_A (A^\vee)^{\otimes_A (k-1)}, A_\Delta)$:
\begin{align*}
	\Cone(\Psi) &=  C^*_{(k)}(A) \oplus \Hom_{A-A}(A_\Delta \otimes_A (A^\vee)^{\otimes_A (k-1)}, A_\Delta)[1] \\
	&= \bigoplus_{\substack{\{n_i \ge 0\}, 1 \le i \le k \\ \{r_j \ge 0\}, 0 \le j \le k}} \Hom_\kk(A[1]^{\otimes n_1} \otimes \dots \otimes A[1]^{\otimes n_k}, A^{\otimes k}) \\
	&\oplus \Hom_\kk(A[1]^{\otimes r_0} \otimes A \otimes A[1]^{\otimes r_1} \otimes A^\vee \otimes \dots \otimes A[1]^{\otimes r_k}, A)[1]
\end{align*}
with differential given by the triangular matrix
\[ d_{\Cone} = \begin{pmatrix}
[\mu, -] & 0 \\
\Psi & [\mu,-]_\nec
\end{pmatrix} \]
where we schematically denote $\mu$ for the appropriate combination of structure maps for $A$, $A_\Delta$ and $A^\vee$.

Consider the following decreasing filtrations:
\begin{align*}
F^p(C^*_{(k)}(A)) &= \bigoplus_{\sum n_i \ge p} \Hom_\kk(A[1]^{\otimes n_1} \otimes \dots \otimes A[1]^{\otimes n_k}, A^{\otimes k}) \\
F^p(\Hom_{A-A}(A_\Delta &\otimes_A (A^\vee)^{\otimes_A (k-1)}, A_\Delta)) \\ &= \bigoplus_{\sum r_i \ge p-1} \Hom_\kk(A[1]^{\otimes r_0} \otimes A \otimes A[1]^{\otimes r_1} \otimes A^\vee \otimes \dots \otimes A^\vee \otimes A[1]^{\otimes r_k}, A)
\end{align*}
The differentials on each complex separately preserve this filtration; the only maps preserving $p$ degree are the components containing only $\mu^1_A$ and $\mu^1_{A^\vee}$, and all higher $\mu^{n \ge 2}$ increase $p$-degree. Moreover $\Psi$ sends
\[ F^p(C^*_{(k)}(A)) \to F^{p+1}(\Hom_{A-A}(A_\Delta \otimes_A (A^\vee)^{\otimes_A (k-1)}, A_\Delta)) \]
because the total number of inputs does not decrease; the $p$-degree one part of $\Psi$ is just given by the component $\mu^2_{A_\Delta}$.

Therefore we get a filtration on $\Cone(\Psi)$ which is compatible with the differential, so we have an associated spectral sequence computing its cohomology. The components of $d_{\Cone}$ preserving the length are only the ones containing the differentials $\mu^1$ so the first page of this spectral sequence is given by the same complex above, but on the unital associative algebra $H = H^*(A,\mu^1)$ instead.

By compactness, $H$ is finite dimensional and $\ev_H$ only has a single component given by the perfect pairing between $H$ and $H^\vee$.  The only nonzero terms in the first page differential are the terms containing $\mu^2_H, \mu^2_{H^\vee}$ and $\mu^2_{H_\Delta}$.

We can then put another filtration on this complex, now by counting the number of $H[1]$ inputs minus $r_0$, that is, by the number $\sum_{i=1}^{k} (n_i + r_i)$. The differential also preserves this filtration so we again take the associated spectral sequence; using the pairing to shift $k-1$ of the outgoing factors of $CC^*_{(k)}(H)$ to we then see that the total complex of the first page is the sum of total bar complexes for $H^\vee \otimes_H \dots \otimes_H H^\vee \otimes H^{r_k}$ as a left $H$-module (total meaning including the last term $\to H^\vee \otimes_H \dots \otimes_H H^\vee \otimes H^{r_k}$ coming from the $r_0 = 0$ component), which is acyclic for unital algebras, so the second spectral sequence, and therefore the first also, converge to zero.
\end{proof}

\subsubsection{Smooth A-infinity categories}
We now prove an analogous result to \cref{prop:calculatingCompact}, but for (homologically) smooth, instead of compact, $A_\infty$-categories. Recall that $\cA$ is smooth when its diagonal bimodule is perfect, and has as bimodule dual the ``inverse dualizing bimodule'' $\cA^!$ which represents Hochschild homology, i.e. for any perfect bimodule $\cM$ there is a quasi-isomorphism
\[ \Hom_{\cA-\cA}(\cA, \cM) \cong \cA^! \otimes_{\cA-\cA} \cM  \]
which is given by composing with a canonical coevaluation element $\ev^!_\cA \in \cA_\Delta \otimes_{\cA-\cA} \cA^!$. We picture $\ev^!_\cA$ as a vertex
\[\begin{tikzpicture}[baseline={([yshift=-.5ex]current bounding box.center)}]
\node (left) at (0,0) {$\cA^\vee$};
\node (right) at (3,0) {$\cA_\Delta$};
\node [vertex] (mid) at (1.5,0) {$\ev_A$};
\draw [->-=0.9,very thick] (left) to (mid);
\draw [->-=0.9, very thick] (right) to (mid);
\end{tikzpicture}\]
with two outgoing bimodule arrows and any number of outgoing $\cA[1]$ arrows along the top and bottom.

Let $\phi \in C^*_{(k)}$ be a $k$th higher Hochschild cochain. We define an element
\[ \tilde\phi \in \Hom_{\cA-\cA}(\cA_\Delta, \cA_\Delta \otimes_\cA \overbrace{\cA^! \otimes_\cA \dots \otimes_\cA \cA^!}^{k-1}) \]
by the following diagram:
\[\begin{tikzpicture}[baseline={([yshift=-.5ex]current bounding box.center)}]
\node (a) at (-1,5) {$\cA_\Delta$};
\node (avee0) at (-0.2,0) {$\cA^!$};
\node (avee1) at (1,0) {$\cA^!$};
\node (avee2) at (2.3,0) {$\cA^!$};
\node (avee3) at (4.5,0) {$\cA^!$};
\node [vertex] (eva0) at (-0.3,1.3) {$\ev^!_\cA$};
\node [vertex] (eva1) at (1,1.3) {$\ev^!_\cA$};
\node [vertex] (eva2) at (2.3,1.3) {$\ev^!_\cA$};
\node at (3.5,1.3) {$\dots$};
\node [vertex] (eva3) at (4.5,1.3) {$\ev^!_\cA$};
\node [vertex] (phi) at (2.2,3.5) {$\phi$};
\node [vertex] (mu) at (-1,3.5) {$\mu_{\cA_\Delta}$};
\node (bot) at (-1,0) {$\cA_\Delta$};
\draw [->-=0.6,very thick] (a) to (mu);
\draw [->-=0.9,very thick] (eva0) to (avee0);
\draw [->-=0.9,very thick] (eva1) to (avee1);
\draw [->-=0.9,very thick] (eva2) to (avee2);
\draw [->-=0.9,very thick] (eva3) to (avee3);
\draw [->-=0.9,very thick,bend left=30] (eva0) to (phi);
\draw [->-=0.9,very thick,bend left=15] (eva1) to (phi);
\draw [->-=0.9,very thick] (eva2) to (phi);
\draw [->-=0.9,very thick,bend right=20] (eva3) to (phi);
\draw [->-,thick,bend right=45] (phi) to (0.5,0.2);
\draw [->-,thick,bend right=15] (phi) to (1.6,0.2);
\draw [->-,thick,bend left=20] (phi) to (4,0.2);
\draw [->-,thick,bend left=60] (phi) to (5,0.2);
\draw [-w-=0.6,very thick] (phi) to (mu);
\draw [->-=0.6,very thick] (mu) to (bot);
\end{tikzpicture}\]

\begin{proposition}\label{prop:calculatingSmooth}
	When $\cA$ is homologically unital and smooth, the map $\phi \mapsto \tilde\phi$ gives a quasi-isomorphism
	\[ C^*_{(k)}(\cA) \overset{\sim}{\longrightarrow} \Hom_{A-A}(\cA_\Delta, (\cA^!)^{\otimes_\cA (k-1)}) \]
	for any $k \ge 1$.
\end{proposition}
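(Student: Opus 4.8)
The plan is to mirror the proof of \cref{prop:calculatingCompact}, systematically dualizing: the linear dual bimodule $\cA^\vee$ is replaced by the inverse dualizing bimodule $\cA^!$, the evaluation $\ev_\cA$ by the coevaluation $\ev^!_\cA$, and the duality of \cref{lemma:IsoCompact} by that of \cref{lemma:IsoSmooth}. As before, the case $k=1$ is the standard identification $CC^*(\cA) \cong \Hom_{\cA-\cA}(\cA_\Delta,\cA_\Delta)$ from \cite{ganatra2013symplectic}, and it suffices to treat the $A_\infty$-algebra case $\cA = A$, reinstating sums over tuples of objects at the end. Writing $\Psi$ for the map $\phi \mapsto \tilde\phi$ (a chain map, by closedness of $\ev^!_\cA$ together with the structure equations for $\mu_{\cA_\Delta}$ and $\cA^!$), I would form its mapping cone
\[ \Cone(\Psi) = C^*_{(k)}(A) \oplus \Hom_{A-A}(A_\Delta, (A^!)^{\otimes_A(k-1)})[1] \]
with the triangular differential whose off-diagonal entry is $\Psi$, and prove that this cone is acyclic.

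First I would filter $\Cone(\Psi)$ by the total number of $A[1]$-inputs, exactly as in the compact proof. Only the components of the differential built from $\mu^1$ (on $A$, on $A_\Delta$, and on $A^!$) preserve this filtration, so the first page of the associated spectral sequence replaces $A$ throughout by its cohomology $H = H^*(A,\mu^1)$, with the induced associative product $\mu^2_H$ and the induced structure maps on $H^!$ and $H_\Delta$. I would then put a second filtration by the number of interior $H[1]$-inputs, minus the distinguished strand, dual to the second filtration of the compact argument. Using the coevaluation to slide $k-1$ of the strands through the tensor factors of $(H^!)^{\otimes_H(k-1)}$, the first page of this second spectral sequence should become a sum of two-sided bar complexes, which are acyclic for any homologically unital algebra; this forces the cone to be acyclic and hence $\Psi$ to be a quasi-isomorphism.

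The main obstacle is precisely the step where smoothness must be invoked. In the compact case compactness passes verbatim to the cohomology algebra $H$ (finite-dimensionality is inherited), which is what guarantees that $\ev_H$ has a single nondegenerate component and lets the second filtration collapse. Homological smoothness, by contrast, is a derived condition: perfectness of $A_\Delta$ over $A^e$ need not descend to perfectness of $H_\Delta$ over the \emph{associative} algebra $H^e$, so one cannot simply assert that $\ev^!_H$ acquires a manageable form after passing to cohomology. This is the genuinely hard part, and the naive mirror of the compact argument breaks here.

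The cleanest way around it is to avoid reducing the dualizing data to cohomology at all. Instead I would resolve $A_\Delta$ by its two-sided bar resolution $A \otimes T(A[1]) \otimes A$, which is quasi-isomorphic to $A_\Delta$ with no finiteness hypothesis, and use smoothness only to know that $A^!$, and hence each tensor power $(A^!)^{\otimes_A(k-1)}$, is perfect. Then \cref{lemma:IsoSmooth} applies with $M = (A^!)^{\otimes_A(k-1)}$ and identifies the target of $\Psi$ with the cyclic $k$-fold tensor product $A^! \otimes_{A-A} (A^!)^{\otimes_A(k-1)}$ (for $k=1$ this recovers $HH^*(A) \simeq A^!\otimes_{A-A}A_\Delta$). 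The quasi-isomorphism $\Psi$ then becomes the assertion that $C^*_{(k)}(A)$ computes this same cyclic tensor product, which reduces once more to bar-complex acyclicity; checking that the coevaluation-built map $\tilde\phi$ is exactly the comparison map between these two models is the technical heart, and is where all the sign bookkeeping from the graphical calculus of \cref{sec:graphical} must be verified.
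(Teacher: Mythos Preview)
You have correctly identified the genuine obstacle: smoothness of $A$ does not descend to the cohomology algebra $H$, so one cannot simply run the compact argument with $\ev^!_H$ after the first length filtration. But your proposed workaround is incomplete. Applying \cref{lemma:IsoSmooth} with $M=(A^!)^{\otimes_A(k-1)}$ does identify the target with the cyclic $k$-fold tensor product of $A^!$, but you then assert that $C^*_{(k)}(A)$ computes this same object ``once more by bar-complex acyclicity'' without saying how: $C^*_{(k)}(A)$ is a space of multilinear maps \emph{into} $A^{\otimes k}$, and matching it to an iterated tensor product of $A^!$'s still requires exactly the kind of comparison you have not made precise.

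The paper resolves this by introducing an auxiliary bimodule $\cW(\cM_1,\dots,\cM_k)$, whose elements are vertices with $k$ outgoing bimodule arrows and groups of $A[1]$-arrows between them. The key point is that $\cW$ is defined purely in terms of $\Hom_\kk$ and the structure maps of the $\cM_i$; it makes no reference to $A^!$ or to smoothness. A short lemma (using $\ev^!$ and hence smoothness, once) shows $\cW(\cM_1,\cM_2)\simeq \cM_1\otimes_A A^!\otimes_A \cM_2[-1]$; iterating $(k-1)$ times reduces the proposition to showing
\[
C^*_{(k)}(A)\;\xrightarrow{\sim}\;\cW\bigl(A_\Delta, A_\Delta[1],\dots,A_\Delta[1]\bigr).
\]
Because neither side of this comparison involves $A^!$, the length-filtration spectral sequence now passes harmlessly to $H=H^*A$, and the second filtration collapses to a total bar complex for $\cW(H_\Delta,\dots,H_\Delta)$ as a left $H$-module, acyclic by unitality. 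The introduction of $\cW$ is precisely the missing idea: it isolates the one place smoothness is needed (the lemma) from the spectral-sequence argument, so that the latter can be run on $H$ without any finiteness hypothesis.
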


The proof of this proposition will be similar to the proof of \cref{prop:calculatingCompact}, but we must first make an auxiliary definition. Let $\cA$ be an $A_\infty$-category and $\cM_1,\dots,\cM_k$ any tuple of $\cA$-bimodules.
\begin{definition}
	The $\cA$-bimodule $\cW(\cM_1,\dots,\cM_n)$ as a graded vector space is given by
	\[ \hspace{-1cm} \prod_{\{X^i_{j_i}\}} \Hom_\kk \left( \bigotimes_{i=1}^{k-1} (\cA(X^i_0, X^i_1)[1] \otimes \dots \otimes \cA(X^i_{n_i - 1}, X^i_{n_i})[1]), \ \bigotimes_{i=1}^{k-1} \cM(X^i_0,X^{i-1}_{n_{i-1}}) \right)  \]
	This graded vector space gets a differential from the structure maps of $\cA$ and of the bimodules $\cM_i$, and a $\cA$-bimodule structure from the maps $\mu^{r|1|s}_{\cM_1}$ and  $\mu^{r|1|s}_{\cM_k}$ of the first and last bimodule.
\end{definition}
We can picture an element of $\cW(\cM_1,\dots,\cM_n)$ as a vertex with outgoing $\cM_i$ arrows, and $n-1$ groups of incoming $\cA[1]$ arrows in between them. In particular, $\cW(\cM) = \cM$ for a single bimodule.

The following fact is a consequence of the univeral property of the inverse dualizing bimodule $\cA^!$ and of the fact that if $\cA$ is smooth, $\cA^!$ is also perfect and there is a quasi-isomorphism $\cA^{!!} \cong \cA$.
\begin{lemma}
	Let $\cA$ be smooth, $\cM_1,\cM_2$ any two $\cA$-bimodules. Then the map
	\[
	\begin{tikzpicture}[baseline={([yshift=-.5ex]current bounding box.center)}]
	\node [vertex] (top) at (1,2) {$\alpha$};
	\node (left) at (0,0) {$\cM_2$};
	\node (right) at (2.2,0) {$\cM_1$};
	\draw [->-=0.9,very thick] (top) to (left);
	\draw [->-=0.9, very thick] (top) to (right);
	\end{tikzpicture} \quad \mapsto \quad
	\begin{tikzpicture}[baseline={([yshift=-.5ex]current bounding box.center)}]
	\node [vertex] (top) at (1,2) {$\alpha$};
	\node (left) at (0,0) {$\cM_2$};
	\node (right) at (2.2,0) {$\cM_1$};
	\node [vertex] (mid) at (1,1) {$\ev^!$};
	\node (bot) at (1,0) {$\cA^!$};
	\draw [->-=0.9,very thick] (top) to (left);
	\draw [->-=0.9, very thick] (top) to (right);
	\draw [->-=0.9,very thick] (mid) to (top);
	\draw [->-=0.9,very thick] (mid) to (bot);
	\end{tikzpicture}
	\]
	gives a quasi-isomorphism $\cW(\cM_1,\cM_2) \xrightarrow{\sim} \cM_1 \otimes_\cA \cA^! \otimes_\cA \cM_2[-1]$.
\end{lemma}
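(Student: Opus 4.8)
The plan is to recognize $\cW(\cM_1,\cM_2)$ as a Hochschild-cochain model for a derived bimodule Hom, and then to identify the map of the statement with the coevaluation quasi-isomorphism produced from $\ev^!$ in \cref{lemma:IsoSmooth}; the strategy parallels that of \cref{prop:calculatingCompact}, but routed through $\ev^!$ rather than $\ev$. Working with an $A_\infty$-algebra $\cA$ for notational ease, I would first unwind the definition of $\cW(\cM_1,\cM_2)$: it is $\prod_{n\ge 0}\Hom_\kk(\cA[1]^{\otimes n},\cM_1\otimes_\kk\cM_2)$ whose differential uses the right action $\mu^{0|1|s}_{\cM_1}$ and the left action $\mu^{r|1|0}_{\cM_2}$ to absorb the single group of gap inputs (along with the internal $\mu_\cA$), while the remaining left action on $\cM_1$ and right action on $\cM_2$ supply the ambient $\cA$-bimodule structure. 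Setting $\cN:=\cM_1\otimes_\kk\cM_2$, viewed as an $\cA$-bimodule via $(\mu^{0|1|s}_{\cM_1},\mu^{r|1|0}_{\cM_2})$, this says that $\cW(\cM_1,\cM_2)$ is, on the nose, the Hochschild cochain complex $C^*(\cA,\cN)$, with its $\cA$-bimodule structure coming from the two residual actions and depending naturally on $\cN$.

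I would then assemble the following chain of $\cA$-bimodule quasi-isomorphisms. The two-sided bar resolution of the diagonal gives the standard identification $C^*(\cA,\cN)\cong\Hom_{\cA\mh\cA}(\cA_\Delta,\cN)$, compatibly with the residual actions. Since $\cA$ is smooth, $\cA_\Delta$ is perfect and \cref{lemma:IsoSmooth} applies, so $\ev^!$ induces a quasi-isomorphism $\cA^!\otimes_{\cA\mh\cA}\cN\xrightarrow{\sim}\Hom_{\cA\mh\cA}(\cA_\Delta,\cN)$; its quasi-inverse, built by attaching $\ev^!$ exactly as in the map displayed right after \cref{lemma:IsoSmooth} (now with general coefficients $\cN$ in place of $\cA_\Delta$), is by construction the map of the present statement. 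Finally I would rewrite the two-sided tensor cyclically: gluing $\cA^!$ to $\cN=\cM_1\otimes_\kk\cM_2$ along the $(\mu^{0|1|s}_{\cM_1},\mu^{r|1|0}_{\cM_2})$ actions yields precisely $\cM_1\otimes_\cA\cA^!\otimes_\cA\cM_2$, with residual actions matching. Composing gives the asserted quasi-isomorphism, the shift $[-1]$ being the degree bookkeeping of the two-sided tensor (equivalently, of $\ev^!$), which I would pin down on the base term $\cM_1=\cM_2=\cA_\Delta$, where the claim reduces to $\cW(\cA_\Delta,\cA_\Delta)\simeq\cA^![-1]$.

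The main obstacle is bookkeeping rather than conceptual. The delicate step is the first identification: checking that the graphically-defined $\cW$, with its particular prescription of which of the four module actions consume the gap inputs and which stay free, really agrees as an $\cA$-bimodule with $C^*(\cA,\cN)\cong\Hom_{\cA\mh\cA}(\cA_\Delta,\cN)$, and that the $\ev^!$-insertion is a bimodule map equal to the \cref{lemma:IsoSmooth} comparison. A second point to address is that \cref{lemma:IsoSmooth} is phrased for perfect $\cM$, whereas $\cN=\cM_1\otimes_\kk\cM_2$ need not be perfect; this is harmless because, $\cA$ being smooth, both $\cA^!\otimes_{\cA\mh\cA}(-)$ and $\Hom_{\cA\mh\cA}(\cA_\Delta,-)$ commute with colimits (the latter as $\cA_\Delta$ is perfect) and agree on the free bimodule $\cA\otimes\cA$, hence agree for arbitrary coefficients, and being a quasi-isomorphism may be verified after forgetting the residual bimodule structure. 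Carrying the Koszul signs and the single shift $[-1]$ through the cyclic rearrangement is the last routine but error-prone ingredient.
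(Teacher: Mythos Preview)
Your proposal is correct and follows the route the paper gestures at. The paper does not actually give a proof of this lemma; it only records, in the sentence preceding it, that the statement ``is a consequence of the universal property of the inverse dualizing bimodule $\cA^!$ and of the fact that if $\cA$ is smooth, $\cA^!$ is also perfect and there is a quasi-isomorphism $\cA^{!!}\cong\cA$.'' Your argument is precisely an unpacking of the first of these ingredients: identify $\cW(\cM_1,\cM_2)$ with $\Hom_{\cA\mh\cA}(\cA_\Delta,\cN)$ for $\cN=\cM_1\otimes_\kk\cM_2$ with the inner bimodule structure, and then invoke \cref{lemma:IsoSmooth} and its $\ev^!$-based quasi-inverse.

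The one place where you and the paper's hint diverge is in dealing with the perfectness hypothesis on the coefficient bimodule in \cref{lemma:IsoSmooth}. You remove it by a colimit argument (both sides commute with colimits since $\cA_\Delta$ is perfect), which is clean and correct. The paper instead names the facts that $\cA^!$ is perfect and $\cA^{!!}\cong\cA$; these give an alternative way to see the same extension, by dualizing the roles of $\cA_\Delta$ and $\cA^!$ so that the universal property applies with $\cA^!$ as the perfect test object. Either route works, and yours is arguably the more direct one here.
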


\begin{proof}(of \cref{prop:calculatingSmooth})
	Applying the lemma above $(k-1)$ times we can prove instead the quasi-isomorphism
	\[ C^*_{(k)}(A) \xrightarrow{\sim} \cW(A_\Delta, \overbrace{A_\Delta[1],\dots,A_\Delta[1]}^{k-1}) \]
	We argue this in an entirely analogous way as in the proof of \cref{prop:calculatingCompact}, by using the filtration on the cone
	\[ \Cone(\Psi) = C^*_{(k)}(A) \oplus \cW(A_\Delta, \overbrace{A_\Delta[1],\dots,A_\Delta[1]}^{k-1})[1] \]
	given by the filtration induced by the number of $A[1]$ arrows on $C^*_{(k)}(A)$ and the number of $A[1]$ arrows plus one on $ \cW(A_\Delta, \overbrace{A_\Delta[1],\dots,A_\Delta[1]}^{k-1})$.

	We again get a spectral sequence whose first page is given by the same cone but for the unital associative algebra $H = H^*A$. We now use the second filtration, by the total length `on the right', getting a second spectral sequence whose first page has as total complex the total bar complex for $\cW(\overbrace{H_\Delta, \dots, H_\Delta}^{k-1})$ as a left $H$-module, which is acyclic for unital $H$.
\end{proof}

\subsection{Relation to smooth CY structures}\label{sec:relationToSmooth}
Let $(\cA,\mu)$ be a homologically smooth and unital $A_\infty$-category. Let $m$ be a pre-CY structure on $\cA$ compatible with $\mu$, that is, $m_{(1)} = \mu$. The next component is
\[ m_{(2)} \in C^2_{(2,d)}(\cA) \subset C^{2+ (d-2)(2-1)}_{(2)}(\cA) = C^d_{(2)}(\cA) \]
Using the quasi-isomorphism $C^*_{(2)}(\cA) \simeq \Hom_{\cA-\cA}(\cA_\Delta,\cA^!)$ we get a morphism $\Phi \in \Hom_{\cA-\cA}(\cA_\Delta,\cA^![d])$.

Recall that for $\cA$ smooth there is another quasi-isomorphism $\Hom_{A-A}(\cA^!,\cA_\Delta) \simeq C_*(\cA)$ between the Hochschild \emph{chain} complex and the inverse morphism space of bimodules. By definition, if $\Phi$ is a quasi-isomorphism of $\cA$-bimodules, any quasi-inverse $\Phi^{-1}$ defines a weak smooth CY structure of dimension $d$ on $\cA$. As mentioned in \cref{sec:CYstructures}, an algebra $A$ with such a structure is also known as a `Ginzburg CY algebra'. In \cite{KTV2} we prove the following result.
\begin{theorem}\label{thm:smoothCYtopreCY}
	Let $\tilde\omega \in CC^-_*(\cA)$ be a (strong) smooth CY structure on $\cA$, whose image $\omega \in C_*(\cA)$ induces a quasi-isomorphism $\cA^![d] \simeq \cA_\Delta$. Then there is a pre-CY structure $m$ on $\cA$ whose component $m_{(2)}$ induces an inverse quasi-isomorphism; conversely, given any such pre-CY structure one can produce a (strong) smooth CY structure on $\cA$.
\end{theorem}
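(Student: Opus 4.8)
The plan is to prove the equivalence by passing through the bimodule description of the necklace complex established in \cref{prop:calculatingSmooth}, and to match the tower of higher components $\{m_{(k)}\}_{k \ge 2}$ of a pre-CY structure with the tower of components $\{\omega_n\}_{n \ge 0}$ of a negative cyclic class $\tilde\omega = \sum_n \omega_n u^n$. For the direction producing a pre-CY structure from a smooth CY structure, I would start with $\tilde\omega$, let $\omega = \omega_0$ be its image in $C_*(\cA)$, and use the identification $\Hom_{\cA-\cA}(\cA^!,\cA_\Delta) \simeq C_*(\cA)$ coming from \cref{lemma:IsoSmooth} to view $\omega$ as a quasi-isomorphism $\cA^![d] \xrightarrow{\sim} \cA_\Delta$. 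Since $\cA_\Delta$ is perfect (smoothness) and hence so is $\cA^!$, this quasi-isomorphism is invertible in the homotopy category; picking a quasi-inverse $\Phi : \cA_\Delta \to \cA^![d]$ and applying \cref{prop:calculatingSmooth} with $k=2$ yields a cocycle $m_{(2)}^{\circ} \in C^*_{(2)}(\cA)$ of the correct degree.

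The delicate point is upgrading $m_{(2)}^\circ$ to a genuinely cyclically invariant cochain $m_{(2)} \in C^2_{(2,d)}(\cA)$. The necklace $(\ZZ_2,d)$-symmetry corresponds, under the identification of \cref{prop:calculatingSmooth}, to a self-duality of $\Phi$ exchanging the two $\cA^!$-outputs, and this is precisely the $S^1$-equivariance data recorded by the negative cyclic lift $\tilde\omega$ rather than by the bare Hochschild cycle $\omega_0$. I would produce the invariant representative by the symmetrization argument of \cref{prop:symmetrization}, feeding the corrections $u\omega_1 + u^2\omega_2 + \cdots$ into the homotopies witnessing cyclic invariance. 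This is the smooth analogue of the finite-dimensional statement \cref{prop:finiteDimensional}, where cyclic invariance of $m$ is what distinguishes a genuine cyclic pairing from a merely nondegenerate one.

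Next I would extend $m_{(2)}$ to a full pre-CY structure $m = \sum_k m_{(k)}$ using the obstruction theory of \cref{sec:deformation}. At each stage the obstruction lies in $HC^2_{(k,d)}(\cA)$, which by \cref{prop:symmetrization} is controlled by the higher Hochschild group $HH^{d\ell - d - 2\ell + 4}_{(\ell)}(\cA)$; combining \cref{prop:calculatingSmooth} with the CY identification $\cA^! \simeq \cA_\Delta[-d]$ identifies this with the negatively graded ordinary Hochschild cohomology $HH^{4 - 2\ell}(\cA)$, which need not vanish. Hence I cannot simply invoke \cref{cor:extend}; instead the higher negative cyclic components $\omega_n$ must supply explicit primitives killing each successive obstruction, the point being that the identity $(b + uB)\tilde\omega = 0$, read off in each power of $u$, should unwind exactly into the necklace Maurer-Cartan relations $\sum_{k+\ell = n+1} m_{(k)} \circnec m_{(\ell)} = 0$. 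The converse direction runs the same dictionary backwards: the quasi-isomorphism $m_{(2)}$ produces $\omega$ as its inverse, and the higher relations contained in $m \circnec m = 0$ assemble the Connes homotopies packaging $\omega$ into a negative cyclic cocycle.

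The main obstacle is precisely this last translation: making rigorous the correspondence between the necklace bracket with its cyclic $\ZZ_k$-symmetries on one side, and Connes' operator $B$ with the $S^1$-action on the other, so that $(b+uB)$-closedness matches necklace integrability order by order. This demands careful control of the cyclic symmetrizations and of the signs governed by the dimension-$d$ action of \cref{def:actionDimensionD}, and is where the substantive work lies; it is for this reason that the full argument is deferred to \cite{KTV2}.
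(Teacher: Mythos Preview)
The paper does not contain a proof of this theorem; it explicitly states ``In \cite{KTV2} we prove the following result'' and defers the argument entirely to the companion paper. There is therefore no proof in the present paper to compare your proposal against, and you correctly acknowledge this deferral in your final paragraph.

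That said, your outline is a sensible reading of how the argument should go given the tools developed here: using \cref{prop:calculatingSmooth} to identify $m_{(2)}$ with a bimodule map $\cA_\Delta \to \cA^![d]$, invoking nondegeneracy to invert $\omega$, and then attempting to match the $u$-tower of a negative cyclic cocycle with the necklace Maurer-Cartan hierarchy. You are also right to flag that \cref{cor:extend} cannot be applied blindly, since after using the CY isomorphism the obstruction groups become $HH^{4-2\ell}(\cA)$ rather than something automatically vanishing. The honest gap you identify---making precise the dictionary between $(b+uB)$-closedness and necklace integrability with the correct cyclic symmetrizations---is exactly the substantive content that the authors postpone, so your proposal is best read as an informed sketch rather than a proof.
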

Note that this result requires the existence of the lift $\tilde\omega$ in negative cyclic homology in order to produce the pre-CY structure. On the other hand, it guarantees the existence of this lift in the following case
\begin{corollary}
	If $\omega \in CC_d(\cA)$ is a weak smooth CY structure of dimension $d$ which has an inverse $m_{(2)} \in CC^*_{(2)}(\cA)$ such that $\mu + m_{(2)}$ extends to a pre-CY structure, then $\omega$ has a lift $\tilde\omega \in CC^-_d(\cA)$ giving a strong smooth CY structure of dimension $d$.
\end{corollary}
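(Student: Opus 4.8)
The plan is to read the lift straight off the converse direction of \cref{thm:smoothCYtopreCY}, and then identify the Hochschild-homology class it produces with $\omega$ by invoking uniqueness of inverse quasi-isomorphisms in the homotopy category of $\cA$-bimodules.

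First I would unpack the hypotheses. By assumption there is a pre-CY structure $m = \mu + m_{(2)} + \cdots$ with $m_{(1)} = \mu$, and under the quasi-isomorphism $C^*_{(2)}(\cA) \simeq \Hom_{\cA\mh\cA}(\cA_\Delta, \cA^!)$ of \cref{prop:calculatingSmooth} the component $m_{(2)}$ induces a quasi-isomorphism $\cA_\Delta \to \cA^![d]$ which, by hypothesis, is a two-sided inverse to the quasi-isomorphism $\cA^![d] \to \cA_\Delta$ determined by $\omega$ under the identification $C_*(\cA) \simeq \Hom_{\cA\mh\cA}(\cA^!, \cA_\Delta)$ of \cref{lemma:IsoSmooth}. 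This is exactly the situation to which the converse half of \cref{thm:smoothCYtopreCY} applies, so it produces a strong smooth CY structure $\tilde\omega \in CC^-_*(\cA)$ of dimension $d$, constructed so that its image in $C_*(\cA)$ induces a quasi-isomorphism $\cA^![d] \to \cA_\Delta$ inverting the one determined by $m_{(2)}$.

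It then remains to show that $\tilde\omega$ is a lift of $\omega$, that is, that the image of $\tilde\omega$ under the canonical map $CC^-_*(\cA) \to C_*(\cA)$ agrees with $\omega$ as a class in $H_*(C_*(\cA))$. Both $\omega$ and the image of $\tilde\omega$ are two-sided inverses of the \emph{same} morphism $\cA_\Delta \to \cA^![d]$, namely the one induced by $m_{(2)}$, in the homotopy category of bimodules; since inverses there are unique up to homotopy, the two agree. Transporting back through the identification $C_*(\cA) \simeq \Hom_{\cA\mh\cA}(\cA^!, \cA_\Delta)$ of \cref{lemma:IsoSmooth}, which sends homology classes bijectively to homotopy classes of bimodule maps, I conclude that $\omega$ and the image of $\tilde\omega$ define the same Hochschild class. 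Hence $\tilde\omega$ lifts $\omega$ and gives the desired strong smooth CY structure.

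The main obstacle is not conceptual but a matter of compatibility bookkeeping: one has to be sure that the converse construction of \cref{thm:smoothCYtopreCY} really produces a $\tilde\omega$ whose image inverts the given $m_{(2)}$ (equivalently, that the forward and converse directions of that theorem are mutually compatible on the relevant data), rather than merely \emph{some} strong smooth CY structure. Once that is granted, the uniqueness-of-inverses step is formal, and it is the only place where the passage from the weak (Hochschild) datum to its strong (negative-cyclic) refinement actually takes place.
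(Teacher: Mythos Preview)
Your argument is correct and is exactly how the paper intends the corollary to be read: in the paper it is stated as an immediate consequence of \cref{thm:smoothCYtopreCY} with no separate proof, and your two steps (apply the converse direction of that theorem to the given pre-CY extension, then use uniqueness of quasi-inverses to identify the resulting Hochschild class with $\omega$) are precisely the intended deduction. The compatibility point you flag---that the $\tilde\omega$ produced by the converse of \cref{thm:smoothCYtopreCY} has Hochschild image inverse to the given $m_{(2)}$---is indeed the only thing one must take on faith from the statement of that theorem (whose proof is deferred to \cite{KTV2}), and once granted the rest is formal as you say.
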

For instance, if $[m_{(2)},m_{(2)}]_\nec = 0$ then $\mu + m_{(2)}$ is already a pre-CY structure (with $m_{(n\ge 3)} = 0$) so this result applies.

\section{Examples}\label{sec:examples}
We now present some examples where one naturally finds pre-Calabi-Yau structures: topology of finite-dimensional manifolds with boundary and the algebraic geometry of varieties with anticanonical section.

\subsection{Finite-dimensional manifolds with boundary}\label{sec:finDimManifolds}
Recall from \cref{sec:SymplecticStructures} that, given an homologically unital and compact $A_\infty$-algebra $A$, there are three equivalent ways of describing compact CY structures on $A$; here we will use the third description, namely, as classes
\[ [\omega] \in H^*(\Omega^0_\cyc(X)/\kk, Lie_Q) \]
in the complex of nc 0-forms with no constant term on the corresponding formal pointed dg manifold $X$ with homological vector field $Q$. We have the following application, which already appears in \cite{kontsevich1993formal}
\begin{proposition}
	Let $M$ be a compact, closed, oriented manifold of dimension $d$. The fundamental class of $Y$ gives a compact CY structure of dimension $d$ on the dg algebra of de Rham forms $B = \Omega^*(M)$ (with coefficients in $\kk$).
\end{proposition}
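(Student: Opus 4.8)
The plan is to use the third of the three equivalent descriptions of compact Calabi--Yau structures recalled in \cref{sec:SymplecticStructures}, realizing the fundamental class as (the lowest-order part of) a closed cyclic $0$-form, and then to reduce the nondegeneracy requirement to classical Poincar\'e duality. Before anything else I would check that the hypotheses of that description hold for $B = \Omega^*(M)$: since $M$ is compact, $H^*(B, d_{dR}) = H^*_{dR}(M)$ is finite dimensional, so $B$ is a compact dg algebra; it is strictly unital with unit the constant function $1$, hence in particular an $A_\infty$-algebra and homologically unital. Thus it is legitimate to search for a class $[\omega] \in H^*(\Omega^0_\cyc(X)/\kk, Lie_Q)$ of internal degree $d$.

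The candidate I would propose is the integration trace $\tau \colon B \to \kk$, $\tau(b) = \int_M b$, which is nonzero only on top-degree forms and so carries internal degree $d$; in the language of \cref{sec:SymplecticStructures} this is exactly the length-one part of the associated cyclic $0$-form, an element of $(B/[B,B])^\vee$. Equivalently it is the functional on the cyclic complex supported in length zero. I would verify that it is a $Lie_Q$-cocycle by splitting the Hochschild/cyclic differential into its $\mu^1 = d_{dR}$ and $\mu^2 = \wedge$ contributions. The $d_{dR}$-contribution produces $\int_M d_{dR}b = 0$ by Stokes' theorem, using that $M$ is closed. The $\mu^2$-contribution, coming from the length-one part of the differential, produces $\int_M (b_0 \wedge b_1)$ together with the wrapped-around term $\int_M(b_1\wedge b_0)$; these cancel because the graded cyclicity $\int_M a\wedge b = (-1)^{\deg(a)\deg(b)}\int_M b\wedge a$ (valid since only top forms integrate nontrivially) matches precisely the Koszul sign $\#_1$ in \cref{def:HochschildHomology}. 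This is just the statement that $\tau$ is a closed graded trace. Being defined directly at the cyclic level and supported in length zero, it is automatically compatible with the Connes operator $B$ (which raises length, so $\omega\circ B = 0$), hence it is already a \emph{strong} structure in the sense of \cref{def:strongCYstructure}: no separate lift to cyclic homology is needed.

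For nondegeneracy I would follow the recipe of \cref{sec:SymplecticStructures}, composing $\tau \in (B/[B,B])^\vee$ with $\mu^2$ to obtain the graded-symmetric pairing $\langle a, b\rangle = \tau(a\wedge b) = \int_M a \wedge b$, which induces the map $B \to B^\vee[-d]$ to be checked. On cohomology this descends to the cup-product pairing $H^k_{dR}(M) \otimes H^{d-k}_{dR}(M) \to \kk$, $([a],[b]) \mapsto \int_M a \wedge b$, and this is nondegenerate precisely by Poincar\'e duality for the compact, closed, oriented manifold $M$; orientability is exactly what makes $\int_M$ well defined on top cohomology. Since the induced map of complexes $B \to B^\vee[-d]$ is then a quasi-isomorphism, the corresponding map of diagonal bimodules $B_\Delta \to B^\vee_\Delta$ supplied by \cref{lemma:IsoCompact} is a quasi-isomorphism of bimodules, and therefore $\omega$ defines a compact CY structure of dimension $d$.

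I expect the main obstacle to be bookkeeping rather than conceptual: pinning down the Koszul-sign and shift conventions of the Hochschild and cyclic complexes so that the multiplication terms in $\omega\circ b$ genuinely cancel and so that $\omega$ lands in $\kk[-d]$ with the normalizations of \cref{def:strongCYstructure,lemma:IsoCompact}. The two substantive analytic inputs, Stokes' theorem and Poincar\'e duality, are classical and I would invoke them as black boxes; the actual content of the argument is the translation of ``closed graded trace with nondegenerate induced pairing'' into the cyclic $0$-form formalism set up in \cref{sec:SymplecticStructures}.
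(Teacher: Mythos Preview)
Your proposal is correct and follows essentially the same route as the paper: take the integration trace $\int_M$ as a cyclic $0$-form (description (3) in \cref{sec:SymplecticStructures}), use Stokes' theorem for closedness, and invoke Poincar\'e duality for nondegeneracy. You are in fact slightly more explicit than the paper, which bundles the graded-trace cancellation of the $\mu^2$-terms into the single phrase ``Stokes' theorem implies that this form is closed under $Lie_Q$''; your separation of the $\mu^1$- and $\mu^2$-contributions, and your remark that the length-zero support makes the structure automatically strong, are welcome clarifications rather than deviations.
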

\begin{proof}
	We regard $B = \Omega^*(M)$ as an $A_\infty$-algebra with $\mu^1 = d_\mathrm{dR}, \mu^2 = \wedge, \mu^{\ge 3} = 0$. Integration against the fundamental class $[M]$ gives a map
	\[ \omega = \int_{[Y]}: B \to \kk[-d] \]
	simply by assigning zero to forms of degree $<d$. We extend this to a nc 0-form $\omega \in \Omega^0_\cyc(B)/\kk$. Stokes' theorem implies that this form is closed under $Lie_Q$, and Poincar\'e duality of $M$ implies that the pairing
	\[ \omega \circ \wedge: B \otimes B \to \kk[d] \]
	is nondegenerate.
\end{proof}

In terms of minimal models, \cite[Thm.10.2.2]{kontsevich2009notes} then implies that
\begin{corollary}
	There is a cyclic $A_\infty$-structure on the graded vector space $H_\mathrm{dR}^*(M) = H^*(B,\mu^1)$.
\end{corollary}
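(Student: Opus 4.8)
The plan is to obtain the cyclic $A_\infty$-structure by transporting the compact Calabi-Yau structure on $B=\Omega^*(M)$, just produced, to a minimal model via the minimal-model proposition of \cite{kontsevich2006notes} recalled in \cref{sec:SymplecticStructures}. First I would record the two hypotheses that that proposition requires. The algebra $B=\Omega^*(M)$ is homologically unital, since the constant function $1\in\Omega^0(M)$ is a strict unit for the wedge product. Moreover $B$ is compact: because $M$ is a compact manifold its de Rham cohomology $H^*_\mathrm{dR}(M)=H^*(B,\mu^1)$ is finite-dimensional, which is exactly the condition $\dim H^*(T_{x_0}X)<\infty$ for the formal pointed dg manifold $X$ associated to $B$.

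Next I would bring the Calabi-Yau class into the right model. The preceding proposition delivers the compact CY structure as a class $[\omega]\in H^*(\Omega^0_\cyc(X)/\kk, Lie_Q)$ in cyclic nc $0$-forms, whereas the minimal-model proposition is phrased for closed cyclic nc $2$-forms. Since $B$ is homologically unital, the remark following \cref{thm:Minimal} gives a quasi-isomorphism
\[ (\Omega^{2,\cl}_\cyc(X),Lie_Q)\cong(\Omega^0_\cyc(X)/\kk,Lie_Q), \]
so $[\omega]$ corresponds to a closed nondegenerate class in $H^*(\Omega^{2,\cl}_\cyc(X),Lie_Q)$, nondegenerate because the pairing $\omega\circ\wedge\colon B\otimes B\to\kk[d]$ was shown to be nondegenerate via Poincaré duality.

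Having checked these inputs, I would apply the cited proposition (Thm.\ 10.2.2 of \cite{kontsevich2006notes}) verbatim: there is a minimal model $A_0$ of $B$, which as a graded vector space is $H^*(B,\mu^1)=H^*_\mathrm{dR}(M)$, and the nondegenerate closed cyclic $2$-form becomes, after a formal change of coordinates (i.e.\ along the quasi-isomorphism $A_0\simeq B$ given by homological perturbation), a \emph{constant} nondegenerate class $\omega_0$ on $A_0$. By definition a constant nondegenerate closed cyclic $2$-form compatible with the induced minimal $A_\infty$-structure is precisely a cyclic $A_\infty$-structure of dimension $d$ on $A_0=H^*_\mathrm{dR}(M)$, which is the assertion.

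The main obstacle is entirely internal to the cited proposition, and in this application is used as a black box: the content is a noncommutative Darboux/Moser-type argument showing that a nondegenerate closed cyclic $2$-form can be made constant by a formal coordinate change, which requires the finiteness $\dim H^*_\mathrm{dR}(M)<\infty$. On our side the only genuine verifications are that finiteness holds (compactness of $M$) and that the $2$-form is nondegenerate (Poincaré duality), both already in hand; the nontrivial minimal $A_\infty$-operations on $H^*_\mathrm{dR}(M)$ are the Massey-product operations of homological perturbation, and the new information supplied by the theorem is that they can be chosen cyclic with respect to the Poincaré pairing.
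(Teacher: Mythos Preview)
Your proposal is correct and follows the same approach as the paper: both simply invoke \cite[Thm.~10.2.2]{kontsevich2006notes} (the proposition recalled in \cref{sec:SymplecticStructures}) to transport the compact CY structure on $\Omega^*(M)$ to a constant nondegenerate form on a minimal model. You are more explicit than the paper in checking the hypotheses (unitality, compactness) and in passing from the cyclic $0$-form model to the cyclic $2$-form model via the quasi-isomorphism of \cref{sec:SymplecticStructures}, but this is exactly the content the paper leaves implicit.
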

In other words, there is a minimal $A_\infty$ structure on $H_\mathrm{dR}^*(M)$, quasi-isomorphic to the dg-algebra $\Omega(Y)$ and cyclic with respect to the Poincar\'e pairing.

Paul Seidel, in private communication, made the conjecture that if a compact oriented manifold $M$ has non-empty boundary then its cohomology $H^*(M)$ should have a pre-CY structure. Indeed, this follows from the results of \cref{sec:ncLagrangian}:
\begin{theorem}
Let $M$ be a compact oriented manifold of dimension $d$ with compact boundary $\partial M$ then the cohomology $H^*(M)$ of $M$ has the structure of a pre-CY algebra of dimension $d$.
\end{theorem}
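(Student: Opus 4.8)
The plan is to deduce the theorem from the noncommutative Lagrangian neighborhood theorem, \cref{thm:Minimal}, applied with $A = \Omega^*(M)$ and $B = \Omega^*(\partial M)$. Since $M$ is compact, $A$ is a homologically unital compact $A_\infty$-algebra (a dg algebra with $\mu^1 = d_{\mathrm{dR}}$, $\mu^2 = \wedge$) whose minimal model is $A_0 = H^*(M)$, so it suffices to produce a tuple $((B,\mu_B),f,\omega_A,\omega_B)$ satisfying conditions $(1)$--$(5)$ of that theorem. The boundary $\partial M$ is a closed oriented manifold of dimension $d-1$, so the Proposition above, applied to $\partial M$, endows $B = \Omega^*(\partial M)$ with a compact Calabi-Yau structure $\omega_B = \int_{\partial M}$ of dimension $d-1$; the induced pairing $\omega_{B,0}$ on $H^*(\partial M)$ is the Poincaré intersection form $H^k(\partial M)\otimes H^{d-1-k}(\partial M)\to\kk$, i.e. a pairing $B\otimes B\to\kk[1-d]$. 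This settles condition $(3)$ and fixes the degree bookkeeping, since a Calabi-Yau structure of dimension $d-1$ on $B$ is exactly what \cref{thm:Minimal} converts into a pre-CY structure of dimension $d$ on $A_0$. For condition $(1)$ I take $f\colon \Omega^*(M)\to\Omega^*(\partial M)$ to be restriction of forms, a strict morphism of dg algebras, hence an $A_\infty$-morphism with $f^1 = r^*$ and $f^{\ge 2}=0$.

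For condition $(2)$ I set $\omega_A(\alpha,\beta) = \int_M \alpha\wedge\beta$, the constant (hence $d_\cyc$-closed) cyclic $2$-form given by the relative integration pairing. The required identity $f^*\omega_B = Lie_{Q_A}\omega_A$ is then precisely Stokes' theorem: the $\mu^1$-part computes $(Lie_{Q_A}\omega_A)(\alpha,\beta) = \int_M d(\alpha\wedge\beta) = \int_{\partial M} r\alpha\wedge r\beta = (f^*\omega_B)(\alpha,\beta)$, while the $\mu^2$-part vanishes by associativity and graded-commutativity of $\wedge$ (this is the statement that $\int_M$ is cyclic with respect to $\mu^2$, its only failure of cyclicity being the boundary term). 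Both sides have no higher inputs, since $f = f^1$ and $\omega_B$ is constant. Equivalently, $\omega = (\omega_A,\omega_B)$ is a closed element of the cone described just after \cref{thm:Minimal}.

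The remaining two conditions are classical topological inputs. Condition $(4)$, that $f^1(H^*(M)) = r^*(H^*(M))$ is Lagrangian in $H^*(\partial M)$ for $\omega_{B,0}$, is exactly the \emph{half-lives-half-dies} lemma: isotropy is the same Stokes computation as in $(2)$, and maximality comes from the dimension count in the long exact sequence of $(M,\partial M)$ together with Lefschetz and Poincaré duality. Condition $(5)$, that $\omega_{A,0}$ restricts to a perfect pairing on $\ker(f^1) = \ker(r^*) = \mathrm{im}(j^*\colon H^*(M,\partial M)\to H^*(M))$, follows from Lefschetz duality: the perfect pairing $\int_M\colon H^*(M)\otimes H^*(M,\partial M)\to\kk$ restricts, via a lift $\beta = j^*\tilde\beta$, to $\omega_{A,0}(\alpha,\beta) = \langle \alpha,\tilde\beta\rangle$, which is well-defined for $\alpha\in\ker(r^*)$ because changing $\tilde\beta$ by $\ker(j^*) = \mathrm{im}(\delta)$ alters the value by $\pm\langle r^*\alpha,-\rangle = 0$. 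Nondegeneracy is then immediate: for nonzero $\alpha\in\ker(r^*)$ there is $\tilde\beta$ with $\langle\alpha,\tilde\beta\rangle\ne 0$, and $\beta = j^*\tilde\beta\in\ker(r^*)$ witnesses it. With $(1)$--$(5)$ verified, \cref{thm:Minimal} yields a pre-CY structure of dimension $d$ on $H^*(M)$.

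The obstacle here is bookkeeping rather than conceptual depth. The geometric content is entirely classical — Stokes, half-lives-half-dies, and Lefschetz duality — and the only real work is to confirm that these assemble to match conditions $(2)$, $(4)$ and $(5)$ on the nose. I expect the fiddliest points to be the sign verification of $f^*\omega_B = Lie_{Q_A}\omega_A$ in the graphical calculus of \cref{sec:graphical}, and checking that the Poincaré pairing on the odd-dimensional $\partial M$ carries the graded symmetry demanded of a dimension $d-1$ cyclic structure; both are routine within the framework already developed. As a sanity check, the degenerate case $\partial M = \emptyset$ forces $B = 0$, makes condition $(4)$ vacuous and condition $(5)$ the full Poincaré duality of the closed manifold $M$, recovering the genuine cyclic Calabi-Yau structure and hence a pre-CY structure via \cref{prop:cyclictopre}.
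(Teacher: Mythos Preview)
Your proposal is correct and follows essentially the same route as the paper: apply \cref{thm:Minimal} with $A=\Omega^*(M)$, $B=\Omega^*(\partial M)$, $f=$ restriction, $\omega_B=\int_{\partial M}$, $\omega_A=\int_M$, verify condition~(2) by Stokes and conditions~(4)--(5) by Poincar\'e--Lefschetz duality and the long exact sequence of the pair. The only cosmetic difference is that you set up $\omega_A$ directly as the constant cyclic $2$-form $\int_M\alpha\wedge\beta$, whereas the paper first extends $\int_M$ to a cyclic $0$-form and then effectively computes with the associated $2$-form; the remark after \cref{thm:Minimal} makes these equivalent.
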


\begin{proof}
Let us apply \cref{thm:Minimal} for the dg-algebras $A=\Omega(M)$ and $B=\Omega(\partial M)$, with $f = A \to B$ given by restriction of forms, i.e., pullback under the inclusion $i:\del M \to M$.

Integration against the fundamental classes $[M]$ in degree $d$ and $[\del M]$ in degree $d-1$ give maps
\[ \int_{[M]}: A \to \kk[-d], \qquad \int_{[\del M]}: B \to \kk[1-d] \]
which we extend to nc 0-forms $\omega_A,\omega_B$.

As before, $\omega_B$ is closed and gives a nondegenerate pairing. We now check the other conditions of \cref{thm:Minimal}. For any forms $\alpha_i$ on $M$ we calculate
\[ f^* \omega_B(\alpha_1,\alpha_2) = \omega_B(i^*\alpha_1,i^*\alpha_2) = \int_{[\del M]} i^*\alpha_1 \wedge i^*\alpha_2 \]
and by Stokes' theorem on $M$,
\[ (Lie_{Q_A} \omega_A)(\alpha_1,\alpha_2) = \int_{[M]} (d \alpha_1 \wedge \alpha_2 + (-1)^{\deg(\alpha_2)} \alpha_1 \wedge d\alpha_2) = \int_{[\del M]} i^*\alpha_1 \wedge i^*\alpha_2 \]
Moreover, the component of length 3 $(Lie_{Q_A} \omega_A)(\alpha_1,\alpha_2,\alpha_3)$ vanishes by associativity of $\wedge$, and the higher components vanish because $\mu^{\ge 3} = 0$. Thus we have $f^*\omega_B = Lie_{Q_A} \omega_A$.

The calculation above also shows that $f^1(H^*(A))=i^*(H^*(M))$ is Lagrangian, since $\int_{[\del M]} i^*\alpha_1 \wedge i^*\alpha_2 = 0$ if both $\alpha_1,\alpha_2$ are closed on $M$, and it has maximal dimension by Poincar\'e-Lefschetz duality.

It remains to check that $\omega_A = \int_{[M]}$ defines a non-degenerate pairing on the kernel $K = \ker(H^*(M) \to H^*(\del M))$. To see this, consider the long exact sequence
\[
\dots \to H^n(M) \overset{i^*}{\rightarrow} H^n(\partial M) \to H^{n+1}(M,\partial M)\to \dots
\]
Poincar\'e-Lefschetz duality and the compatibility $f^*\omega_B = Lie_{Q_A} \omega_A$ implies then that the map
\begin{align*}
\hspace{-1cm} K &= \ker(H^*(M) \to H^*(\del M)) \cong \coker(H^{*-1}(\del M) \to H^*(M,\del M)) \\
& \cong (\ker((H^*(M,\del M))^\vee \to (H^{*-1}(\del M))^\vee))^\vee \to \ker(H^{d-*}(M) \to H^{d-*}(\del M))^\vee = K^\vee[-d]
\end{align*}
is nondegenerate.
\end{proof}

\subsubsection{Poincar\'e pairs}
The result above can be applied to a slight generalization of oriented manifolds with boundary, given by the formalism of Poincar\'e pairs, explained in \cite{brav2019relative}. A Poincar\'e pair of dimension $d$ is a continuous map of topological spaces of finite type $f:X \to Y$, together with a class $[Y,X] \in H_d(Y,X)$, satisfying a certain nondegeneracy condition. An instance of such an object is an oriented manifold $M = Y$ with boundary $\del M = X$

Given any topological space of finite type $X$ and any field $\kk$, there is a linearization $\cL(X)$; this is a $\kk$-linear dg category such that there is a (noncanonical) equivalence $\cL(X) \simeq C_*(\Omega_\mathrm{pt} X)$ to the dg algebra of chains on the based loop space, and such that there is an equivalence between $\cL(X)$-modules and ($\infty$-)local systems on $X$ valued in $\kk$-chain complexes.
\begin{theorem} \cite[Thm.5.7]{brav2019relative}
	A Poincar\'e pair of dimension $d$ determines a relative smooth Calabi-Yau structure of dimension $d$ on the functor $\cL(X) \to \cL(Y)$, and therefore a relative compact Calabi-Yau structure on the functor $\mathrm{Loc^{fd}}(Y) \to \mathrm{Loc^{fd}}(X)$.
\end{theorem}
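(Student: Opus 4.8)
The plan is to build the relative smooth Calabi--Yau class directly out of the fundamental class $[Y,X]$ by passing through the identification of (negative) cyclic invariants of the linearizations with ($S^1$-equivariant) homology of free loop spaces, and then to recognize the required nondegeneracy as Poincar\'e--Lefschetz duality for the pair.

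First I would recall the classical computation (Goodwillie, Burghelea--Fiedorowicz) that the Hochschild homology of $\cL(X) \simeq C_*(\Omega_\mathrm{pt} X)$ is the homology $H_*(LX)$ of the free loop space, under which the Connes $S^1$-action corresponds to loop rotation; consequently the negative cyclic complex $CC^-_*(\cL(X))$ computes the equivariant homology $H^{S^1}_*(LX)$. The inclusion of constant loops $X \hookrightarrow LX$ is $S^1$-equivariant with trivial action on its image, so it furnishes a natural map $H_*(X) \to HC^-_*(\cL(X))$. Applying the same construction to $Y$ and using functoriality along $f:X\to Y$, these assemble into a map from the long exact sequence of the pair $(Y,X)$ to the long exact sequence computing the relative negative cyclic homology $HC^-_*(\cL(Y),\cL(X))$ of the functor $\cL(X)\to\cL(Y)$; in particular I obtain a map $H_*(Y,X) \to HC^-_*(\cL(Y),\cL(X))$ compatible with boundary maps.

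Second, I would define the candidate structure $\tilde\omega$ to be the image of the fundamental class $[Y,X]\in H_d(Y,X)$ under this map, a relative negative cyclic class of degree $d$. Compatibility with boundary maps identifies its boundary $\del\tilde\omega$ with the image of $\del[Y,X]=[X]\in H_{d-1}(X)$, i.e.\ a candidate absolute smooth Calabi--Yau class of dimension $d-1$ on $\cL(X)$. The main work, and the principal obstacle, is nondegeneracy. Via the dictionary between $\cL(X)$-bimodules and local systems on $X$ (equivalently, modules over $C_*(\Omega_\mathrm{pt}(X\times X))$), I must show that capping with $\tilde\omega$ exhibits the inverse dualizing bimodule $\cL(X)^!$ as a shift of the diagonal $\cL(X)_\Delta$, fitting into the commuting square that defines a relative left Calabi--Yau structure: on the boundary this is the statement that cap product with $[X]$ is an isomorphism $H^*(X)\cong H_{d-1-*}(X)$, i.e.\ Poincar\'e duality of $X$, while the relative nondegeneracy is exactly the isomorphism $H^*(Y)\cong H_{d-*}(Y,X)$ given by cap product with $[Y,X]$, i.e.\ Poincar\'e--Lefschetz duality. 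The delicate point is to promote these duality isomorphisms, which a priori hold on (co)homology, to quasi-isomorphisms of the relevant bimodules over the chain-level categories, and to verify that the equivariant refinement (the lift from $C_*$ to $CC^-_*$) is compatible with the boundary map relating the absolute and relative nondegeneracy conditions; this is precisely where the Poincar\'e-pair nondegeneracy hypothesis is used in full.

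Finally, the relative compact statement follows formally. Since $\mathrm{Loc^{fd}}(X)$ is the category of proper (finite-dimensional) $\cL(X)$-modules, the general duality exchanging smooth and compact Calabi--Yau structures under passage to proper modules --- together with the fact that the restriction functor $\mathrm{Loc^{fd}}(Y)\to\mathrm{Loc^{fd}}(X)$ is the appropriate dual of $\cL(X)\to\cL(Y)$ --- converts the relative smooth structure into a relative compact Calabi--Yau structure on $\mathrm{Loc^{fd}}(Y)\to\mathrm{Loc^{fd}}(X)$, with the reversal of functor direction accounting for the exchange of left and right structures.
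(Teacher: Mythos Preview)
The paper does not prove this theorem at all: it is stated with the citation \cite[Thm.5.7]{brav2019relative} and used as a black box from the literature, so there is no proof in the paper to compare your proposal against. Your outline is a reasonable sketch of the strategy actually used by Brav--Dyckerhoff --- building the class from $[Y,X]$ via the Goodwillie/Jones identification $HH_*(\cL(X))\simeq H_*(LX)$ and constant loops, then reading nondegeneracy as Poincar\'e--Lefschetz duality, and deducing the compact statement by the smooth/proper duality --- but since the present paper only quotes the result, any discussion of how to prove it lies outside its scope.
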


Translating the definition of relative CY structure into the language of $A_\infty$-structures and nc forms, we see that it corresponds exactly to the structure in the assumptions of \cref{thm:Minimal}. Therefore, we conclude the following.
\begin{corollary}
	If the dg categories $\mathrm{Loc^{fd}}(Y)$ and $\mathrm{Loc^{fd}}(X)$ have minimal models (as $A_\infty$-categories with $\mu^1=0$) then the minimal model for $\mathrm{Loc^{fd}}(Y)$ has a pre-CY structure of dimension $d$.
\end{corollary}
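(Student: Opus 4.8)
The plan is to produce the tuple required by \cref{thm:Minimal} directly from the relative Calabi--Yau structure of \cite[Thm.5.7]{brav2019relative}, so that the corollary becomes an application of that theorem (in its evident extension to $A_\infty$-categories, which is what the hypothesis on minimal models guarantees). First I would fix compact $A_\infty$-models: let $A$ and $B$ be minimal models of $\mathrm{Loc^{fd}}(Y)$ and $\mathrm{Loc^{fd}}(X)$. These are compact and homologically unital because $X,Y$ have finite type, so all morphism spaces have finite-dimensional cohomology. The restriction functor $\mathrm{Loc^{fd}}(Y)\to \mathrm{Loc^{fd}}(X)$ is then represented by an $A_\infty$-functor $f\colon A\to B$, supplying item (1).

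Next I would unpack the relative compact CY structure on $f$. By definition it is a class in the relative cyclic homology of $f$, i.e. a closed element of the cone $\Cone\bigl(f^*\colon \Omega^{2,cl}_{cyc}(B)\to \Omega^{2,cl}_{cyc}(A)\bigr)$ once translated into the language of nc $2$-forms via \cref{sec:SymplecticStructures}. Its two components are a closed form $\omega_B$ and a form $\omega_A$ with $Lie_{Q_B}\omega_B=0$ and $f^*\omega_B=Lie_{Q_A}\omega_A$, which is exactly item (2); the underlying \emph{absolute} compact CY structure on the target $B=\mathrm{Loc^{fd}}(X)$ gives the nondegeneracy of $\omega_B$, which is item (3).

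The crux is to identify the \emph{relative} nondegeneracy packaged into the definition of a relative CY structure with items (4) and (5). The relative nondegeneracy is the assertion that a certain comparison map of bimodules sitting in the defining exact triangle is a quasi-isomorphism; reading this off on cohomology says precisely that $f^1(H^*A)$ is isotropic of half-dimension, hence Lagrangian in $(H^*B,\omega_{B,0})$ (item (4)), and that $\omega_{A,0}$ restricts to a perfect pairing on $\ker(f^1)$ (item (5)). This is the homotopical counterpart of the Poincar\'e--Lefschetz duality computation in the proof of the preceding theorem, where $\ker(f^1)=\ker(H^*(M)\to H^*(\partial M))$ was shown to be self-dual. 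With items (1)--(5) in hand, \cref{thm:Minimal} produces a pre-CY structure of dimension $d$ on the minimal model of $\mathrm{Loc^{fd}}(Y)$. The one genuine obstacle is precisely this translation of Brav--Dyckerhoff's derived-Lagrangian nondegeneracy into the two linear-algebra conditions on cohomology; the remaining steps are bookkeeping, the text having already asserted that relative CY data corresponds exactly to the hypotheses of \cref{thm:Minimal}.
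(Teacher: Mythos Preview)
Your proposal is correct and follows essentially the same approach as the paper: the paper simply asserts that translating the relative compact CY structure of \cite{brav2019relative} into the language of nc forms yields exactly the hypotheses of \cref{thm:Minimal}, and then invokes that theorem (noting, as you do, that one must assume the existence of minimal models in the categorical setting). Your write-up is in fact more detailed than the paper's one-sentence justification, and you correctly flag the one substantive step---matching the relative nondegeneracy condition to items (4) and (5)---which the paper leaves implicit.
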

\begin{remark}
	Strictly speaking, we only proved \cref{thm:Minimal} in the setting of $A_\infty$-algebras, for the sole reason that the existence of minimal models has only been proven in the algebra case. But if we add the assumption of their existence the rest of the proof is the same.
\end{remark}

\begin{remark}
The results of \cite{KTV2} imply that it is also possible to show that the dg category $\cL(Y)$ itself carries a pre-CY structure, which is moreover nondegenerate in the sense of \cref{sec:relationToSmooth}.
\end{remark}

\subsection{Varieties with section of the anticanonical bundle}
Let $X$ be a quasi-compact separated scheme over $\kk$. We denote by $\cA = D_\mathrm{perf}(X)$ the derived category of perfect complexes on $X$. A result of Bondal and van den Bergh \cite{bondal2003generators} is that $\cA$ is generated under taking cones and direct sums by a single object $E$.

Setting $A_X = \End(E)$, seen as a dg algebra of endomorphisms, there is a triangulated equivalence $\cA \cong [\Perf(A_X)]$ to the derived category of perfect $A_X$-modules. This choice of generator exhibits the dg category $\Perf(A_X)$ as an enhancement of the triangulated category $\cA$. We also have a description of the category of $A_X$-bimodules; there is a quasi-isomorphism
\[ D_\mathrm{perf}(X \otimes X) \cong [\Perf(A_X \otimes A_X^{op})] \]
described more precisely in e.g. \cite{toen2007homotopy}. The algebra $A_X$ is homologically smooth when $X$ is smooth and compact when $X$ is compact.

We can use this enhancement to define Hochschild co/homology of $X$ as the corresponding invariants of $A_X$; the resulting complexes can be shown to be invariant up to quasi-isomorphism under derived equivalence, and agree with the geometric definitions
\[ HH^*(X) = \Ext^*_{X\times X}(\Delta_* \cO_X, \Delta_* \cO_X), \quad HH_*(X) = \mathbb{H}^*_{X\times X}(X, \Delta_* \cO_X \otimes_{X\times X} \Delta_* \cO_X)  \]
where by $\Delta_*,\otimes$ etc. we mean the derived version of those functors.

When $X$ is smooth, we have the Hochschild-Kostant-Rosenberg isomorphisms \cite{hochschild1962differential,alex2009hochschild}
\[ HH^*(X) \cong \bigoplus_{p+q=*} H^p(X,\wedge^q T_X), \quad HH_*(X) \cong \bigoplus_{p-q=*} H^p(X,\Omega^q_X) \]

We now extend this calculation to higher Hochschild homology.
\begin{proposition}
	If $X$ is smooth of dimension $d$, under the equivalence $D_\mathrm{perf}(X \otimes X) \cong [\Perf(A_X \otimes_\kk A_X^{op})]$, the inverse dualizing bimodule $A^!$ corresponds to $\Delta_*(\omega^{-1}_X)[-d]$.
\end{proposition}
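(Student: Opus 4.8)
The plan is to transport the purely algebraic bimodule $A^! = \RHom_{A^e}(A, A^e)$, with $A = A_X$ and $A^e = A \otimes_\kk A^{op}$, across the equivalence $[\Perf(A^e)] \simeq D_\mathrm{perf}(X \times X)$ and compute it by Grothendieck--Serre duality for the diagonal. Under this equivalence the diagonal bimodule $A = A_\Delta$ corresponds to the structure sheaf of the diagonal $\Delta_* \cO_X$, where $\Delta \colon X \hookrightarrow X \times X$. The first step is to identify the geometric avatar of the bimodule-dual functor $M \mapsto \RHom_{A^e}(M, A^e)$: I claim it is the derived sheaf-dual $\mathbb{D} := R\sheafHom_{X\times X}(-, \cO_{X\times X})$ on perfect complexes. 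Granting this, the proposition reduces to the identity $\mathbb{D}(\Delta_* \cO_X) \simeq \Delta_*(\omega_X^{-1})[-d]$.

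For the computation I would apply Grothendieck duality to the regular closed immersion $\Delta$, which has codimension $d$ and normal bundle $N_\Delta \cong T_X$. Duality for $\Delta$ gives $R\sheafHom_{X\times X}(\Delta_* \cO_X, \cG) \simeq \Delta_* \Delta^! \cG$ for any $\cG \in D_\mathrm{perf}(X\times X)$, and for a regular immersion one has $\Delta^! \cG \simeq \Delta^* \cG \otimes \omega_\Delta[-d]$ with relative dualizing sheaf $\omega_\Delta = \det N_\Delta = \det T_X = \omega_X^{-1}$. Taking $\cG = \cO_{X\times X}$ yields $\Delta^! \cO_{X\times X} \simeq \omega_X^{-1}[-d]$, hence $\mathbb{D}(\Delta_* \cO_X) \simeq \Delta_*(\omega_X^{-1})[-d]$, as desired. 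As a consistency check, when $\omega_X \cong \cO_X$ this returns $A^! \simeq A_\Delta[-d]$, matching the fact that a Calabi--Yau variety of dimension $d$ carries a smooth Calabi--Yau structure of that dimension in the sense of \cref{sec:CYstructures}.

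The main obstacle is the identification in the first paragraph, namely that the algebraic dual $\RHom_{A^e}(-, A^e)$ corresponds to the geometric dual $\mathbb{D}$. The derived Morita equivalence coming from a compact generator $E$ is monoidal for the convolution product on $D_\mathrm{perf}(X \times X)$ (with unit $\Delta_* \cO_X = A_\Delta$), but not a priori for the pointwise tensor product $\otimes_{\cO_{X\times X}}$ that underlies $\mathbb{D}$, so this compatibility is not automatic and is where the real work lies. I would handle it intrinsically rather than by tracking the generator: by \cref{lemma:IsoSmooth}, $A^!$ is the kernel representing the functor inverse to the Serre twist $M \mapsto M \otimes_A A^\vee$, and on the geometric side the Serre functor of $D_\mathrm{perf}(X)$ has Fourier--Mukai kernel $\Delta_*(\omega_X)[d]$, so its inverse has kernel $\Delta_*(\omega_X^{-1})[-d]$; matching these two intrinsic characterizations avoids any choice of generator. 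Alternatively, since $\RHom_{A^e}(A, A^e)$ can be computed from a finite bimodule resolution of $A_\Delta$, one may reduce to the local statement, which is exactly the Koszul-resolution computation of $\Delta^! \cO_{X\times X}$ and is valid for any smooth $X$ without a properness hypothesis (needed because the Serre-functor argument presupposes properness).
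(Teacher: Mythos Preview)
Your core computation --- that $R\sheafHom_{X\times X}(\Delta_*\cO_X,\cO_{X\times X})\simeq \Delta_*(\omega_X^{-1})[-d]$ via Grothendieck duality for the regular immersion $\Delta$ --- is exactly the paper's final step. The difference lies entirely in how the ``main obstacle'' you flag is resolved.

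The paper does not try to match the functors $\RHom_{A^e}(-,A^e)$ and $R\sheafHom_{X\times X}(-,\cO_{X\times X})$ abstractly. Instead it tracks the two commuting $A^e$-actions on $A^e$ by passing to $X^4=X_1\times X_2\times X_3\times X_4$: under the equivalence, the quadri-module $A^e$ becomes $\cO_{\Delta_{12}}\boxtimes\cO_{\Delta_{34}}$, with the two bimodule structures visible along the $(1,4)$ and $(2,3)$ directions. Then $A^!$ is literally
\[
(\pi_{23})_*\,R\sheafHom_{X^4}\bigl(\pi_{14}^*\cO_{\Delta_{14}},\ \cO_{\Delta_{12}}\boxtimes\cO_{\Delta_{34}}\bigr),
\]
and a support argument (everything lives over the product of diagonals) collapses this to $R\sheafHom_{X_2\times X_3}(\cO_{\Delta_{23}},\cO_{X_2\times X_3})$, after which your Grothendieck-duality step finishes verbatim.

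This $X^4$ device is precisely the bookkeeping that your Serre-functor and local-Koszul workarounds are trying to avoid. It works uniformly for any smooth $X$ with no properness hypothesis, whereas your first route (as you note) needs $X$ proper, and your invocation of \cref{lemma:IsoSmooth} does not actually say that $A^!$ is the inverse Serre kernel --- that requires the further identification of $A^\vee$ with the Serre kernel, which again uses properness. Your second route would work but leaves the global bimodule structure implicit; the $X^4$ computation makes it explicit in one line.
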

\begin{proof}
	Roughly this follows from the adjunction (of derived functors) $\Delta_* \vdash \Delta^! = \Delta^*(-)\otimes_{\cO_X} \omega^{-1}_X[-d]$ and the identification of the diagonal bimodule as $\cO_\Delta = \Delta_* \cO_X$. To be more precise, we must calculate $\Hom_{A_X \otimes A_X}((A_X)_\Delta,A_X^e)$ as a $A_X^e$-module itself; for that we must consider four actions of the algebra $A_X$ which translates to sheaves on $X_1 \times X_2 \times X_3 \times X_4$. We number the copies of $X$ for clarity, and denote by $\pi_i, \pi_{ij}, \Delta_{ij}$ the appropriate projections and diagonal embeddings.

	The 4-module $A^e$ has two bimodule structures, outer and inner, and corresponds to the sheaf
	\[ \cO_{\Delta_{12}} \boxtimes \cO_{\Delta_{34}} \in \Perf(X_1 \times X_2 \times X_3 \times X_4) \]
	with outer and inner bimodule structure given by the identifications
	\[ (\pi_{14})_* (\cO_{\Delta_{12}} \boxtimes \cO_{\Delta_{34}}) \cong \cO_{X_1} \boxtimes \cO_{X_4}, \quad (\pi_{23})_* (\cO_{\Delta_{12}} \boxtimes \cO_{\Delta_{34}}) \cong \cO_{X_2} \boxtimes \cO_{X_2} \]
	The bimodule $A_X^! = \Hom_{A_X \otimes A_X}((A_X)_\Delta,A_X^e)$ is then given by
	\[ (\pi_{23})_* \sheafHom_{X_1 \times X_2 \times X_3 \times X_4}(\pi_{14}^*\cO_{\Delta_{14}}, \cO_{\Delta_{12}} \boxtimes \cO_{\Delta_{34}}) \]
	We now consider the isomorphism $\eta: X_2 \times X_3 \xrightarrow{\simeq} X_1 \times X_4$; from the fact that the support of the sheaf above is contained in the product of the diagonals we have calculate that it is isomorphic to
	\begin{align*}
	\eta_* (\pi_{14})_* &\sheafHom_{X_1 \times X_2 \times X_3 \times X_4}(\pi_{14}^*\cO_{\Delta_{14}}, \cO_{\Delta_{12}} \boxtimes \cO_{\Delta_{34}}) \cong \sheafHom_{X_2 \times X_3}(\cO_{\Delta_{23}}, \cO_{X_2 \times X_3}) \\
		&\cong (\Delta_{23})_*\Delta_{23}^!(\cO_{X_2 \times X_3}) \cong (\Delta_{23})_*(\omega^{-1}_X)[-d]
	\end{align*}
\end{proof}

Together with \cref{prop:calculatingSmooth}, this implies that:
\begin{corollary}
For any $k \ge 1$ there is a quasi-isomorphism of complexes
\[ C^*_{(k)}(A_X) \cong \Hom_{X\times X}(\Delta_*\cO_X, \Delta_*(\omega_X^{1-k})[d(1-k)]). \]
\end{corollary}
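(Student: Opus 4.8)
The plan is to combine \cref{prop:calculatingSmooth} with the geometric identification of the inverse dualizing bimodule established in the Proposition above, translating the abstract bimodule computation into Fourier--Mukai calculus on $X \times X$. Since $X$ is smooth, $A_X$ is homologically smooth, and being the endomorphism dg algebra of a generator it is strictly unital, hence homologically unital; so \cref{prop:calculatingSmooth} applies and yields a quasi-isomorphism
\[ C^*_{(k)}(A_X) \simeq \Hom_{A_X-A_X}\bigl((A_X)_\Delta, (A_X^!)^{\otimes_{A_X}(k-1)}\bigr). \]
The right-hand side lives in $A_X\mh\Mod\mh A_X$, which under the equivalence $D_\mathrm{perf}(X\times X)\cong[\Perf(A_X\otimes A_X^{op})]$ matches the derived category of integral kernels on $X\times X$, with $\Hom$ of bimodules corresponding to the derived $\Hom_{X\times X}$. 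Under this equivalence the diagonal bimodule $(A_X)_\Delta$ corresponds to $\Delta_*\cO_X$, and by the Proposition above the inverse dualizing bimodule $A_X^!$ (written $A^!$ there) corresponds to $\Delta_*(\omega_X^{-1})[-d]$.

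First I would record the key compatibility needed: the bimodule tensor product $-\otimes_{A_X}-$ corresponds, under the chosen enhancement, to convolution (Fourier--Mukai composition) of the associated kernels. For kernels supported on the diagonal this convolution is simple to evaluate, since $\Delta_*\cF$ represents the autoequivalence $-\otimes_{\cO_X}\cF$; composing $\Delta_*\cF$ with $\Delta_*\cG$ then represents $-\otimes_{\cO_X}(\cF\otimes_{\cO_X}\cG)$, so it is the kernel $\Delta_*(\cF\otimes_{\cO_X}\cG)$. Concretely this is the (derived) projection-formula and base-change identity
\[ (\pi_{13})_*\bigl(\pi_{12}^*\Delta_*\cF \otimes \pi_{23}^*\Delta_*\cG\bigr) \cong \Delta_*(\cF\otimes_{\cO_X}\cG). \]

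Applying this identity $(k-1)$ times to the $(k-1)$-fold tensor power of $A_X^!\leftrightarrow\Delta_*(\omega_X^{-1})[-d]$, and using that $\omega_X$ is a line bundle so its tensor powers are invertible, gives
\[ (A_X^!)^{\otimes_{A_X}(k-1)} \longleftrightarrow \Delta_*\bigl((\omega_X^{-1})^{\otimes(k-1)}\bigr)[-d(k-1)] = \Delta_*(\omega_X^{1-k})[d(1-k)]. \]
Substituting into the target of \cref{prop:calculatingSmooth} produces the claimed quasi-isomorphism
\[ C^*_{(k)}(A_X) \cong \Hom_{X\times X}\bigl(\Delta_*\cO_X, \Delta_*(\omega_X^{1-k})[d(1-k)]\bigr). \]

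The main obstacle is the first step: verifying carefully that the bimodule tensor product $\otimes_{A_X}$ matches kernel convolution under the enhancement $D_\mathrm{perf}(X\times X)\cong[\Perf(A_X\otimes A_X^{op})]$, and that this convolution commutes with $\Delta_*$ as stated. This requires the compatibility of that equivalence with the convolution (monoidal) structures, together with the smoothness of $X$ to ensure that all objects in sight are perfect and that the relevant base-change and projection-formula isomorphisms hold in the derived category without correction terms. Once this compatibility is in place, the remainder is the formal iteration of a line-bundle multiplication and bookkeeping of the degree shifts $[-d]$.
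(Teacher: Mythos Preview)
Your proposal is correct and follows exactly the route the paper intends: the Corollary is stated immediately after the Proposition identifying $A_X^!$ with $\Delta_*(\omega_X^{-1})[-d]$, with the text ``Together with \cref{prop:calculatingSmooth}, this implies that:'' as its only justification. You have simply spelled out the implicit step---that $\otimes_{A_X}$ corresponds to convolution of kernels and that convolution of diagonally supported kernels is $\Delta_*$ of the tensor product on $X$---which the paper takes for granted via the cited equivalence of \cite{toen2007homotopy}.
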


Consider now the pair of a smooth variety $X$ of dimension $d$ and a section $s$ of its anticanonical bundle $\omega_X^{-1}$. Its image under the pushforward $\Delta_*$ is an element of
\[ \Ext^d_{X\times X}(\Delta_* \cO_X, \Delta_* \omega_X^{-1}) \cong HH^d_{(2)}(A_X) \]
by the proposition above. We define $m_{(2)} \in C^2_{(2,d)}(A_X)$ to be (a cocycle representative) of the symmetrization of this element in $HH^d_{(2)}(A_X)$. We also denote $m_{(1)}$ to be the $A_\infty$ structure on $A_X$ (which is just a dg algebra structure in this case).
\begin{theorem}\label{thm:varietyWithSection}
	For any smooth $X$ and anticanonical section as above, the element $m_{(1)} + m_{(2)}$ can be extended to a pre-CY structure of dimension $d$ on $A_X$.
\end{theorem}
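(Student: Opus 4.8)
The plan is to realize this as a direct instance of the obstruction-theoretic criterion \cref{cor:extend}. By construction $m_{(2)}$ is a cocycle of degree two in the cyclic complex $C^*_{(2,d)}(A_X)$: it is a symmetrized representative of the class $\Delta_* s \in HH^d_{(2)}(A_X)$, and symmetrization over the $\ZZ_2$-action produces a genuine cyclic cocycle because $[\mu,-]_\nec$ commutes with that action (cf. \cref{prop:symmetrization}). Thus \cref{cor:extend} tells us that $m_{(1)} + m_{(2)}$ extends to a full pre-CY structure of dimension $d$ as soon as the obstruction groups $HH^{d\ell - d - 2\ell + 4}_{(\ell)}(A_X)$ vanish for every $\ell \ge 3$. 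The whole argument therefore collapses to a single vanishing statement.

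To verify that vanishing I would feed these degrees into the identification of higher Hochschild cohomology with $\Ext$ groups on $X \times X$ established in the corollary preceding the theorem, namely $HH^n_{(\ell)}(A_X) \cong \Ext^{n + d(1-\ell)}_{X \times X}(\Delta_* \cO_X, \Delta_*(\omega_X^{1-\ell}))$, which rests on \cref{prop:calculatingSmooth} together with the computation of the inverse dualizing bimodule as $\Delta_*(\omega_X^{-1})[-d]$. Substituting the obstruction degree $n = d\ell - d - 2\ell + 4$, the dimension-dependent terms cancel and the relevant $\Ext$-degree simplifies to exactly $4 - 2\ell$, independent of $d$.

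The key observation is then purely homological: both $\Delta_* \cO_X$ and $\Delta_*(\omega_X^{1-\ell})$ are honest coherent sheaves on $X \times X$, concentrated in cohomological degree zero, so all $\Ext$ groups between them in strictly negative degree vanish. Since $4 - 2\ell \le -2 < 0$ for every $\ell \ge 3$, each obstruction group $HH^{d\ell-d-2\ell+4}_{(\ell)}(A_X)$ vanishes, and \cref{cor:extend} delivers the desired pre-CY structure. Smoothness of $X$ is used only to guarantee that $A_X$ is homologically smooth, so that \cref{prop:calculatingSmooth} and the geometric identification apply; no properness, positivity, or dimension bound on $X$ is needed beyond $\dim X = d$.

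I expect the only genuinely delicate point to be the degree bookkeeping. There are two shifts in play: the $(d-2)(\ell-1)$ shift built into the definition of the cyclic complex $C^*_{(\ell,d)}$, through which the obstruction in $HC^2_{(\ell,d)}$ corresponds to the Hochschild degree $d\ell - d - 2\ell + 4$, and the $d(1-\ell)$ shift in the geometric identification. The content of the argument is precisely that these conspire so that the obstruction lands in negative $\Ext$-degree, after which vanishing is automatic; I would take care to confirm this cancellation explicitly rather than rely on it morally.
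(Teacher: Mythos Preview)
Your proposal is correct and follows essentially the same route as the paper's proof: apply \cref{cor:extend}, use the identification of higher Hochschild cohomology with $\Ext$ groups on $X\times X$ via $\Delta_*(\omega_X^{1-\ell})[d(1-\ell)]$, substitute the obstruction degree to land in $\Ext^{4-2\ell}$, and invoke vanishing of negative $\Ext$ between honest coherent sheaves. Your degree bookkeeping is right, and the paper likewise notes that $\Delta_*$ is exact on a closed immersion to justify that $\Delta_*\cO_X$ and $\Delta_*(\omega_X^{1-\ell})$ sit in the abelian category.
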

\begin{proof}
	We note first that since $\Delta$ is a closed immersion, $\Delta_*$ is an exact functor, and therefore for any coherent sheaf $\cF$ (in the abelian category, in degree zero in $\Perf(X)$), $\Delta_*\cF$ is also in the abelian category of coherent sheaves on $X\times X$.

	We now calculate, using the proposition above:
	\[ HH^{dk-d-2k+4}_{(k)}(A_Y) = \Ext^{4-2k}(\Delta_* \cO_X, \Delta_*(\omega_X^{1-k})) \]
	But the Ext groups vanish in negative degree since both objects are in the abelian category of coherent sheaves. So $HH^{dk-d-2k+4}_{(k)}(A_X)=0$ for all $k \ge 3$ and we can apply \cref{cor:extend}.
\end{proof}

\subsubsection{Calabi-Yau spaces}
A special case of the result above applies to varieties with non-vanishing section of their anticanonical sheaf $\omega_X$. An example of such a space is a Calabi-Yau variety of any dimension, open or closed; here we take the broad definition that a Calabi-Yau is just a smooth variety with trivial canonical bundle.

\begin{proposition}
For any Calabi-Yau variety $Y$ of dimension $d$, the dg category $\mathrm{Coh}(X)$ has a pre-CY structure of dimension $d$; moreover its component $m_{(2)}$ is nondegenerate in the sense of \cref{sec:relationToSmooth}.
\end{proposition}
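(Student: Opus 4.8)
The plan is to get existence straight from \cref{thm:varietyWithSection} and then to verify nondegeneracy separately by a sheaf-theoretic computation. First I would observe that, since $X$ is Calabi-Yau, the canonical bundle $\omega_X$ is trivial, so the anticanonical bundle $\omega_X^{-1}$ admits a nowhere-vanishing section $s$, i.e. a trivialization $\cO_X \xrightarrow{\sim} \omega_X^{-1}$. This $s$ is in particular an anticanonical section in the sense of the construction preceding \cref{thm:varietyWithSection}, so it produces a class $m_{(2)} \in C^2_{(2,d)}(A_X)$ and, by that theorem, $m_{(1)} + m_{(2)}$ extends to a pre-CY structure of dimension $d$ on $A_X$. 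Since $\Perf(A_X)$ is a dg enhancement of $\mathrm{Coh}(X)$, this gives the first claim.

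For nondegeneracy, recall from \cref{sec:relationToSmooth} that $m_{(2)}$ is nondegenerate precisely when the associated bimodule morphism $\Phi \in \Hom_{\cA-\cA}(\cA_\Delta, \cA^![d])$, obtained via the quasi-isomorphism of \cref{prop:calculatingSmooth} for $k=2$, is a quasi-isomorphism. Here I would use the two geometric identifications established earlier in this subsection: the inverse dualizing bimodule satisfies $\cA^! \cong \Delta_*(\omega_X^{-1})[-d]$, and the corollary computing higher Hochschild cochains gives $C^*_{(2)}(A_X) \cong \Hom_{X\times X}(\Delta_*\cO_X, \Delta_*(\omega_X^{-1})[-d])$. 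Under these, $\cA^![d] \cong \Delta_*(\omega_X^{-1})$, and the morphism $\Phi$ is simply the pushforward $\Delta_*(\cO_X \xrightarrow{\cdot s} \omega_X^{-1})$ of multiplication by $s$. Because $s$ is a trivialization, $\cdot s$ is an isomorphism of line bundles on $X$, and since $\Delta_*$ is exact (the diagonal is a closed immersion), $\Phi$ is an isomorphism in $D_\mathrm{perf}(X\times X)$, hence a quasi-isomorphism of $A_X$-bimodules. This is exactly the nondegeneracy of $m_{(2)}$.

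The main obstacle, and the one point requiring genuine care, is the \emph{symmetrization}: by construction $m_{(2)}$ represents not $[\Delta_* s]$ itself but its $\ZZ_2$-symmetrization (the dimension-$d$ cyclic action) inside $HH^d_{(2)}(A_X)$, so I must check that this symmetrized class is still represented by an isomorphism. My plan is to identify the $\ZZ_2$-action geometrically with the swap of factors $\sigma\colon X\times X \to X\times X$, which fixes both $\Delta_*\cO_X$ and $\Delta_*\omega_X^{-1}$ and hence acts on $\Ext^d_{X\times X}(\Delta_*\cO_X, \Delta_*\omega_X^{-1})$. I would then argue, via Serre-duality/HKR bookkeeping, that this action is trivial (or at worst a sign) on the one-dimensional top piece spanned by the canonical class $[\Delta_* s]$; since we work in characteristic zero, averaging returns a nonzero multiple of $[\Delta_* s]$, which preserves the isomorphism property and therefore nondegeneracy. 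Pinning down that the induced involution fixes this class is the only computational step I expect to need attention.
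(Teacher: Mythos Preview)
Your approach is essentially the same as the paper's: pick a trivializing section of $\omega_X^{-1}$, apply \cref{thm:varietyWithSection}, and argue nondegeneracy from the fact that the section is nowhere vanishing. The paper's proof is in fact terser than yours --- it simply says ``Nondegeneracy follows from the fact that this section is nonvanishing'' without spelling out the bimodule identification or addressing the symmetrization, so your more detailed unpacking (and your flagged concern about the $\ZZ_2$-averaging) is additional care that the paper omits rather than a divergence in strategy.
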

\begin{proof}
Since $\omega_Y \simeq \cO_Y$, we pick a section trivializing its inverse $\omega_Y^{-1}$, which by \cref{thm:varietyWithSection} gives a pre-CY structure. Nondegeneracy follows from the fact that this section is nonvanishing.
\end{proof}

In fact the result above can be extended a little further, to any Gorenstein scheme with trivial canonical bundle. Moreover, by \cref{thm:smoothCYtopreCY}, existence of this pre-CY structure implies that there is a smooth Calabi-Yau structure of dimension $d$ on $\mathrm{Coh}(Y)$ for any such space $Y$; this has also been shown by \cite{brav2019relative}.

\section{PROPs of marked ribbon quivers}\label{sec:PROPs}
We now arrive at the proof of our main result, \cref{thm:MainPROP}. To recall, it says that the data of a pre-Calabi-Yau structure of dimension $d$ on an $A_\infty$-algebra/category $\cA$ determines an action of a certain colored dg \textsc{prop} $Q^d$ on the morphism spaces $\cA(X,Y)$ for all objects $X,Y$ of $\cA$ and on its Hochschild chain complex $C_*(\cA)$. This structure will be related to surfaces whose boundary has `open strings' (corresponding to elements of some space $\cA(X,Y)$) and `closed strings' (corresponding to Hochschild chains).

The \textsc{prop}s $Q^d$ will be defined combinatorially in terms of ribbon quivers in \cref{sec:theClosedPROP}, and using the graphical calculus for signs explained in \cref{sec:graphicalhigher} we prove that its action commutes with the relevant differentials. This proves half of \cref{thm:MainPROP}.

We postpone to \cref{sec:Strebel} the other half of this proof, namely, the fact that the complexes $Q^d$ compute chains on the  moduli spaces of open-closed surfaces; this description relies on an extension of Strebel's theorem to meromorphic quadratic differentials with higher-order poles, based on the description of Gupta and Wolf of the space of such objects \cite{gupta2016quadratic,gupta2019meromorphic}. This geometric description also makes manifest the fact that the composition maps of the combinatorial \textsc{prop} describe the maps induced on chains by gluing of surfaces.

\subsection{Marked ribbon quivers}
Let us introduce the combinatorial objects that we will use to define the \textsc{prop}s.

\subsubsection{Ribbon quivers}\label{sec:ribbonQuivers}
For us, a ribbon graph (or fatgraph) is a finite, connected graph whose vertices are equipped with a cyclic order of the incident half-edges. We will allow vertices of any valence $val(v) \in \ZZ_+ = \{1,2,\dots\}$. Every ribbon graph $\Gamma$ gives rise to an oriented topological surface with boundary $\Sigma_\Gamma$ by assigning a disc to each vertex and a rectangle to each edge, and then gluing according to incidence and ribbon structure.

\begin{definition}
	A \emph{acyclic ribbon quiver} $\vec\Gamma$ is a ribbon graph $\Gamma$ together with an orientation of each edge of $\Gamma$, such that
	\begin{enumerate}
		\item the underlying quiver of $\vec\Gamma$ has no oriented cycles, and
		\item any vertex of valence two is either a source or a sink; it cannot have one arrow in and another one out.
	\end{enumerate}
\end{definition}
Given any $\vec\Gamma$ as above, we denote by $\mathrm{Source}(\vec\Gamma)$ and $\mathrm{Sink}(\vec\Gamma)$ the corresponding subsets of the set $V(\Gamma)$ of vertices. Any vertex that is not a source or a sink we will call a \emph{flow} vertex, denoting their subset $\mathrm{Flow}(\vec\Gamma)$. We also denote by $\mathrm{Source}^1(\vec\Gamma)$ and $\mathrm{Sink}^1(\vec\Gamma)$ the subsets of those that have valence one.

Note that $\mathrm{Source}^1(\vec\Gamma) \sqcup \mathrm{Sink}^1(\vec\Gamma)$ is all the vertices of valence one, and to each element of this set there is a well-defined boundary circle, i.e., component of $\del \Sigma_\Gamma$ which it sits on.

\begin{definition}\label{def:markings}
	A \emph{marking} on an acyclic ribbon quiver $\vec\Gamma$ is the data of five \emph{ordered} subsets of $V(\Gamma)$, labeled
	\[ V_\times, V_\mathrm{open-in}, V_\mathrm{open-out}, V_\circ, V_\mathbf{1} \]
	all pairwise disjoint, with the following properties:
	\begin{enumerate}
		\item $V_\times \subset \mathrm{Source}^1(\vec\Gamma)$, such that if $v \in V_\times$, then no other vertices in $\mathrm{Source}^1(\vec\Gamma)$ sit on the same boundary component of $v$.
		\item $V_\mathrm{open,out} \subseteq \mathrm{Sink}^1(\vec\Gamma)$ such that if $v \in V_\mathrm{open,out}$, the boundary component it sits on doesn't have any vertex in $V_\times$
		\item $V_\circ \subset \mathrm{Sink}(\vec\Gamma) \setminus V_\mathrm{open-out}$
		\item $V_\mathbf{1} \subset \mathrm{Source}^1(\vec\Gamma)$ and every vertex in $V_\mathbf{1}$ is directly connected to a vertex in $V_\circ$, such that for each vertex in $V_\circ$ there is at most one vertex in $V_\mathbf{1}$ connected to it.
		\item $V_\mathrm{open-in} = \mathrm{Source}^1(\vec\Gamma) \setminus (V_\times \sqcup V_1)$.
	\end{enumerate}
	together with a choice of distinguished outgoing arrow for each vertices not in $\mathrm{Sink}(\vec\Gamma)$, and a distinguished incoming arrow for each vertex in $V_\circ$.
\end{definition}

We will see later, in \cref{sec:Strebel}, that marked ribbon quivers label cells in some moduli space of open-closed surfaces; that is, topological surfaces whose boundary has subsets marked as incoming/outgoing `open strings' (intervals) and `closed strings' (framed circles). Let us mention the topological interpretation of these types of vertices. Each vertex in $V_\times$ corresponds to an incoming closed string, and each vertex in $V_\mathrm{open,in/out}$ corresponds to an incoming/outgoing open string. The outgoing closed strings can be either labeled by a vertex in $V_\circ$ alone, or by a vertex in $V_\circ$ with a $V_\mathbf{1}$ attached to it; this latter arrangement should be interpreted as giving the cell (of one dimension more) corresponding to rotating the framing of the corresponding circle output.

\begin{figure}[h!]
	\centering
	\includegraphics[width=0.8\textwidth]{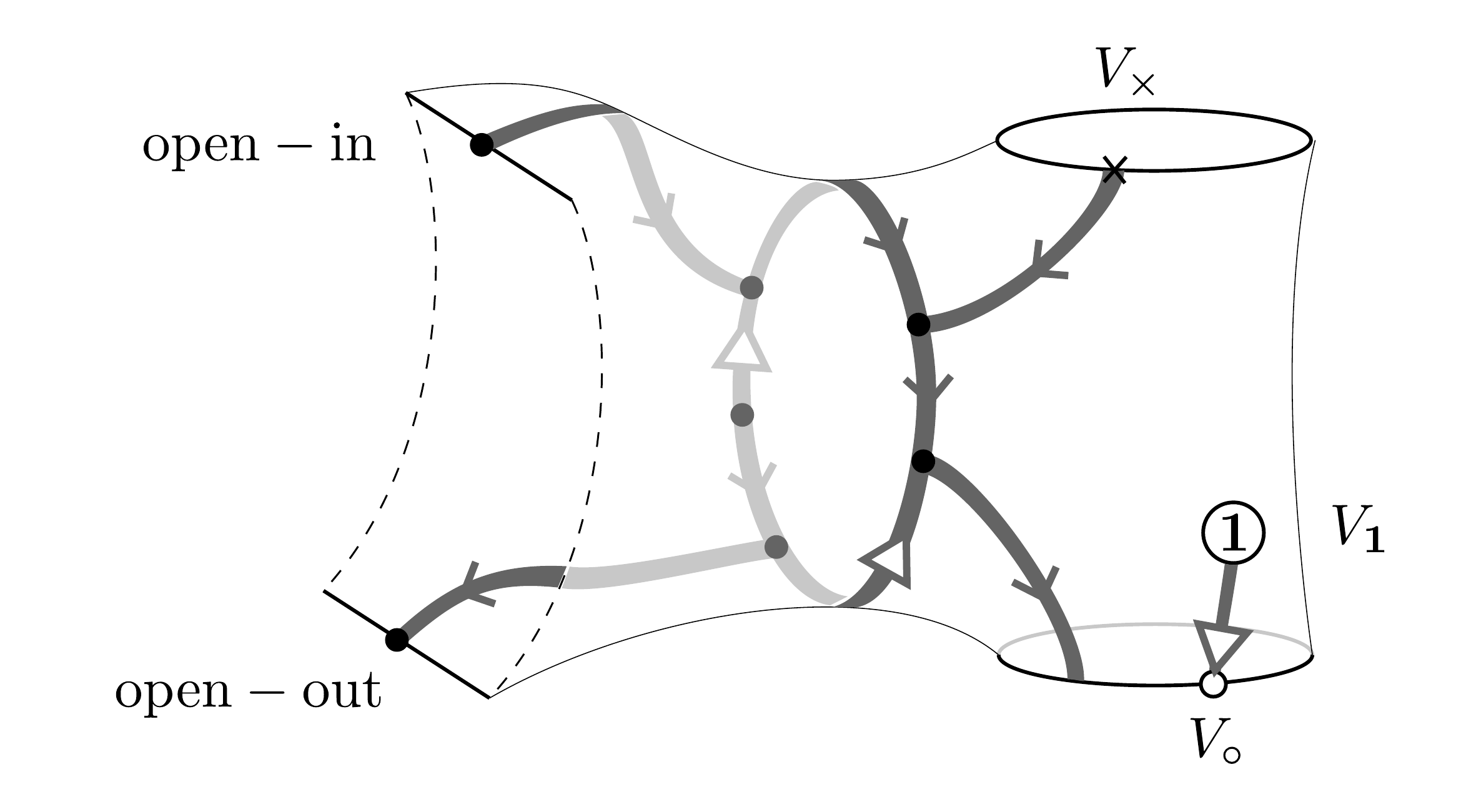}
	\caption{A ribbon quiver embedded in its corresponding open-closed surface $\Sigma$. The $V_\times$ vertices attach to closed inputs, the $V_\circ$ vertices map attach to closed outputs, and the open-in/open-out vertices to open inputs/outputs. We denote the distinguished arrows by white triangles when there are multiple possible choices.}
	\label{fig:ribbongraph}
\end{figure}


	The following marked ribbon quiver corresponds to the open-closed surface in \cref{fig:ribbongraph}:
	\[\begin{tikzpicture}[auto,baseline={([yshift=-.5ex]current bounding box.center)}]
	\node [inner sep=0pt] (x) at (2,0) {$\times$};
	\node [bullet] (top) at (0,1) {};
	\node [bullet] (o1) at (0.2,0.2) {};
	\node [bullet] (o2) at (-0.2,-0.2) {};
	\node [bullet] (bot) at (0,-1) {};
	\node [bullet] (ne) at (0.7,0.7) {};
	\node [bullet] (e) at (1,0) {};
	\node [bullet] (sw) at (-0.7,-0.7) {};
	\node [circ] (out) at (0,-2) {};
	\node [vertex] (one) at (0.6,-1.4) {$1$};
	\draw [->-,shorten <=-3.5pt] (x) to (e);
	\draw [->-] (o1) to (ne);
	\draw [->-] (sw) to (o2);
	\draw [->-] (0,1) arc (90:45:1);
	\draw [->-] (0.7,0.7) arc (45:0:1);
	\draw [-w-] (0,1) arc (90:225:1);
	\draw [-w-] (-0.7,-0.7) arc (225:270:1);
	\draw [->-] (1,0) arc (0:-90:1);
	\draw [-w-=1] (one) to (out);
	\draw [->-] (bot) to (out);
	\end{tikzpicture}\]
	Recall that we are free to attach a $V_\mathbf{1}$ vertex at the $\circ$-sink, as we did in this quiver.

\subsubsection{Genus and degree}
Let $(\vec\Gamma, V_\times, V_\mathrm{open-in}, V_\mathrm{open-out}, V_\circ, V_\mathbf{1})$ be a marked acyclic ribbon graph, which for simplicity we will just call $\vec\Gamma$.

\begin{definition}\label{def:degree}
	The genus $g(\vec\Gamma)$ is the genus of the closed surface $\overline{\Sigma}_\Gamma$. The homological $d$-degree of $\vec\Gamma$ depends on the choice of an integer $d$, and is given by the formula:
	\[ \hspace{-1cm} \deg_d(\vec\Gamma) = \sum_{v \in \mathrm{Source}^{\ge 2}} ((2-d)out(v)+d-4) + \sum_{v \in \mathrm{Flow}} ((2-d) out(v) + d + in(v) -4) + \sum_{v \in V_\circ} (in(v)-1) \]
	where $in(v)$ and $out(v)$ are the number of incoming and outgoing arrows of $v$.
\end{definition}

\begin{remark}
	Note that when $d=0$, the graphs with degree zero are exactly the ones with only trivalent flow vertices, bivalent unmarked sources, and valence one $\circ$-sinks. One gets the other graphs by starting from such a graph and contracting edges; each contracted edge contributes $+1$ to the homological degree. Adding a $V_\mathbf{1}$-leaf to a $V_\circ$-vertex also contributes $+1$ to the degree.
\end{remark}

\subsubsection{Orientations on ribbon quivers}\label{sec:orientations}
In order to define the differential on the \textsc{prop} of ribbon quivers, and moreover to assign its action with correct minus signs, it is necessary to introduce the notion of orientations.

Again let us fix an integer $d$, and a graph $\Gamma$. Suppose now that we assign a degree $|v| \in \ZZ$ to each vertex $v \in V(\Gamma)$, independently of $\Gamma$ itself.\footnote{This degree will be related to the degree of operation we insert at $v$, but for defining the orientations let us just describe it as an arbitrary integer.}

Consider the set $V(\Gamma) \sqcup E(\Gamma)$ of all its vertices and edges. Let us denote by $\mathrm{Ord}(\Gamma)$ the set of orderings of $V(\Gamma) \sqcup E(\Gamma)$; an element of $\mathrm{Ord}(\Gamma)$ is for instance a sequence
\[ (e_1 e_2 v_1 e_3 \dots v_n) \]
We define an action of the symmetric group $S_{|V(\Gamma)| + |E(\Gamma)|}$ on $\{\pm 1\} \times \mathrm{Ord}(\Gamma)$ by the following rule:
\begin{itemize}
	\item Vertices have weight $d + |v|$, edges have weight $d-1$
	\item When we commute any two elements $x,y$ in the sequences, we multiply by a factor $(-1)^{\mathrm{weight}(x)\mathrm{weight}(y)}$.
\end{itemize}

\begin{definition}\label{def:dOrientation}
	A $d$-orientation on a graph $\Gamma$ with vertex degrees $\{|v|\}$ is an element of the two-element set $(\ZZ/2 \times \mathrm{Ord}(\Gamma))/S_{|V(\Gamma)| + |E(\Gamma)|}$ where we take the quotient by the action of dimension $d$ above.
\end{definition}

Note that by definition, the notion of orientations only depends on the degrees $|v|$ up to parity. The case that will be most important to us is when all the degrees are even.
\begin{lemma}
	If all the degrees $|v|$ are even, then:
	\begin{itemize}
		\item if $d$ is even, a $d$-orientation on $\Gamma$ is the same as an orientation (in the classical sense) on the vector space $\mathrm{Span}_\RR(E(\Gamma))$, and
		\item if $d$ is odd, it is an orientation on $\mathrm{Span}_\RR(V(\Gamma))$.
	\end{itemize}
\end{lemma}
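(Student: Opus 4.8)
The plan is to unwind \cref{def:dOrientation} into a computation of the \emph{parities} of the weights of vertices and edges, and then to observe that, under the stated hypothesis, the twisted $S_{|V(\Gamma)|+|E(\Gamma)|}$-action collapses to the ordinary sign action on exactly one of the two types of cells. First I would record that the sign contributed by commuting two elements $x,y$ depends only on $\mathrm{weight}(x)\cdot\mathrm{weight}(y) \bmod 2$. Since every $|v|$ is assumed even, a vertex has weight $d+|v|\equiv d \pmod 2$ and an edge has weight $d-1 \pmod 2$. Thus when $d$ is even, vertices have even weight and edges odd weight, so the only adjacent transpositions contributing a sign are those swapping two edges; when $d$ is odd the roles reverse and only transpositions of two vertices contribute a sign.

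Next, treating the case $d$ even (the odd case being identical after exchanging the roles of $V$ and $E$), I would introduce the forgetful map $\{\pm1\}\times\mathrm{Ord}(\Gamma) \to \{\pm1\}\times\mathrm{Ord}(E(\Gamma))$ sending $(\epsilon,s)$ to $(\epsilon, s|_E)$, where $s|_E$ is the subsequence of edges in the order in which they appear in $s$. To see that this map carries $S_{|V|+|E|}$-orbits into orientation classes it suffices to check it on adjacent transpositions generating the symmetric group: swapping a vertex with an adjacent element contributes no sign and leaves $s|_E$ unchanged, while swapping two adjacent edges contributes $-1$ and transposes those two edges in $s|_E$. These are precisely the relations defining an orientation of the basis $E$, i.e. an ordering of $E$ up to sign recording the parity of the permutation. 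Hence the forgetful map descends to a well-defined map from $d$-orientations to orientations of $\mathrm{Span}_\RR(E(\Gamma))$.

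Finally I would verify that this descended map is a bijection of two-element sets. The $S_{|V|+|E|}$-action on $\mathrm{Ord}(\Gamma)$ is simply transitive, so the twisted action on the product is free and the quotient has exactly two elements; likewise $\{\pm1\}\times\mathrm{Ord}(E)$ modulo edge-transpositions has exactly two elements, the two orientations of $\mathrm{Span}_\RR(E(\Gamma))$. The descended map is surjective, since every ordering of $E$ extends to an ordering of $V\sqcup E$, and a surjection between two-element sets is a bijection. The odd case is obtained verbatim with $V$ in place of $E$.

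The only genuinely delicate point, which I would flag at the outset, is that the adjacent-transposition rule extends \emph{consistently} to an action of the full symmetric group, so that the quotient in \cref{def:dOrientation} is well-defined at all; this is the standard coherence of Koszul signs in the symmetric monoidal category of super vector spaces, obtained by assigning to each vertex and edge the $\ZZ/2$-degree equal to the parity of its weight. Once this is granted, the remainder is pure sign bookkeeping, so I expect no substantive obstacle beyond carefully matching the two descriptions of an orientation.
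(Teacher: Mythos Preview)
Your proposal is correct and is precisely the intended unwinding of \cref{def:dOrientation}; the paper does not supply a separate proof for this lemma, treating it as immediate from the definition, and your argument is exactly the computation one would perform to justify that. The only minor remark is that your final paragraph about coherence of Koszul signs is not really a ``delicate point'' here: since the weights are fixed integers, the rule $(-1)^{\mathrm{weight}(x)\mathrm{weight}(y)}$ automatically satisfies the braid relations and so extends to a genuine $S_{|V|+|E|}$-action, with no subtlety beyond that already present in the category of $\ZZ$-graded vector spaces.
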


Let us now fix a ribbon quiver structure $\vec\Gamma$. This defines a partial order on the set of vertices, with $v > w$ if there is a path $v \to w$, and a cyclic order on the half-edges incident at each vertex. We have the following notion of compatibility:
\begin{definition}
	An ordering in $\mathrm{Ord}(\Gamma)$ is in \emph{normal form} with respect to the ribbon quiver structure $\vec\Gamma$ if it is of the following form:
	\[ (e_{11}, \dots, e_{1 k_1}, v_1, \dots, v_{n-1}, e_{n1}, \dots, e_{n k_n}, v_n), \]
	where $v_1, v_2, \dots, v_n$ is non-decreasing in the partial order, $(e_{i1},\dots,e_{i k_i})$ are the edges \emph{going out} of $v_i$ in some order compatible with the clockwise cyclic order.
\end{definition}
The data of an ordering in normal form can be given by a \emph{linear extension} of the partial order of vertices, together with marking one outgoing edge for every vertex $v_i$, indicating the edge $e_{i1}$ in the notation above.

\begin{example}
	Consider the following marked ribbon quiver of genus zero with two $\times$ sources and one $\circ$ sink:
	\[\begin{tikzpicture}[auto,baseline={([yshift=-.5ex]current bounding box.center)}]
	\node [inner sep=0pt,label=right:{$v_1$}] (v1) at (0,2) {$\times$};
	\node [bullet,label=below:{$v_3$}] (v3) at (0,1) {};
	\node [inner sep=0pt,label=left:{$v_2$}] (v2) at (0,0) {$\times$};
	\node [bullet,label=right:{$v_4$}] (v4) at (1,0) {};
	\node [bullet,label=above:{$v_5$}] (v5) at (0,-1) {};
	\node [circ,label=right:{$v_6$}] (v6) at (0,-2) {};
	\draw [->-,shorten <=-3.5pt] (v1) to node {$a$} (v3);
	\draw [->-,shorten <=-3.5pt] (v2) to node {$d$} (v4);
	\draw [-w-] (0,1) arc (90:0:1) node[midway]{$c$};
	\draw [->-] (0,1) arc (90:270:1) node[midway,swap]{$b$};
	\draw [->-] (1,0) arc (0:-90:1) node[midway]{$e$};
	\draw [->-] (v5) to node {$f$} (v6);
	\end{tikzpicture}\]
	We can pick for instance the linear extension $(v_1 > v_2 > v_3 > v_4 > v_5 > v_6)$ with the marked edges indicated by the white arrows (when there is a choice). The corresponding ordering in normal form is then
	\[ (v_6\ f\ v_5\ e\ v_4\ c\ b\ v_3\ d\ v_2\ a\ v_1). \]
\end{example}

\subsubsection{Action of oriented marked ribbon quivers}\label{sec:action}
Recall that in \cref{sec:graphicalhigher}, for an $A_\infty$-category $\cA$ we defined an action of directed trees of higher Hochschild cochains on $\cA$; more precisely, on the collection of morphism spaces $\cA(X,Y)$. We now show that this action naturally generalizes to an action of oriented marked ribbon quivers: the open in/outputs will still get assigned to $\cA(X,Y)$, but the closed in/outputs get assigned to Hochschild chains $C_*(\cA)$.

This action is described graphically in the following way. For each $\times$-source, we input a Hochschild chain $a_0 \otimes a_1 \otimes \dots \otimes a_p$ by sending $a_0$ along the edge and drawing $p$-arrows corresponding to $a_1 \dots a_p$ \emph{counter-clockwise} from it:
\[\begin{tikzpicture}[baseline={([yshift=-.5ex]current bounding box.center)}]
	\node [inner sep=0pt] (v) at (0,0) {$\times$};
	\draw [->-=0.3] (v) to node [midway,fill=white] {$a_0$} (1.6,0);
	\draw [->-=0.3] (v) to node [midway,fill=white] {$a_1$} (0.8,1.2);
	\draw [->-=0.3] (v) to node [midway,fill=white] {$a_2$} (-0.2,1.2);
	\node at (-1.2,0) {$\vdots$};
	\draw [->-=0.3] (v) to node [midway,fill=white] {$a_p$} (0.8,-1.2);
	\draw [->-=0.3] (v) to node [midway,fill=white] {$a_{p-1}$} (-0.6,-1.2);
\end{tikzpicture}\]

Now take $(\vec\Gamma, (\dots))$ a marked ribbon quiver with an ordering in normal form, such that all the sinks (i.e., vertices in $V_\circ$) are first, then all the $V_\mathbf{1}$ vertices, then all the flow vertices, then all the sources. The ordering then looks like
\[ (o_1 o_2 \dots o_n \dots e 1 \dots e \dots v_N \dots e\dots v_1 e \dots s_m \dots e \dots s_1 ) \]
where $o$ are the $V_\circ$ sinks, $v$ are the flow vertices and $s$ are the sources, and $e$ just generically denotes the edges in normal form.

Now, into each flow vertex with $out(v) = k$ we can insert a \emph{higher cyclic $k$-cochain}, i.e. an element $\phi \in C^*_{(k,d)}(A)$. We evaluate this on $m$ Hochschild chains and output $n$ Hochschild chains, in the following way:
\begin{enumerate}
	\item For each $\times$ source $s_i$ corresponding to each Hochschild chain $a^i_0 \otimes a^i_1 \otimes \dots \otimes a^i_{p_i}$ we draw the $a$ arrows coming out as above, then choose a way to connect them to the vertices embedded in the surface $\Sigma_\Gamma$, \emph{without crossing} each other.
	\item We write the ordering in normal form and the components of the Hochschild chains next to each other:
	\[ (o_1 o_2 \dots o_n e \dots v_N \dots e\dots v_1 e \dots s_m \dots e \dots s_1 )(a^1_0 a^1_1 \dots a^m_{p_m}) \]
	where now we regard all the $a$'s as elements of $A[1]$.
	\item We now proceed as we did in \cref{sec:graphicalhigher}; we bring the inputs of the last vertex to the beginning of the $a$ string.
	\item We then evaluate the cochain $\phi_i$ and write in the place the output, in the given ordering of the output edges. For the sources $s$, we interpret them as just producing the corresponding Hochschild chain.
	\item We repeat steps (3) and (4) until there are only outputs vertices left. We now reorder the elements of $A[1]$ to correspond to the output vertices in the order $o_1 \dots o_n$, beginning from the incoming marked direction of $o_i$; recall that if there is a $V_\mathbf{1}$ vertex attached we must start from it. We then sum over all the possibilities in step (1), with the Koszul sign coming from the transpositions.
\end{enumerate}
Note that all the Koszul signs are for $a$ as seen as elements of $A[1]$. For clarity, we now do an example.

\begin{example}
	We consider the graph below with the normal form ordering
	\[\begin{tikzpicture}[auto,baseline={([yshift=-.5ex]current bounding box.center)}]
	\node [inner sep=0pt,label=right:{$s_1$}] (s1) at (0,2) {$\times$};
	\node [bullet,label=below:{$v_1$}] (v1) at (0,1) {};
	\node [inner sep=0pt,label=left:{$s_2$}] (s2) at (0,0) {$\times$};
	\node [bullet,label=right:{$v_2$}] (v2) at (1,0) {};
	\node [bullet,label=above:{$v_3$}] (v3) at (0,-1) {};
	\node [circ,label=right:{$o$}] (o) at (0,-2) {};
	\draw [->-,shorten <=-3.5pt] (s1) to node {$e_1$} (v1);
	\draw [->-,shorten <=-3.5pt] (s2) to node {$e_4$} (v2);
	\draw [-w-] (0,1) arc (90:0:1) node[midway]{$e_3$};
	\draw [->-] (0,1) arc (90:270:1) node[midway,swap]{$e_2$};
	\draw [->-] (1,0) arc (0:-90:1) node[midway]{$e_5$};
	\draw [->-] (v3) to node {$e_6$} (o);
	\end{tikzpicture}, \qquad (o\ e_6\ v_3\ e_5\ v_2\ e_3\ e_2\ v_1\ e_4\ s_2\ e_1\ s_1) \]
	and input a pair of Hochschild chains $a_0 \otimes a_1 \otimes a_2$ and $b_0 \otimes b_1$ through the sources $s_1,s_2$, and put higher cyclic cochains $\phi \in C^*_{(2,d)}(A)$, $\psi, \lambda \in C^*_{(1,d)}(A) = C^*(A)$. Let us connect the arrows in the following way, for instance:
	\[\begin{tikzpicture}[baseline={([yshift=-.5ex]current bounding box.center)}]
	\draw [-w-] (0,1) arc (90:0:1);
	\draw [->-] (0,1) arc (90:270:1);
	\draw [->-] (1,0) arc (0:-90:1);
	\node [inner sep=0pt] (s1) at (0,2) {$\times$};
	\node [vertex,fill=white] (v1) at (0,1) {$\phi$};
	\node [inner sep=0pt] (s2) at (-0.5,-0.5) {$\times$};
	\node [vertex,fill=white] (v2) at (1,0) {$\psi$};
	\node [vertex,fill=white] (v3) at (0,-1) {$\lambda$};
	\node [circ] (o) at (0,-2) {};
	\draw [->-,shorten <=-3.5pt] (s1) to node [midway,fill=white]{$a_0$} (v1);
	\draw [->-=0.8,bend left=45] (s1) to node [midway,fill=white]{$a_1$} (v2);
	\draw [->-,bend right=90] (s1) to node [midway,fill=white]{$a_2$} (o);
	\draw [->-=1,shorten <=-3.5pt] (s2) to node [midway,fill=white] {$b_0$} (v2);
	\draw [->-=1] (s2) to node [midway,fill=white] {$b_1$} (v1);
	\draw [->-] (v3) to  (o);
	\end{tikzpicture}\]
	and then evaluate $X = (o\ e_6\ v_3\ e_5\ v_2\ e_3\ e_2\ v_1\ e_4\ s_2\ e_1\ s_1)(a_0\ a_1\ a_2\ b_0\ b_1)$.

	The source vertices just output the chains themselves, permuting the $a$s and $b$s back and forth, so we have no sign:
	\[ X =  (o\ e_6\ v_3\ e_5\ v_2\ e_3\ e_2\ v_1)(a_0\ a_1\ a_2\ b_0\ b_1) \]
	We then permute to get
	\[ X = (-1)^{\bar b_1(\bar a_0 + \bar a_1 + \bar a_2 + \bar b_0)}(o\ e_6\ v_3\ e_5\ v_2\ e_3\ e_2\ v_1)(b_1\ a_0\ a_1\ a_2\ b_0) \]
	and evaluate $\phi$, suppose for instance that $\phi(b_1;a_0) = c_1 \otimes c_2$, so that
	\[ X = (-1)^{\bar b_1(\bar a_0 + \bar a_1 + \bar a_2 + \bar b_0)}(o\ e_6\ v_3\ e_5\ v_2\ e_3\ e_2)(c_1\ c_2\ a_1\ a_2\ b_0) \]
	and then continuing this process until we have $(-1)^\# (o)(x\ a_2)$. We then permute $x$ and $a_2$ to read the output, since the edge $e_6
	$ is the marked edge going into the output $o$. The value of the diagram is then the sum over all possible ways of connecting the `extra arrows', in this case, the arrows carrying the elements $a_1,a_2,b_1$.
\end{example}

It is straightforward to check that once we choose higher cyclic cochains $\phi_i$ for each vertex, the action of a graph $\vec\Gamma$ of degree $\deg(\vec\Gamma)$ defines a map of graded vector spaces
\[ \Phi(\Gamma): (C_*(A))^{\otimes m} \to (C_*(A))^{\otimes n} \]
of degree $\sum_i (|\phi_i| - 2) - \deg(\vec\Gamma)$. We have the following result, which motivates our definition of orientation:
\begin{proposition}
	The dimension $d$ action of the symmetric group on edges and vertices intertwines the assignment $(\vec\Gamma,(\dots)) \to \Phi(\vec\Gamma)$, with the sign representation on the target. Therefore, it descends to an action of the set of graphs with $d$-orientation.
\end{proposition}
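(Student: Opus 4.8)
The plan is to reduce the statement to a check on generators and then match signs one elementary move at a time. Since $S_{|V(\Gamma)|+|E(\Gamma)|}$ is generated by adjacent transpositions, and since any two orderings in normal form are connected by a sequence of such moves, it suffices to show that each elementary move changes $\Phi(\vec\Gamma)$ by exactly the dimension-$d$ sign attached to the corresponding permutation of $V(\Gamma)\sqcup E(\Gamma)$. Because the dimension-$d$ sign is by construction a genuine action of $S_{|V(\Gamma)|+|E(\Gamma)|}$ on $\{\pm1\}\times\mathrm{Ord}(\Gamma)$ (\cref{def:dOrientation}), the associated character is multiplicative, so these local checks automatically glue to the full equivariance statement; the descent of $\Phi$ to $d$-oriented graphs then follows formally.

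First I would isolate the two types of elementary moves relating normal-form orderings: move (a), transposing two consecutive, mutually incomparable flow vertices together with their outgoing-edge blocks (a change of linear extension of the partial order); and move (b), cyclically rotating the outgoing-edge block of a single vertex (a change of distinguished outgoing arrow). The analogous moves at the source, sink, $V_\circ$ and $V_{\mathbf{1}}$ vertices are treated by the same mechanism using their distinguished arrows, and the reading conventions for Hochschild chains at $\times$-sources and $V_\circ$-sinks enter only through these.

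For move (a), the two blocks act on disjoint inputs and produce disjoint outputs, so interchanging the order in which they are processed in the evaluation of \cref{sec:action} multiplies the result by the Koszul sign $(-1)^{\bar\phi_v\bar\phi_w}$ of the two block maps, viewed as morphisms between tensor powers of $\cA[1]$; the interleaving of their outputs in the final clockwise reading is independent of processing order and cancels in the comparison. On the orientation side, the full block transposition, including all edge–edge and edge–vertex crossings, contributes $(-1)^{W_vW_w}$ with block weights $W_v=k_v(d-1)+d+|v|$ and $W_w=k_w(d-1)+d+|w|$. The two signs agree once one records the parity identity $W_v\equiv\bar\phi_v\pmod 2$, which follows from $\bar\phi=\deg(\phi)-k$ together with the shift $[(d-2)(k-1)]$ built into $C^*_{(k,d)}(\cA)$ in \cref{def:cyclicCochains}. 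This is the elementary but essential bookkeeping that makes the edge weight $d-1$ and the vertex weight $d+|v|$ conspire correctly.

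For move (b), changing the distinguished outgoing arrow at a flow vertex of out-degree $k$ amounts to a cyclic rotation of the vertex, re-reading both its incoming and outgoing legs from a shifted starting point. By the dimension-$d$ cyclic invariance of $\phi_v\in C^*_{(k,d)}(\cA)$ recorded in \cref{def:actionDimensionD}, the two readings produce the same operation up to the sign $(-1)^{(d-1)(k-1)}$, while on the orientation side a cyclic permutation of the $k$ weight-$(d-1)$ edges of the block is a product of $k-1$ transpositions, giving the same factor $(-1)^{(d-1)(k-1)}$. I expect this case to be the main obstacle: one must verify that the Koszul signs $\#_a,\#_b$ dictated by the cyclic action match, term by term, the reordering of the $a$- and $b$-arrows prescribed by the evaluation algorithm, and that the auxiliary conventions — the distinguished incoming arrow at each $V_\circ$ vertex, the optional $V_{\mathbf{1}}$ attachment, and the reading of Hochschild chains at marked boundary vertices — are all governed by the same weight assignment (via parity identities relating the degree of the chain at a marked vertex to its weight $d+|v|$). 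Once this compatibility is established at the boundary vertices, the local sign computation is complete and the proposition follows.
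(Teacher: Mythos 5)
Your proposal is correct and follows essentially the same route as the paper's proof: both reduce the statement to the two elementary moves relating normal forms (transposing the blocks of two incomparable vertices, and cyclically rotating the outgoing-edge block of a single vertex), match the block-transposition sign against the Koszul sign $(-1)^{\bar\phi_v\bar\phi_w}$ via the parity identity $W_v\equiv\bar\phi_v\pmod 2$, and absorb the cyclic-rotation sign $(-1)^{(k-1)(d-1)}$ into the extra sign built into the dimension-$d$ cyclic action on $C^*_{(k,d)}(\cA)$. Your explicit block weight $k(d-1)+d+|v|$ is the correct total weight of a vertex together with all $k$ of its outgoing edges, and the rest of your bookkeeping matches the paper's.
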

\begin{proof}
	This statement just says that a permutation between two normal forms acts by the same sign on orderings and on the resulting operator $\Phi(\vec\Gamma)$. To see this, note that any two normal forms are related by two types of moves:
	\begin{itemize}
		\item Switching the order of two vertices (corresponding to cochains $\phi,\psi$) that are incomparable in the partial order, together with their outgoing vertices. This introduces a sign
		\[ ((k-1)(d-1) + |\phi| + d)((\ell-1)(d-1)+ |\psi|+d) \]
		where $\phi \in C^*_{(k,d)}(A), \psi \in C^*_{(\ell,d)}(A)$.
		\item Performing a cyclic permutation of the output edges of any given vertex $\phi$, which introduces a sign $(k-1)(d-1)$.
	\end{itemize}
	For moves of type (1), we explicitly verify that changing the order of evaluation in $\Phi(\vec\Gamma)$ introduces the same sign
	\[ \bar \phi \bar \psi \equiv ((k-1)(d-1) + |\phi| + d)((\ell-1)(d-1)+ |\psi|+d) \pmod{2} \]
	(recall that $\bar\phi$ is the degree as a map to factors of $A[1]$) and for moves of type (2), we get a Koszul sign, which differs from $(k-1)(d-1)$ exactly by the sign used in the definition of higher cyclic cochains $C^*_{(k,d)}(A)$.
\end{proof}

\subsubsection{Signs for the necklace product and pre-CY morphisms}\label{sec:signsNecklace}
By the proposition above, we can define the action of a graph with any ordering of its edges and vertices, even if they are not in normal form. This now allows us to explain the signs appearing in our formulas in a more natural way. We return to the necklace product of \cref{def:NecklaceProduct}, and express the signs using orientations.
\begin{lemma}
	The necklace product of $\phi \in C^*_{(k,d)}(A), \psi \in C^*_{(\ell,d)}(A)$ is given by the symmetrization of the result of the oriented ribbon quiver
	\[\phi \circnec \psi = \left( \begin{tikzpicture}[baseline={([yshift=-.5ex]current bounding box.center)}]
	\draw (0,0) circle (1.5);
	\node [vertex] (psi) at (0,0.6) {$\psi$};
	\node [vertex] (phi) at (0,-0.5) {$\phi$};
	\draw [-w-=1] (phi) to (0,-1.8);
	\draw [->-] (phi) to (0,-1.5);
	\node at (0,-2) {$e_1$};
	\draw [->-] (phi) to (-1.4,-1);
	\node at (-1.5,-1.1) {$e_2$};
	\draw [->-] (phi) to (-1.6,0.5);
	\node at (-1.8,0.55) {$e_n$};
	\node at (-0.8,-0.3) {$\vdots$};
	\draw [->-] (psi) to (-1.1,1.1);
	\node at (-1.6,1.2) {$e_{n+1}$};
	\node at (0,1.1) {$\dots$};
	\draw [->-] (psi) to (1.1,1.1);
	\node at (2,1.2) {$e_{n+\ell-1}$};
	\draw [->-] (phi) to (1.4,-1);
	\node at (2.2,-1.1) {$e_{k+\ell-1}$};
	\node at (0.8,-0.3) {$\vdots$};
	\draw [->-] (phi) to (1.6,0.5);
	\node at (2.1,.55) {$e_{n+\ell}$};
	\draw [->-] (psi) to node [auto] {$e$} (phi);
	\end{tikzpicture}, (e_1 e_2 \dots e_{k+\ell-1}\phi e \psi) \right) + (\mathrm{cyc})\]
	where by (cyc) we denote the sum over cyclic permutations of the labels $e_1, \dots, e_{k+\ell-1}$ on the diagram.
\end{lemma}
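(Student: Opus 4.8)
The plan is to evaluate the action $\Phi$ of the single oriented ribbon quiver displayed in the statement directly from the graphical rules of \cref{sec:graphicalhigher,sec:action}, and then to match it termwise against \cref{def:NecklaceProduct}. In this quiver the two vertices are $\psi$ and $\phi$, joined by the internal edge $e$ along which the distinguished output of $\psi$ is fed into $\phi$ as an input; this is exactly the elementary necklace insertion of $\psi$ into one angular slot of $\phi$. The edges $e_1,\dots,e_{k+\ell-1}$ are the $k+\ell-1$ outgoing legs of the composite, and reading them off clockwise from the white arrow produces an element of $C^*_{(k+\ell-1)}(A)$. The content of the Lemma is that the explicit exponents $r_n$ and $s_m$ of \cref{def:NecklaceProduct} are precisely the $d$-orientation signs bookkept by the formalism of \cref{sec:orientations}, so that summing over the cyclic relabelings of $e_1,\dots,e_{k+\ell-1}$ reconstructs $\phi\circnec\psi$ together with its cyclic invariance.

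First I would fix the ordering $(e_1\cdots e_{k+\ell-1}\,\phi\,e\,\psi)$ and compute $\Phi$ of the quiver with this ordering by applying $\psi$ first, transporting its distinguished output along $e$ into $\phi$, applying $\phi$, and finally permuting the $k+\ell-1$ outputs into clockwise order from the white arrow; by construction this yields a single summand of the first sum in \cref{def:NecklaceProduct}, with a sign read directly from the Koszul rule. Next I would invoke the Proposition established just above to compute the effect of each cyclic permutation of the labels $e_1,\dots,e_{k+\ell-1}$: such a relabeling is a reordering of the edge set, and by the dimension-$d$ action (edges carrying weight $d-1$, the vertices $\phi,\psi$ weights $d+|\phi|,d+|\psi|$) it multiplies $\Phi$ by a definite sign. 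A cyclic shift whose new first output is again an output of $\phi$ corresponds, after using the cyclic (anti-)invariance of $\phi\in C^*_{(k,d)}(A)$ from \cref{def:actionDimensionD}, to moving the insertion slot of $\psi$; running over these shifts reproduces the first sum $\sum_n(-1)^{r_n}(\cdots)$. A cyclic shift whose new first output lies in the block of $\psi$'s free outputs corresponds, after using the cyclic invariance of $\psi$, to promoting a non-distinguished output of $\psi$ to the connecting edge, and these reproduce the second sum $\sum_m(-1)^{s_m}(\cdots)$.

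The heart of the argument, and the step I expect to be the main obstacle, is matching the two sign systems. Concretely, I must check that the $d$-orientation sign accumulated by an elementary transposition in the ordering, combined with the cyclic-action sign of \cref{def:actionDimensionD} incurred when $\phi$ or $\psi$ is rotated, equals the increment $r_{n+1}-r_n$ (respectively the value $s_m$). The factor $(\ell-1)(d-1)$ in $r_n$ should emerge from commuting the weight-$(d-1)$ connecting edge past the block of the $\ell-1$ free outputs of $\psi$, while the dependence on $|\phi|$ enters through the weight $d+|\phi|$ of the $\phi$-vertex; the extra term $(k-1)(d-1)(|\phi|+1)$ distinguishing $s_m$ from $r_n$ should be exactly the sign produced when the distinguished output crosses from $\phi$'s block into $\psi$'s block. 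I would organize this bookkeeping by anchoring to the $n=1$ term, verifying its sign against the ordering once, and then tracking only the incremental parity of each elementary move, so that the whole computation reduces to a single parity check per step. Once the signs agree, the sum over all $k+\ell-1$ cyclic relabelings is manifestly cyclically invariant, placing the result in $C^*_{(k+\ell-1,d)}(A)$ and identifying it with $\phi\circnec\psi$; this simultaneously supplies the sign conventions that \cref{prop:necklaceDGLie} was deferred to rely upon.
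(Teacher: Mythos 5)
Your proposal is correct and follows essentially the same route as the paper, whose entire proof is the one-line instruction to compute the orientation signs of the displayed quiver and compare them with the exponents $r_n$, $s_m$ of \cref{def:NecklaceProduct}. Your expansion of that computation — evaluating one anchor term, then tracking the $d$-orientation sign of each cyclic relabeling together with the cyclic (anti-)invariance of $\phi$ and $\psi$ to recover the two sums — is exactly the intended verification.
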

One proves the lemma above simply by computing the signs and comparing with the previously given \cref{def:NecklaceBracket}. However, this definition in terms of the orientation allows us to easily prove properties about the necklace product and bracket.

\begin{lemma}
	The necklace product lands in higher cyclic cochains for the dimension $d$ action of $\ZZ_{k+\ell-1}$ and the corresponding necklace bracket, seen as a map
	\[ [-,-]_\nec: C^*_{[d]}(A)[1] \otimes C^*_{[d]}(A)[1] \to C^*_{[d]}(A)[1], \]
	satisfies the graded Jacobi relation.
\end{lemma}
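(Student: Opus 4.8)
The plan is to exploit the orientation-based reformulation of the necklace product established in the Lemma just above, which replaces all \emph{ad hoc} sign prescriptions by the single dimension-$d$ action on orderings of edges and vertices. Once signs are controlled by the $d$-orientation, both assertions become statements about the combinatorics of the underlying ribbon quivers, and the Proposition on the action of oriented quivers (which intertwines the $d$-orientation action with the sign representation on operators) does the bookkeeping.

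For the cyclic invariance, I would argue as follows. By the reformulation Lemma, $\phi \circnec \psi$ is, up to symmetrization, a sum over cyclic permutations of the output labels $e_1,\dots,e_{k+\ell-1}$ of a single two-vertex oriented quiver. The generator $t$ of the dimension-$d$ action of $\ZZ_{k+\ell-1}$ shifts the distinguished (first) output leg by one, hence permutes the summands of this cyclic sum. By the action Proposition, a cyclic rotation of the outgoing legs of the diagram multiplies the associated operator by exactly the orientation sign that \emph{defines} the dimension-$d$ cyclic action on $C^*_{(k+\ell-1)}(\cA)$. Therefore $t$ sends the cyclic sum to itself up to this single uniform sign, so the output is invariant or anti-invariant as required and lands in $C^*_{(k+\ell-1,d)}(\cA)$. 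This step is essentially formal once the orientation-sign dictionary is granted.

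For the graded Jacobi relation I would pass to the diagrammatic expansion of the iterated products. Since $\circnec$ inserts one vertex into another along a single directed edge, the double product $(\phi \circnec \psi) \circnec \chi$ is a sum over three-vertex oriented ribbon quivers, which I classify by the directed subgraph on $\{\phi,\psi,\chi\}$: either there is a directed path $\chi \to \psi \to \phi$ (\emph{nested}), or $\psi$ and $\chi$ are attached directly to $\phi$ with no edge between them (\emph{parallel}). The nested diagrams are precisely those produced by $\phi \circnec (\psi \circnec \chi)$, since in $\psi \circnec \chi$ the vertex $\psi$ carries the distinguished output that is then fed into $\phi$; hence they cancel in the associator, leaving only the parallel diagrams. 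In a parallel diagram $\psi$ and $\chi$ are incomparable in the partial order, so by the action Proposition exchanging them (together with their outgoing edges) multiplies the oriented quiver by the orientation sign for transposing two incomparable vertices. One then checks that this sign is exactly the one needed for $\circnec$ to be a graded right pre-Lie product on $C^*_{[d]}(\cA)[1]$, after which the standard fact that graded antisymmetrization of a pre-Lie product yields a Lie bracket delivers the Jacobi identity.

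The main obstacle will be the precise matching of signs in the parallel term: I must verify that the orientation-transposition sign from the Proposition, which reduces $\bmod\,2$ to $\bar\psi\,\bar\chi$, together with the degree shifts relating $C^*_{(k)}(\cA)$, $C^*_{(k,d)}(\cA)$ and the overall $[1]$-shift (recall $\deg(\phi)=\bar\phi+k=|\phi|+(d-2)(k-1)$), produces exactly the sign $(-1)^{(|\psi|-1)(|\chi|-1)}$ built into the definition of $[-,-]_\nec$, and that this is compatible with the cyclic symmetrizations carried out at each insertion (which multiply the number of diagrams per term). I expect the parity identities already recorded in the proof of the action Proposition to settle this. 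Should the pre-Lie bookkeeping prove delicate, the robust alternative is to antisymmetrize the full Jacobiator $[[\phi,\psi]_\nec,\chi]_\nec \pm (\mathrm{cyc})$ directly, writing it as a signed sum over all three-vertex oriented quivers and exhibiting, via the $d$-orientation rules, a sign-reversing pairing of the diagrams that forces cancellation.
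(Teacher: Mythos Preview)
Your argument for cyclic invariance is the paper's own: the cyclic shift just rotates the output labels $e_1,\dots,e_{k+\ell-1}$, permuting the summands and changing only the orientation by the sign for that rotation, which is exactly the extra $(d-1)(k+\ell-2)$ built into the dimension-$d$ action on $C^*_{(k+\ell-1)}$.

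For Jacobi, however, your pre-Lie route has a genuine gap. You claim that the nested diagrams $\chi\to\psi\to\phi$ in $(\phi\circnec\psi)\circnec\chi$ are \emph{precisely} those produced by $\phi\circnec(\psi\circnec\chi)$. This is false. In $\psi\circnec\chi$ the internal edge goes $\chi\to\psi$, but the resulting element, after the inner cyclic symmetrization over output labels, has external outputs coming from \emph{both} $\psi$ and $\chi$. When you then form $\phi\circnec(\psi\circnec\chi)$, the output of the inner element that feeds into $\phi$ can be one of $\chi$'s remaining outputs, producing a third topology $\psi\leftarrow\chi\to\phi$ in which $\chi$ has two outputs landing on other vertices. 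These diagrams are absent from $(\phi\circnec\psi)\circnec\chi$, so the associator is not purely ``parallel'' and is not graded-symmetric in $\psi,\chi$; the necklace product is simply not pre-Lie once the cochains have more than one output. (It reduces to Gerstenhaber's pre-Lie product only in the $k=\ell=m=1$ case, where $\chi$ has no remaining outputs and this third type is vacuous.)

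The paper's proof is essentially your fallback. From the reformulation Lemma, every two-vertex diagram in $\phi\circnec\psi$ carries the uniform orientation $(e_1\cdots e_{k+\ell-1}\,\phi\,e\,\psi)$, and this propagates so that every three-vertex diagram in the expanded Jacobiator carries the same canonical orientation expression regardless of the order in which the insertions were performed. The Jacobi identity then reduces to the combinatorial observation that each oriented diagram appears with cancelling explicit signs from the antisymmetrizations---precisely the sign-reversing pairing you sketch at the end.
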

\begin{proof}
	For the first statement, note that cyclic shift under $\ZZ_{k +\ell-1}$ does not change the set of graphs we sum over, but does change their orientations by cyclically rotating the edges $e_1\dots e_{k+\ell-1}$, which introduces a sign $(k-1)(d-1)$, which is exactly the sign added to the Koszul sign in the definition of higher cyclic cochain. The second statement just follows directly from the observation that all graphs appear with orientation given by the same expression.
\end{proof}

In \cref{def:compatibility}, when defining the notion of pre-CY morphisms, we omitted the signs in our formulas. We now return and complete those definitions using orientations. Let $f:(A,m) \to (B,n)$ be a pre-CY morphism; using orientations, the compatibility condition between $f,m,n$ is the equation
\[ \hspace{-1cm} \sum_\Gamma \left( \Gamma = \quad \begin{tikzpicture}[baseline={([yshift=-.5ex]current bounding box.center)}]
\draw (0,0) circle (2.1);
\node [vertex] (m) at (0,0) {$m$};
\node [vertex] (f1) at (-1.3,-0.5) {$f$};
\node [vertex] (f3) at (0,+1.2) {$f$};
\node [vertex] (f4) at (1.1,0) {$f$};
\node [vertex] (f6) at (0,-1.3) {$f$};
\draw [->-] (m) to (f1);
\draw [->-] (f1) to (-1.35,-1.6);
\draw [->-] (f1) to (-2,-0.6);
\draw [->-] (f1) to (-2,0.6);
\draw [->-] (m) to (f3);
\draw [->-] (f3) to (-1,1.85);
\draw [->-] (f3) to (1,1.85);
\draw [->-] (m) to (f4);
\draw [->-] (f4) to (1.85,1);
\draw [->-] (f4) to (1.85,-1);
\draw [->-] (m) to (f6);
\draw [->-] (f6) to (0,-2.1);
\end{tikzpicture}\quad , \cO_\Gamma \right) = \sum_{\Gamma'} \left( \Gamma' = \quad
\begin{tikzpicture}[baseline={([yshift=-.5ex]current bounding box.center)}]
\draw (0,0) circle (2.1);
\node [vertex] (n) at (0,0) {$n$};
\node [vertex] (f1) at (-1,-0.3) {$f$};
\node [vertex] (f3) at (0,+1.2) {$f$};
\node [vertex] (f4) at (1,0.5) {$f$};
\node [vertex] (f6) at (0,-1.3) {$f$};
\draw [->-] (n) to (-2, 0.6);
\draw [->-] (f1) to (n);
\draw [->-] (n) to (-1.4,-1.6);
\draw [->-] (n) to (1.85,-1);
\draw [->-] (f1) to (-2,-0.6);
\draw [->-] (f3) to (n);
\draw [->-] (f3) to (-1,1.85);
\draw [->-] (f3) to (1,1.85);
\draw [->-] (f4) to (n);
\draw [->-] (f4) to (1.85,1);
\draw [->-] (f6) to (n);
\draw [->-] (f6) to (0,-2.1);
\end{tikzpicture} \quad, \cO_{\Gamma'} \right)
\]

Note that each $f$ vertex with $k$ outgoing arrows, seen as a map $A[1]^{\otimes \dots} \to B[1]^{\otimes\dots}$, has degree $(d-3)(k-1) \equiv (d-1)k + (d-1) \pmod 2$; that is, in the ordering of all vertices and edges together, each $f$ vertex and each edge has the same weight $(d-1)$. Therefore the orientation is given by an ordering where switching the order of any two elements (vertices and/or edges) carries a sign $(-1)^{d-1}$.

We define the orientation on some $\Gamma$ such as the one drawn above by: first enumerating all the outgoing edges in cyclic order, then starting from the vertex connected to the first outgoing edge and enumerating all vertices/edges (except for $m$) going around a contour enclosing the tree (omitting repetitions), then ending with $m$. For example, for the graph $\Gamma$ on the left-hand side of the equation above we pick
\[\hspace{-0.7cm} \begin{tikzpicture}[baseline={([yshift=-.5ex]current bounding box.center)}]
\draw (0,0) circle (2.1);
\node [vertex] (m) at (0,0) {$m$};
\node [vertex] (f2) at (-1.3,-0.5) {$f_2$};
\node [vertex] (f3) at (0,+1.2) {$f_3$};
\node [vertex] (f4) at (1.1,0) {$f_4$};
\node [vertex] (f1) at (0,-1.3) {$f_1$};
\node (e1) at (0,-2.3) {$e_1$};
\node (e2) at (-1.4,-1.8) {$e_2$};
\node (e3) at (-2.2,-0.7) {$e_3$};
\node (e4) at (-2.2,0.7) {$e_4$};
\node (e5) at (-1.1,2) {$e_5$};
\node (e6) at (1,2) {$e_6$};
\node (e7) at (2.1,1.1) {$e_7$};
\node (e8) at (2.1,-1.1) {$e_8$};
\draw [->-] (m) to node [auto,swap] {$e_{10}$} (f2);
\draw [->-] (f2) to (-1.35,-1.6);
\draw [->-] (f2) to (-2,-0.6);
\draw [->-] (f2) to (-2,0.6);
\draw [->-] (m) to  node [auto] {$e_{11}$} (f3);
\draw [->-] (f3) to (-1,1.85);
\draw [->-] (f3) to (1,1.85);
\draw [->-] (m) to node [auto] {$e_{12}$} (f4);
\draw [->-] (f4) to (1.85,1);
\draw [->-] (f4) to (1.85,-1);
\draw [->-] (m) to node [auto] {$e_9$} (f1);
\draw [->-] (f1) to (0,-2.1);
\end{tikzpicture} \quad , \quad (e_1\ e_2\ e_3\ e_4\ e_5\ e_6\ e_7\ e_8\ f_1\ e_9\ e_{10}\ e_{11}\ f_3\ e_{12}\ f_4\ m) \]
We do the same for the diagrams on the right-hand side, ending with $n$ instead of $m$. This defines the signs for the compatibility condition.

We now do a similar procedure to define the signs in \cref{def:composition} of composition $h = g \circ f$ of pre-CY morphisms. Recall that each term in the expression for $h$ is given by some height two planar tree quiver in the disc, with arrows outgoing from $g$ vertices. We orient the diagram of each term in $h$ by again first enumerating all the outgoing edges in cyclic clockwise order, then all the vertices and remaining edges by drawing a clockwise contour around the tree. For example, one such diagram with its orientation will be given by
\[\hspace{-0.6cm} \begin{tikzpicture}[baseline={([yshift=-.5ex]current bounding box.center)}]
\draw (0,0) circle (2);
\node [vertex] (f1) at (-0.5,-0.2) {$f_1$};
\node [vertex] (f2) at (0.5,0.5) {$f_2$};
\node [vertex] (f3) at (1,-1) {$f_3$};
\node [vertex] (g1) at (0,-1) {$g_1$};
\node [vertex] (g2) at (-1.5,0) {$g_2$};
\node [vertex] (g3) at (0,1.1) {$g_3$};
\node [vertex] (g4) at (1.3,0) {$g_4$};
\node (e1) at (0,-2.3) {$e_1$};
\node (e2) at (-2,-1.2) {$e_2$};
\node (e3) at (-2,1.2) {$e_3$};
\node (e4) at (-1.1,2) {$e_4$};
\node (e5) at (1,2) {$e_5$};
\node (e6) at (2.2,0) {$e_6$};
\draw [->-] (f1) to node [auto,swap] {$e_7$} (g1);
\draw [->-] (f2) to node [auto] {$e_{10}$} (g1);
\draw [->-] (f1) to node [auto,swap,xshift=-6pt] {$e_8$} (g2);
\draw [->-] (f1) to node [auto] {$e_9$} (g3);
\draw [->-] (f3) to node [auto,swap,yshift=3pt] {$e_{12}$} (g4);
\draw [->-] (f2) to node [auto] {$e_{11}$} (g4);
\draw [->-] (g1) to (0,-2);
\draw [->-] (g2) to (-1.75,-1);
\draw [->-] (g2) to (-1.75,1);
\draw [->-] (g3) to (1,1.75);
\draw [->-] (g3) to (-1,1.75);
\draw [->-] (g4) to (2,0);
\end{tikzpicture} \quad , \quad
(e_1\ e_2\ e_3\ e_4\ e_5\ e_6\ g_1\ e_7\ f_1\ e_8\ g_2\ e_9\ g_3\ e_{10}\ f_2\ e_{11}\ g_4\ e_{12}\ f_3)
\]

We now are ready to prove the result we claimed in \cref{sec:categoryPreCYalgebras} that the definitions of compatibility between $m,n$ and $f$ above, and of the composition law, define a category of pre-CY algebras.
\begin{proof} (of \cref{prop:preCYcategory})
	Let $f: (A,m) \to (B,n), g:(B,n) \to (C,p)$ and $h:(C,p) \to (D,q)$ be two pre-CY morphisms between pre-CY algebras of dimension $d$. We prove first that $g\circ f$ is compatible with the pre-CY structures $m,p$. We write this compatibility equation; on the left-hand side we see a sum over planar tree quivers of height 3, with a single vertex labeled $m$ at height one. We use the compatibility relation between $f,m,n$ to rewrite this into the sum on the other side; for that, we need to check that the signs agree. For every diagram with its orientation in the form
	\[(e_1\ e_2 \dots f_i\ \dots g_i \dots m) \]
	the other side has a diagram with some orientation
	\[(e_1\ e_2 \dots f_i\ \dots g_i \dots n) \]
	We calculate that both of those orientation agree with an \emph{overall} cyclic ordering of the edges and $f,g$ vertices (followed by $m$ or $n$ at the end); this happens since to go between these orderings we always permute pairs of edge and vertex, both of which have equal weight $d-1$.
	
	A similar argument shows that $h \circ (g \circ f) = (h \circ g) \circ f$; the expression for both of these can be written as a sum over all height 3 planar tree quivers, with the overall cyclic orientation of edges and $f,g,h$ vertices.
\end{proof}

\subsection{The PROP of marked ribbon quivers}\label{sec:theClosedPROP}
We are now ready to define the \textsc{prop}s acting on Hochschild chains of pre-CY algebras and categories. For simplicity of presentation, let us first describe the `closed string' part of the \textsc{prop}; for each dimension $d$ this is a (single-colored) \textsc{prop} $Q^d$ which will act on Hochschild chains of pre-CY categories of dimension $d$. Later, in \cref{sec:openClosed}, we will explain how the general open-closed case works.

\subsubsection{Generators}
Let us fix any pair $m,n$ of \emph{positive} integers. Consider the set $RQ(m,n)$ of all marked ribbon quivers with $|V_\times| = m, |V_\circ| = n, V_\mathrm{open-in} = V_\mathrm{open-out} = \varnothing$. We will be inserting the pre-CY structure maps $m_{(k)}$ into every flow vertices of our quivers; as those have degree $|m_{(k)}| = 2$ in $C^*_{[d]}(\cA)$, we fix the degree of all flow vertices to be $|v| = 2$.

\begin{definition}
	As a graded vector space, the space $Q^d_{m,n}$ is defined as
	\[ Q^d(m,n) = \mathrm{Span}_\kk \left( \{ (\vec\Gamma, \cO)\ |\ \vec\Gamma \in RQ_{m,n}, \quad \cO\  d\mathrm{-orientation\ on\ } \vec\Gamma \} \right) / \sim, \]
	where $d$-orientation was defined in \cref{def:dOrientation}, and $\sim$ denotes the equivalence relation that sends $(\vec\Gamma,\cO) \mapsto -(\vec\Gamma,\cO^{op})$, with $\cO^{op}$ the opposite orientation to $\cO$. The generating vector $(\vec\Gamma,\cO)$ is placed in (cohomological) degree $-\deg(\vec\Gamma)$.
\end{definition}

Because each ribbon graph has a genus $g \ge 0$, we get decompositions $RQ_{m,n} = \bigsqcup_g RQ_g(m,n)$ and $Q^d(m,n) = \prod_g Q^d_g(m,n)$.

Let us present some examples in low genus, with their respective degrees:
\begin{example}
	Let us present some examples in genus zero, together with their (homological) degrees:
	\begin{alignat*}{4}
	& m=1, n=1 && && \\
		& (1)\ \ \begin{tikzpicture}[baseline={([yshift=-.5ex]current bounding box.center)}]
		\node [inner sep=0pt] (top) at (0,1) {$\times$};
		\node [circ] (bot) at (0,0) {};
		\draw [->-] (top) to (bot);
		\end{tikzpicture}\ \deg=0 \qquad \qquad
	&& (2)\ \ \begin{tikzpicture}[baseline={([yshift=-.5ex]current bounding box.center)}]
	\node [inner sep=0pt] (top) at (-0.5,1) {$\times$};
	\node [vertex] (side) at (0.5,0.5) {$1$};
	\node [circ] (bot) at (0,0) {};
	\draw [->-] (top) to (bot);
	\draw [->] (side) to (bot);
	\end{tikzpicture}\qquad \deg = 1 \qquad
	&& (3)\ \ \begin{tikzpicture}[baseline={([yshift=-.5ex]current bounding box.center)}]
	\node [inner sep=0pt] (x) at (-1.2,0.98) {$\times$};
	\node [bullet] (left) at (-0.6,0) {};
	\node [bullet] (down) at (0,-0.6) {};
	\node [bullet] (up) at (0,0.6) {};
	\draw [->-] (0,0.6) arc (90:180:0.6);
	\draw [->-] (-0.6,0) arc (180:270:0.6);
	\draw [-w-] (0,0.6) arc (90:-90:0.6);
	\node [circ] (bot) at (0,-1.2) {};
	\draw [->-] (down) to (bot);
	\draw [->-] (-1.2,1) to (left);
	\draw [dashed] (0,0) circle (0.4);
	\end{tikzpicture}\ \deg = -d \\
	& m=1, n=2 && && \\
	& (4)\ \ \begin{tikzpicture}[baseline={([yshift=-.5ex]current bounding box.center)}]
	\node [inner sep=0pt] (top) at (0,1) {$\times$};
	\node [bullet] (midleft) at (-0.5,0.5) {};
	\node [bullet] (midright) at (0.5,0) {};
	\node [circ] (left) at (-1,-0.5) {};
	\node [circ] (right) at (1,-0.5) {};
	\draw [->-, shorten <=-3.5pt] (top) to (midright);
	\draw [-w-] (midleft) to (midright);
	\draw [->-] (midleft) to (left);
	\draw [->-] (midright) to (right);
	\end{tikzpicture}\ \deg=-d \qquad
	&& (5)\ \ \begin{tikzpicture}[baseline={([yshift=-.5ex]current bounding box.center)}]
	\node [inner sep=0pt] (top) at (0,1) {$\times$};
	\node [bullet] (mid) at (0,0) {};
	\node [circ] (left) at (-1,-0.5) {};
	\node [circ] (right) at (1,-0.5) {};
	\draw [->-, shorten <=-3.5pt] (top) to (mid);
	\draw [->-] (mid) to (left);
	\draw [-w-] (mid) to (right);
	\end{tikzpicture}\ \deg=-d+1 && \\
	\end{alignat*}

	We give the ribbon structure from the embedding into the page, and indicate the orientation by ordering all vertices (by height) and choosing the first outgoing (white arrow) when there is ambiguity. Note that graph (3) has a boundary component (dashed circle) without any incoming or outgoing leg; this will be a \emph{free boundary} of the surface.
\end{example}

\subsubsection{The differential}
We now describe a differential on each of the spaces $Q^d_{m,n,g}$. Recall from \cref{def:degree} that the vertices contributing nonzero degree to a given marked ribbon quiver $\vec\Gamma$ are: (unmarked) sources with valence $>2$, flow vertices with $in >2$ and/or $out >1$, and $\circ$-marked sinks with valence $>1$.

For each such vertex $v$, draw it on the plane and draw a dashed curve separating the incident half-edges into two non-empty subsets.
\begin{definition}\label{def:separations}
	A \emph{separation} of the vertex $v$ is a ribbon quiver with two vertices $a,b$ obtained by splitting the vertex into two, and connecting them by an arrow $e:a \to b$, with the following conditions:
	\begin{enumerate}
		\item None of the resulting vertices have $in = out = 1$.
		\item If $v$ is not a sink, then $b$ is not a sink.
	\end{enumerate}
	For the distinguished edge at a $\circ$-sink, when we separate such a vertex, we end up with a flow vertex and a $\circ$-sink; if the previously distinguished edge now lands in the $\circ$-sink, we mark it. If not, we mark the \emph{new edge}.
	
	And as for the $\circ$-sinks with a $V_\mathbf{1}$-vertex attached, if such a vertex has valence 2 we just declare it to have no separations; otherwise, if the resulting graph has a $V_\mathbf{1}$-vertex attached to a flow vertex of valence 3, we delete the $V_\mathbf{1}$-vertex and its incident edge; if it has a $V_\mathbf{1}$-vertex attached to a flow vertex of valence $> 3$, we exclude this separation.
\end{definition}
For example, along the dashed curve below, there are two possible separations
\[
\begin{tikzpicture}[baseline={([yshift=-.5ex]current bounding box.center)}]
	\node [bullet] (v) at (0,0) {};
	\draw [->-=1] (v) to (1,0);
	\draw [->-=1] (v) to (0,1);
	\draw [->-=1] (v) to (-1,0);
	\draw [->-] (0.9,0.3) to (v);
	\draw [->-] (0.3,0.9) to (v);
	\draw [->-] (-0.7,0.7) to (v);
	\draw [->-] (0.1,-0.9) to (v);
	\draw [->-] (0.4,-0.6) to (v);
	\draw [dashed] (1,1) to (-1,-1);
\end{tikzpicture} \qquad \rightsquigarrow \qquad
\begin{tikzpicture}[baseline={([yshift=-.5ex]current bounding box.center)}]
	\node [bullet] (v1) at (0.2,-0.2) {};
	\node [bullet] (v2) at (-0.2,0.2) {};
	\draw [->-=1] (v1) to (1,0);
	\draw [->-] (0.1,-0.9) to (v1);
	\draw [->-] (0.5,-0.7) to (v1);

	\draw [->-=1] (v2) to (0,1);
	\draw [->-=1] (v2) to (-1,0);
	\draw [->-] (0.9,0.3) to (v1);
	\draw [->-] (0.3,0.9) to (v2);
	\draw [->-] (-0.7,0.7) to (v2);
	\draw [->-=0.7] (v1) to (v2);
\end{tikzpicture} \quad \mathrm{and} \quad
\begin{tikzpicture}[baseline={([yshift=-.5ex]current bounding box.center)}]
\node [bullet] (v1) at (0.2,-0.2) {};
\node [bullet] (v2) at (-0.2,0.2) {};
\draw [->-=1] (v1) to (1,0);
\draw [->-] (0.1,-0.9) to (v1);
\draw [->-] (0.5,-0.7) to (v1);

\draw [->-=1] (v2) to (0,1);
\draw [->-=1] (v2) to (-1,0);
\draw [->-] (0.9,0.3) to (v1);
\draw [->-] (0.3,0.9) to (v2);
\draw [->-] (-0.7,0.7) to (v2);
\draw [->-] (v2) to (v1);
\end{tikzpicture}
\]
In contrast, if we separate only incoming arrows to one side, by condition (2) we only have one separation
\[\begin{tikzpicture}[baseline={([yshift=-.5ex]current bounding box.center)}]
\node [bullet] (v) at (0,0) {};
\draw [->-=1] (v) to (1,0);
\draw [->-=1] (v) to (0,1);
\draw [->-=1] (v) to (-1,0);
\draw [->-] (0.9,0.3) to (v);
\draw [->-] (0.3,0.9) to (v);
\draw [->-] (-0.7,0.7) to (v);
\draw [->-] (0.1,-0.9) to (v);
\draw [->-] (0.4,-0.6) to (v);
\draw [dashed] (-0.8,-0.55) arc (160:20:0.8);
\end{tikzpicture} \qquad \rightsquigarrow \qquad
\begin{tikzpicture}[baseline={([yshift=-.5ex]current bounding box.center)}]
\node [bullet] (v1) at (0,-0.3) {};
\node [bullet] (v2) at (-0.2,0.2) {};
\draw [->-=1] (v2) to (1,0);
\draw [->-] (0.1,-0.9) to (v1);
\draw [->-] (0.5,-0.7) to (v1);
\draw [->-=1] (v2) to (0,1);
\draw [->-=1] (v2) to (-1,0);
\draw [->-] (0.9,0.3) to (v2);
\draw [->-] (0.3,0.9) to (v2);
\draw [->-] (-0.7,0.7) to (v2);
\draw [->-=0.7] (v1) to (v2);
\end{tikzpicture}
\]
Finally, to exemplify the special rules at $\circ$-sinks, we decreed that the vertex $\begin{tikzpicture}[baseline={([yshift=-.5ex]current bounding box.center)}]
\node [circ] (a) at (0,0) {};
\node [vertex] (one) at (1,0) {$1$};
\draw [->-] (-1,0) to (a);
\draw [->-] (one) to (a);
\end{tikzpicture}$ has no separations. With more incoming edges we do have separations at vertices with $\bm{1}$ attached, for example,
\[ \begin{tikzpicture}[baseline={([yshift=-.5ex]current bounding box.center)}]
\node [circ] (a) at (0,0) {};
\node [vertex] (one) at (1,0) {$1$};
\draw [->-] (-0.7,0.3) to (a);
\draw [->-] (-0.7,-0.3) to (a);
\draw [->-] (one) to (a);
\end{tikzpicture} \qquad \rightsquigarrow \qquad
 \begin{tikzpicture}[baseline={([yshift=-.5ex]current bounding box.center)}]
\node [bullet] (a) at (-0.4,0) {};
\node [circ] (b) at (0.2,0) {};
\node [vertex] (one) at (1,0) {$1$};
\draw [->-] (-0.8,0.3) to (a);
\draw [->-] (-0.8,-0.3) to (a);
\draw [->-] (a) to (b);
\draw [->-] (one) to (b);
\end{tikzpicture}, \quad
\begin{tikzpicture}[baseline={([yshift=-.5ex]current bounding box.center)}]
\node [circ] (b) at (0.2,0) {};
\draw [-w-=1] (-0.8,0.3) to (b);
\draw [->-] (-0.8,-0.3) to (b);
\end{tikzpicture}, \quad \mathrm{and} \quad
\begin{tikzpicture}[baseline={([yshift=-.5ex]current bounding box.center)}]
\node [circ] (b) at (0.2,0) {};
\draw [->-] (-0.8,0.3) to (b);
\draw [-w-=1] (-0.8,-0.3) to (b);
\end{tikzpicture}
\]

We now define the differential on $Q^d_{m,n,g}$ by defining it on the basis elements given by a marked ribbon quiver $\vec\Gamma$ together with some orientation. We first put the orientation into some normal form
\[ (o_1 o_2 \dots o_n \dots 1_i \dots v_N \dots v_1 x_m \dots x_1 ) \]
with all the $\circ$ sinks $o_i$ before all the $V_1$ vertices $1_i$, before all the flows and unmarked sources $v_i$, before all the $\times$ sources $x_i$.

\begin{definition}
	The differential $\del$ of the element above is given by
	\begin{align*}
	\hspace{-0.8cm} \del((\vec\Gamma,&((o_1 o_2 \dots o_n \dots 1_i \dots v_N \dots \mathbf{v} \dots v_1 \dots x_m \dots x_1))) = \\
	&\sum_{\substack{v \in V(\Gamma) \\ (e:a\to b)\mathrm{separation\ of\ }v}} \left( \vec\Gamma_{(e:a\to b)}, (o_1 o_2 \dots o_n \mathbf{e\ a} \dots 1_i \dots v_N \dots \mathbf{b} \dots v_1 \dots x_m \dots x_1)\right)
	\end{align*}
	In other words, we sum over all separations of all vertices that can be separated (that is, all vertices with nonzero degree), with the orientation such that the extra vertex $a$ and extra edge $e$ created are always immediately after the output vertices, with the other vertex $b$ replacing $v$.
	
	As for the special case of separations involving $V_\mathbf{1}$-vertices, if we deleted a $V_\mathbf{1}$-vertex attached as
	\[ \begin{tikzpicture}[baseline={([yshift=-.5ex]current bounding box.center)}]
	\node [bullet] (a) at (0,0.6) {};
	\node [circ] (b) at (0,0) {};
	\node [vertex] (one) at (-0.8,0.6) {$1$};
	\draw [->-] (0,1.2) to (a);
	\draw [->-] (one) to (a);
	\draw [->-] (a) to (b);
	\end{tikzpicture}  \quad \rightsquigarrow \quad
	\begin{tikzpicture}[baseline={([yshift=-.5ex]current bounding box.center)}]
	\node [circ] (b) at (0,0) {};
	\draw [->-] (0,1.2) to (b);
	\end{tikzpicture}\]
	we just produce the orientation given by deleting the vertex $\bm{1}$ and its incident edge; if we instead deleted
	\[ \begin{tikzpicture}[baseline={([yshift=-.5ex]current bounding box.center)}]
	\node [bullet] (a) at (0,0.6) {};
	\node [circ] (b) at (0,0) {};
	\node [vertex] (one) at (0.8,0.6) {$1$};
	\draw [->-] (0,1.2) to (a);
	\draw [->-] (one) to (a);
	\draw [->-] (a) to (b);
	\end{tikzpicture}  \quad \rightsquigarrow \quad
	\begin{tikzpicture}[baseline={([yshift=-.5ex]current bounding box.center)}]
	\node [circ] (b) at (0,0) {};
	\draw [->-] (0,1.2) to (b);
	\end{tikzpicture}\]
	we produce \emph{minus} that orientation.
\end{definition}

\begin{remark}
	The rules for the separations involving $V_\mathbf{1}$-vertices might seem arbitrary, but in our \textsc{prop} action we will input strict units at these vertices; the conventions in fact they follow from the relations satisfied by those strict units. For example, the reason for excluding separations of vertices with valence 2 and one $\bm{1}$-vertex attached is that when evaluating the ribbon quiver with the $A_\infty$-maps, the differential above would give a term $\mu^2(1,x) + (-1)^{\bar x} \mu^2(x,1)$ = 0. 
\end{remark}

\begin{lemma}
	The differential $\del$ squares to zero.
\end{lemma}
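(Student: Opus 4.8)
The plan is to prove $\partial^2=0$ by a combinatorial pairing argument: I will show that every marked ribbon quiver appearing in $\partial^2(\vec\Gamma,\cO)$ occurs \emph{exactly twice}, and that the two occurrences carry opposite $d$-orientations, so that they cancel. First I would expand $\partial^2(\vec\Gamma,\cO)$ as a sum over ordered pairs of separations: a first separation $S_1$ (in the sense of \cref{def:separations}) of some positive-degree vertex of $\vec\Gamma$, followed by a separation $S_2$ of a vertex of the resulting quiver. Each term is recorded by its output quiver $\vec\Gamma''$, which differs from $\vec\Gamma$ by two extra edges, each with its tail vertex. I would then group the terms of $\partial^2$ by $\vec\Gamma''$ and argue that the grouping is $2$-to-$1$.

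There are two cases to isolate. In the \emph{disjoint} case the two new edges are created at two distinct vertices $v\neq w$ of $\vec\Gamma$; then $\vec\Gamma''$ is obtained both by separating $v$ first and $w$ second, and by separating $w$ first and $v$ second. In the \emph{nested} case both separations occur at a single vertex $v$: since two separations introduce two edges and split $v$ into three vertices joined by a tree, and the unique tree on three vertices is a path, the result is a path $P_1\!-\!P_2\!-\!P_3$ with the middle vertex $P_2$ incident to both new edges. Such a path is produced in exactly two ways, by first splitting off the endpoint $P_1$ (then separating the merged vertex $\{P_2,P_3\}$) or by first splitting off $P_3$ (then separating $\{P_1,P_2\}$). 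In both cases the two decompositions yield the same $\vec\Gamma''$, so the grouping is $2$-to-$1$.

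For the sign I would use the weights of \cref{def:dOrientation}: a vertex has weight $d+|v|$ and an edge weight $d-1$. Since every vertex carries even degree ($|v|=2$ for flow vertices, and an even degree for sources and sinks), each vertex has weight $\equiv d$, so every block $[\,e\cdot a\,]$ consisting of a new edge $e$ together with its tail vertex $a$ has total weight $(d-1)+d\equiv 1 \pmod 2$. By the placement rule in the definition of $\partial$, the second separation inserts its new edge/vertex pair immediately after the $\circ$-sinks, hence just before the pair inserted by the first separation. Therefore, in both the disjoint and the nested case, the two normal-form orderings attached to the two decompositions differ precisely by transposing these two odd-weight blocks, which multiplies the orientation by $(-1)^{1\cdot 1}=-1$. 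This gives the required cancellation $\partial^2=0$; note that because the degree change under a separation is $-1$ in $\deg_d$ (equivalently $+1$ in cohomological degree, as in \cref{def:degree}), $\partial^2$ has the expected cohomological degree $+2$.

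The hard part will be verifying that the pairing is genuinely $2$-to-$1$, i.e. that whenever one order of separations is admissible the reverse order is admissible as well. Concretely one must check that the constraints of \cref{def:separations} — acyclicity, the exclusion of vertices with $in=out=1$, and the rule that the head $b$ of a new edge may be a sink only when the separated vertex was a sink — are all preserved under reversing the order; the delicate point is that the intermediate merged vertices $\{P_2,P_3\}$ and $\{P_1,P_2\}$ of the nested case must themselves satisfy these constraints, which I expect to follow from a short case analysis on the in/out-valences of $P_1,P_2,P_3$. Finally, the separations involving a $V_\mathbf{1}$-vertex are governed by the special rules of \cref{def:separations} and of $\partial$; these I would handle by the same pairing, checking that the extra sign in the $V_\mathbf{1}$-deletion rule is arranged exactly so that the strict-unit relations, and hence the cancellation, persist.
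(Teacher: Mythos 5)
Your proposal is correct and takes essentially the same approach as the paper, whose proof consists of exactly this pairing argument: two consecutive separations can be performed in either order, yielding the same quiver with orientations differing by the transposition of two edge--vertex blocks, each of odd weight $(d-1)+(d+|v|)\equiv 1 \pmod 2$, hence a relative sign of $-1$. Your case analysis (disjoint versus nested separations) and your attention to admissibility of the reversed order are elaborations of details the paper's two-sentence proof leaves implicit, not a different route.
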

\begin{proof}
	Any two consecutive separations can be performed in either order; resulting in the same graph but with orientations given by
	\[ (o_1 o_2 \dots o_n \mathbf{e_2\ a_2\ e_1\ a_1} \dots 1_i  \dots v_N \dots \mathbf{b_1} \dots \mathbf{b_2} v_1 \dots x_m \dots x_1) \]
	if we do the separation $(e_1:a_1\to b_1)$ and then $(e_1:a_1\to b_1)$, and
	\[ (o_1 o_2 \dots o_n \mathbf{e_1\ a_1\ e_2\ a_2} \dots 1_i \dots v_N \dots \mathbf{b_1} \dots \mathbf{b_2} v_1 \dots x_m \dots x_1) \]
	if we do them in the opposite order. The sign difference between these is from switching a pair of an edge and vertex with another pair, giving $(-1)^{(d+d+1)(d+d+1)} = -1$.
\end{proof}

\subsubsection{Isomorphisms between even and odd dimensions}\label{sec:isoEvenOdd}
We defined each of the complexes $\{Q^d(m,n)\}$ separately for each choice of integer $d$; changing $d$ shifts the degree assigned to each marked ribbon quivers in $Q^d(m,n)$, and also changes the signs in the differential.

Upon fixing the number of inputs $m$, the number of outputs $n$ and the genus $g$, for any two integers $d_1,d_2$ the complexes $Q^{d_1}_g(m,n)$ and $Q^{d_2}_g(m,n)$ are spanned by the same marked ribbon quivers, with an overall shift depending on those integers. Moreover, if $d_1,d_2$ have the same parity, the signs in the differentials are all the same, so we have that:
\begin{proposition}
	If $d_1 \equiv d_2 \pmod{2}$, the complexes $Q^{d_1}_g(m,n)$ and $Q^{d_2}_g(m,n)$ are isomorphic up to shift.
\end{proposition}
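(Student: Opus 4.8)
The plan is to exhibit the identity map on marked ribbon quivers as the desired isomorphism, and to split the verification into three independent checks: that the generating sets agree, that the orientation data and differential signs agree, and that the grading discrepancy is an overall shift. None of these requires the \textsc{prop} action on an algebra; everything happens at the level of the combinatorially defined complexes.

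First I would record that the set $RQ_g(m,n)$ of marked ribbon quivers of genus $g$ with $|V_\times|=m$, $|V_\circ|=n$ and no open markings is defined with no reference to $d$, so $Q^{d_1}_g(m,n)$ and $Q^{d_2}_g(m,n)$ are built from the same underlying graphs. Next I would check that for $d_1\equiv d_2 \pmod 2$ the two notions of orientation literally coincide. In \cref{def:dOrientation} the weights are $d+|v|$ on vertices and $d-1$ on edges, and by the remark following that definition the resulting two-element set depends on these weights only through their parities. Since the vertex degrees $|v|$ (in particular $|v|=2$ on flow vertices) are fixed independently of $d$, congruent $d$ give equal weight-parities, hence the same $S_{|V(\Gamma)|+|E(\Gamma)|}$-action on $\{\pm1\}\times\mathrm{Ord}(\Gamma)$, the same set of $d$-orientations, and the same relation $(\vec\Gamma,\cO)\sim-(\vec\Gamma,\cO^{op})$. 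Consequently the identity-on-graphs map $\Phi\colon Q^{d_2}_g(m,n)\to Q^{d_1}_g(m,n)$ is a bijection on generators. That $\Phi$ intertwines the differentials is then immediate: the separation rules of \cref{def:separations} and the placement of the new edge and vertex are stated without reference to $d$, and the accompanying signs are governed by the same weight-parities. (The sign $(-1)^{(2d+1)^2}=-1$ appearing in the proof that $\partial^2=0$ is in fact independent of $d$ altogether.)

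It remains to control the grading. The degree of $(\vec\Gamma,\cO)$ is $-\deg_d(\vec\Gamma)$, and by \cref{def:degree} the quantity $\deg_d(\vec\Gamma)$ is affine-linear in $d$ with slope
\[ c(\vec\Gamma) = \sum_{v\in\mathrm{Source}^{\ge 2}\cup\mathrm{Flow}} (1-out(v)). \]
I would rewrite this slope topologically: summing $1-out(v)$ over all vertices gives $|V(\Gamma)|-|E(\Gamma)|=\chi(\Sigma_\Gamma)$, while the univalent sources in $V_\times$ and $V_\mathbf{1}$ (with $out=1$) contribute $0$ and the $n$ sinks in $V_\circ$ (with $out=0$) contribute $n$, so that $c(\vec\Gamma)=\chi(\Sigma_\Gamma)-n$. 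The key point is that $c(\vec\Gamma)$ is constant along the differential: a separation subdivides a single vertex-disc into two discs joined by an edge-rectangle and so leaves $\Sigma_\Gamma$ homeomorphic, while the special $V_\mathbf{1}$-deletions remove a univalent vertex together with its incident edge and hence also preserve $\chi$, and in every case $n=|V_\circ|$ is unchanged. Thus $\partial$ preserves $\chi(\Sigma_\Gamma)$, the complex decomposes as $Q^d_g(m,n)=\prod_{\chi}Q^d_{g,\chi}(m,n)$ into subcomplexes, and on the piece of fixed $\chi$ the degree discrepancy $\deg_{d_1}-\deg_{d_2}=(d_1-d_2)(\chi-n)$ is a single constant. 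Hence $\Phi$ restricts on each piece to a shifted isomorphism of complexes, which is exactly the assertion that $Q^{d_1}_g(m,n)$ and $Q^{d_2}_g(m,n)$ are isomorphic up to shift.

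The one genuinely substantive step is this last invariance of $\chi(\Sigma_\Gamma)$ under $\partial$ — in particular the bookkeeping for the $V_\mathbf{1}$-deletion rules, where one must confirm that deleting the leaf and its edge neither changes the homeomorphism type of $\Sigma_\Gamma$ nor the count $|V_\circ|$. Everything else (equality of generating sets, parity-invariance of orientations, and parity-invariance of the separation signs) is a matter of unwinding definitions. I would also note explicitly that, because the slope $c(\vec\Gamma)=\chi-n$ varies with the number of boundary components of $\Sigma_\Gamma$, the shift is a single global constant only after restricting to fixed $\chi$; the statement ``up to shift'' is therefore understood summand by summand in the decomposition by Euler characteristic.
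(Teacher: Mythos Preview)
Your argument is correct and follows the same basic idea as the paper's: the identity on marked ribbon quivers gives the isomorphism, since the generating set is $d$-independent and the orientation weights (hence all signs in $\partial$) depend only on the parity of $d$. The paper in fact offers no more than that observation as justification.

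Where you go further than the paper is in the analysis of the shift. The paper simply asserts ``an overall shift depending on those integers,'' whereas you compute the $d$-slope $c(\vec\Gamma)=\chi(\Sigma_\Gamma)-n$ and correctly observe that this is \emph{not} constant across all of $Q^d_g(m,n)$: for fixed $g,m,n$ the number $F$ of free boundary circles can vary (compare the paper's own examples (1) and (3) in $Q^d_0(1,1)$, with degrees $0$ and $-d$ respectively), so $\chi$ varies too. Your decomposition $Q^d_g(m,n)=\prod_\chi Q^d_{g,\chi}(m,n)$, together with the check that $\partial$ preserves $\chi$, is exactly what is needed to make ``isomorphic up to shift'' precise. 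This point is glossed over at the location of the proposition but is consistent with what the paper establishes later in \S6.5, where the shift $\deg_d-\deg_0=dN$ is computed and $N$ is shown to depend on $F$. So your proof is both correct and more careful than the paper's, and your final caveat about interpreting ``up to shift'' summand-by-summand is well taken.
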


It turns out that at partial form of the result above also holds between dimensions $d_1,d_2$ of different parity, but only once we restrict to ribbon quivers corresponding to surfaces \emph{without free boundary circles}. We will be more precise about the relations between marked ribbon quivers and open-closed surfaces in \cref{sec:Strebel}, but for now we will say that a free boundary circle of $\Gamma$ is a boundary component of the surface $\Sigma_\Gamma$ that has no vertex of valence one neighboring it.

\begin{definition}
	For any $d,g,m,n$, the subcomplex of marked ribbon quivers \emph{without free boundaries}
	\[ Q^d_{g,F=0}(m,n) \subset Q^d_g(m,n) \]
	is the subcomplex spanned by the marked quivers with no free boundary circles.
\end{definition}
Note that the differential preserves the number of free boundary circles, so it preserves the subcomplex $Q^d_{g,F=0}(m,n)$. We will now argue that the complexes $Q^d_{g,F=0}(m,n)$ is independent of the dimension $d$, up to shift; this is seen by a computation involving the signs in the differential, which we now explain.

Recall that when $d$ is even, a $d$-orientation on a marked ribbon quiver $\vec\Gamma$ is an ordering of the \emph{edges} of $\vec\Gamma$ modulo the alternating group, and when $d$ is odd, it is an ordering of the \emph{vertices} of $\vec\Gamma$ modulo the alternating group. The sets of orientations for either the edges or the vertices are both abstractly isomorphic to $\ZZ_2$ so in order to define a bijection between orientations, it is enough to determine a single sign between a fixed ordering of edges and a fixed ordering of vertices.

We now choose a pair of those fixed orderings. Recall that the surface $\Sigma_\Gamma$ is divided by $\Gamma$ into regions; as long as there are no free boundary circles, each such region corresponds to a $\times$-vertex and is homeomorphic to a disc with a cut going from the $\times$-vertex to the boundary. The boundary of each such disc is made up of a sequence of edges and vertices, and possibly some trees attached to some of these vertices.

We now fix an ordering of the $\times$-vertices and an ordering of the $\circ$-vertices (corresponding to fixing the ordering on the inputs/outputs of the \textsc{prop} operation); let that ordering be
\[ (x_1, \dots, x_m), \qquad (o_1,\dots,o_n) \]
respectively.

On each region corresponding to the source $x_i$, we start from $x_i$ along the edge incident to it, and go around the perimeter of the disc. We record the data of the edges along the boundary of this perimeter in the following way: we fill a matrix with two rows, one column at a time, such that
\begin{itemize}
	\item when we encounter an new vertex followed by an outgoing edge with the same orientation of our boundary walk, we write that vertex and edge as a new column, and
	\item when we encounter a vertex followed by an outgoing edge with the same orientation of our boundary walk, but \emph{we already encountered that vertex before}, we write a new column leaving the top entry empty.
\end{itemize}
We perform the operation above for each $\times$-vertex $x_i$ in sequence, keeping the columns as we move to the next disc.

Because the surface $\Sigma_\Gamma$ is orientable, if there are no free boundary circles we guarantee that every edge appears exactly once in this matrix; and every vertex that is not a sink ($\circ$-vertex) also appears exactly once.

\begin{remark}
	Note that if, on the other hand, there are a nonzero number of free boundary circles, it is possible that the procedure above will miss some edges; namely, every edge that is part of the clockwise-oriented cycle around each free boundary circle.
\end{remark}

Once we are done, this matrix contains each edge and vertex exactly once, except for the sink $\circ$-vertices. We now add those back in the beginning, according to their order, and define $\Sgn(\mathrm{sinks,\ top\ row})$ to be the \emph{fixed vertex orientation}. As for the edges, we define $\Sgn(\mathrm{bottom\ row})$ to be the \emph{fixed edge orientation}. Note that these depend only of the marked ribbon quiver $\vec\Gamma$ and on the ordering of the $\times$- and $\circ$-vertices.

We now define a sign between these orientations, in the following way:
\begin{itemize}
	\item For each column that has a vertex and an edge, we record the number of nonempty entries of the matrix \emph{before} that column,
	\item For each column that has just an edge, we record the number of nonempty entries of the matrix \emph{after} that column, and 
	\item For each column that has just a vertex, we record zero. 
\end{itemize}
We then sum over all these numbers to get an integer $S$.

\begin{definition}\label{def:fixedOrientations}
	The isomorphism between the sets of vertex orientations and edge orientations is given by relating the fixed orientations we specified by
	\[ \Sgn(\mathrm{top\ row}) = (-1)^S \Sgn(\mathrm{bottom\ row}). \]
\end{definition}

Doing this for every marked ribbon quiver $\vec\Gamma$ gives isomorphisms of graded vector spaces
\[ Q^{\mathrm{even}}_{g,F=0}(m,n) \cong Q^{\mathrm{odd}}_{g,F=0}(m,n)[s], \]
for some shift $s$.
\begin{proposition}
	The isomorphisms above intertwine the differential $\del$, and thus give isomorphisms of complexes between all the $Q^d_{g,F=0}(m,n)$ for varying $d$, up to shift.
\end{proposition}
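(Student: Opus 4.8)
The plan is to verify the intertwining $\iota\circ\del=\del\circ\iota$ termwise, where $\iota$ denotes the family of orientation isomorphisms $\iota_{\vec\Gamma}$ from \cref{def:fixedOrientations}. Since $\iota$ acts diagonally (quiver by quiver), $\del$ is a sum over separations, and both complexes are free on the oriented marked ribbon quivers, it suffices to fix a single separation $v \rightsquigarrow (e\colon a\to b)$ carrying $\vec\Gamma$ to $\vec\Gamma'$ and to check that $\iota_{\vec\Gamma'}$ sends the even-dimensional ($=$edge) orientation that $\del$ assigns to $\vec\Gamma'$ to the odd-dimensional ($=$vertex) orientation that $\del$ assigns to $\vec\Gamma'$, up to one global sign depending only on $(g,m,n)$ and the shift $s$, not on the separation. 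Recall that $\del$ preserves the number of free boundary circles, so it restricts to $Q^d_{g,F=0}(m,n)$ and every quiver appearing in $\del(\vec\Gamma)$ again has the property that the matrix-filling procedure of \cref{sec:isoEvenOdd} records every edge and every non-sink vertex exactly once; this well-definedness is precisely where the hypothesis $F=0$ enters.

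Next I would analyze the local effect of the separation on the two-row matrix. A separation replaces, in the boundary walk(s) of the disc(s) incident to $v$, the appearance of $v$ by the short configuration $a,e,b$; I would show this inserts exactly one new vertex column (for $a$, together with its distinguished outgoing edge) and one new edge $e$ into the matrix, at a location determined by $v$, while preserving the relative order of all other vertices and edges. Consequently $\Sgn(\text{top row})$ and $\Sgn(\text{bottom row})$ for $\vec\Gamma'$ differ from those of $\vec\Gamma$ only by the insertion of $a$ into the top ordering and of $e$ into the bottom ordering.

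I would then compute $\Delta S=S(\vec\Gamma')-S(\vec\Gamma)$ directly from the three counting rules defining $S$, tracking the number of nonempty entries before the column carrying $(a,e)$. The point is to match this against the two insertion prescriptions of $\del$: in even dimension $\del$ places the new \emph{edge} $e$ immediately after the output sinks in the edge ordering, while in odd dimension it places the new \emph{vertex} $a$ immediately after the output sinks in the vertex ordering. Comparing (i) the sign of moving $e$ from that standard slot to its natural position in the bottom row of $\vec\Gamma'$, namely $(-1)^{\#\{\text{edges before }e\}}$, with (ii) the sign of moving $a$ from that slot to its natural position in the top row, namely $(-1)^{\#\{\text{non-sink vertices before }a\}}$, I expect the identity $\Delta S\equiv \#\{\text{non-sinks before }a\}-\#\{\text{edges before }e\}\pmod 2$ (up to a universal constant). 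This gives $\iota_{\vec\Gamma'}\circ(\del^{\mathrm{even}}\text{-term})=\pm(\del^{\mathrm{odd}}\text{-term})\circ\iota_{\vec\Gamma}$ with a universal sign; summing over separations yields $\iota\del=\del\iota$, and combining with the same-parity isomorphism from the preceding proposition gives the stated isomorphism of complexes $Q^d_{g,F=0}(m,n)$ for all $d$ up to shift.

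The main obstacle will be the sign bookkeeping in $\Delta S$, in particular the cases where the separated vertex $v$ borders several discs or where its incident edges are split nontrivially between $a$ and $b$ (so that the boundary walk traverses $e$ in both orientations), together with the special conventions at $\circ$-sinks and at $V_\mathbf{1}$-vertices, where both the matrix-filling rule and the differential behave exceptionally. Checking that every such case produces the \emph{same} global sign, independent of the separation, is the crux; the orientability of $\Sigma_\Gamma$ and the absence of free boundaries are exactly what force these counts to be consistent.
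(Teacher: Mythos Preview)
Your proposal is correct and follows essentially the same strategy as the paper: reduce to a single separation, track how the two-row matrix changes, compute $\Delta S$, and match it against the even/odd insertion rules for $\del$. The paper carries this out by an explicit case analysis (three cases, according to whether the new $e$ and $a$ land in the same column and whether $b$ sits immediately after), working one representative configuration in full; your plan anticipates exactly this as the ``main obstacle''. One small correction to your local description: the separation does not simply insert a fresh column for $a$; in the paper's worked case $a$ \emph{replaces} $v$ in its old column, $b$ fills a previously empty top slot, and the genuinely new column carries $e$ with an empty top entry---so the bookkeeping for $\Delta S$ involves tracking $b$ as well, not just $a$ and $e$.
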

\begin{proof}
	The proof consists in checking the sign introduced by the differential $\del$ with respect to our fixed orientations. Recall the definition of $\del$: for each vertex $v$ that is either a flow vertex or a sink we consider all separations $e:a \to b$ at $v$, and sum over all of those. For the orientation, starting  an orientation given by inserting the new higher vertex $a$ and the new edge $e$ between all the sinks and the other vertices:
	\[(o_1 o_2 \dots o_n \boldsymbol{e}\ \boldsymbol{a} \dots 1_i \dots v_N \dots \boldsymbol{b} \dots v_1 \dots x_m \dots x_1) \]
	
	Let $\Gamma$ be our starting graph and $\Gamma' = \Gamma_{e:a\to b}$ be one of its separations at $v$. Starting now with the fixed orientation (we got by identifying the fixed edge and vertex orientations) for $\Gamma$, we can permute it to some normal form (getting two signs for vertices and edges), apply the separation 
	and then permute the vertices/edges other than $a$ and $e$ again to the fixed orientations (getting those same two signs for vertices and edges).
	
	We then compare the obtained orientations with the fixed orientations of $\Gamma'$, and calculate that the relative sign between the new edge orientation and the new vertex orientation is always $+1$. This involves checking each possible configuration of edge directions around the new edge $e$, together with each possible ordering of the regions around it. 
	
	Essentially there are three distinct cases to be checked, depending on whether in the matrix the new edge and vertex $e$ and $a$ appear together (in the same column) or separately, and if not whether the vertex $b$ appears right after that column or not. We now do one of those cases explicitly for the sake of clarity.
	
	Suppose that the region of the surface $\Sigma_\Gamma$ around $v$, and the separation $e:a\to b$ of $v$, look like the following drawing:
	\[\begin{tikzpicture}[baseline={([yshift=-.5ex]current bounding box.center)}]
	\node [bullet, label=below:{$\bm{v}$}] (v) at (0,0) {};
	\node [bullet, label=below:{$v_1$}] (v1) at (-1.5,-0.5) {};
	\node [bullet, label=below:{$v_2$}] (v2) at (1.5,-0.5) {};
	\node [bullet, label=above:{$v_3$}] (v3) at (0,+1) {};
	\node [bullet, label=below:{$v_4$}] (v4) at (-1.5,0.5) {};
	\node [bullet, label=below:{$v_5$}] (v5) at (1.5,0.5) {};
	\draw [->-] (v1) to node [auto,swap] {$e_1$} (v);
	\draw [->-] (v) to node [auto,swap] {$e_2$} (v2);
	\draw [->-] (v3) to node [auto] {$e_3$} (v);
	\draw [->-] (v) to node [auto] {$e_4$} (v4);
	\draw [->-] (v5) to node [auto] {$e_5$} (v);
	\node [vertex] at (0,-1) {I};
	\node [vertex] at (-0.7,1) {II};
	\node [vertex] at (0.7,1) {III};
	\end{tikzpicture} \qquad \rightsquigarrow \qquad
	\begin{tikzpicture}[baseline={([yshift=-.5ex]current bounding box.center)}]
	\node [bullet, label=below:{$\bm{b}$}] (b) at (0,0) {};
	\node [bullet, label=below:{$\bm{a}$}] (a) at (1,0) {};
	\node [bullet, label=below:{$v_1$}] (v1) at (-1.5,-0.5) {};
	\node [bullet, label=below:{$v_2$}] (v2) at (2.5,-0.5) {};
	\node [bullet, label=above:{$v_3$}] (v3) at (0,+1) {};
	\node [bullet, label=below:{$v_4$}] (v4) at (-1.5,0.5) {};
	\node [bullet, label=below:{$v_5$}] (v5) at (2.5,0.5) {};
	\draw [->-] (v1) to node [auto,swap] {$e_1$} (b);
	\draw [->-] (a) to node [auto,swap] {$e_2$} (v2);
	\draw [->-] (v3) to node [auto] {$e_3$} (b);
	\draw [->-] (b) to node [auto] {$e_4$} (v4);
	\draw [->-] (v5) to node [auto] {$e_5$} (a);
	\draw [->-] (a) to node [auto] {$\bm{e}$} (b);
	\node [vertex] at (0,-1) {I};
	\node [vertex] at (-0.7,1) {II};
	\node [vertex] at (1,1) {III};
	\end{tikzpicture}
	\]
	On the left we have a piece of $\Gamma$ and on the right a piece of $\Gamma'$,
	and the roman numerals indicate in which order those three regions around $v$ appear in our fixed orientations.
	
	We now make the matrix of fixed orientations for $\Gamma$:
	\[\begin{bmatrix}
		\dots & v_1 & \bm{v} & \dots & v_3 & \fbox{\phantom{X}} & \dots & v_5 & \dots \\
		\dots & e_1 & e_2 & \dots & e_3 & e_4 & \dots & e_5 & \dots
	\end{bmatrix}\]
	and for $\Gamma'$:
	\[\begin{bmatrix}
	\dots & v_1 & \bm{a} & \dots & v_3 & \bm{b} & \dots & v_5 & \fbox{\phantom{X}} & \dots \\
	\dots & e_1 & e_2 & \dots & e_3 & e_4 & \dots & e_5 & \bm{e} & \dots
	\end{bmatrix}\]
	where we emphasize the vertices and edge being separated for convenience.
	
	Using the sign prescription above gives us two signs $S_\Gamma$ and $S_{\Gamma'}$; we now calculate their difference to be equal to
	\[ \#(\dots v_1) + \#(a \dots v_3) + \#(\dots e_1 \dots e_5) + 1 \]
	modulo 2, where each $\#$ term is the length of the indicated string in the matrix. We note that this is exactly the sign of the permutation that brings $a$ and $e$ to the beginning of their respective rows, and $b$ to where $v$ was in the top row for $\Gamma$.
	
	Therefore we have that both the differential for even and odd $d$, applied to the fixed edge/vertex orientation that we identified in \cref{def:fixedOrientations}, give $\Gamma'$, also in its fixed edge/vertex orientation, with the same sign. We then check other cases (with different ordering of the regions and orientations of $e_1,\dots,e_5$) and get analogous results.
\end{proof}

\subsubsection{Compositions}
Finally, we must discuss compositions; in a \textsc{prop} one can compose along any number of outputs and inputs, without connectedness restrictions. We now describe how to compose marked ribbon quivers with orientations.

To describe the composition, one must consider the topology of the surface $\Sigma_\Gamma$ associated to the ribbon graph. We embed $\Gamma$ into this surface and for each $\times$ source $x_i$, we consider the region of $\Sigma_\Gamma \setminus \Gamma$ adjacent to it. We interpret this region as a disc with a cut from the boundary to the center, going along the edge incident at $x_i$.

We call the boundary of this disc the \emph{boundary cycle} $B_{x_i}$ associated to the source $x_i$. Note that this boundary cycle might include the same edge of $\Gamma$ once or twice (on opposite sides), and might include a vertex of $\Gamma$ multiple times.

Note also that different $\times$-sources $x_i,x_j$ have adjacent regions that are disjoint from one another; also, even though their boundary cycles might overlap, each angle around a vertex of $\Gamma$ is at most associated to one $\times$-source.

Let $\Gamma_1$ be some graph with a $\circ$-vertex $o$ and $\Gamma_2$ with a $\times$-vertex $x$. By definition, a $\times$-vertex has only one outgoing arrow, but a $\circ$-vertex has any number $k \ge 1$ of incoming arrows, one of which is distinguished. A composition of $\Gamma_1$ and $\Gamma_2$ at the pair $(o,x)$ is given by:
\begin{itemize}
	\item Deleting the $\circ$-vertex $o$ from $\Gamma_1$ and the $\times$-vertex $x$ from $\Gamma_2$,
	\item Connecting the marked arrow in $\Gamma_1$ to the arrow in $\Gamma_2$ leaving the removed vertex $\times$
	\item Connecting the $k-1$ other arrows in $\Gamma_1$ to vertices in the boundary cycle of $x$, \emph{respecting the cyclic ordering}.
\end{itemize}
One checks that this operation produces another marked ribbon quiver, and that it is additive on degrees. We show an example in \cref{fig:composition}. Moreover, since all the regions adjacent to the $\times$-vertices are disjoint, we can compose along multiple pairs as above, by performing the connections in each of those regions.

\begin{figure}[h!]
	\centering
	\includegraphics[width=0.9\textwidth]{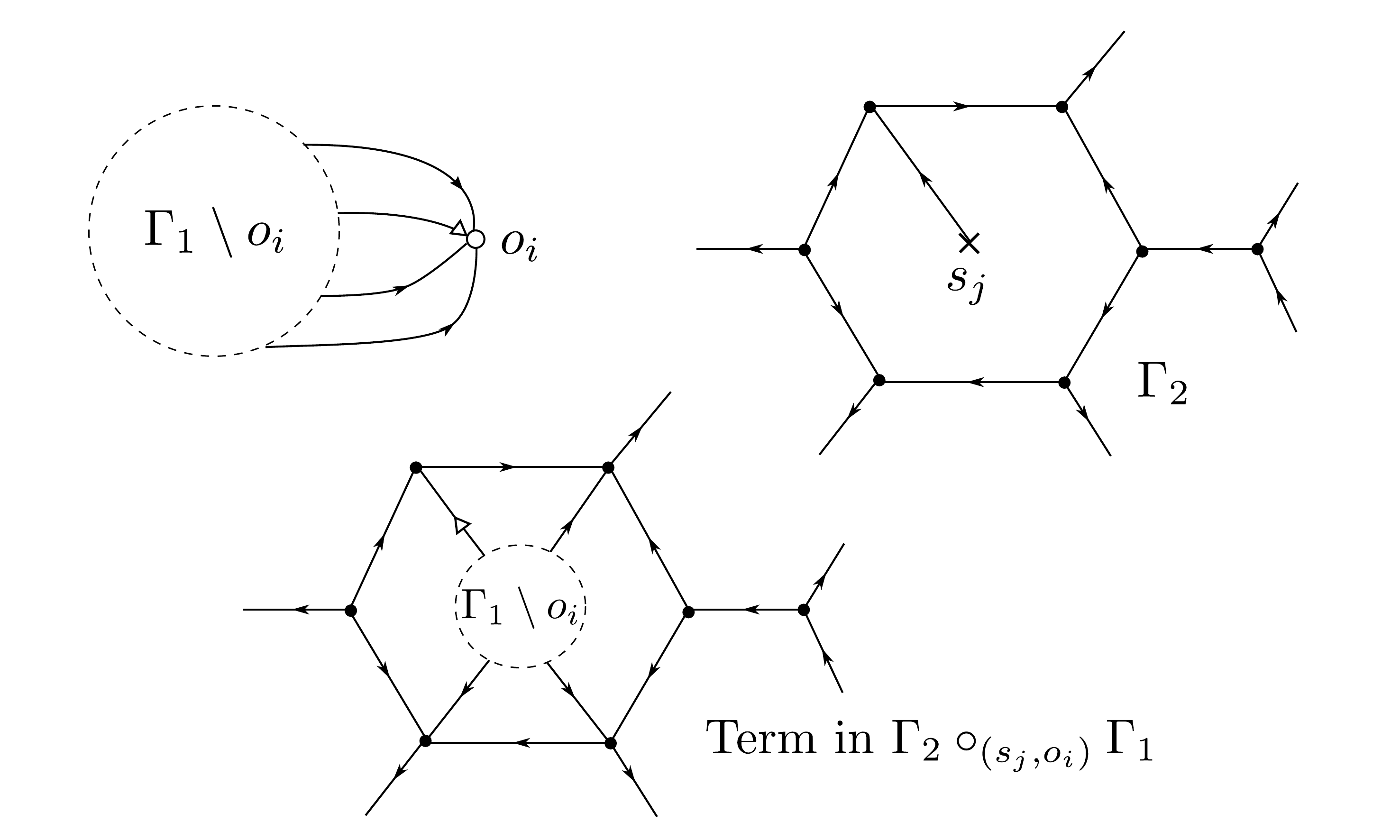}
	\caption{Composition of two marked ribbon quivers with one closed input and one closed output (i.e. $(m,n) = (1,1)$. The composition at the pair $(s_j,o_i)$ of an input and output is a sum over ways of distributing the three arrows incident at $o_i$ around the boundary cycle of $s_j$; the distinguished arrow (white triangle) always gets connected to the unique edge leaving $s_j$.}
	\label{fig:composition}
\end{figure}

We must now describe how to compose orientations. Note that composition at each pair $(o_i,x_i)$ deletes the two vertices and replaces the edge $e_i$ incident at $x_i$ with an edge connecting $\Gamma_1$ to $\Gamma_2$. We define the composition of orientations at a pair $(o_i,x_i)$ as follows:
\[ (\dots e_i x_i) \circ (o_i \dots) = (\dots e_i \dots) \]
and if there are multiple pairs, we equally define
\[ (\dots e_3 x_3 e_2 x_2 e_1 x_1) \circ (o_1 o_2 o_3 \dots) = (\dots e_3 e_2 e_1) \]
Note that the formula above is equivariant with respect to changing the order of compositions; transposing $(e_i x_i e_j x_j) \leftrightarrow (e_j x_j e_i x_i)$ gives a $-1$ sign, $(o_i o_j) \leftrightarrow (o_j o_i)$ gives $(-1)^d$ and $(e_i e_j) \leftrightarrow (e_j e_i)$ gives $(-1)^{d+1}$.

\begin{definition}
	The composition of two elements $(\Gamma_1, \cO_1) \in Q^d(m_1,n_1)$ and $(\Gamma_2, \cO_2) \in Q^d(m_2,n_2)$ at some number of pairs $(o_1,x_1), \dots, (o_p,x_p)$ is given by the linear combination
	\[ (\Gamma_1, \cO_1) \circ_{\{(o_i,x_i)\}} (\Gamma_2, \cO_2) = \sum_{\mathrm{compositions\ } \Gamma} (\Gamma, \cO_2 \circ \cO_1) \]
	where we sum over all compositions, i.e. all ways of distributing the extra incident vertices on $o_i$ around the boundary cycle of $x_i$, with the orientation given by the composition as above.
\end{definition}

The following result follows directly from checking the axioms of a symmetric monoidal category.
\begin{proposition}
	The spaces $\{Q^d(m,n)\}$ for all $m,n \ge 1$ form a dg \textsc{prop}, that is, a symmetric monoidal category enriched over the category of chain complexes $\mathrm{Ch}_\kk$, with object monoid given by the natural numbers.
\end{proposition}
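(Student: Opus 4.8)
The plan is to verify directly that the data assembled so far --- the objects $\mathbb{N}$ with monoid structure given by addition, the hom-complexes $Q^d(m,n)$ equipped with the differential $\del$, the gluing composition $\circ_{\{(o_i,x_i)\}}$, and the disjoint-union monoidal product --- satisfy the axioms of a symmetric monoidal category enriched in $\mathrm{Ch}_\kk$. Most of these axioms hold strictly at the level of marked ribbon quivers, so the verification reduces to two genuinely sign-sensitive statements: that composition is a chain map (the Leibniz compatibility of $\del$ with $\circ$), and that composition is associative and unital once orientations are tracked. I would arrange the argument so that the orientation conventions of \cref{sec:orientations} and the composition-of-orientations rule carry out all the sign bookkeeping.

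First I would dispose of the structural, sign-insensitive parts. The monoidal product $\otimes$ is disjoint union of marked ribbon quivers; to make $\{Q^d(m,n)\}$ closed under it one reads the hom-complexes as spanned by \emph{possibly disconnected} marked ribbon quivers, each connected component being a quiver as in \cref{sec:ribbonQuivers}, with $\del$ and every structure map acting componentwise. On such generators $\otimes$ is strictly associative, and the associators and unitors are identities, so monoidal coherence is automatic. The unit object is $0$, and the identity of the object $1$ is the quiver (1) of the examples (a single $V_\times$ joined by one edge to a single $V_\circ$): its degree is $0$, it admits no separations since it has only valence-one vertices, hence it is a $\del$-closed degree-zero element, and the identity of $n$ is its $n$-fold disjoint union, which is a two-sided unit because the trivial boundary cycle forces the distinguished arrow to be glued without redistribution. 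The symmetry is induced by reordering components together with the ordered marking sets $V_\times$ and $V_\circ$, realizing the $\Sigma_m\times\Sigma_n$-equivariance; the sign it carries is exactly the one dictated by the composition-of-orientations rule when transposing pairs $(e_i x_i)$ or outputs $o_i$.

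The substantive step is showing that composition is a morphism in $\mathrm{Ch}_\kk$, i.e.
\[ \del\big((\Gamma_1,\cO_1)\circ(\Gamma_2,\cO_2)\big) = \del(\Gamma_1,\cO_1)\circ(\Gamma_2,\cO_2) \pm (\Gamma_1,\cO_1)\circ\del(\Gamma_2,\cO_2), \]
with Koszul sign determined by the degrees. The key observation is that the two vertices erased in a gluing at $(o_i,x_i)$ --- a valence-one source and a valence-one $\circ$-sink --- contribute no separations, so every separation of a composite quiver takes place either entirely inside $\Gamma_1$ away from the erased $o_i$, or entirely inside $\Gamma_2$ away from the erased $x_i$. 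I would check that such a separation commutes with gluing and with the redistribution of arrows around the boundary cycles, producing precisely the terms of $\del(\Gamma_1)\circ\Gamma_2$ (resp.\ $\Gamma_1\circ\del(\Gamma_2)$); the delicate point is that a $\circ$-sink of $\Gamma_1$ distinct from $o_i$ must be separated before gluing and its distinguished incoming edge tracked through the composition. The sign comparison uses the rule $(\dots e_i x_i)\circ(o_i\dots)=(\dots e_i\dots)$ together with the fact that a separation inserts its new edge $e$ and vertex $a$ immediately after the output vertices in normal form; since edges and the (degree-two) internal vertices carry weights of opposite parity ($d-1$ and $d$) in the $d$-orientation action, commuting the new data past the intervening symbols reproduces exactly the Leibniz sign.

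Associativity of $\circ$ is then checked the same way: both iterated compositions are sums over the same set of composite quivers, and the orientation rule is equivariant under changing the order of gluings --- transposing $(e_i x_i)\leftrightarrow(e_j x_j)$ gives $-1$, $o_i\leftrightarrow o_j$ gives $(-1)^d$, and $e_i\leftrightarrow e_j$ gives $(-1)^{d+1}$ --- so the two resulting orientations coincide. The interchange law between $\otimes$ and $\circ$ is immediate, because disjoint union and gluing are performed in disjoint regions of the surfaces. I expect the main obstacle to be precisely the Leibniz sign check: one must confirm that no separation at the gluing locus is spuriously created or destroyed, and that the fixed placement of the new edge and vertex in normal form is consistent across the relevant configurations (new edge and vertex in the same matrix column, in adjacent columns, or separated), mirroring the case analysis already carried out in \cref{sec:isoEvenOdd}.
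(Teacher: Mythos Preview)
Your approach matches the paper's, which essentially omits the proof entirely: the paper just writes ``The following result follows directly from checking the axioms of a symmetric monoidal category.'' So your detailed verification plan is in the same spirit, only far more fleshed out than what the authors provide.

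That said, there is a genuine gap in your Leibniz argument. You write that the two vertices erased in a gluing at $(o_i,x_i)$ are ``a valence-one source and a valence-one $\circ$-sink'' and hence contribute no separations. The source $x_i$ is indeed valence one, but the $\circ$-sink $o_i$ has valence $k\ge 1$ in general --- that is the whole point of the composition rule, which redistributes its $k-1$ non-distinguished incoming arrows around the boundary cycle of $x_i$. When $k>1$, the vertex $o_i$ \emph{does} admit separations, and these contribute terms to $\del(\Gamma_1)\circ\Gamma_2$. On the other side, after gluing, the vertices on the boundary cycle of $x_i$ in $\Gamma_2$ have acquired extra incoming arrows from $\Gamma_1$, so their separations in the composite $\Gamma_1\circ\Gamma_2$ produce terms that were not present in $\Gamma_1\circ\del(\Gamma_2)$ alone. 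The Leibniz identity therefore requires matching separations of $o_i$ before gluing against separations of the receiving vertices after gluing, summed over all redistributions; this is where the definition of composition as a \emph{sum over all ways of distributing the $k-1$ arrows} becomes essential. Your current outline does not address this interaction, and without it the chain-map claim is not established.
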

More precisely, each $Q^d_g(m,n)$ is the space of connected operations of genus $g$ in the \textsc{prop}; to get all operations (or equivalently, the morphisms spaces of the symmetric monoidal category) one must allow disconnected ribbon quivers, which means allowing tensor products of any number of spaces $Q^d_g(m,n)$.

\subsection{Action of the PROP on Hochschild chains}
Let $\cA$ be an pre-CY category of dimension $d$, with pre-CY structure $\{m_{(k)}\}$. As before we will call $\mu = m_{(1)}$ its $A_\infty$-structure. Recall that in \cref{sec:action} we describe how a marked ribbon quiver acts on Hochschild chains, once we input higher cyclic cochains into the vertices of the quiver.

Let us be more precise about which Hochschild complex we will use. We assume the category $\cA$ is homologically unital as an $A_\infty$-category; we extend it to an $A_\infty$-equivalent, strictly unital $A_\infty$-category $\cA^+$, with strict units $1_X \in \cA(X,X)$ for each $X$.

The Hochschild chain complex $C_*(\cA)$ has a subcomplex spanned by all terms $a_0 \otimes a_1 \otimes \dots \otimes a_p$ where some $a_{i \neq 0} = 1_X$ for some $X$. The \emph{nonunital Hochschild chain complex} $C^{nu}_*(\cA)$ is then defined as the quotient of $C_*(\cA^+)$ by this subcomplex; as $\cA$ is homologically unital the composition
\[ C_*(\cA) \hookrightarrow C_*(\cA^+) \twoheadrightarrow C^{nu}_*(\cA) \]
is a quasi-isomorphism. So for simplicity of notation we will just denote $C_*(\cA)$ for this non-unital complex, which allows us to work with the strict units in $\cA^+$.

We now input the pre-CY structure map $m_{(k)}$ into every vertex with $k$ outgoing edges, and get a map of graded vector spaces
\[ \Phi: Q^d(m,n) \otimes C_*(\cA)^{\otimes m} \to C_*(\cA)^{\otimes n} \]

We have the Hochschild differential $b$ acting on $C_*(\cA)$ and the differential $\del$ acting on $Q^d(m,n)$ as we defined using separations.
\begin{theorem}\label{thm:actionClosedPROP}
	The map $\Phi$ commutes with the differential and defines a map
	\[ H^*(Q^d(m,n)) \otimes HH_*(\cA)^{\otimes m} \to HH_*(\cA)^{\otimes n}. \]
\end{theorem}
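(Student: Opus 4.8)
The plan is to prove that $\Phi$ is a chain map, i.e. that
\[
b^{\mathrm{out}} \circ \Phi \;=\; \Phi \circ (\del \otimes \id) \;\pm\; \Phi \circ (\id \otimes b^{\mathrm{in}}),
\]
where $b^{\mathrm{in}}$ is the total Hochschild differential on the $m$ input factors and $b^{\mathrm{out}}$ that on the $n$ output factors; passing to cohomology then yields the stated map. I would express all three terms graphically using the calculus of \cref{sec:action}: the evaluation $\Phi(\vec\Gamma)$ inserts the pre-CY structure map $m_{(k)}$ at each flow vertex of out-valence $k$, while by the graphical description of the Hochschild differential in \cref{sec:graphical}, both $b^{\mathrm{in}}$ and $b^{\mathrm{out}}$ are realized by adjoining a single $\mu = m_{(1)}$ vertex next to the relevant marked $\times$-source or $\circ$-sink. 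In this way every term becomes the evaluation of a marked ribbon quiver into which one extra structure map has been inserted, either by separating an existing vertex (from $\del$) or by adjoining a $\mu$ at a boundary (from $b^{\mathrm{in}}$, $b^{\mathrm{out}}$).

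First I would analyze $\Phi(\del\vec\Gamma)$. By construction $\del$ sums over separations $e\colon a\to b$ of the flow vertices and $\circ$-sinks; inserting the $m_{(\cdot)}$ at $a$ and $b$ with $e$ feeding the output of $a$ into $b$, each summand is precisely one term of a necklace product $m_{(k)}\circnec m_{(\ell)}$ localized at the vertex being split. The separation rules of \cref{def:separations} — excluding $in=out=1$ vertices and forbidding $b$ to be a sink when $v$ is not — are exactly what delete the degenerate $\mu^1$-pieces and the boundary pieces from this local sum. I would then show that these two excluded families are supplied by the other side of the equation: a separation that would place a one-input $\mu$ adjacent to a marked input or output is instead produced by the matching term of $b^{\mathrm{in}}$ or $b^{\mathrm{out}}$, after sliding the inserted $\mu$-vertex along its incident edge by the graphical calculus. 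Assembling everything, the combination $b^{\mathrm{out}}\Phi \pm \Phi b^{\mathrm{in}} - \Phi\del$ reorganizes into a sum over the flow vertices $w$ of $\vec\Gamma$ of the \emph{complete} local insertion of $\sum_{k+\ell} m_{(k)}\circnec m_{(\ell)}$ at $w$, which vanishes term by term by the Maurer-Cartan equation $m\circnec m = 0$.

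The remaining work is the sign bookkeeping, where the orientation formalism of \cref{sec:orientations} earns its keep. I would invoke the Proposition of \cref{sec:action}, which guarantees that $\Phi$ intertwines the dimension-$d$ action of the symmetric group on orderings of edges and vertices with the sign representation on operators, so that $\Phi$ is well defined on $d$-oriented quivers; this reduces the check to comparing, for one fixed normal-form ordering, the sign introduced by a separation in the definition of $\del$ against the Koszul sign produced by the graphical evaluation of the corresponding necklace term, exactly as in the sign analysis underlying \cref{prop:necklaceDGLie}. The strict-unit vertices need separate but routine attention: the special separation rules and their signs were designed so that inserting a strict unit $1_X$ reproduces the relations $\mu^2(1_X,a)=a$, $(-1)^{\bar b}\mu^2(b,1_X)=b$ and $\mu^{\neq 2}(\dots,1_X,\dots)=0$, and I would verify that with these rules the unit insertions cancel against the corresponding $b$-terms, so that $\Phi$ descends to the nonunital complex $C^{nu}_*(\cA)$ used in the statement.

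I expect the main obstacle to be the precise matching of the boundary terms: establishing rigorously that each Hochschild-differential insertion at a marked point equals, with the correct sign, exactly the necklace summand missing from the interior separation sum, so that the per-vertex Maurer-Cartan sums close up. This is a local combinatorial statement about the surface $\Sigma_\Gamma$ near each $\times$-source and $\circ$-sink, but carrying the orientation data and the distinguished-arrow conventions through the sliding of the $\mu$-vertex is the delicate point; the $V_\mathbf{1}$ and free-boundary special cases add bookkeeping but, I expect, no genuinely new ideas.
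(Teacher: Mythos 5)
Your proposal follows essentially the same route as the paper's proof: realize $b$ before and after $\Phi(\vec\Gamma)$ as insertions of a $\mu$-vertex around the boundary cycles of the $\times$-sources and around the $\circ$-sinks, match the separations in $\del\vec\Gamma$ with the remaining terms of the necklace Maurer--Cartan equation localized at each flow vertex and sink, and control signs via the $d$-orientation formalism (with the paper accumulating the global sign $\deg(\Gamma)+m-n$ by passing each $\mu$-vertex through the graph one flow vertex at a time). The structure, the key cancellations, and the treatment of the unit vertices all agree with the paper's argument.
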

\begin{proof}
	It will be easier to shift the Hochschild complex, so we will instead describe an action
	\[ \Phi: Q^d(m,n)[n-m] \otimes (C_*(\cA)[1])^{\otimes m} \to (C_*(\cA)[1])^{\otimes n} \]
	In other words, given Hochschild chains $a^i, 1 \le i \le m$, we want to prove the following identity:
	\[ \Phi(\del \vec\Gamma)(a^1,\dots, a^m) + (-1)^{\deg(\Gamma) + m - n} \Phi(\vec\Gamma)\circ b (a^1,\dots,a^m) = b \circ\Phi(\vec\Gamma)(a^1,\dots, a^m) \]
	The action of a graph on a Hochschild chain is as described in \cref{sec:action}; in order to prove the identity above we have to understand how to express the Hochschild differential $b$, applied before and after $\phi(\vec\Gamma)$, in terms of modifications to $\vec\Gamma$.

	We describe the ribbon quivers corresponding to $\Phi(\vec\Gamma)\circ b$: for the component of the Hochschild differential on the $i$th chain in $(C_*(\cA)[1])^{\otimes m}$, we sum over insertions of a vertex
	$\begin{tikzpicture}[baseline={([yshift=-.5ex]current bounding box.center)}]
	\node [vertex] (a) at (0,0) {$\mu$};
	\draw [->-] (a) to (1,0);
	\end{tikzpicture}$
	attached to all angles around the boundary cycle of the $i$th $\times$-source of $\Gamma$, added to an insertion of a vertex $\begin{tikzpicture}[baseline={([yshift=-.5ex]current bounding box.center)}]
	\node [vertex] (a) at (0,0) {$\mu$};
	\draw [->-] (a) to (1,0);
	\draw [->-] (-1,0) to (a);
	\end{tikzpicture}$ along the edge incident at that $\times$-source.

	We orient all those graphs in the following way: if $\Gamma$ had an orientation in normal form
	\[ (o_1 o_2 \dots o_n \dots v_N \dots v_1\  e_m \dots x_m \dots e_1 \dots x_1 ) \]
	where $e_1,\dots, e_m$ are the edges incident at the sources $x_1,\dots, x_m$, for each modified graph we insert the new vertex $\mu$ and its outgoing edge $e$ as
	\[ (o_1 o_2 \dots o_n \dots v_N \dots v_1\ \bm{e\ \mu}\  e_m \dots x_m \dots e_1 \dots x_1 ) \]
	between the $\times$-sources and their edges, and all the other edges and vertices.

	Now we sum over all these new ribbon quivers with the orientation above. We see that the effect of each new vertex $\mu$ is to exactly precede the application of $\Phi(\Gamma)$ by an operation
	\begin{align*}
	\hspace{-2cm} a^1_0 &\otimes a^1_1 \otimes \dots \otimes a^1_{p_1} \otimes \dots \otimes a^m_0 \otimes a^m_1 \otimes \dots \otimes a^m_{p_m} \mapsto \\
	& a^1_0 \otimes a^1_1 \otimes \dots \otimes a^1_{p_1} \otimes \dots \otimes a^i_0 \otimes a^i_1 \otimes \dots \otimes \bm{\mu}(a^i_j,\dots) \otimes \dots \otimes  a^i_{p_i} \otimes \dots \otimes a^m_0 \otimes a^m_1 \otimes \dots \otimes a^m_{p_m}
	\end{align*}
	and the sum over such operations is the (shifted) Hochschild differential $b$ on $(C_*(\cA)[1])^{\otimes m}$.

	Now we describe the ribbon quivers corresponding to $b\circ\Phi(\vec\Gamma)$: for the $i$th $\circ$-vertex $o_i$, we sum over all insertions of 	$\begin{tikzpicture}[baseline={([yshift=-.5ex]current bounding box.center)}]
	\node [vertex] (a) at (0,0) {$\mu$};
	\draw [->-] (a) to (1,0);
	\end{tikzpicture}$ around the angles of $o_i$ and also over all insertions of $\begin{tikzpicture}[baseline={([yshift=-.5ex]current bounding box.center)}]
	\node [vertex] (a) at (0,0) {$\mu$};
	\draw [->-] (a) to (1,0);
	\draw [->-] (-1,0) to (a);
	\end{tikzpicture}$ along the edges incident at $o_i$.

	We orient all those graphs in a similar way, by placing the new vertex $\mu$ and its outgoing edge as follows
	\[ (o_1 o_2 \dots o_n\ \bm{e\ \mu}\ \dots v_N \dots v_1\ e_m \dots x_m \dots e_1 \dots x_1 ) \]
	that is, right between the $\circ$-outputs and all the other elements. We sum over all these new ribbon quivers with this orientation, and observe that the effect is \emph{almost} to follow the application of $\Phi(\Gamma)$ by $b$ on $(C_*(\cA)[1])^{\otimes n}$: what we are missing are terms where some subset of size $\ge 2$ of the $k$ incoming edges to $o_i$ themselves get input into $\mu$; these are exactly the separations of a sink in \cref{def:separations}, which appear in $\del\Gamma$.

	We now turn to the flow vertices. The Maurer-Cartan equation satisfied by the elements $\{m_{(k)}\}$ is
	\[ \sum_{i + j = k+1} m_{(i)} \circnec m_{(j)} = 0 \]
	for every $k \ge 1$. Consider now some vertex $v \in \vec\Gamma$, with $k$ outgoing edges and $\ell$ incoming edges. Two of the types of terms in the equation above are terms with $j=1$ or $i=1$; by calculating their orientation they enter in the equation above respectively as
	\[
	\sum_{1 \le p \le k}\begin{tikzpicture}[baseline={([yshift=-.5ex]current bounding box.center)}]
	\draw (0,0) circle (1.5);
	\node [vertex] (psi) at (0,0.6) {$\mu$};
	\node [vertex] (phi) at (0,-0.5) {$m_{(k)}$};
	\draw [-w-] (phi) to (0,-1.6);
	\node at (0,-2) {$e_1$};
	\draw [->-] (phi) to (-1.4,-1);
	\node at (-1.5,-1.1) {$e_2$};
	\draw [->-] (phi) to (-1.6,0.5);
	\node at (-1.8,0.55) {$e_p$};
	\node at (-0.8,-0.3) {$\vdots$};
	\draw [->-] (phi) to (1.4,-1);
	\node at (1.7,-1.1) {$e_k$};
	\node at (0.8,-0.3) {$\vdots$};
	\draw [->-] (phi) to (1.6,0.5);
	\node at (2,.55) {$e_{p+1}$};
	\draw [->-] (psi) to node [auto] {$e$} (phi);
	\end{tikzpicture} +
	\sum_{1 \le p \le k}(-1)^{(k-1)(d-1)} \begin{tikzpicture}[baseline={([yshift=-.5ex]current bounding box.center)}]
	\draw (0,0) circle (1.5);
	\node [vertex] (psi) at (0,0.6) {$\mu$};
	\node [vertex] (phi) at (0,-0.5) {$m_{(k)}$};
	\draw [-w-] (phi) to (0,-1.6);
	\node at (0,-2) {$e_1$};
	\draw [->-] (phi) to (-1.4,-1);
	\node at (-1.5,-1.1) {$e_2$};
	\draw [->-] (phi) to (-1.6,0.5);
	\node at (-1.9,0.55) {$e_{p-1}$};
	\node at (-0.8,-0.3) {$\vdots$};
	\draw [->-] (phi) to (1.4,-1);
	\node at (1.7,-1.1) {$e_k$};
	\node at (0.8,-0.3) {$\vdots$};
	\draw [->-] (phi) to (1.6,0.5);
	\node at (2,.55) {$e_{p+1}$};
	\draw [->-] (phi) to node [auto] {$e$} (psi);
	\draw [->-] (psi) to (0,1.6);
	\node at (0,1.8) {$e_p$};
	\end{tikzpicture}
	\]
	in terms of the ribbon quiver, the first type of term corresponds to the sum of two types of modifications to the ribbon quiver: adding
	$\begin{tikzpicture}[baseline={([yshift=-.5ex]current bounding box.center)}]
	\node [vertex] (a) at (0,0) {$\mu$};
	\draw [->-] (a) to (1,0);
	\end{tikzpicture}$ in angles around $v$, and also to separations of $v$ where one side of the dashed line in \cref{def:separations} only has incoming arrows. Note that the sign for all terms in the second sum is the same, and that
	\[ (k-1)(d-1) = dk -d -k +1 \equiv \bar v + 1 \]
	where again $\bar v = \bar m_{(k)} = dk - d + 3k -4$ is the degree of the vertex as a map from copies of $\cA[1]$ to copies of $\cA[1]$.

	All the other separations of $v$ appearing in $\del\Gamma$ correspond to the other terms in the necklace equation above, with $i,j \ge 2$. The last calculation we need is to analyze the effect on the orientation of the operation that does not change anything about the ribbon quiver, but commutes the order between some new vertex $\begin{tikzpicture}[baseline={([yshift=-.5ex]current bounding box.center)}]
	\node [vertex] (a) at (0,0) {$\mu$};
	\draw [->-] (a) to (1,0);
	\end{tikzpicture}$ or $\begin{tikzpicture}[baseline={([yshift=-.5ex]current bounding box.center)}]
	\node [vertex] (a) at (0,0) {$\mu$};
	\draw [->-] (a) to (1,0);
	\draw [->-] (-1,0) to (a);
	\end{tikzpicture}$ past some vertex $v$ that is not attached to it. We calculate that this has the effect of introducing a sign
	\[ \bar \mu \bar v = \bar v \]

	We are now ready to assemble all these calculations. Starting with the sum of ribbon quivers for $\Phi(\vec\Gamma)\circ b$, in sequence we `pass' the $\mu$ vertices past the flow vertices $v_i$, either by the necklace relation, if they are connected, or by switching the orientation as shown above, if they are not. The sign gained is always $\bar v$. If $v$ has $k$ outgoing edges and $\ell$ incoming edges this is
	\[ \bar v = \bar m_{(k)} \equiv dk - d - k \equiv \deg(v) + out(v) + in(v) \pmod{2} \]
	Therefore repeating this procedure for all the flow vertices we get a global sign
	\[ \sum_{v \in \mathrm{Flow}} (\deg(v) + out(v) + in(v)) \]
	but every edge appears twice in the sum above, with the exception of the edges connected to the $\times$-sources and $\circ$-sinks. As each $\circ$-sink has a degree of $(in(o)-1)$, this global sign is $\deg(\Gamma) + m - n$ and we have the desired identity.
\end{proof}

\subsection{The open-closed PROPs}\label{sec:openClosed}
We now describe a modification of the \textsc{prop}s $Q^d$, which will act not only on Hochschild chains of a pre-CY category $\cA$, which are the associated to closed strings, but also on the morphism spaces $\cA(X,Y)$ for objects $X,Y$ of $\cA$, which are associated to open strings. We already described the graphs appearing in this \textsc{prop} in \cref{sec:ribbonQuivers}; the open-in and open-out vertices are inputs and outputs of open strings.

\subsubsection{Colors and boundary type}
A colored \textsc{prop}s is just a particular type of symmetric monoidal category. Given a set $\cS$ of colors, a $\cS$-\textsc{prop} is a strict symmetric monoidal category whose monoid of objects is isomorphic to the free monoid generated by $\cS$.

That is, if $Q$ is a $\cS$-\textsc{prop}, for any two sequences $\vec c = (c_1, \dots, c_n)$ and $\vec c\ ' = (c_1' \dots, c_m')$, there is a set of morphisms $Q(\vec c,\vec c\ ')$. Just from the axioms of symmetric monoidal categories these spaces then come with appropriate actions by the symmetric groups $S_n,S_m$, compatible with permutations of $\vec c,\vec c\ '$.

Let $\cA$ be some $A_\infty$-category, and denote $\mathrm{Ob}(\cA)$ its set of objects. We now fix a set of colors associated to $\cA$ to be the set
\[ \cS_\cA = (\mathrm{Ob}(\cA) \times \mathrm{Ob}(\cA)) \sqcup \{*\} \]
In other words, there is one color for each ordered pair $(X,Y)$ of objects of $\cA$ and one extra color $*$. If $\cA$ is an algebra, that is, has a single object $X$, the set of colors is the two-element set $\{(X,X),*\}$ (for open and closed boundaries, respectively).

The closed \textsc{prop} $Q^d$ we described previously can be seen as colored \textsc{prop} where we only use the color $*$; in that case, the sequences $\vec c, \vec c\ '$ are described solely by the two positive integers $m,n$ which determine the space $Q^d(m,n)$, by specifying how many $\times$- and $\circ$-vertices were required.

For the open-closed \textsc{prop} we need to describe which ribbon quivers (also with open in/outputs) are compatible with a pair $\vec c, \vec c\ '$.
\begin{definition}
	The \emph{boundary type} of $\vec\Gamma$ is the tuple
	\[ (|V_\times|, |V_\circ|, (i,o,i,i,o,\dots,o), \dots, (o,i,o,\dots,i), F) \]
	of three integers, where $F$ is the number of boundary components without any marked vertices, and some finite number of cyclically ordered sequences on the symbols $i,o$, each corresponding to a boundary component with open-in and open-out vertices.
\end{definition}
Note that by definition, the sets $V_\times,V_\circ$ and all the $i$'s and $o$'s are also linearly ordered, corresponding to some order of open inputs and outputs; this ordering has nothing to do with the cyclic ordering in the boundary type.

\begin{definition}
	A marked acyclic ribbon quiver $\vec\Gamma$ is \emph{compatible} with the sequences of colors $\vec c,\vec c\ '$ if the boundary type of $\vec\Gamma$ satisfies these conditions:
	\begin{enumerate}
		\item $|V_\times| =$ number of $*$'s appearing in $\vec c$,
		\item $|V_\circ| =$ number of $*$'s appearing in $\vec c\ '$,
		\item $|\vec c| - |V_\times|$ = number of $i$'s in the boundary type,
		\item $|\vec c\ '| - |V_\circ|$ = number of $o$'s in the boundary type,
	\end{enumerate}
	together with the following condition on the colors. By (3) and (4) above we have a bijection between the incoming/outgoing open colors $c_k = (X_k,Y_k)$ (pairs of objects of $\cA$) and the sets of $i$ and $o$; we require that along every boundary component:
	\begin{enumerate}
		\item if $i_k$ appears immediately before $i_\ell$ then $Y_k = X_\ell$,
		\item if $i_k$ appears immediately before $o_\ell$ then $Y_k = Y_\ell$,
		\item if $o_k$ appears immediately before $i_\ell$ then $X_k = X_\ell$, and
		\item if $o_k$ appears immediately before $o_\ell$ then $X_k = Y_\ell$.
	\end{enumerate}
\end{definition}
Paraphrasing the conditions above in more informal terms, once we draw the surface associated to the marked acyclic quiver $\vec\Gamma$, we have exactly the right numbers of open/closed colors on each side, and along each boundary component made up of open colors, we can draw them as incoming/outgoing oriented strings such that their sources and targets are compatible.

\begin{figure}[h!]
	\begin{minipage}[c]{0.4\textwidth}
		\centering
		\includegraphics[width=3.5cm]{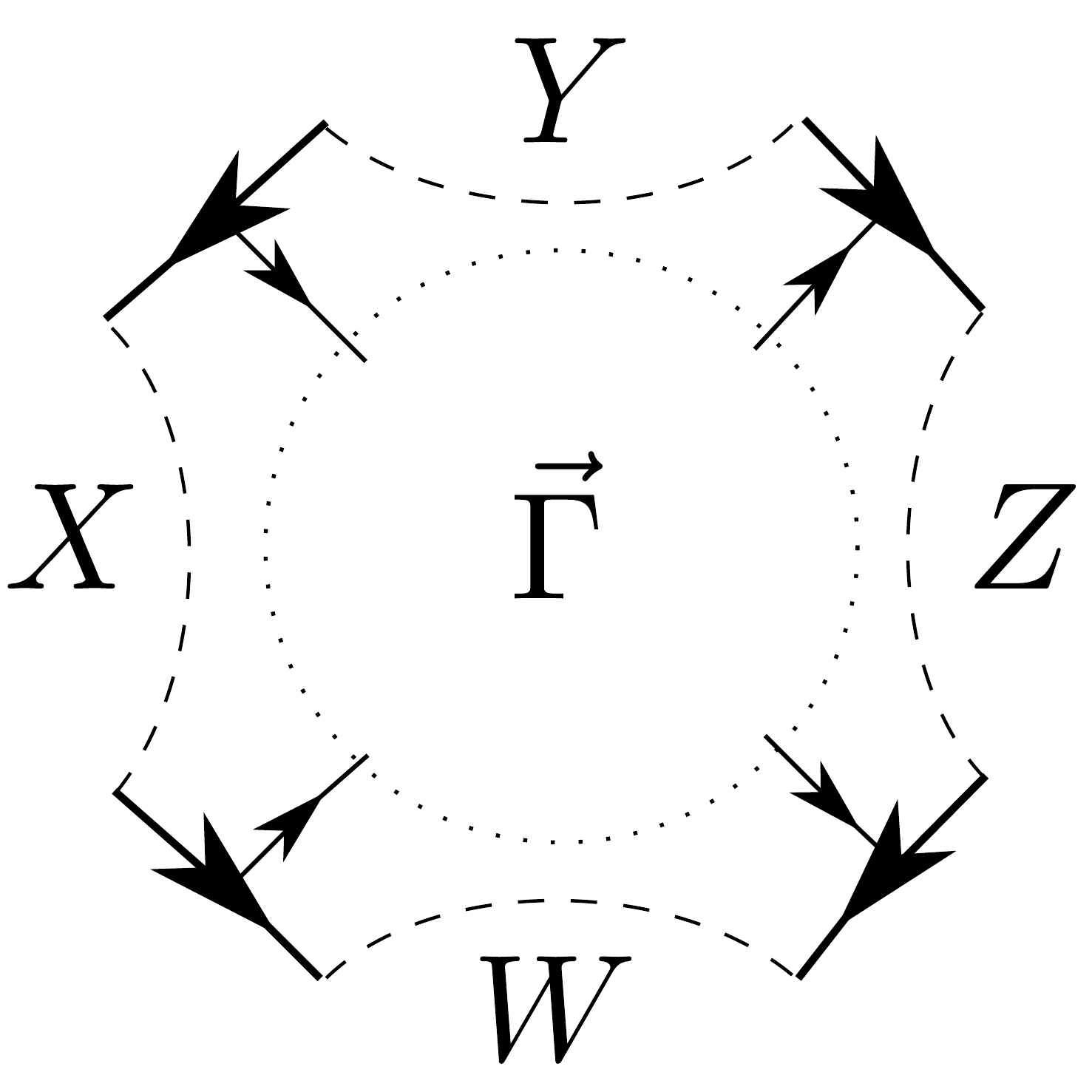}
	\end{minipage}\hfill
	\begin{minipage}[c]{0.6\textwidth}
	\caption{Boundary type along a boundary component of $\Gamma$ with two open inputs (on the left) and two open outputs (on the right). The associated boundary type has a cyclically ordered tuple $(i,i,o,o)$ which is compatible with any sequence of colors of the form $\left((Y,X),(X,W),(Z,W),(Y,Z)\right)$, for any four objects $X,Y,Z,W$ of $\cA$.}
\label{fig:boundaryType}
	\end{minipage}
\end{figure}

\subsubsection{Action of the open-closed PROPs}
We now use the compatibility condition above to define the desired \textsc{prop}s. For any pair of colors $\vec c, \vec c\ '$, consider the set $RQ(\vec c,\vec c\ ')$ of all marked ribbon quivers compatible with it.
\begin{definition}
	The space $Q^d(\vec c,\vec c\ ')$ is defined analogously to $Q^d(m,n)$ (\cref{sec:theClosedPROP}), but summing over all ribbon quivers in $RQ(\vec c,\vec c\ ')$, with orientations.
\end{definition}

As for the action of $Q^d(\vec c,\vec c\ ')$, we proceed the same way with the closed inputs ($\times$-vertices), and on each open input (valence one vertex in $V_\mathrm{open-in}$) labeled by a color $(X,Y)$, we input the corresponding element of the hom space $\cA(X,Y)$; on each open output labeled by a color $(X,Y)$ (valence one vertex in $V_\mathrm{open-out}$ we again read out the arrow traveling along the incident edge as an element of some hom space $\cA(X,Y)$ on components where the source and target $X,Y$ of that element agree with the desired color, or as zero if they do not.

The same argument as in \cref{thm:actionClosedPROP} proves the following result.
\begin{theorem}
	For any pre-CY category $\cA$ of dimension $d$, and any sequences of colors $\vec c, \vec c\ '$, having respectively $m, n$ instances of the color $*$, and open colors given by pairs of objects $(X_i,Y_i)$ and $(X'_i,Y'_i)$, there is an morphism of complexes
	\[ Q^d(\vec c, \vec c\ ') \otimes (C_*(\cA))^{\otimes m} \otimes \prod_i \cA(X_i,Y_i) \to (C_*(\cA))^{\otimes n} \otimes \prod_j \cA(X'_j,Y'_j)  \]
\end{theorem}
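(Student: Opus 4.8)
The plan is to adapt verbatim the chain-map argument of \cref{thm:actionClosedPROP}, treating each open input and output as a degenerate, length-zero boundary string. First I would define the map $\Phi$ on the open-closed \textsc{prop}: as before, insert the pre-CY structure map $m_{(k)}$ into every flow vertex with $k$ outgoing edges (following \cref{sec:action}), feed a Hochschild chain at each $\times$-source and read one off at each $\circ$-sink; additionally, feed the element of $\cA(X_i,Y_i)$ along the outgoing edge at each open-in vertex and read the element traveling along the incident edge at each open-out vertex, projecting onto the prescribed hom-space $\cA(X'_j,Y'_j)$ (and returning zero when the source and target objects fail to match). The compatibility conditions on the boundary type guarantee that the objects labelling the regions line up along every boundary component, so every such reading is well-defined. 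Write $D$ for the total differential on both source and target: the Hochschild differential $b$ on each closed factor, together with the $A_\infty$ differential $\mu^1 = m^1_{(1)}$ on each open factor, with the usual Koszul signs between tensor factors.

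The identity to establish is the chain-map relation, exactly analogous to the closed case:
\[ \Phi(\del\vec\Gamma)(\dots) + (-1)^{\deg_d(\vec\Gamma) + m - n}\, \Phi(\vec\Gamma)\circ D(\dots) = D\circ\Phi(\vec\Gamma)(\dots). \]
I would reuse the decomposition from the proof of \cref{thm:actionClosedPROP}: inserting univalent and bivalent $\mu$-vertices around the boundary cycles of the $\times$-sources and along their incident edges reproduces $b$ acting on the closed inputs; the same insertions around the $\circ$-sinks reproduce $b$ on the closed outputs up to the sink separations appearing in $\del\vec\Gamma$; and the separations at the flow vertices reproduce the necklace Maurer-Cartan equation $\sum_{i+j=k+1} m_{(i)}\circnec m_{(j)} = 0$ satisfied by the pre-CY structure.

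The genuinely new contributions are those attached to the open boundary vertices, and these slot into the same pattern. Since an open-in vertex is univalent and carries a \emph{single} element (not a cyclic chain with surrounding angles), the only modification producing its differential is the insertion of a bivalent $\mu$-vertex (one incoming, one outgoing edge) along its incident edge, which evaluates to precisely $\mu^1$ on that open input. Passing this vertex through the adjacent flow vertex via the necklace relation converts it into the corresponding separations in $\del\vec\Gamma$, exactly as the $a_0$-component of a closed input was handled. Dually, for each open-out vertex the separations of its adjacent flow vertex that peel a bivalent $\mu$ onto the outgoing edge reproduce $\mu^1$ applied after the open output is read off. Thus every term of $\Phi(\vec\Gamma)\circ D$ and of $D\circ\Phi(\vec\Gamma)$ is matched either by a term of $\Phi(\del\vec\Gamma)$ or by a commutation of $\mu$-vertices past unattached flow vertices.

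The main obstacle, as in the closed case, is the sign bookkeeping. The orientation conventions of \cref{def:dOrientation,def:markings} assign uniform weights to all vertices and edges, and the univalent open-in and open-out vertices enter the orderings in the same way as the $\times$- and $\circ$-vertices, so the passage to $d$-orientations is unchanged. What must be checked is that the signs gained when commuting the inserted $\mu$-vertices past the flow vertices, and when reordering the outputs to read off the open strings, match the Koszul signs in $D$ — in particular that the open-string $\mu^1$ differentials carry the correct sign relative to the closed Hochschild differential $b$. This follows from the same parity computation $\bar v \equiv \deg(v) + out(v) + in(v) \pmod 2$ used for the flow vertices in \cref{thm:actionClosedPROP}, now applied verbatim at the boundary vertices; once these local signs are verified, the accumulated global sign is again $\deg_d(\vec\Gamma) + m - n$ and the chain-map identity holds, giving the desired morphism of complexes.
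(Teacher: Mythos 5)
Your proposal follows essentially the same route as the paper, which simply asserts that the argument of the closed-string case (\cref{thm:actionClosedPROP}) carries over; your explicit treatment of the open-in/open-out vertices via bivalent $\mu$-insertions and the adjacent flow-vertex separations is exactly the intended adaptation. No gaps.
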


Finally, composition of ribbon quivers $\Gamma_2 \circ \Gamma_1$ along open in/outputs with compatible colors is done by erasing the pair of vertices from $V_\mathrm{open-out} \subset V(\Gamma_1)$ and $V_\mathrm{open-in} \subset V(\Gamma_2)$ and identifying their incident edges; this makes the collection of spaces $Q^d_{\vec c,\vec c\ '}$ into a $\cS_\cA$-colored dg \textsc{prop}.

In order to define composition and the action as above, one has to define the orientation on these ribbon quivers with open in/outputs, just as we have done before. This can be done by pretending that the open inputs are $\times$-vertices and the open outputs are $\circ$-vertices of valence one; and proceeding just as we did for the closed case.

\begin{remark}
	We would like to point out a new feature that occurs when composing open inputs and outputs: the creation of free boundary circles. Let $\Gamma_1, \Gamma_2$ be graphs which contain consecutive open outputs and inputs as follows:
	
	\begin{figure}[h!]
		\centering
		\includegraphics[width=0.6\textwidth]{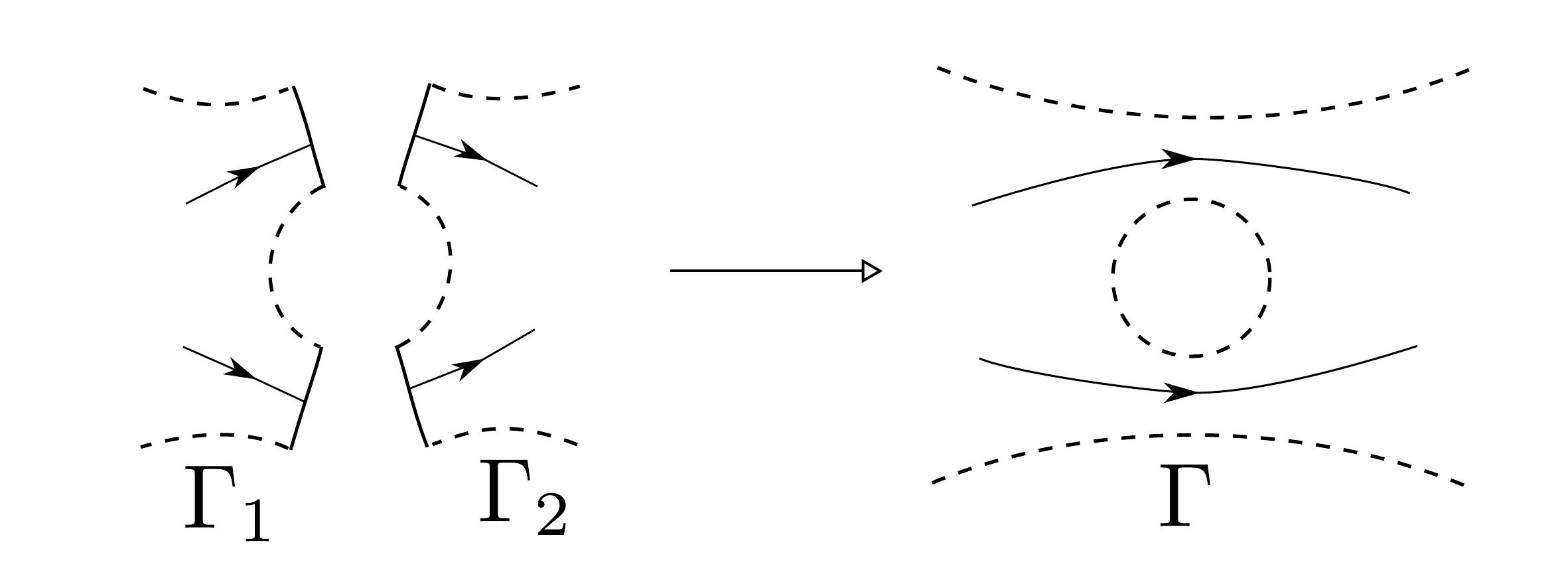}
		\caption{Creation of a new free boundary component (dashed circle from open gluing along two neighboring open intervals)}
	\end{figure}

	The composition $\Gamma_2 \circ \Gamma_1$ then will have a free boundary (i.e. without any marked sources or sinks), indicated by the dashed circle. On the other hand, all the free boundaries that $\Gamma_1$ and $\Gamma_2$ already had will still be in $\Gamma_2 \circ \Gamma_1$; the composition map $\circ$ is \emph{superadditive} on the number $F$ of free boundary circles. Because of this, in the open-closed case one cannot restrict to the case without no free boundary circles, and one does not get the isomorphisms between the even and odd dimensions that we discussed in the closed case in \cref{sec:isoEvenOdd}.
\end{remark}

\subsection{Features of the open-closed PROPs}
We discuss some features of the \textsc{prop}s $Q^d$, as well as some smaller algebraic structures that are part of it.

\subsubsection{Connes' differential and identity maps}
When marking ribbon quivers, we allowed some $\circ$-outputs to have a valence one vertex attached to them, labeled by \textbf{1}; this corresponds to inputting the cochain $1 \in C^0(A)$, for the case of an algebra, or the unit morphism $1_X \in \cA(X,X)$ for the appropriate object $X$, for the case of a category.

Recall that we have been denoting by $C_*(\cA)$ the `nonunital chain complex' $C^{nu}_*(\cA)$ for Hochschild homology, that is the quotient of the usual complex $C_*(\cA^+)$ of the augmented $A_\infty$-category by the subcomplex of chains that have some strict unit $1_X$ in some place with nonzero index.

Therefore, if a certain $\circ$-vertex $o_i$ has a \textbf{1} attached to it, unless the edge connecting $o_i$ and \textbf{1} is the distinguished edge of $o_i$, the resulting output chain is zero in $C_*(\cA)$, so we will always assume that edge is the distinguished edge.

With this convention, the ribbon quiver giving Connes' differential $B$ of cohomological degree $-1$ is given by
\[ \Gamma_B =
\begin{tikzpicture}[baseline={([yshift=-.5ex]current bounding box.center)}]
\node [inner sep=0pt] (top) at (-0.5,1) {$\times$};
\node [vertex] (side) at (0.5,0.5) {$1$};
\node [circ] (bot) at (0,0) {};
\draw [->-] (top) to (bot);
\draw [->] (side) to (bot);
\end{tikzpicture} \]

The ribbon quiver above has (homological) degree $\deg=1$ and genus zero, and is part of the closed \textsc{prop}. The identity map on $C_*(\cA)$ is also in that same space $Q^d(1,1)$, but with degree zero, and is given simply by
\[ \Gamma_{\id} =
\begin{tikzpicture}[baseline={([yshift=-.5ex]current bounding box.center)}]
\node [inner sep=0pt] (left) at (-1,0) {$\times$};
\node [circ] (right) at (1,0) {};
\draw [->-,shorten <=-3.5pt] (left) to (right);
\end{tikzpicture} \]

Another simple operation is given by the disc with $k$ open inputs and one closed output at the origin:
\[\begin{tikzpicture}[baseline={([yshift=-.5ex]current bounding box.center)}]
\draw (0,0) circle (1.5);
\node [circ] (o) at (0,0) {};
\draw [-w-] (1.6,0) to (o);
\draw [->-] (1.1,1.1) to (o);
\draw [->-] (0,1.6) to (o);
\draw [->-] (-1.1,1.1) to (o);
\node at (-1,0) {$\vdots$};
\draw [->-] (1.1,-1.1) to (o);
\draw [->-] (0,-1.6) to (o);
\draw [->-] (-1.1,-1.1) to (o);
\node at (1,0.4) {$X_1$};
\node at (1,-0.4) {$X_2$};
\node at (0.4,-1) {$X_3$};
\node at (-0.4,-1) {$X_4$};
\node at (-0.4,1) {$X_{k-1}$};
\node at (0.4,1) {$X_k$};
\end{tikzpicture}\]
which describes the map of degree $1-k$ that sends a sequence
\[ (a_0,a_1,\dots,a_{k-1}) \in \cA(X_1,X_2) \otimes \cA(X_2,X_3) \otimes \dots \otimes \cA(X_k,X_1) \]
to the Hochschild chain $a_0 \otimes a_1 \otimes \dots \otimes a_{k-1} \in C^*(\cA)$.

\subsubsection{The A-infinity operad}
Consider now the sub-\textsc{prop} of the open-closed \textsc{prop} $Q^d$ made up of genus zero ribbon quivers without $\times$- or $\circ$-vertices (therefore without \textbf{1}-vertices), without free boundary components and with a single open output; every such ribbon quiver is a union of some directed trees, with every vertex having a single output.

\begin{figure}[h!]
	\centering
	\includegraphics[width=0.3\textwidth]{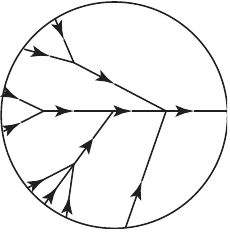}
\end{figure}

Note that the degree and differential on all these ribbon quivers is independent of the integer $d$. Moreover, if we restrict to connected ribbon quivers, only composition along one output is allowed, and we just have an operad. The following proposition follows from just checking that the signs in our prescription are just the Koszul signs appearing in the $A_\infty$-relations.
\begin{proposition}
	This operad of genus zero ribbon graphs with open inputs and one open outputs is equivalent to the $A_\infty$-operad, that is, chains of the operad of \emph{rooted planar trees}.
\end{proposition}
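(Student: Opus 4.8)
The plan is to construct an explicit isomorphism of dg operads between the sub-operad of genus-zero, single-output ribbon quivers and the cellular chain operad of the Stasheff associahedra, matching generators, degrees, differential, composition, and signs, in that order. Most of these matchings are direct combinatorial translations; the sign comparison is the only substantial point.

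First I would identify the generators. A connected ribbon quiver of the stated type has no $\times$-, $\circ$-, or $\mathbf{1}$-vertices, no free boundary, exactly one open-out vertex and some number of open-in vertices, while every remaining vertex is a flow vertex, which by the valence conventions has exactly one outgoing edge and at least two incoming edges. Acyclicity together with genus zero forces the underlying graph to be a tree, with the open-out vertex as root and the open-in vertices as leaves; the ribbon (cyclic) structure on the half-edges is precisely a planar embedding. Thus the connected generators are in bijection with rooted planar trees whose $n$ leaves are the inputs and whose internal vertices have at least two incoming edges, which are exactly the cells of the associahedron $K_n$.

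Next come the degrees and the differential. Applying \cref{def:degree} with no sources of valence $\ge 2$ and no $V_\circ$ vertices, only the flow sum survives, and each flow vertex contributes $(2-d)\cdot 1 + d + in(v) - 4 = in(v) - 2$, so $\deg_d(\vec\Gamma) = \sum_{v \in \mathrm{Flow}}(in(v)-2)$, independently of $d$. For a tree with $n$ leaves this equals $n-2$ minus the number of internal edges, so the corolla (a single flow vertex, i.e. the generator playing the role of $\mu^n$) sits in cohomological degree $2-n$ and the binary trees in degree $0$; this matches the dimensions of the faces of $K_n$ under the sign flip between homological dimension and our cohomological grading. I would then check that a separation of a flow vertex, constrained as in \cref{def:separations} (conditions (1)--(2) force both pieces to again be flow vertices, and hence exclude binary vertices from being split), is exactly the insertion of one internal edge into the tree, i.e. the passage to a codimension-one face. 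Hence $\del$ coincides with the cellular boundary, and $Q^d(\dots)$ is identified, up to this regrading, with $C_*(K_n)$.

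Finally, operadic grafting of planar trees at a leaf corresponds under the bijection to the composition of ribbon quivers along an open output/input pair (erasing the two valence-one vertices and splicing their incident edges), which is manifestly associative and equivariant, giving an isomorphism of operads at the level of the underlying graded vector spaces. The remaining and main point is to verify that the signs our formalism attaches---the $d$-orientations of \cref{sec:orientations} together with the evaluation rule of \cref{sec:action}---reduce, in this genus-zero single-output setting, to the standard Koszul signs appearing in the $A_\infty$-relations. Since the degree and the differential are $d$-independent here, it suffices to treat one parity; I would fix the orientation of each generator by its normal-form data and compare, term by term, the sign produced when a separation brackets a consecutive block of inputs against the sign of the corresponding quadratic term in the $A_\infty$-structure equation. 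The hard part will be precisely this bookkeeping: showing that permuting edges and vertices into normal form and reading off the evaluation reproduces exactly the alternating and Koszul contributions of the Stasheff convention. Everything else is a clean combinatorial identification, so this sign check is the sole content behind the assertion that the result follows from verifying that our prescription yields the Koszul signs of the $A_\infty$-relations.
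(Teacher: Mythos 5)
Your proposal is correct and follows essentially the same route as the paper, whose own proof consists of the observation that these connected genus-zero quivers are directed planar trees with one output per vertex, that the degree and differential are $d$-independent, and the one-line assertion that the remaining content is checking that the orientation/evaluation prescription reproduces the Koszul signs of the $A_\infty$-relations. You have simply spelled out the combinatorial identifications (generators, degree count $\sum_v (in(v)-2)$, separations as face maps, grafting as composition) that the paper states without proof, and you correctly isolate the sign verification as the sole substantive step, exactly as the paper does.
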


\subsubsection{The multicorolla dioperad}\label{sec:multicorolla}
We now consider a slightly bigger sub-\textsc{prop} of $Q^d$, where we still only have genus zero connected ribbon tree quivers with open in/outputs, without free boundaries, but now we allow multiple outputs.

If we still only allow composition along a single edge, the composed ribbon quiver still satisfies those conditions, and we get a dioperad (in the language of e.g. \cite{gan2003koszul}). This dioperad is generated by `multicorollas' such as:
\[\begin{tikzpicture}
\node [bullet] (phi) at (0,0) {};
\node (a11) at (-.7,-1) {};
\node (a12) at (-1.2,-0) {};
\node (a21) at (-0,1.3) {};
\node (a31) at (1.4,0.2) {};
\node (a32) at (1.3,-0.6) {};
\node (a33) at (0.7,-1) {};
\node (b1) at (0,-1.5) {};
\node (b2) at (-1.5,0.9)  {};
\node (b3) at (1.5,0.9) {};

\draw [->-] (phi) to (b1);
\draw [->-] (phi) to (b2);
\draw [->-] (phi) to (b3);
\draw [->-] (a11) to (phi);
\draw [->-] (a12) to (phi);
\draw [->-] (a21) to (phi);
\draw [->-] (a31) to (phi);
\draw [->-] (a32) to (phi);
\draw [->-] (a33) to (phi);
\end{tikzpicture}\]

Let us denote this multicorolla operad by $MC^d$. Each space of operations of $MC^d$ splits as a sum of complexes
\[ MC^d_{n_1,\dots,n_k} \]
from $n = \sum_i n_i$ inputs to $k$ outputs; this space is spanned by directed trees embedded in the disc with $k$ arrows going out, and $n_i$ arrows coming in between the $i$th and the $(i+1)$th outgoing arrows.

Later in \cref{sec:dimensionsAndOrientations} we will prove that the complexes $Q^d$ model chains on appropriate moduli spaces of metric ribbon quivers. In this case, each of those spaces is given by a quotient
\[ \mathrm{MetRT_{n_1,\dots,n_k}} / \ZZ_k, \]
where $\mathrm{MetRT_{n_1,\dots,n_k}}$ is a space of metric ribbon tree quivers with $k$ outputs, and $n = \sum_i n_i$ inputs (distributed as we described).

Note the quotient by the cyclic action; this is because in identifying the surfaces with $n$ inputs as belonging to one of the spaces $\mathrm{MetRT_{n_1,\dots,n_k}}$ we must be free to apply a cyclic rotation. The spaces $\mathrm{MetRT_{n_1,\dots,n_k}}$ are contractible and retract to the unique cell given by the ribbon tree with a single vertex.

Thus the spaces $\mathrm{MetRT_{n_1,\dots,n_k}} / \ZZ_k$ are rationally contractible; for $\kk$ of characteristic zero we have that
\[ MC^d_{n_1,\dots,n_k} \cong C_*(\mathrm{MetRT_{n_1,\dots,n_k}} / \ZZ_k, \cL^d), \]
where $\cL$ is a certain $\kk$-local system, and thus we have the following characterization:
\begin{proposition}
	$A$ has a pre-CY structure of dimension $d$ if and only if it is a module over the dioperad $MC^d$.
\end{proposition}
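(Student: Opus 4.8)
The plan is to interpret a module over the dioperad $MC^d$ as a morphism of dg dioperads $\rho \colon MC^d \to \End_A$ into the endomorphism dioperad of $A$, and to match this data with a solution of the necklace Maurer--Cartan equation. Since $A$ is a plain graded vector space, $\End_A$ carries the zero differential, so a dg dioperad morphism is just a graded dioperad morphism $\rho$ satisfying $\rho \circ \partial_{MC} = 0$. The first step is to observe that, as a graded dioperad, $MC^d$ is freely generated by the single-vertex multicorollas: every directed tree in the disc decomposes uniquely into its vertices, each of which is a multicorolla, and dioperadic composition is concatenation of trees along single edges. Consequently $\rho$ is determined by its values on the generators, and these values are exactly a collection of maps $A[1]^{\otimes n_1} \otimes \dots \otimes A[1]^{\otimes n_k} \to A^{\otimes k}$.

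Next I would match the remaining structure on both sides. The generating multicorolla $MC^d_{n_1,\dots,n_k}$ is defined as a $\ZZ_k$-quotient, so its image under $\rho$ must be $\ZZ_k$-invariant for the dimension $d$ action of \cref{sec:orientations}; assembling over $k$ this places $m := \sum_k m_{(k)}$, with $m_{(k)}^{n_1,\dots,n_k} := \rho(\text{generator})$, precisely in the space $C^*_{[d]}(A)$ of \cref{def:cyclicCochains}, the degree convention on the generators being arranged so that $m$ has degree $2$ there. The differential $\partial_{MC}$ of a single-vertex multicorolla is, by \cref{def:separations} specialized to genus-zero open trees with no $\times$-, $\circ$- or $\mathbf 1$-vertices, the sum over all separations of that vertex into two vertices joined by an edge; each such separation is a two-vertex tree, i.e.\ an arity-two dioperadic composite of two generators. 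Applying $\rho$ therefore turns the condition $\rho(\partial_{MC} g) = 0$, read off in each arity, into the equation $\sum_{i+j = k+1} m_{(i)} \circnec m_{(j)} = 0$, which is exactly the Maurer--Cartan equation $m \circnec m = 0$ of \cref{def:main}.

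The crux of the argument is the sign bookkeeping needed to identify the separation differential with the necklace product of \cref{def:NecklaceProduct}. Here I would invoke the orientation formalism of \cref{sec:signsNecklace}: the lemma there expresses $\phi \circnec \psi$ as the symmetrization of a single oriented two-vertex ribbon quiver, with the $d$-orientation chosen exactly so that the dimension $d$ symmetric-group action on edges and vertices reproduces the cyclic and Koszul signs defining $C^*_{(k,d)}(A)$. This is the same mechanism that makes the action of marked ribbon quivers well defined, and it guarantees that the two sums agree term by term, matching each separation with a term of $\circnec$ while making the $\ZZ_k$-quotient in $MC^d$ correspond to the cyclic (anti)invariance cut out in $C^*_{[d]}(A)$. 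Granting this identification the equivalence is immediate in both directions: a module structure yields an $m$ with $m \circnec m = 0$, and conversely any such $m$ defines $\rho$ on generators, which extends uniquely to a graded dioperad morphism by freeness and is a chain map precisely because $m$ solves the Maurer--Cartan equation. I expect the main obstacle to be purely this sign comparison, since the freeness and the combinatorial bijection between separations and necklace terms are formal; it is the exact numerical agreement of the $d$-dependent signs, already the content of \cref{sec:signsNecklace}, that does the real work, exactly as the analogous statement for the $A_\infty$-operad recovers the Gerstenhaber Maurer--Cartan equation.
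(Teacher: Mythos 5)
Your overall strategy is the right one, and it is essentially the argument the paper leaves implicit (the proposition is stated without proof, as a consequence of the preceding description of $MC^d$): as a graded dioperad $MC^d$ is free on the single-vertex multicorollas, its differential on a generator is the sum over separations, each separation is a dioperadic composite of two generators, and the orientation formalism of \cref{sec:signsNecklace} identifies this sum with the corresponding component of $m \circnec m$. The identification of the $\ZZ_k$-quotient on generators with the cyclic (anti)invariance cutting out $C^*_{(k,d)}(A)$ is also correct.

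There is, however, one concrete gap in the step ``$\rho\circ\partial_{MC}=0$ is equivalent to $m\circnec m=0$,'' and it comes precisely from your opening assumption that $A$ is a plain graded vector space so that $\End_A$ carries the zero differential. The ribbon-quiver axioms forbid a flow vertex with one arrow in and one arrow out, so $MC^d$ has no generator corresponding to $m_{(1)}^{1}=\mu^1$; moreover, condition (1) in \cref{def:separations} explicitly excludes separations producing such a vertex. Consequently $\rho(\partial_{MC}g)=0$ only yields the terms of the Maurer--Cartan equation in which neither factor is $\mu^1$: the components $\mu^1\circnec m_{(k)}^{n_1,\dots,n_k}$ and $m_{(k)}^{n_1,\dots,n_k}\circnec\mu^1$ are unaccounted for, so what you have proved is strictly weaker than $m\circnec m=0$. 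The standard repair is to regard $A$ as a complex with differential $\mu^1$ and give $\End_A$ the commutator differential, so that the chain-map condition $\rho\circ\partial_{MC}=[\mu^1,\rho(-)]$ supplies exactly the missing terms (this is how the $A_\infty$ operad recovers the full $A_\infty$ relations, as the paper recalls); alternatively one restricts the statement to $\mu^1=0$. Either fix should be stated explicitly, since as written the ``only if'' direction produces a weaker system of equations than a pre-CY structure, and the ``if'' direction does not explain where $\mu^1$ lives in the module data.
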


Similar spaces to the these already appeared in the work of Poirier-Tradler \cite{poirier2018combinatorics}, where the authors consider trees with a distinguished outgoing edge, meaning that they do not take the quotient of the chain complex by the cyclic action. Some related works also include \cite{poirier2019koszuality,drummondcole2015chainlevel}.

\section{Meromorphic Strebel differentials and the open-closed moduli space}\label{sec:Strebel}
In this section, we turn to the theory of Strebel differentials and explain how the open-closed \textsc{prop} $Q$ that we defined in the previous section relates to certain moduli spaces of surfaces with open/closed/free boundaries.

\subsection{Strebel differentials}
Let us briefly recall some relations between the geometry of quadratic differentials and the description of moduli spaces of Riemann surfaces.

Let us fix a compact and connected Riemann surface $S$. A meromorphic quadratic differential $\varphi$ on $S$ determines a flat metric $|\varphi|$ on the complement of its set of zeros and poles, and a measured foliation given by its \emph{horizontal foliation}.

The classical work of Jenkins and Strebel \cite{jenkins1957existence,strebel1984quadratic} deals with meromorphic quadratic differentials with poles of order at most two. Let us first discuss the holomorphic case. Such a differential $\varphi$ is a \emph{(holomorphic) Strebel differential} if the union of all non-closed leaves of its horizontal foliation has measure zero.

Such a differential determines a finite graph $\Gamma_\varphi$ embedded in $S$, consisting of the union of all the critical leaves, zeros and simple poles of $\varphi$, and decomposes $S \setminus (\Gamma \cup \{\text{double poles}\})$ into some number of maximal ring domains, or finite-height cylinders. Each such cylinder is foliated by the horizontal leaves of $\varphi$, which are simple closed curves of some isotopy class $\gamma_i$, all pairwise distinct and each not nullhomotopic.

\begin{theorem}\cite{strebel1984quadratic}
	Fix $S$ (of genus $\ge 2$), $n$ (isotopy classes) of simple closed curves $\gamma_i$ as above, and $n$ positive real numbers $m_i$; then there is a unique (up to scale) Jenkins-Strebel differential $\varphi$ whose ring domains are cylinders associated to $\gamma_i$ with modulus $m_i$.
\end{theorem}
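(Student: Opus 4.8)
The plan is to prove this via the classical length--area (extremal metric) method of Jenkins and Strebel \cite{jenkins1957existence,strebel1984quadratic}, reformulating the problem of prescribing cylinder moduli as an extremal problem for conformal metrics. The starting observation is that a Jenkins--Strebel differential $\varphi$ whose ring domains are cylinders $C_i$ of circumference $c_i$ and height $h_i$ in the classes $\gamma_i$ has modulus $m_i = h_i/c_i$ and total area $\|\varphi\| = \sum_i c_i h_i$, and that the extremal length of $\gamma_i$ in this configuration equals $1/m_i$. Thus prescribing the moduli $m_i$ is equivalent, after fixing the overall scale, to prescribing the heights $h_i$ of the cylinders, and I would phrase the existence statement in terms of heights: among all measurable conformal metrics $\rho\,|dz|$ on $S$ such that every closed curve freely homotopic to $\gamma_i$ has $\rho$--length at least $h_i$, one seeks the one of minimal area $A(\rho) = \int_S \rho^2$.

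First I would establish existence of a minimizer. The admissible class is nonempty and closed under the relevant convergence, and a normalization argument bounds the area from below; a compactness/normal-families argument then extracts a minimizing metric $\rho_0$, using lower semicontinuity of the area functional. Next comes the main analytic step: showing that $\rho_0^2 = |q_0|$ for a holomorphic quadratic differential $q_0$, and that its horizontal foliation decomposes $S$ into exactly the cylinders associated to the $\gamma_i$ with the prescribed heights, so that the union of non-closed leaves has measure zero. Here I would use the Euler--Lagrange / minimal-area characterization to prove flatness of $\rho_0$ off a negligible set, together with the structure theory of measured foliations to identify the ring-domain decomposition; this is where the hypothesis of genus $\ge 2$ keeps the analysis clean.

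For uniqueness I would invoke the fundamental length--area inequality: given two Jenkins--Strebel differentials with the same combinatorial data and the same moduli, a Cauchy--Schwarz comparison of heights against areas yields an inequality $\|\varphi_1\| \le \|\varphi_2\|$ in both directions, and the equality case of Cauchy--Schwarz forces the two flat structures, hence the two differentials, to coincide up to scale. Finally, to pass from prescribing heights to prescribing the moduli $m_i$ themselves---which is subtle because each circumference $c_i$ depends on the entire configuration, so the height-to-modulus map is not diagonal---I would show that this map is a homeomorphism of the positive orthant $\RR_{>0}^n$ modulo overall scaling, by a properness-plus-degree (or inverse function theorem) argument.

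The hard part will be the second step: proving that the extremal metric is genuinely $|q_0|$ for a holomorphic quadratic differential and has the Jenkins--Strebel cylinder structure with the correct homotopy classes. This regularity-and-structure statement is the analytic heart of the theorem; the existence of a minimizer and the uniqueness via length--area are comparatively formal once it is in hand.
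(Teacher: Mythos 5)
This theorem is quoted from \cite{strebel1984quadratic} and used as a black box; the paper contains no proof of it, so there is nothing internal to compare your argument against. Your sketch is the standard Jenkins--Strebel length--area argument and is the correct approach: minimize area over conformal metrics subject to length constraints on the classes $\gamma_i$, prove that the extremal metric is $|q_0|$ for a holomorphic quadratic differential whose ring domains realize the prescribed data, and deduce uniqueness from the equality case of the Cauchy--Schwarz (length--area) inequality. Two cautions. First, your assertion that the extremal length of $\gamma_i$ equals $1/m_i$ is in general only the inequality $\mathrm{Ext}_S(\gamma_i)\le 1/m_i$ coming from the embedded cylinder of modulus $m_i$; you do not actually use the identity, so nothing breaks, but it should not be stated as an equality. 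Second, the final passage from prescribed heights to prescribed moduli is a genuine additional theorem rather than a formal corollary: in Strebel's book the moduli version is obtained from a separate extremal problem (maximizing $\sum_i m_i c_i^2$ under a norm constraint) rather than by inverting the height-to-modulus map, so if you take the properness-plus-degree route you must actually verify properness, i.e.\ rule out some modulus degenerating as the height vector approaches the boundary of the orthant. With those points addressed, your outline is a faithful reconstruction of the classical proof.
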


Hubbard and Masur gave another perspective on the result above. Let $\cM\cF$ denote the space of measured foliations; a holomorphic quadratic differential gives such an object by taking its horizontal foliation.
\begin{theorem}\cite{hubbard1979quadratic}
	Any measured foliation $F \in \cM\cF(S)$ is realized by a unique holomorphic quadratic differential on $S$.
\end{theorem}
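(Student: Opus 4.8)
The plan is to realize the statement as the bijectivity of the "horizontal foliation" map
\[ \Theta: \cQ(S) \to \cM\cF(S), \qquad \varphi \mapsto \cF_h(\varphi), \]
where $\cQ(S)$ is the space of holomorphic quadratic differentials on the fixed surface $S$ (taken of genus $g \ge 2$, the essential case) and $\cF_h$ denotes the horizontal measured foliation. By Riemann--Roch, $\cQ(S)$ is a complex vector space of dimension $3g-3$, hence a real manifold of dimension $6g-6$; by Thurston's theory of measured foliations, $\cM\cF(S)$ is a piecewise-linear manifold of the same dimension $6g-6$, homeomorphic to $\RR^{6g-6}$. Since both spaces are connected manifolds of equal dimension, it suffices to prove that $\Theta$ is continuous, proper, and injective: invariance of domain then forces $\Theta$ to be open, properness forces its image to be closed, and connectedness of $\cM\cF(S)$ upgrades this to surjectivity, so that $\Theta$ is a homeomorphism and in particular a bijection.

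Continuity is routine: the transverse measure that $\cF_h(\varphi)$ assigns to a simple closed curve $\gamma$ is computed by integrating $|\mathrm{Im}\,\sqrt{\varphi}|$ over representatives of $\gamma$, and this varies continuously with $\varphi$; intersection numbers with a fixed filling system of curves give coordinates on $\cM\cF(S)$, so $\Theta$ is continuous in these coordinates. Properness follows because the $L^1$-norm $\|\varphi\| = \int_S |\varphi|$ is bounded above in terms of the transverse measures of $\cF_h(\varphi)$ against a filling family (a length--area estimate), so a bounded set of foliation data pulls back to a bounded, hence relatively compact, set of differentials.

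The crux, and the step I expect to be the main obstacle, is injectivity, i.e. the uniqueness assertion: two holomorphic quadratic differentials with the same horizontal measured foliation coincide. The cleanest route I would take is through equivariant harmonic maps. A measured foliation $F$ determines, up to isometry, an $\RR$-tree $T_F$ (its leaf space with the transverse-measure metric) carrying an action of $\pi_1(S)$; by the theory of harmonic maps into non-positively curved targets (Korevaar--Schoen, Gromov--Schoen), there is an equivariant harmonic map $u\colon \tilde S \to T_F$ whose Hopf differential $\mathrm{Hopf}(u)$ is a holomorphic quadratic differential on $S$ with horizontal foliation exactly $F$ --- this simultaneously furnishes an independent proof of existence. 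Uniqueness of $\Theta$ then follows from uniqueness of the harmonic map in its equivariant homotopy class (the target being an $\RR$-tree, an NPC space), since the Hopf differential is determined by $u$.

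Alternatively, I would give a direct flat-geometry argument: if $\varphi_0,\varphi_1$ share the horizontal foliation $F$ with transverse measure $\mu$, then in natural coordinates $w_i = \int \sqrt{\varphi_i}$ the imaginary parts $\mathrm{Im}\,w_0$ and $\mathrm{Im}\,w_1$ define the same measured foliation, so the two singular flat structures assign equal heights to every cylinder and arc; a Schwarz-type (length--area) inequality applied to the identity map between the metrics $|\varphi_0|$ and $|\varphi_1|$, together with its equality case, then forces $\varphi_0 = \varphi_1$. Either way, the analytic content of uniqueness is the rigidity of maps preserving the transverse structure, and this is where the real work lies; the surrounding topological degree argument is formal once continuity, properness, and injectivity are in hand.
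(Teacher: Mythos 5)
The paper does not prove this statement: it is the classical Hubbard--Masur theorem, quoted verbatim as background with the citation \cite{hubbard1979quadratic}, so there is no in-paper argument to compare yours against. Judged on its own, your sketch is a faithful outline of the two standard proofs in the literature. The degree-theoretic frame (continuity, properness, injectivity, invariance of domain on spaces both homeomorphic to $\RR^{6g-6}$) is essentially the architecture of the original Hubbard--Masur argument, and the harmonic-maps-to-$\RR$-trees route is Wolf's later proof --- which is, not coincidentally, exactly the method that Gupta and Wolf extend to meromorphic differentials in \cite{gupta2016quadratic,gupta2019meromorphic}, the results this paper actually leans on. One step you should make explicit in the harmonic-maps route: uniqueness of the equivariant harmonic map $u\colon \tilde S \to T_F$ only yields injectivity of $\Theta$ once you know that \emph{every} holomorphic quadratic differential with horizontal foliation $F$ arises as the Hopf differential of such a map, i.e.\ that the natural leaf-space projection determined by a quadratic differential is itself harmonic; this is true and is the heart of Wolf's argument, but as written your sketch quietly assumes it. With that point supplied, and the deferred analytic inputs (Korevaar--Schoen/Gromov--Schoen existence and uniqueness, the length--area estimate for properness) taken as known, the proposal is a correct proof strategy.
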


This gives a homeomorphism between the space of measured foliations and the space of holomorphic quadratic differentials on $S$, both homeomorphic to $\RR^{6g-6}$. In other words, the map $\cQ \to \cT_g$ presents the space of quadratic differentials as a fiber bundle over Teichm\"uller space, with fiber identified with $\cM\cF(\Sigma_g)$. Strebel's theorem for holomorphic differentials is then recovered by taking a particular measured foliation.

For differentials with double poles, the story is similar but the maximal ring domains surrounding each double pole is a \emph{infinite-height cylinder}. If we now set all the heights of the finite-height cylinders to be zero, we then have the following variant of Strebel's theorem.
\begin{theorem}\cite{strebel1984quadratic}
	For a fixed Riemann surface $C$ with $k$ distinct points $p_1,\dots,p_k$, and a choice of positive real numbers $\ell_1,\dots,\ell_k$, there is a unique Strebel differential on $C$ with double poles at $p_i$ and holomorphic on $C\setminus \{p_i\}$, such that all the maximal ring domains of $\varphi$ are half-infinite cylinders of circumference $\ell_i$ surrounding the points $p_i$.
\end{theorem}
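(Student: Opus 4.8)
The plan is to realize $\varphi$ as the solution of an extremal problem of length-area type, following the classical strategy of Jenkins and Strebel. First I would pass to the punctured surface $C^\circ = C \setminus \{p_1,\dots,p_k\}$ and record the homotopy classes $\gamma_i$ of small positively-oriented loops encircling each $p_i$; these are pairwise non-homotopic, non-nullhomotopic simple closed curves on $C^\circ$, each bounding a once-punctured disc. The key dictionary is that a double pole of $\varphi$ at $p_i$ with quadratic residue normalized so that $\varphi \sim -(\ell_i/2\pi)^2\,(dz/z)^2$ in a local coordinate $z$ vanishing at $p_i$ corresponds, in the flat metric $|\varphi|$, to a half-infinite cylinder of circumference $\ell_i$ swept out by the closed horizontal trajectories in the class $\gamma_i$. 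Thus prescribing the $\ell_i$ is the same as prescribing the invariantly-defined quadratic residues at the poles, and the desired differential is one whose horizontal foliation consists precisely of these $k$ half-infinite cylinders together with a measure-zero critical graph.

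Second, for existence I would run a variational argument. Because the flat area near a double pole is logarithmically divergent, the correct functional is the \emph{reduced norm}: fixing the quadratic residues (hence the $\ell_i$), one subtracts the divergent annular contribution of each cylinder and minimizes the resulting reduced area over all meromorphic quadratic differentials on $C$ with at most double poles at the $p_i$ and the prescribed residues. A minimizing sequence has bounded norm on compact subsets of $C^\circ$, so by the normal-families compactness of holomorphic quadratic differentials one extracts a limit $\varphi$, and lower semicontinuity of the reduced norm shows $\varphi$ is a minimizer. Equivalently, one may invoke the Hubbard-Masur picture recalled above: the target data define a measured foliation on $C^\circ$ whose only non-critical leaves are the closed curves filling the $k$ annuli with transverse measures $\ell_i$, the half-infinite cylinders being exactly the mechanism by which a foliated annulus of infinite modulus forces a second-order pole.

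Third, I would establish the Jenkins-Strebel property and uniqueness together by the length-area (extremal length) method. Extremality forces the horizontal trajectory structure to have no minimal (recurrent) components: any region swept by non-closed trajectories could be modified to decrease the reduced norm, so the closed trajectories have full measure and the surface decomposes into the $k$ cylinders plus the critical graph $\Gamma_\varphi$. Uniqueness then follows from the convexity encoded in Jenkins' basic inequality: given any competitor $\psi$ with the same poles and residues, integrating $|\psi|$ along the closed horizontal trajectories of $\varphi$ and applying the length-area (Cauchy-Schwarz) estimate shows that the reduced norm of $\psi$ strictly exceeds that of $\varphi$ unless $\psi = \varphi$, with the overall scale pinned down by the prescribed circumferences.

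The main obstacle I anticipate is the analysis at the poles: making the reduced-norm functional genuinely well-defined and lower semicontinuous requires careful control of the logarithmically divergent cylinder contributions and of the behavior of the minimizing sequence near each $p_i$, together with a proof that the minimizer has exactly double poles, rather than accidentally higher-order or removable behavior, carrying precisely the prescribed quadratic residues. Equally delicate is the trajectory-structure step: ruling out minimal components and spiraling trajectories is the true content of Strebel's theorem, and it is here that orientability of $C$ and the non-nullhomotopic, pairwise non-homotopic nature of the $\gamma_i$ must be used in full.
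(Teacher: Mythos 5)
The paper does not prove this statement---it is quoted verbatim from Strebel's book \cite{strebel1984quadratic} as background---and your sketch is a faithful outline of exactly the classical Jenkins--Strebel argument that the citation points to: translate the circumferences $\ell_i$ into prescribed quadratic residues at the double poles, produce the differential by minimizing the reduced norm (or, dually, by Strebel's extremal problem for the sum of reduced moduli of disjoint punctured discs) over the finite-dimensional affine space of differentials with those residues, and then use the length-area inequality both to rule out minimal trajectory components and to get uniqueness. So your proposal is correct in strategy and takes essentially the same route as the source; the points you flag as delicate (well-definedness of the reduced norm, exact pole order of the minimizer, exclusion of recurrent trajectories) are indeed where the real work in Strebel's proof lies.
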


For simplicity we refer to such objects as \emph{Strebel differentials}; in particular, for such a differential the residue of $\sqrt{\varphi}$ at every double pole is real. Each Strebel differential then determines a finite metric ribbon graph $\Gamma$ embedded in $S$, given by the critical leaves of $\varphi$, to which $C\setminus\{p_i\}$ contracts.

Strebel's uniqueness theorem can then be used to give an interpretation of moduli space of punctured curves by such graphs. The set $\cM^{comb}_{g,k}$ of all such metric ribbon graphs with genus $g$ and $k$ boundary cycles can be given a natural topology and orbifold structure.
\begin{theorem}\cite{kontsevich1992intersection}
	The map $\cM_{g,k} \times \RR_+^k \to \cM^{comb}_{g,k}$, given by taking the graph of critical leaves of Strebel differentials, is a homeomorphism of orbifolds.
\end{theorem}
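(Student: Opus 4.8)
The plan is to produce an explicit inverse to the stated map by running the Strebel construction backwards, deduce that it is a set-theoretic bijection from the uniqueness clause of Strebel's theorem above, and then promote this to a homeomorphism of orbifolds using continuous dependence of Strebel differentials together with a dimension count. First I would record precisely what $\Phi\colon \cM_{g,k}\times\RR_+^k \to \cM^{comb}_{g,k}$ does: to a pointed curve $(C,p_1,\dots,p_k)$ and perimeters $(\ell_1,\dots,\ell_k)$ the uniqueness theorem stated above associates a canonical Strebel differential $\varphi$, whose critical graph $\Gamma_\varphi$ is a metric ribbon graph of genus $g$ with $k$ boundary cycles, the $i$th of perimeter $\ell_i$. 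This descends to orbifolds because the assignment is equivariant under biholomorphisms of $(C,\{p_i\})$, which act on $\varphi$ and hence on $\Gamma_\varphi$.

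Next I would construct the candidate inverse $\Psi$ by gluing. Given a metric ribbon graph $\Gamma$ with $k$ boundary cycles of perimeters $\ell_i$, one regards $\Gamma$ as carrying a flat structure in which each edge has its prescribed length, and to each boundary cycle of perimeter $\ell_i$ one glues a half-infinite flat cylinder of circumference $\ell_i$. The resulting space inherits a flat metric with cone singularities at the vertices, hence a complex structure; filling in the cylinder ends adds marked points $p_i$ and yields a compact Riemann surface $C_\Gamma$ carrying a meromorphic quadratic differential $\varphi_\Gamma$ equal to $dz^2$ on each cylinder, with double poles at the $p_i$. By construction $\varphi_\Gamma$ is Strebel with circumferences $\vec\ell$, and its critical graph is $\Gamma$ itself. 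The identity $\Phi\circ\Psi=\id$ is then immediate from the uniqueness clause: the Strebel differential on $C_\Gamma$ with circumferences $\vec\ell$ must coincide with $\varphi_\Gamma$, so $\Phi$ returns the metric graph $\Gamma$ we started from. For $\Psi\circ\Phi=\id$, starting from $(C,\vec\ell)$ the differential $\varphi$ already equips $C\setminus\{p_i\}$ with precisely the flat-graph-plus-cylinders structure that $\Psi$ reconstructs from $\Gamma_\varphi$, so the glued surface is biholomorphic to $C$ and recovers the same perimeters.

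To upgrade this bijection to a homeomorphism, I would argue continuity in both directions and match dimensions. Continuity of $\Psi$ is transparent: the glued complex structure and the residues $\ell_i$ depend continuously, indeed real-analytically on each combinatorial stratum, on the edge-length coordinates of $\cM^{comb}_{g,k}$. For the opposite direction one invokes the continuous dependence of the Strebel differential on the complex structure and perimeters. Combined with the fact that both spaces have real dimension $6g-6+3k$ — the left side because $\dim_\RR\cM_{g,k}=6g-6+2k$ and $\RR_+^k$ contributes $k$ more, the right side because a top-dimensional trivalent ribbon graph of genus $g$ with $k$ faces has exactly $6g-6+3k$ edges — an invariance-of-domain argument carried out compatibly with the orbifold charts shows the continuous bijection is open, hence a homeomorphism of orbifolds.

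The hard part is the analytic input in this last step: establishing that the Strebel differential, and therefore its critical graph and edge lengths, vary continuously as the complex structure approaches the boundaries between cells of $\cM^{comb}_{g,k}$, where horizontal leaves become critical, cylinders pinch, and some edges shrink to zero length. Controlling this degeneration uniformly — equivalently, verifying that the cell structure of $\cM^{comb}_{g,k}$ is exactly the one cut out by the loci on which the critical graph changes combinatorial type — together with the bookkeeping of the finite automorphism groups, so that $\Phi$ is an isomorphism of orbifold charts rather than merely of coarse spaces, is where the genuine work lies; the existence and uniqueness of Strebel differentials, which we take as given from the theorem above, supplies everything else.
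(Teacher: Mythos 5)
The paper does not prove this statement: it is quoted verbatim from \cite{kontsevich1992intersection} as background, so there is no internal argument to compare against. Your outline is the standard proof from the literature (inverse map by gluing half-infinite flat cylinders along the boundary cycles of the metric ribbon graph, bijectivity from the existence-and-uniqueness clause of Strebel's theorem, dimension count $6g-6+3k$ on both sides), and you correctly isolate where the genuine analytic work lies, namely the continuous dependence of the Strebel differential on the complex structure and perimeters, including across the cell boundaries of $\cM^{comb}_{g,k}$ where edges degenerate. Since that continuity is exactly the content you would have to import from Strebel's book or reprove, your proposal is an accurate and honest reduction of the theorem to its known hard ingredient rather than a self-contained proof, which is consistent with how the paper itself treats the result.
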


\subsection{Higher-order poles}
Recent work of Gupta and Wolf \cite{gupta2016quadratic,gupta2019meromorphic} has described a generalization of the Hubbard-Masur theorem to meromorphic quadratic differentials with poles of arbitrary order, precisely describing the compatibility between geometric data on the surface (e.g. measured foliations) and the analytic behavior of $\varphi$.

As before, let $S$ be a Riemann surface with $k \ge 1$ points $p_1,\dots,p_k$ and choose $k$ positive integers $n_i \ge 2$. A quadratic differential $\varphi$ with poles of order $n_i$ at $p_i$ induces a measured foliation with pole singularities on $S$; this is a measured foliation on $S\setminus\{p_i\}$ but with some specific local behavior around each $p_i$. The space of such measured foliations is denoted by $\cM\cF(S,\{n_i\})$.

Around each pole, for some arbitrary choice of coordinate $z$, we have the local expressions
\[ \sqrt{\varphi} = \frac{1}{z^{n/2}}\left(p(z) + z^{n/2} g(z)\right)dz \]
for $n$ even, where $p(z)$ is a polynomial of degree $(n-2)/2$, and
\[ \sqrt{\varphi} = \frac{1}{z^{n/2}}\left(p(z) + z^{(n-1)/2} g(z)\right)dz \]
for $n$ odd, where $p(z)$ is a polynomial of degree $(n-3)/2$. In both formulas $g(z)$ is some non-vanishing holomorphic function.

The polynomials $p(z)$ are then the \emph{principal parts} of $\sqrt{\varphi}$; in the even $n$ case there is one real compatibility condition between $p$ and the measured foliation determined by $\varphi$. Taking into account this condition, one calculates that at the point $p_i$, the space of compatible principal parts is a manifold of real dimension $n-1$. When $n \ge 3$ this is homeomorphic to $\RR_+^{n-2} \times S^1$, and when $n = 2$ this is homeomorphic to $\RR_+$.

One can prove then a generalization of the Hubbard-Masur theorem, which we paraphrase from \cite{gupta2016quadratic,gupta2019meromorphic}.
\begin{theorem}\label{thm:guptawolf}
	With $S$ and $\{p_i\}, \{n_i\}$ as above, given a measured foliation with poles $F \in \cM\cF(S,\{n_i\})$ and the data of `compatible principal parts' (as defined in \emph{op.cit.}) at each $p_i$, there is a unique meromorphic quadratic differential $\varphi$ with poles of order $n_i$ at $p_i$ realizing $F$ and with the chosen principal parts, depending continuously on that data.
\end{theorem}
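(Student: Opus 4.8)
The plan is to realize the statement as the bijectivity of a single \emph{realization map} and to prove it by matching dimensions, establishing injectivity through a maximum-principle argument, and then upgrading to a homeomorphism by a properness/degree argument. Concretely, fix $S$ of genus $g \ge 2$ together with the points $p_1,\dots,p_k$ and pole orders $n_1,\dots,n_k$, and let $\cQ(S,\{n_i\})$ denote the complex vector space of meromorphic quadratic differentials on $S$ that are holomorphic away from the $p_i$ and have a pole of order at most $n_i$ at each $p_i$; as sections of $K^{\otimes 2}(\sum_i n_i p_i)$, Riemann--Roch gives $\dim_\CC \cQ(S,\{n_i\}) = 3g-3+\sum_i n_i$. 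Taking the horizontal measured foliation together with the principal part of $\sqrt{\varphi}$ at each pole defines a map
\[
\Phi:\cQ(S,\{n_i\}) \longrightarrow \cP \times \cM\cF(S,\{n_i\}),
\]
where $\cP = \prod_i \cP_i$ is the product of the spaces of compatible principal parts. The theorem is exactly the assertion that $\Phi$ is a homeomorphism, so the first step is a dimension count: using that each $\cP_i$ is a manifold of real dimension $n_i-1$ (as recalled just before the statement), one checks that $\dim_\RR \cQ(S,\{n_i\})$ equals $\dim \cP + \dim \cM\cF(S,\{n_i\})$, so that $\Phi$ is a map between manifolds of equal dimension.

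The heart of the uniqueness (injectivity of $\Phi$) is local-to-global. First I would analyze a neighborhood of each pole: in a coordinate $z$ the two local normal forms for $\sqrt{\varphi}$ recalled before \cref{thm:guptawolf} show that the principal part $p(z)$ determines the asymptotic geometry of the flat metric $|\varphi|$ near $p_i$, namely a configuration of $n_i-2$ foliated half-plane ends whose combinatorics and widths are fixed by $p(z)$ together with the foliation. Given two differentials $\varphi_1,\varphi_2$ with $\Phi(\varphi_1)=\Phi(\varphi_2)$, they share a horizontal foliation and the same principal parts, so they agree to leading order near every pole. One then runs the classical Hubbard--Masur comparison: the quantity $\mathrm{Re}\,\sqrt{\varphi_1\,\overline{\varphi_2}}$, and the associated comparison of the two flat metrics along common leaves, is governed by a subharmonic function whose behaviour at the punctures is pinned down by the matching principal parts, and the maximum principle forces $\varphi_1=\varphi_2$. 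The genuinely new feature compared to the compact case is that $S\setminus\{p_i\}$ is non-compact with infinite-area ends, so the maximum principle must be applied on a compact exhaustion and the resulting boundary terms shown to vanish using the prescribed asymptotics; this is where the compatibility condition on the principal parts is essential.

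For existence (surjectivity) I would argue that $\Phi$ is a proper local homeomorphism onto a connected target, hence a covering, which combined with injectivity gives a homeomorphism. Local invertibility follows from the implicit function theorem once one knows that the linearization of $\Phi$ is an isomorphism, and infinitesimally this is again a vanishing statement of the same type as the uniqueness argument, now for the linearized equation. Properness amounts to showing that a sequence of differentials whose foliations and principal parts stay in a compact set cannot escape $\cQ(S,\{n_i\})$, which one proves by the standard normal-families and area estimates for quadratic differentials, the prescribed pole orders preventing any concentration of mass at the punctures. Alternatively, and perhaps more robustly, one can establish existence directly by exhausting $S$ by compact subsurfaces with boundary, applying the Jenkins--Strebel/Hubbard--Masur realization with the principal-part data imposed as boundary conditions on each piece, and extracting a limit; this is essentially the route of Gupta and Wolf.

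Continuous dependence on the data is then automatic once $\Phi$ is known to be a homeomorphism. The main obstacle, and the place where the higher-order poles genuinely complicate the classical picture, is the analysis at the poles in the existence step: unlike the second-order (cylinder) case, a pole of order $n\ge 3$ produces half-plane ends along which the horizontal foliation can spiral, the residue of $\sqrt{\varphi}$ need not be real, and the single real compatibility condition in the even-order case must be tracked through any limiting or gluing construction. I expect the delicate point to be verifying that the glued-in local models close up to a \emph{global} differential realizing exactly the prescribed principal parts, i.e. that the compatibility conditions are not merely necessary but sufficient; this is precisely the technical content supplied by \cite{gupta2016quadratic,gupta2019meromorphic}.
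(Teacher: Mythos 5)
You should first be aware that the paper contains no proof of \cref{thm:guptawolf} to compare against: the statement is explicitly a paraphrase of results of Gupta and Wolf, imported from \cite{gupta2016quadratic,gupta2019meromorphic} as an external input to the construction of the moduli spaces in this section. So the only question is whether your sketch would stand on its own, and as written it does not. Your outline is a reasonable reconstruction of the Hubbard--Masur-type strategy that Gupta and Wolf do follow (local half-plane/cylinder models at the poles, a global uniqueness argument, exhaustion by compact subsurfaces for existence), but every load-bearing step is asserted rather than carried out, and your closing paragraph concedes that the decisive point --- that the compatibility conditions on principal parts are sufficient and that the glued local models close up to a global differential --- is ``precisely the technical content supplied by'' the very references the theorem is being quoted from. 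A proof plan that defers its hardest steps to the papers it is meant to replace is not a proof.

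Two of the deferred steps deserve to be named as genuine gaps rather than routine verifications. First, the dimension count: the linear space of differentials with poles of order \emph{at most} $n_i$ has real dimension $6g-6+2\sum_i n_i$ by Riemann--Roch, whereas the target described in the paper, $\cM\cF(S,\{n_i\})\times\{\text{compatible principal parts}\}$, has dimension $6g-6+\sum_i n_i + \sum_i(n_i-1) = 6g-6+2\sum_i n_i - k$; reconciling these requires tracking exactly which locus of differentials (order exactly $n_i$) is being parametrized and how the local data at each pole is shared between the foliation and the principal part, i.e.\ the precise content of ``compatible.'' This is not something ``one checks'' in a line, and it is exactly where the single real condition at even-order poles enters. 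Second, the uniqueness argument: the classical Hubbard--Masur comparison is a minimal-norm/convexity argument on a \emph{compact} surface, and on $S\setminus\{p_i\}$ the flat metric has infinite area at every pole of order $\ge 2$, so the boundary terms on a compact exhaustion do not obviously vanish --- they must be controlled using the matching of principal parts, and for odd-order poles and non-real residues the asymptotic geometry (spiralling leaves) makes this delicate. Until those two points are supplied, the argument establishes neither injectivity nor the equality of dimensions on which the surjectivity step leans, so the proposal should be regarded as a correct identification of the shape of the Gupta--Wolf proof rather than a proof.
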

That is, up to the one real compatibility condition at poles of even order, one can pick the measured foliation and the principal parts independently. The continuity statement implies, in particular, that the natural map $\cQ(g, \{n_i\}) \to \cT_{g,k}$ presents the space of such meromorphic quadratic differentials as a fiber bundle with fiber homeomorphic to
\[ \hspace*{-0.7cm} \cM\cF(S,\{n_i\}) \times \{\text{compatible principal parts}\} \cong \RR^{6g-6 + \sum_i n_i} \times \left(\prod_{i, n_i \ge 3} \RR_+^{n_i-2} \times S^1 \right) \times \left(\prod_{i, n_i = 2} \RR_+\right) \]

For quadratic differentials with poles of orders $\le 2$, one recovers Strebel's theorem from this description: one chooses the zero measured foliation in $\cM\cF(S,\{n_i\})$ and gets a homeomorphism $\cQ(\{n_i\})^\mathrm{Str} \cong \cT_{g,k} \times \RR^k_+$. Here $\RR^k_+$ comes from the principal parts, which in this case are the residues of $\phi$ at the double poles. This isomorphism is moreover equivariant with respect to the action of the mapping class group $\Mod(\Sigma_{g,n})$, giving the isomorphism of orbifolds $\cM^\mathrm{Str} \cong \cM_{g,n} \times \RR^k_+$.

We now extend this to the case of higher order poles.
\begin{definition} (Meromorphic Strebel differentials)
	A meromorphic quadratic differential $\varphi$ is \emph{meromorphic Strebel} if it maps to the zero measured foliation.
\end{definition}

One can also characterize such differentials by the following properties:
\begin{enumerate}
	\item Every leaf asymptotic to a pole of $\varphi$ of order $\ge 2$ in one direction either
	\begin{itemize}
		\item Goes to a zero of $\varphi$ in the other direction, or
		\item Goes to the same pole in the other direction, and is homotopic to the constant curve at that pole, through a homotopy of curves that are also leaves of the horizontal foliation.
	\end{itemize}
	\item The closure $\Gamma$ of the union of all the other critical leaves (i.e. leaf going to a zero or simple pole, and not contained in the item above) of the horizontal foliation is measure zero, and
	\item The complement of the graph $\Gamma$ in $S$ is a disjoint union of some number of discs (with no cylinders).
\end{enumerate}

\begin{remark}
	In \cite{gupta2016quadratic} the authors refer to these differentials as `half-plane differentials', but do not include differentials with simple poles. Or rather, they get rid of simple poles by taking double covers ramified at them; for our purposes we cannot do that, and will eventually need to include simple poles. We have decided to call this notion `meromorphic Strebel' to emphasize this difference and the relation to Strebel's theorem.
\end{remark}

\begin{figure}[h!]
	\centering
	\begin{minipage}[c]{0.45\textwidth}
		\hspace{1cm}
		\includegraphics[width=\textwidth]{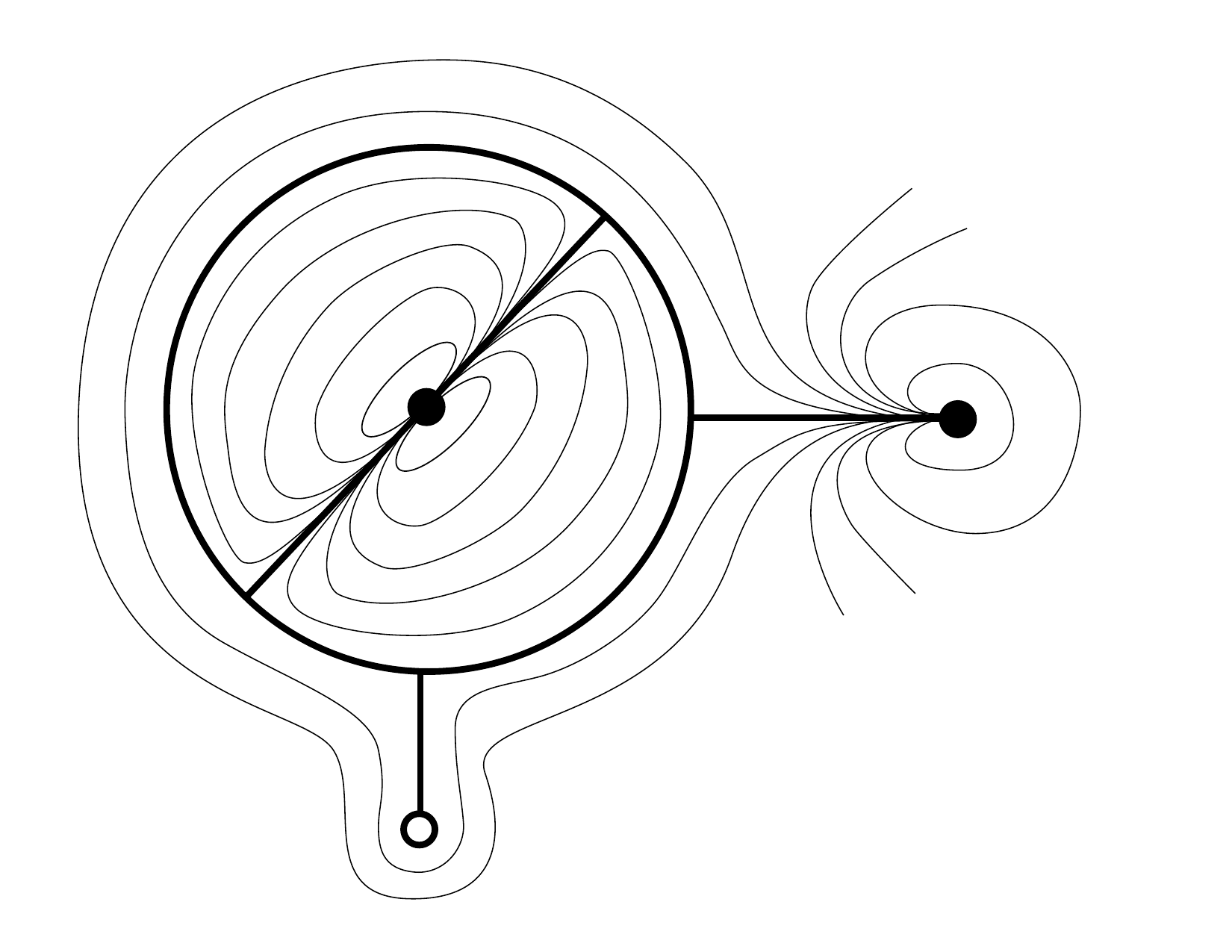}
	\end{minipage}\hfill
	\begin{minipage}[c]{0.5\textwidth}
		\caption{Picture of the horizontal foliation of a meromorphic Strebel differential on $\CC\PP^1$ with one pole of order 4 (inside the circle), one pole of order 3 (to the right) and a simple pole (white circle at the bottom).}
		\label{fig:meromorphic}
	\end{minipage}
\end{figure}

Consider the locus $\cQ(g,\{n_i\})^\mathrm{mStr}$ of Riemann surfaces of genus $g$ with meromorphic Strebel differentials with poles of orders $n_i$,  and no simple poles, together with the natural map $\pi: \cQ(g,\{n_i\})^\mathrm{mStr} \to \cT_{g,k}$. As a consequence of \cref{thm:guptawolf} we have
\begin{corollary}
	Each fiber of $\pi$ is identified with the space of compatible principal parts, homeomorphic to
	\[ \left(\prod_{i, n_i \ge 3} \RR_+^{n_i-2} \times S^1 \right) \times \left(\prod_{i, n_i = 2} \RR_+\right) \].
\end{corollary}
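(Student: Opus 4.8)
The plan is to realize the meromorphic Strebel locus as the restriction of the Gupta--Wolf fibration (\cref{thm:guptawolf}) to the slice over the zero measured foliation. Recall that, by definition, a meromorphic quadratic differential $\varphi$ is meromorphic Strebel precisely when its image under the horizontal-foliation map is the zero element of $\cM\cF(S,\{n_i\})$. Thus $\cQ(g,\{n_i\})^{\mathrm{mStr}}$ is, tautologically, the preimage of the zero section $\{0\} \subset \cM\cF(S,\{n_i\})$ under that map, intersected with the locus of differentials having no simple poles, and the corollary amounts to computing the fibers of $\pi$ over $\cT_{g,k}$ after this restriction.

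First I would fix a point $[S] \in \cT_{g,k}$ and analyze $\pi^{-1}([S])$. By \cref{thm:guptawolf}, the total space of meromorphic quadratic differentials with poles of order $n_i$ at the $p_i$ fibers over $\cT_{g,k}$ with fiber homeomorphic to $\cM\cF(S,\{n_i\}) \times \{\text{compatible principal parts}\}$, the differential $\varphi$ being uniquely determined by, and depending continuously on, this data. Imposing the meromorphic Strebel condition sets the $\cM\cF(S,\{n_i\})$-coordinate equal to the single point $0$: the uniqueness half of \cref{thm:guptawolf} then identifies $\pi^{-1}([S])$ bijectively with the space of principal parts compatible with the zero foliation, and the continuity half upgrades this bijection to a homeomorphism.

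Then I would assemble this space of compatible principal parts as a product over the poles, using the local analysis recorded just before \cref{thm:guptawolf}: at a pole of order $n_i$, after imposing the single real compatibility condition that is present when $n_i$ is even, the admissible principal parts form a manifold of real dimension $n_i - 1$, homeomorphic to $\RR_+^{n_i-2}\times S^1$ when $n_i \ge 3$ and to $\RR_+$ when $n_i = 2$. Since the principal parts at distinct poles are prescribed independently, the fiber is the product
\[ \left(\prod_{i,\ n_i \ge 3} \RR_+^{n_i-2}\times S^1\right) \times \left(\prod_{i,\ n_i = 2} \RR_+\right), \]
as claimed.

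The main obstacle is making precise that, once the foliation is frozen at zero, the admissible principal parts genuinely decouple both from the surface moduli and from one another, so that the fiber is literally this product and not some twisted family over $\cT_{g,k}$; this is exactly what the product decomposition and the continuity statement of \cref{thm:guptawolf} provide, so the essential work is to check that the single real compatibility condition at even-order poles survives the specialization to the zero foliation and reduces the naive dimension count by the stated amount. One should also record the equivalence between the foliation-theoretic definition of meromorphic Strebel and the three geometric properties listed (each leaf asymptotic to a higher-order pole runs to a zero or returns homotopically to the same pole, the remaining critical graph has measure zero, and the complement of $\Gamma$ is a disjoint union of discs); this equivalence is the geometric translation of the vanishing of the horizontal measured foliation and can be read off directly from the local normal forms of $\sqrt{\varphi}$ near each pole.
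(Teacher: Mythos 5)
Your argument is exactly the one the paper intends: the corollary is stated as an immediate consequence of \cref{thm:guptawolf}, obtained by restricting the fiber bundle $\cQ(g,\{n_i\}) \to \cT_{g,k}$ to the slice where the measured foliation is zero, so that the fiber reduces to the compatible-principal-parts factor, which decomposes pole-by-pole as $\RR_+^{n_i-2}\times S^1$ for $n_i\ge 3$ and $\RR_+$ for $n_i=2$. Your additional remarks about checking that the compatibility condition at even-order poles persists under specialization to the zero foliation are sensible but are already built into the local description given just before the theorem, so nothing further is needed.
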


\subsection{A moduli space of meromorphic Strebel differentials}
We now construct a space that will give us a classifying space for certain open-closed cobordisms. Consider a compact topological surface with boundary $(\Sigma,\del\Sigma)$ of genus $g$. Let us choose a partition of its boundary $\del\Sigma$ into the following subsets:
\begin{itemize}
	\item Incoming closed boundaries $C_\mathrm{in}$ given by some disjoint union of circles.
	\item Incoming open boundaries $O_\mathrm{in}$ given by some disjoint union of (open) intervals.
	\item Outgoing closed boundaries $C_\mathrm{out}$ given by some disjoint union of circles.
	\item Outgoing open boundaries $O_\mathrm{out}$ given by some disjoint union of (open) intervals.
	\item Free boundaries, given by the complement of the above subsets, a disjoint union of circles and (closed) intervals.
\end{itemize}
We now pick a linear ordering of each of the set of connected components of the first four subsets above; i.e. we label all the incoming/outgoing open/closed boundaries by some ordered sets.

Consider the `bordered mapping class group' preserving all the open and closed boundaries \emph{pointwise}; in contrast, it can freely rotate and permute the free boundary circles. We will simply denote the corresponding mapping class group by $\Mod^\mathsc{oc}(\Sigma)$.

Let us now take a classifying space for this group, decomposed as
\[ \bigsqcup_{F \text{ free boundary circles}} B\Mod^\mathsc{oc}(\Sigma_F), \]
where we keep open and closed boundaries fixed, but include an arbitrary number $F$ of free boundary circles. This disjoint union is a classifying space for cobordisms between $O_\mathrm{in} \sqcup C_\mathrm{in}$ and $O_\mathrm{out} \sqcup C_\mathrm{out}$, with any number of holes in the interior. We can work with each one of those spaces separately by fixing the number of free boundary circles.

We now define some data from this surface.
\begin{definition}(Pole data)
	To each connected component $i$ of $\del \Sigma$, we assign an integer $n_i$ as follows:
	\begin{itemize}
		\item If $i \in \pi_0(C_\mathrm{in})$ (i.e. incoming closed), we assign $n_i = 3$, and if $i \in \pi_0(C_\mathrm{out})$ (i.e. outgoing closed) we assign $n_i = 2$.
		\item If $i$ contains exactly $N \ge 1$ open intervals (either incoming or outgoing) we assign $n_i = N+2$.
		\item Finally, if $i$ is a free boundary circle, we assign $n_i=2$.
	\end{itemize}
\end{definition}

Consider then the space $\cQ(g,\{n_i\})^\mathrm{mStr}$ of Riemann surfaces $S$ of genus $g$ with meromorphic Strebel differentials of pole orders $\{n_i\}$, and no simple poles. Consider one such differential. By definition, each pole $p$ of order 2 is surrounded by a ring domain that is a half-infinite cylinder, with boundary given by some circle $S^1_p$ which is the union of some critical leaves.
\begin{definition}\label{def:MarkingOutputs}
	An \emph{marking with $\ell$ outputs} on $\varphi \in \cQ(g,\{n_i\})^\mathrm{mStr}$ is a choice of some subset of the double poles of $\varphi$, of size $\ell$, together with a single point in $S^1_p$ for each $p$ in that subset. We denote the space of all such objects by $\cQ(g,\{n_i\})^\mathrm{mStr}_\ell$.
\end{definition}

For each double pole marked as an output, we take the unique vertical geodesic going from that pole to its marked point; this defines a tangent direction in $T_p S$. Moreover, for every pole $p_i$ of order $n_i \ge 3$, we have distinguished $n_i -2$ directions, given by the critical leaves asymptotic to $p_i$.

Therefore such a differential with marked outputs gives a point in the `bordered Teichm\"uller space' $\cT_{g, \vec{k}}$ of Riemann surfaces of genus $g$ with choices of some numbers of distinguished points on the boundaries. Here we use some generic subscript $\vec{k}$ to indicate all the data of the boundary; note that the free boundary circles (corresponding to unmarked double poles) do not have any distinguished points.

\begin{proposition}\label{prop:isoFiberBundles}
	There is an isomorphism of fiber bundles between
	\[ \pi: \cQ(g,\{n_i\})^\mathrm{mStr}_\ell \to \cT_{g,\vec{k}} \]
	and the trivial $\RR_+^k$-bundle over $\cT_{g,\vec{k}}$, which is equivariant for the action of the bordered mapping class group $\Mod^\mathsc{oc}(\Sigma)$.
\end{proposition}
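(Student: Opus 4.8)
The plan is to bootstrap from the fiber bundle structure provided by the corollary following \cref{thm:guptawolf}, which identifies the fiber of $\cQ(g,\{n_i\})^\mathrm{mStr} \to \cT_{g,k}$ with the space of compatible principal parts
\[ P = \prod_{i,\,n_i \ge 3}\left(\RR_+^{n_i-2} \times S^1\right) \times \prod_{i,\,n_i = 2} \RR_+, \]
where $k$ is the number of poles, i.e. the number of boundary components of $\Sigma$. The whole content of the proposition is a reorganization of this data: the angular part of the principal parts together with the marking points is exactly the information recorded when one upgrades the base from $\cT_{g,k}$ to the bordered Teichm\"uller space $\cT_{g,\vec k}$, while a single positive real scale survives at each pole. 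A dimension count confirms the shape of the answer: the markings contribute $\ell$ extra dimensions, the distinguished directions recorded in $\cT_{g,\vec k}$ number $\sum_{n_i \ge 3}(n_i-2) + \ell$, and so the expected fiber dimension is $\dim P + \ell - \left(\sum_{n_i \ge 3}(n_i-2) + \ell\right) = \#\{i : n_i \ge 3\} + \#\{i : n_i = 2\} = k$.

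First I would fix the discrete choice of which $\ell$ double poles are marked as outputs, reducing to a single connected component on which $\cQ^\mathrm{mStr}_\ell \to \cQ^\mathrm{mStr}$ adds one point of the boundary circle $S^1_p$ for each marked pole $p$; together with $p$ this point determines a tangent direction, namely the class of the vertical geodesic from $p$ to the marked point. Next I would make explicit the map $\cQ^\mathrm{mStr}_\ell \to \cT_{g,\vec k}$ sending a marked differential $\varphi$ to its underlying Riemann surface decorated with its distinguished boundary directions: the $n_i - 2$ separatrix directions at each pole of order $n_i \ge 3$, and the marking direction at each marked double pole. Because all of these directions are read off intrinsically from the horizontal foliation of $\varphi$, this map is continuous and manifestly $\Mod^\mathsc{oc}(\Sigma)$-equivariant.

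The technical heart, and the step I expect to be the main obstacle, is to show that at each pole the principal-part data splits canonically as (distinguished directions)$\,\times\,\RR_+$, with the $\RR_+$ factor an intrinsic continuous scale. For a double pole this is immediate: the residue of $\sqrt\varphi$, equivalently the circumference of the surrounding half-infinite cylinder in the flat metric $|\varphi|$, is the sole scale, and the marking point is the sole angular datum. For a pole of order $n_i \ge 3$ one must use the local normal forms $\sqrt\varphi = z^{-n_i/2}(p(z) + \cdots)\,dz$ recalled before \cref{thm:guptawolf}: the asymptotic directions of the $n_i - 2$ separatrices are determined by the principal part $p(z)$, and I would prove that the assignment
\[ \RR_+^{n_i-2} \times S^1 \;\longrightarrow\; (\text{separatrix directions}) \times \RR_+ \]
is a homeomorphism, the $\RR_+$-factor being the total width of the $n_i - 2$ half-plane domains abutting $p_i$ measured in $|\varphi|$. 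Verifying that the separatrix directions are transverse to this overall scaling, so that the map has a continuous inverse, is the delicate local computation on which the argument rests.

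Granting this splitting pole by pole, the global statement assembles as follows. The space of distinguished directions is precisely the fiber of $\cT_{g,\vec k} \to \cT_{g,k}$, so the directional data is what promotes the base to $\cT_{g,\vec k}$, while the collection of per-pole scales defines a continuous map $\cQ^\mathrm{mStr}_\ell \to \RR_+^k$ that does not depend on a choice of conformal structure. Combining this with the inverse construction of \cref{thm:guptawolf} — which recovers the unique $\varphi$ from its underlying surface, its measured foliation (here the zero foliation) and its principal parts — yields a homeomorphism $\cQ^\mathrm{mStr}_\ell \cong \cT_{g,\vec k} \times \RR_+^k$ over $\cT_{g,\vec k}$, that is, a trivialization. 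Finally, since the separatrix directions, the marking directions and the scales are all extracted intrinsically from $\varphi$, the homeomorphism intertwines the $\Mod^\mathsc{oc}(\Sigma)$-actions: the group fixes pointwise the open and closed boundaries used to decorate $\cT_{g,\vec k}$ and only rotates and permutes the free-boundary poles, which leaves their rotation-invariant residues untouched and merely permutes the corresponding factors of the trivial bundle $\RR_+^k$, giving the asserted equivariance.
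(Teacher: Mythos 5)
Your proposal is correct and follows essentially the same route as the paper: both invoke the Gupta--Wolf parametrization and then reduce everything to exhibiting an intrinsic, $\Mod^{\mathsc{oc}}(\Sigma)$-invariant $\RR_+$-valued scale at each pole (the circumference of the half-infinite cylinder at a double pole, and at a higher-order pole the total $|\varphi|$-length of the critical-leaf boundary of the union of half-planes --- which is what your ``total width'' should be, since the half-planes themselves are infinite), checking that together with the distinguished directions this splits the space of compatible principal parts. The paper states this more tersely as the claim that the length function gives a continuous embedding $\RR_+^k \hookrightarrow \{\text{compatible principal parts}\}$ on each fiber, varying continuously with the complex structure; your dimension count and the explicit per-pole splitting are consistent elaborations of that same check.
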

\begin{proof}
	The existence of an isomorphism follows directly from Gupta-Wolf's result; the only non-trivial fact is that one can construct this isomorphism equivariantly, where the trivial $\RR_+^k$-bundle has a trivial action along the fiber. For this we must give a $\RR_+^k$-valued invariant function on $\cQ(g,\{n_i\})^\mathrm{mStr}_\ell$.

	For $\varphi \in \cQ(g,\{n_i\})^\mathrm{mStr}_\ell$, to each pole of order $n_i \ge 3$, resp. $=2$ there is a punctured disc around it given by the union of $n_i-2$ half-planes, resp. half-infinite cylinder, whose boundary is a union of critical leaves between zeros. The lengths of such boundaries give the desired $\Mod^\mathsc{oc}(\Sigma)$-invariant function. One must check that this indeed gives an isomorphism of fiber bundles; this follows from observing that, using the description of the spaces of compatible principal parts, such a function gives on each fiber a continuous embedding $\RR^k \hookrightarrow \{\text{compatible principal parts}\}$, moreover continuous with respect to variations of complex structure.
\end{proof}

\subsection{The perimeter-shrinking map}
Each meromorphic Strebel differential with marked outputs we used above gives a metric ribbon graph, which in turns determines the surface and differential; one could use it to give a cell model for the moduli space above. We will now explain how to relate this model to the marked ribbon quivers we discussed in \cref{sec:PROPs}.

For that, we will need an operation that shrinks the perimeter of the half-infinite cylinders corresponding to outputs. We start with a surface $S$ and a meromorphic Strebel differential $\varphi_0$ on $S$ having a double pole at a point $p$. The horizontal foliation and metric associated to $\varphi_0$ give a half-infinite cylinder surrounding $p$, with an $S^1$ boundary made of a sequence of horizontal geodesics between zeros of $\varphi$; this is a cycle in the associated metric ribbon graph $\Gamma$.

Let us say there are $n$ zeros on that cycle; pick one of these zeros as a starting point, and encode the data of $\Gamma$ near this cycle as a tuple of lengths
\[ (d_1,c_1, d_2,c_2, \dots, d_n,c_n) \in \RR^{2n}_{\ge 0} \]
where $d_i$ is associated to the $i$th zero in the cyclic order as follows: if the order of that zero is $\ge 2$ (i.e. the vertex in $\Gamma$ has valency $\ge 4$) then we assign $d_i=0$. If it is a simple zero, then $d_i$ is the length of the edge in $\Gamma$ incident there and \emph{not contained} in the cycle (i.e. the edge pointing out). We define $c_i$ to be the length of the edge between the $i$th and the $(i+1)$th zero in the cycle.

\begin{definition}
	A \emph{partial perimeter-shrinking family} of meromorphic Strebel differentials is a continuous family over $[0,T)$ of tuples $(S^t, \varphi^t,\gamma^t)$ of Riemann surfaces, meromorphic Strebel differentials, and cycles $c^t$ as above, such that their tuples of lengths around $\gamma^t$ satisfy
	\[ d_i^t = d_i^0 + t/2, \qquad c_i^t = c_i^0 - t \]
	while keeping all other lengths of the ribbon graph constant.
\end{definition}
In other words, as we increase $t$, the sides of the cycle `zip up' by a distance of $t/2$ on each side, gluing more of the cells neighboring the cycle and reducing the circumference of the cycle by $Nt$.

It is obvious from continuity that if none of the $c_i^0$ is equal to $T$, the family extends to $t=T$. But also if, say, $c_1 = T$, we can extend the family with a differential over $t=T$ that has $(n-1)$ zeros on the cycle; as long as at least one of the $c_i$ is bigger than $T$, this still gives a continuous family of quadratic differentials over $[0,T]$.

We can iterate this process, shrinking the perimeter while decreasing $n$ accordingly, until we end up with some differential where all the $c_i$ are equal to some $T$. Then we can complete this family by a meromorphic Strebel differential with \emph{one less double pole}. This gives a continuous family valued in the locus of quadratic differentials with a bounded above number of higher-order poles.

\begin{definition}
	A \emph{perimeter-shrinking family} of meromorphic Strebel differentials is a sequence of completed partial perimeter-shrinking families $(S_i^t,\varphi^t_i,\gamma_i^t), 1 \le i \le M$, each over some interval $[0,T_i]$, together with isomorphisms
	\[ (S_i^{T_i},\varphi_i^{T_i},\gamma_i^{T_i}) \cong (S_{i+1}^{0},\varphi_{i+1}^{0},\gamma_{i+1}^{0})\]
	and such that in the last family, $\varphi^{t=T_M}_M$ has one less double pole than $\varphi^{t=0}_1$.
\end{definition}

\begin{lemma}
	Such a perimeter-shrinking family from $\varphi$ to $\varphi'$ gives a map of metric ribbon graphs $\Gamma \to \Gamma'$ between the corresponding metric ribbon graphs.
\end{lemma}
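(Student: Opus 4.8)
The plan is to read the map $\Gamma \to \Gamma'$ off the sequence of combinatorial events that occur as $t$ increases, each of which will turn out to be an edge contraction. Since a perimeter-shrinking family is by definition a concatenation of completed partial families $\varphi_i^t$ (with $\varphi_i^{t=T_i} = \varphi_{i+1}^{t=0}$), and morphisms of metric ribbon graphs compose, it suffices to produce the map for a single completed partial family over an interval $[0,T]$ and then compose the results for $i = 1, \dots, M$.

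So first I would fix one completed partial family over $[0,T]$, with chosen cycle carrying lengths $(d_1,c_1,\dots,d_n,c_n)$. Off the cycle all edge lengths are held constant, while along the cycle $c_i(t) = c_i^0 - t$ and $d_i(t) = d_i^0 + t/2$; hence for $t$ below $\min_i c_i^0$ the underlying ribbon graph is combinatorially constant, equal to $\Gamma := \Gamma^0$, with only its metric varying, and by the continuity statement in \cref{thm:guptawolf} the associated surface and differential vary continuously and stay meromorphic Strebel of the prescribed pole orders. The first combinatorial event occurs at $t_1 = \min_i c_i^0$: as a cycle edge length $c_j(t) \to 0$, the two zeros at the endpoints of the edge $e_j$ collide into a single vertex. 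Combinatorially this is exactly the contraction of $e_j$, producing a metric ribbon graph $\Gamma^{t_1}$ with one fewer cycle edge and one fewer zero on the cycle (if several $c_j$ vanish at once we contract them all, independently of order). Each such contraction is a genuine morphism of ribbon graphs: the quotient map collapsing $e_j$ to a point, where the cyclic order at the merged vertex is the concatenation of the cyclic orders at the two endpoints with the two half-edges of $e_j$ deleted.

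Then I would iterate. After each event the family resumes as a partial shrinking family with one fewer cycle edge, so only finitely many contractions occur; the completion step is reached when all remaining cycle edges have a common length and shrink to zero simultaneously, at which point the half-infinite cylinder around $p$ closes up and the double pole is removed, yielding $\Gamma' = \Gamma^T$ with one fewer boundary cycle. This last step is again a quotient map, now collapsing the entire remaining cycle subgraph to a single vertex $v_\infty$ whose ribbon structure is induced by reading the surviving outgoing edges $d_i$ in their cyclic order around the collapsed cycle; the limiting surface and differential form a valid element of $\cQ(g,\{n_i\})^\mathrm{mStr}$ with one fewer double pole by the construction of the completed family. Composing the quotient maps from all the contraction events together with this final collapse gives a single continuous map of metric ribbon graphs $\Gamma \to \Gamma'$, which collapses precisely the set of cycle edges that shrank to zero; concatenating over $i$ produces the map for the whole family.

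The hard part is not the combinatorics but verifying that these edge contractions are the \emph{only} combinatorial changes along the family --- equivalently, that the differential stays meromorphic Strebel throughout, with no spurious zeros, cylinders, or changes to the rest of the graph appearing, and that the limit objects at each event (and at the completion, where the pole-order data itself changes) are again genuine metric ribbon graphs of meromorphic Strebel differentials. This is exactly where the continuity in Gupta--Wolf's theorem (\cref{thm:guptawolf}), combined with the explicit local models of $\sqrt{\varphi}$ near a pole, does the work: it guarantees that the family stays inside the relevant locus and that the assignment of a metric ribbon graph to a differential is continuous, so the only jumps in combinatorial type are the edge-length degenerations described above. Once this continuity is in hand, the identification of the map with the composite of edge contractions is routine.
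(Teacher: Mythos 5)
There is a genuine gap here: the map you construct is not the map the perimeter-shrinking family actually induces, and the difference is material. Your description correctly tracks how the \emph{combinatorial type} of $\Gamma^t$ changes (a cycle edge is contracted each time some $c_j(t)=c_j^0-t$ reaches zero, and the remaining cycle disappears at the end), but the map $\Gamma\to\Gamma'$ is not the composite of these quotient/contraction maps. The map defined by the family is the \emph{zipping} map built into the definition of a partial perimeter-shrinking family: at time $t$, the initial segments of length $t/2$ of the two cycle edges meeting at the $i$-th zero are identified with each other and appended to the outgoing edge $d_i$ (which is why $d_i^t=d_i^0+t/2$), while the middle portion of each cycle edge, of length $c_i^0-t$, survives as the new cycle edge; everything off the cycle is fixed. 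In particular a point of the cycle at distance $s$ from the $i$-th zero ends up, once $t/2>s$, at a specific interior point of the extended edge $d_i$ --- it does \emph{not} get sent to the collapsed vertex. Your composite of contractions instead sends the entire cycle to the single vertex $v_\infty=p'$, and it also leaves unspecified how the edge $d_i$ of length $d_i^0$ in $\Gamma$ is supposed to map into the edge of length $d_i^0+T/2$ in $\Gamma'$.

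Why this matters: the very next lemma in the paper pushes the marked point $q\in S^1_p$ forward along this map to a point $q''$ and then compares the distances from $q''$ to the $m$ zeros around the pole in order to extract either a marked incident edge or a marked angle at $p'$. Under the zipping map $q''$ is a generically non-vertex point sitting partway along one of the zipped segments, and the minimum-distance dichotomy makes sense; under your collapse map the image of $q$ is always $p'$ itself and the marking data is destroyed. So the content of the lemma is precisely the folding description, not the contraction description. Relatedly, your appeal to Gupta--Wolf continuity to rule out ``spurious'' combinatorial changes is not where the work lies: the family is \emph{defined} by prescribing the metric ribbon graph at each $t$ (the tuple $(d_i^t,c_i^t)$ with all other lengths constant), with the corresponding differentials supplied by the existence/continuity theorem, so the map is read off directly from that prescription rather than deduced from a degeneration analysis.
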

\begin{proof}
	We can construct this map by hand on each partial family, by defining it on the edges with lengths $c_i$ and $d_i$ by the zipping description. One can easily check that this map extends to a the endpoint of such a family. This uniquely defines the map, since the other edges of $\Gamma$ are kept constant.
\end{proof}

Consider now starting from a surface and a meromorphic Strebel differential with marked outputs $(S,\varphi) \in \cQ(g,\{n_i\})^\mathrm{mStr}_\ell$ as in \cref{def:MarkingOutputs}. Let $p$ be a marked output (i.e., double pole of $\varphi$) defining a cycle $\gamma$ in the corresponding ribbon graph, and $q \in \gamma$ the point giving its marked direction.
\begin{lemma}
	There is a unique perimeter-shrinking family (up to isomorphism) from $(S,\varphi,\gamma)$ such that the end differential $\varphi'$ has one fewer double pole. Moreover, this family gives a distinguished point $p'$ in $\Gamma$, coming from the vanishing double pole, and, coming from the marked direction $q$, either a marked edge of $\Gamma$ incident at $p'$, or a marked angle at $p'$.
\end{lemma}
\begin{proof}
	Starting from $(S,\varphi,\gamma)$, the existence and uniqueness of this perimeter-shrinking family follows from \cref{thm:guptawolf} and \cref{prop:isoFiberBundles}, which identifies the space of such marked differentials with an appropriate space of metric ribbon graphs; we can then just specify the lengths of this graph to satisfy our condition.
	
	A perimeter-shrinking family is made of a sequence of partial perimeter-shrinking families, so let us look at the last such family $\varphi^t_M$, which has $\varphi^{T_M}_M = \varphi'$. At the start of such a family we have a differential $\varphi'' = \varphi^0_M$ with some number $m$ of zeros around the relevant double pole, such that all the lengths $c_i$ between them are equal to $T_M$.

	We take the image $q''$ of the point $q$ in $\Gamma''$. Let us take the distance in $\Gamma''$ between $q''$ and each of the $m$ zeros around that pole. By construction, there are only  two possibilities:
	\begin{enumerate}
		\item There is a unique minimum among those distances, for some zero labeled by $j$, or
		\item There are two consecutive zeros, labeled by $j$ and $j+1$, which are equidistant to $q''$.
	\end{enumerate}
	Now we produce the markings. The point $p'$ is defined to be the image of the circle in the graph $\Gamma'$; this is either a simple pole (if $m=1$), a regular point (if $m=2$) or a zero of order $m-2$ (if $m \ge 3$). As for the direction, in case 1 above we pick the edge incident at $p'$ corresponding to the $j$th zero, and in case 2 we pick the angle between the edges corresponding to $j$ and $j+1$.
\end{proof}

\begin{figure}[h!]
	\centering
	\hspace*{-1cm}
	\includegraphics[width=1.2\textwidth]{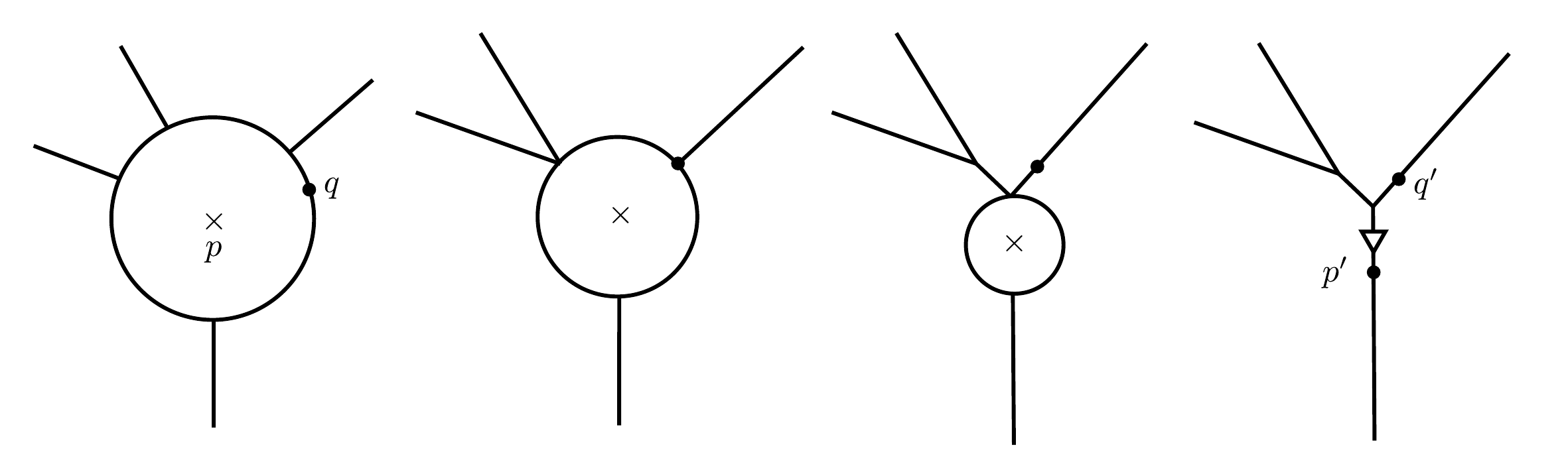}
	\caption{A perimeter-shrinking family for the double pole $p$ with a generic position of its marking $q$. The end result is a differential with one less double pole; the image of $p$ is $p'$ and its distinguished edge is the top one, as it leads to the image of $q$, denoted $q'$.}
\end{figure}

\begin{figure}[h!]
	\centering
	\hspace*{-1cm}
	\includegraphics[width=1.2\textwidth]{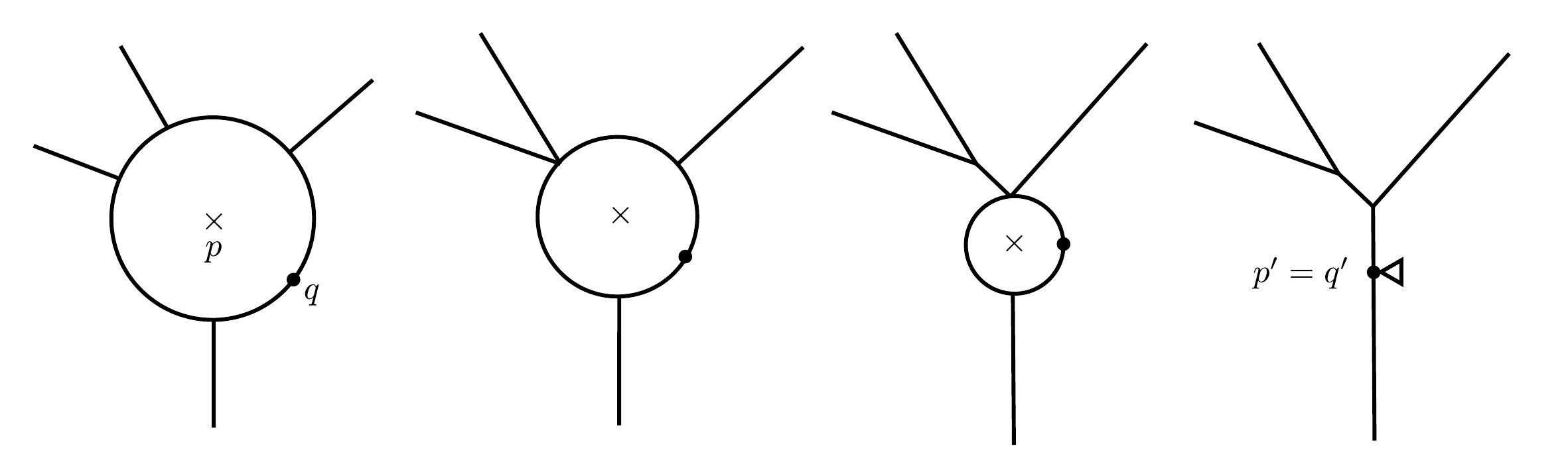}
	\caption{A perimeter-shrinking family for the double pole $p$ with a non-generic position of its marking $q$. If the images $p',q'$ coincide, we then mark the angle inside of which the image of $q$ is before the last partial perimeter shrinking.}
\end{figure}

Consider now such a differential with marked outputs $\varphi \in \cQ(g,\{n_i\})^\mathrm{mStr}_\ell$ as in \cref{def:MarkingOutputs}. Let us pick any linear ordering of the $\ell$ marked outputs. Doing the procedure above for each of the double poles in order results in a sequence $(S^i, \varphi^i, \{\text{markings}\}^i)$ where $S^0 = S$, $\varphi^{i+1}$ has one less double pole than $\varphi^i$ and there are $i$ markings (i.e. points in the ribbon graph with a distinguished incident direction or angle).

After contracting the perimeters of all the half-infinite cylinders corresponding to the $\ell$ marked double poles, we get a metric ribbon graph $\Gamma$, with $\ell$ marked points, each with a marked direction (either incident half-edge or angle between half-edges). All of the internal edges of $\Gamma$ have finite length, but some of its leaves (corresponding to the higher-order poles) have infinite length. This data satisfies the conditions
\begin{enumerate}
	\item The marked points $p_i$ are all distinct.
	\item Every leaf of $\Gamma$ with finite length ends in a marked point (this comes from a simple pole after contracting perimeters of half-infinite cylinders)
\end{enumerate}

Let $\Gamma$ be such a metric ribbon graph with $\ell$ marked points $p_i$, some number $e$ of marked incident half-edges and $(\ell-e)$ marked angles. Let us denote the set of such data as $\mathrm{MetGr}_\ell$. We now make a larger set $\mathrm{MetRG}'_\ell$ by replacing every element in $\mathrm{MetRG}_\ell$ by the set
$\RR^e$.

This replacement accounts for the fact that from the perimeter-shrinking map, for each time we ended up with a marked half-edge, there was a whole angular sector on which our marked point around the double pole could have been, whereas for each marked angle there was a single such point.

We now partition the set $\mathrm{MetRG}'$ by genus and number of infinite-length leaves on each boundary component of the corresponding surface. Let $(m_i)$, $1 \le i \le N$ denote that tuple of numbers, with $m_i \ge 0$. We then have a partition
\[ \mathrm{MetRG}'_\ell = \bigsqcup_{g, (m_i)} \mathrm{MetRG}'_{\ell,g,(m_i)} \]

We then make another tuple $(n_i)$, of length $N + \ell$, by setting
\[ n_i = m_i +2,\ 0 \le i \le N, \text{\ and\ } n_i = 2,\ N+1 \le i \le N+\ell \]
The tuple $(n_i)$ is the orders of poles \emph{before} perimeter-shrinking.

\begin{corollary}\label{cor:perimeterShrinking}
	Perimeter-shrinking gives a bijection of sets
	\[ \cQ(g,(n_i))^\mathrm{mStr}_{\ell} \xrightarrow{\sim} \mathrm{MetRG}'_{\ell,g,(m_i)} \times \RR^{\ell}\]
	Therefore we can endow $\mathrm{MetRG}'_{\ell,g,(m_i)}$ with the action of the corresponding mapping class group $\Mod^\mathsc{oc}_{g,(m_i)}$, acting trivially on the $\RR^\ell$ component, and with the finest topology for which the map above is a homeomorphism, so that the quotient
	\[ \cM_{\ell,g,(m_i)} := \left[ \mathrm{MetRG}'_{\ell,g,(m_i)}/\Mod^\mathsc{oc}_{g,(m_i)} \right] \]
	is an orbifold classifying space for the open-closed mapping class group.
\end{corollary}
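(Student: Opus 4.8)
The plan is to prove the bijection first and then transport all structure across it, combining the perimeter--shrinking lemmas established above with the Gupta--Wolf realization theorem (\cref{thm:guptawolf}) and the equivariant triviality of the bundle $\pi$. The forward map sends a marked differential $\varphi \in \cQ(g,(n_i))^\mathrm{mStr}_\ell$ to three pieces of data: first, the tuple of circumferences of the $\ell$ half-infinite cylinders at the marked double poles, giving the factor $\RR_+^\ell \cong \RR^\ell$; second, the metric ribbon graph $\Gamma$ obtained by running a perimeter--shrinking family at each marked pole, which lands in $\mathrm{MetRG}_{\ell,g,(m_i)}$; and third, the position of each marked point $q$ inside its angular sector, recording the extra $\RR^e$ coordinates that promote $\mathrm{MetRG}_\ell$ to $\mathrm{MetRG}'_\ell$. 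I would first check that this map is well defined: the $\ell$ marked poles have pairwise disjoint half-infinite cylinders, so the unique shrinking families of the preceding lemma commute, and the resulting graph with markings is independent of the chosen ordering of the outputs.

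The inverse map is the ``un-shrinking'' construction. Given an element of $\mathrm{MetRG}'_{\ell,g,(m_i)}$ together with circumferences $(t_1,\dots,t_\ell) \in \RR_+^\ell$, at each marked point $p_i$ of $\Gamma$ I would reverse the zipping procedure, opening a cycle of circumference $t_i$ and placing the marked point $q_i$ at the location dictated by the sector coordinate. This produces a metric ribbon graph with $\ell$ additional double poles and pole orders $(n_i)$, which by the extension of Strebel's theorem to higher-order poles --- i.e. the uniqueness half of \cref{thm:guptawolf} applied to the zero measured foliation --- is the critical graph of a unique meromorphic Strebel differential on a unique Riemann surface, equipped with the markings of \cref{def:MarkingOutputs}. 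That the two maps are mutually inverse follows from the uniqueness clauses of the perimeter--shrinking lemmas (the shrinking family out of $\varphi$ at a given pole is unique) together with Gupta--Wolf uniqueness: each map recovers exactly the data the other discards.

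Having the bijection, I would transport the action of $\Mod^\mathsc{oc}_{g,(m_i)}$ and the topology from $\cQ(g,(n_i))^\mathrm{mStr}_\ell$, which is a genuine analytic space fibering over $\cT_{g,\vec k}$. The action is trivial on the $\RR^\ell$ factor because the marked outputs are outgoing closed boundary circles, which $\Mod^\mathsc{oc}$ fixes pointwise, so their circumferences are invariant; this is precisely the triviality along the fiber recorded in the equivariant bundle isomorphism established above. Declaring the finest topology making the bijection a homeomorphism then makes $\mathrm{MetRG}'_{\ell,g,(m_i)} \times \RR^\ell$ homeomorphic, $\Mod^\mathsc{oc}$-equivariantly, to $\cQ(g,(n_i))^\mathrm{mStr}_\ell$. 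Since the latter is a trivial $\RR_+^k$-bundle over the contractible Teichm\"uller space $\cT_{g,\vec k}$, it is contractible, and $\Mod^\mathsc{oc}_{g,(m_i)}$ acts properly discontinuously with finite stabilizers; over a field of characteristic zero the orbifold quotient therefore serves as a classifying space $B\Mod^\mathsc{oc}_{g,(m_i)}$ (its rational cohomology being the group cohomology), proving the claim.

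The main obstacle I anticipate is the third piece of the forward map: verifying that the $\RR^e$ enhancement defining $\mathrm{MetRG}'_\ell$ exactly parametrizes the fibers of the shrinking map, with the correct gluing between the ``marked half-edge'' (generic $q$) and ``marked angle'' (non-generic $q$) strata. Concretely, as the marked point $q$ sweeps across a fixed angular sector the nearest-zero half-edge stays constant while the sector coordinate ranges over all of $\RR$, and as $q$ crosses into an adjacent sector the marking jumps to the intervening angle and then out to the next half-edge; one must check that the transported topology on $\mathrm{MetRG}'$ glues these strata into a single space homeomorphic to the fiber, compatibly with variation of the conformal structure. This continuity statement is the technical heart, and it is where the explicit geometry of the half-infinite cylinder and the continuity clause of \cref{thm:guptawolf} must be used together.
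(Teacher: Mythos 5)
Your overall strategy matches the paper's: the corollary is meant to follow from the preceding construction exactly as you describe, with the forward map recording the shrunk metric ribbon graph, the $\RR^e$ sector coordinates distinguishing a marked half-edge from a marked angle, and the remaining $\RR^\ell$ accounting for the circumferences (equivalently, total shrinking times) of the $\ell$ collapsed cylinders; the inverse is un-zipping plus the uniqueness half of the Gupta--Wolf realization applied to the zero foliation. The paper does not spell this out beyond the two lemmas, so your reconstruction of the bijection, including the observation that the topology on $\mathrm{MetRG}'$ is simply \emph{defined} by transport, is faithful to what is intended.

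There is, however, one step that fails as written: you assert that $\cT_{g,\vec k}$ is contractible and deduce the classifying-space property from ``contractible space with a properly discontinuous action.'' The decorated Teichm\"uller space used here, with a distinguished tangent direction at each marked puncture, is \emph{not} contractible --- it is a torus bundle over the ordinary (contractible) Teichm\"uller space, and the paper's own example computes $\cT_{0,\vec 3}\cong (S^1)^3$ with $\cM \simeq (S^1)^3 \times (\RR_{\ge 0})^2$. Consequently $\cQ(g,(n_i))^{\mathrm{mStr}}_\ell$ is not contractible either, and your argument for the final clause collapses. The correct justification is the standard one for bordered mapping class groups: the $S^1$ factors coming from the tangent directions account precisely for the boundary Dehn twists, i.e.\ the decorated Teichm\"uller space modulo the decorated mapping class group is already an orbifold model for $B\Mod^{\mathsc{oc}}$ (in the example, the trivial group acts on $(S^1)^3 = B\ZZ^3$, with $\ZZ^3$ the pair-of-pants mapping class group). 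Replacing your contractibility argument with this statement repairs the proof; the rest of your proposal, including the flagged continuity issue at the half-edge/angle interface (which matters for the subsequent cell-decomposition theorem rather than for this corollary), is sound.
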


\subsection{Cell decomposition by marked ribbon quivers}
We now use the model discussed in the previous Subsection to produce a cell decomposition of the open-closed moduli space, and relate it to the dg \textsc{prop} constructed in \cref{sec:PROPs}.

Let us paraphrase the result of the previous section. We choose a genus $g$ and a boundary type of $\Sigma$: each boundary component is either incoming closed, outgoing closed, a free circle, or a combination of incoming open intervals, outgoing open intervals and free intervals.

The number of outgoing closed boundaries is the integer $\ell \ge 1$ and the rest of the boundaries determines the tuple $\{m_i\}$. Following the perimeter-shrinking map in the previous subsection gives a space $\cM'_{\ell,g,(m_i)}$: each point in this space is given by  $(S, \varphi, \{\vec v_i\}_{1 \le i \le \ell})$ where $S$ is a Riemann surface diffeomorphic to a compactification of $\Sigma$, $\varphi$ is a meromorphic Strebel differential with critical graph $\Gamma$, and $\vec v_i$ are unit length tangent vectors, projecting down to points $p_i \in \Gamma$.

Each such vector $\vec v_i$ corresponds to one outgoing closed boundary, and encodes a distinguished direction; the quadratic differential $\varphi$ has poles of order $\ge 2$ corresponding to the other boundary components of $\Sigma$. Namely, it has poles of:
\begin{itemize}
	\item order 2 for each free boundary circle,
	\item order 3 for each incoming closed circle,
	\item order $n+2$ for each boundary component with $n$ open intervals (incoming and outgoing)
\end{itemize}
Each pole of order 3 has a single edge of $\Gamma$ asymptotic to it, and each pole of order $n+2$ has $n$ such edges. The data of such a differential is encoded by the metric graph $\Gamma$. We now use the marking data $\{\vec v_i\}$ to give $\Gamma$ the structure of a \emph{marked ribbon quiver} as defined in \cref{sec:ribbonQuivers}.

For that, we need to make a small modification to $\Gamma$ to properly deal with the open outputs. Recall that every open in/output corresponds to an infinite-length edge of $\Gamma$, asymptotic to a higher-order pole.

Let $|V_\mathrm{open-out}|$ be the number of open outputs. We now pick a tuple of positive reals $(\lambda_1,\lambda_2,\dots) \in (\RR_{>0})^{|V_\mathrm{open-out}|}$ and regularize $\Gamma$ by setting the length of the corresponding edge to be given by these numbers $\lambda_i$.

We now use the marking to define a height function: consider the subset $P \subset \Gamma$ consisting of all the $\ell$ points $p_i$ corresponding to the markings $\vec v_i$, together with all the end points of the regularized edges. That is, $P$ has one point for each output, open or closed.
\begin{definition}
	The height function $h$ on the metric graph $\Gamma$ is the distance along $\Gamma$ to the set $P$. We define the ribbon quiver $\vec\Gamma$ to be the ribbon graph $\Gamma$ directed with the \emph{negative gradient} of this height function.
\end{definition}
The regularization procedure guarantees that the outgoing open edges always have directions pointing into their corresponding sink.

We then use the rest of the data to produce a marking on the ribbon quiver, in the sense of \cref{sec:ribbonQuivers}, as follows
\begin{itemize}
	\item $V_\times$ are the valence one sources (corresponding to poles of order 3) associated to incoming closed boundaries,
	\item $V_\mathrm{open-in}, V_\mathrm{open-out}$ are the valence one sources, resp. sinks associated to the other higher order poles,
	\item $V_\circ$ is the set of marked points $\{p_i\}$ in $\Gamma$,
	\item for each point in $V_\circ$, if after perimeter-shrinking we ended up with a marked angle, we attach an extra leaf to that angle
	\[\begin{tikzpicture}[baseline={([yshift=-.5ex]current bounding box.center)}]
	\node [vertex] (a) at (0,0) {$1$};
	\draw [->-] (a) to (1,0);
	\end{tikzpicture}\]
\end{itemize}

The procedure above uniquely defines a marked ribbon quiver, and it stratifies the space
\[ \cM'_{\ell,g,(m_i)} \times (\RR_{>0})^{|V_\mathrm{open-out}|}  \]
by the type of marked ribbon quiver obtained; this gives a cell decomposition of this space.

\begin{remark}
	Note that the quiver obtained generically will have more vertices than the ribbon graph; there will be some number of valence two sources, one for each maximum of the height function occurring inside an edge of the graph.
\end{remark}

\subsubsection{Dimensions and orientations}\label{sec:dimensionsAndOrientations}
We now discuss the dimensions and orientations of the cells labeled by marked ribbon quivers.

The dimension of the Teichm\"uller space of a genus $g$ surface with $k$ punctures is given by $6g-6+2k$. We considered instead the moduli space $\cT_{g, \vec{k}}$ where each puncture is decorated by a number of tangent directions depending on the boundary type; in terms of those boundary types, we have that
\[ \dim \cT_{g, \vec{k}} = 6g -6 +2F +3 |V_\times| + 2O + |V_\mathrm{open-in}| + |V_\mathrm{open-out}| + 3|V_\circ| -1 \]
where $F$ is the number of free boundaries (i.e. boundary components of $\Gamma$ without marked sources or sinks) and $O$ is the number of boundary components with open-in and open-out intervals. We get this formula from the dimension of the punctured Teichm\"uller space since every $\times$-source, open-in, open-out, $\circ$-sink contributes one extra dimension (by picking a direction at the corresponding puncture), minus one to account for overall rotation of the directions.

We then had the space $\cQ(g,(n_i))^\mathrm{mStr}_{|V_\circ|}$ of meromorphic Strebel differentials, before shrinking perimeters; this was a $(\RR_{>0})^{k}$ bundle over $\cT_{g, \vec{k}}$ so its dimension is 
\[ \dim \cQ(g,(n_i))^\mathrm{mStr}_{|V_\circ|} = 6g -7 +3F +4 |V_\times| + 3O + |V_\mathrm{open-in}| + |V_\mathrm{open-out}| + 4|V_\circ| \]
and after perimeter-shrinking we have the space $\mathrm{MetRG}'_{\ell,g,(m_i)}$ and its quotient, our desired moduli space $\cM = \cM_{g,(m_i),|V_\times|}$, which by \cref{cor:perimeterShrinking} has dimension
\[ \dim \cM = 6g -7 +3F +4 |V_\times| + 3O + |V_\mathrm{open-in}| + |V_\mathrm{open-out}| + 3|V_\circ| \]
If we include the data of the lengths of the edges connected to the open outputs, we then have
\[ \dim \cM \times (\RR_{>0})^{|V_\mathrm{open-out}|} = 6g -7 +3F +4 |V_\times| + 3O + |V_\mathrm{open-in}| + 2|V_\mathrm{open-out}| + 3|V_\circ| \]

Recall now the definition of the $d$-degree of a marked ribbon quiver, from \cref{def:degree}:
\[ \deg_d(\vec\Gamma) = \sum_{v \in \mathrm{Source}^{\ge 2}} ((2-d)out(v)+d-4) + \sum_{v \in \mathrm{Flow}} ((2-d) out(v) + d + in(v) -4) + \sum_{v \in V_\circ} (in(v)-1). \]
Calculating this for $d=0$ gives
\[ \deg_0(\vec\Gamma) = \sum_{v \in \mathrm{Source}^{\ge 2}} (2 out(v)-4) + \sum_{v \in \mathrm{Flow}} (2 out(v) + in(v) -4) + \sum_{v \in V_\circ} (in(v)-1), \]
so $\vec\Gamma$ has $\deg_0(\vec\Gamma) = 0$ when it has only valence 2 unmarked sources, valence 3 flow vertices and valence 1 $\circ$-vertices. Such marked ribbon quivers are exactly the ones labeling a top-dimensional cell in the decomposition of $\cM \times (\RR_{>0})^{|V_\mathrm{open-out}|}$ that we described; other marked ribbon quivers label cells with (real) codimension given by $\deg_0$.

\begin{lemma}\label{lem:dimension}
	The (real) dimension of a cell in $\cM \times (\RR_{>0})^{|V_\mathrm{open-out}|}$ labeled by a marked ribbon quiver $\vec\Gamma$ is given by
	\[ \dim_\RR C_{\vec\Gamma} = |V(\Gamma)| - |V_\times| - |V_\mathrm{open-in}| - |V_\mathrm{open-out}| - |V_\mathbf{1}| - 1. \]	
\end{lemma}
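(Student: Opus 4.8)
The plan is to compute $\dim_\RR C_{\vec\Gamma}$ by counting the independent continuous parameters of a representative and then converting the resulting edge count into the stated vertex count by means of the Euler characteristic of the ribbon surface. By the perimeter-shrinking construction of \cref{cor:perimeterShrinking}, a point of $C_{\vec\Gamma}$ is recorded by a metric ribbon graph of combinatorial type $\vec\Gamma$, together with its marking data and the regularization lengths in $(\RR_{>0})^{|V_\mathrm{open-out}|}$. The free length parameters are carried by every edge of $\Gamma$ \emph{except} the infinite leaves asymptotic to the poles of order $\ge 3$, namely those ending at the vertices of $V_\times$ and $V_\mathrm{open-in}$, and except the formal unit leaves at $V_\mathbf{1}$; the open-output leaves carry their regularized lengths and are counted among the finite edges.

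Rather than tracking the remaining geometric data (the marking directions at the $V_\circ$ vertices, and the compatible-principal-part constraints of \cref{thm:guptawolf}) by hand, I would extract them from the ambient dimension already computed above,
\[ \dim \cM \times (\RR_{>0})^{|V_\mathrm{open-out}|} = 6g -7 +3F +4 |V_\times| + 3O + |V_\mathrm{open-in}| + 2|V_\mathrm{open-out}| + 3|V_\circ|, \]
together with the degree $\deg_d(\vec\Gamma)$ of \cref{def:degree}, which I would evaluate at $d=0$. The essential geometric input is that the codimension of $C_{\vec\Gamma}$ inside the ambient space is $\deg_0(\vec\Gamma) - |V_\mathbf{1}|$: the contributions of the higher-valence sources and flow vertices and of the $V_\circ$-vertices count the edge-contractions relating $C_{\vec\Gamma}$ to a top-dimensional (trivalent) cell, and so measure the geometric codimension, while each unit leaf contributes an extra $+1$ to $\deg_0$ through the $in(v)-1$ term of its adjacent $V_\circ$-vertex without corresponding to any edge contraction. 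Granting this, $\dim C_{\vec\Gamma} = \dim \cM \times (\RR_{>0})^{|V_\mathrm{open-out}|} - \deg_0(\vec\Gamma) + |V_\mathbf{1}|$.

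The remaining work is arithmetic. Writing $S_2 = |\mathrm{Source}^{\ge 2}|$ and $FL = |\mathrm{Flow}|$, I would record the two counts of edges by tails and by heads,
\[ |E(\Gamma)| = |V_\times| + |V_\mathrm{open-in}| + |V_\mathbf{1}| + \sum_{\mathrm{Source}^{\ge 2}} out(v) + \sum_{\mathrm{Flow}} out(v) = |V_\mathrm{open-out}| + \sum_{V_\circ} in(v) + \sum_{\mathrm{Flow}} in(v), \]
use them to rewrite $\deg_0(\vec\Gamma) = 3|E(\Gamma)| - 2|V_\times| - 2|V_\mathrm{open-in}| - 2|V_\mathbf{1}| - |V_\mathrm{open-out}| - 4S_2 - 4FL - |V_\circ|$, and then eliminate $|E(\Gamma)|$ via Euler's formula $|V(\Gamma)| - |E(\Gamma)| = 2 - 2g - b$. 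The crucial point here is that the number of faces is $b = F + |V_\times| + O$: perimeter-shrinking collapses each outgoing-closed double pole to an interior $V_\circ$-vertex, so the only remaining boundary components of $\Sigma_\Gamma$ are the free, incoming-closed, and open ones. Substituting $3|E(\Gamma)| = 3|V(\Gamma)| - 6 + 6g + 3b$, the terms in $g$, $F$, $O$, $|E(\Gamma)|$ and the higher-valence sums all cancel, leaving $|V_\circ| + S_2 + FL - 1$, which is exactly $|V(\Gamma)| - |V_\times| - |V_\mathrm{open-in}| - |V_\mathrm{open-out}| - |V_\mathbf{1}| - 1$.

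The main obstacle is the geometric claim that the codimension is $\deg_0(\vec\Gamma) - |V_\mathbf{1}|$ and not simply $\deg_0(\vec\Gamma)$. This is where the marked-angle versus marked-half-edge dichotomy of the perimeter-shrinking map enters through the $\RR^e$-coordinates recording the position of each output marking within its angular sector: one must verify that inserting a $V_\mathbf{1}$ leaf (passing to a marked angle) is the bookkeeping of a cell of the \emph{same} dimension as the corresponding configuration, so that the extra $+1$ it produces in $\deg_0$ is cancelled and does not register as a further degeneration. Pinning this down, and confirming the face count $b$ after shrinking, are the two points requiring genuine care; the rest is the linear computation above, which I have checked closes up (and which is consistent with the boundary cases such as the identity quiver $\Gamma_{\id}$ and the Connes quiver $\Gamma_B$).
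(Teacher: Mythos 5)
Your route is genuinely different from the paper's. The paper proves this lemma by a direct parametrization: a point of the cell is a metric on $\vec\Gamma$, every edge length is a difference of values of the height function, so the independent parameters are the heights of the vertices other than those in $V_\times$, $V_\mathrm{open-in}$, $V_\mathrm{open-out}$ (the first two sit at $+\infty$, the last at height zero), minus one, with the $\circ$-sink bookkeeping (the $\RR$-factor for a distinguished incident edge versus the fictitious $\mathbf{1}$-vertex for a distinguished angle) handled as explicit corrections. It never invokes the ambient dimension of $\cM \times (\RR_{>0})^{|V_\mathrm{open-out}|}$ or any codimension statement. Your Euler-characteristic conversion is fine — the face count $b = F + |V_\times| + O$ after perimeter-shrinking is correct, and I have checked that your linear algebra does close up to the stated formula from your two inputs.

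The gap sits exactly where you flagged it, but it resolves against you: the codimension of $C_{\vec\Gamma}$ is not $\deg_0(\vec\Gamma) - |V_\mathbf{1}|$. This contradicts the assertion, made immediately before the lemma, that the codimension is $\deg_0(\vec\Gamma)$, and more importantly it contradicts the geometry of the perimeter-shrinking construction. A $\circ$-sink with a distinguished incident half-edge carries an extra factor of $\RR$ in $\mathrm{MetRG}'$, recording the position of the output marking $q \in S^1_p$ inside the attracting basin of that edge; a $\circ$-sink with a distinguished angle — which is precisely what a $V_\mathbf{1}$-leaf encodes — carries no such factor, because the marked-angle locus is the boundary between two basins. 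So inserting a $\mathbf{1}$-leaf collapses an $\RR$-factor to a point and genuinely raises the codimension by one, matching the $+1$ it contributes to $\deg_0$ through the $in(v)-1$ term; it is not a dimension-preserving relabeling. This is also forced by the combinatorial differential: the separations of a $\circ$-sink carrying a $\mathbf{1}$-leaf include the quivers obtained by deleting the leaf and distinguishing one of the adjacent real edges, which exhibits the $\mathbf{1}$-cell as a codimension-one face of each neighbouring marked-edge cell — impossible if the two had equal dimension. Beyond this specific error, the strategy is structurally fragile: ``codimension equals $\deg_0$'' is not an independent input but is itself established by counting the parameters of each cell, i.e.\ by the very height-function count the paper uses, so deriving the cell dimension from it is close to circular, and it makes your answer inherit any slip in the ambient dimension formula. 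I would redo the count directly as the paper does, treating the $\RR$-factors and the $\mathbf{1}$-vertices as local corrections at the $\circ$-sinks rather than folding them into a global codimension.
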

\begin{proof}
	Recall that each point in this cell can be given by a marked ribbon quiver with a metric; however, we are not free to choose the lengths of all the edges independently, since the lengths all come from differences of values of the height function.
	
	Given any two vertices, the length of any directed path between them is the same, given by the difference of their heights no matter the path. Conversely, fixing all height differences fixes all the edge lengths. Thus the space of possible edge lengths has dimension given by the number of vertices at finite height minus one. This excludes the vertices in $V_\times,V_\mathrm{open-in}$ and $V_\mathrm{open-out}$; the first two groups are at $+\infty$ height, and we set the open-out vertices to be at zero height.
	
	The term $-|V_\mathbf{1}|$ comes from the fact that for every $\circ$-sink with distinguished incident edge we added a factor of $\RR$, and for every $\circ$-sink with distinguished angle we replaced that angle with a $\mathbf{1}$-vertex. Counting all these factors we get the formula above.
\end{proof}

\begin{example}
	It is easy to see that the formulas above give the correct dimension in the usual case of Stasheff associahedra, which correspond to the disc with one open output, that is, $g=F=|V_\times|=|V_\circ|=|V_\mathbf{1}|=0$, with one open boundary, $O=1$, and one output, $|V_\mathrm{open-out}|=1$. The formulas above then calculate
	\[ \dim_\RR \cM \times \RR^{|V_\mathrm{open-out}|} = |V_\mathrm{open-in}|-2.\]
	This is equal to the dimension of the top cells, parametrized by metric binary trees with $|V_\mathrm{open-in}|$ inputs, and also equal to minus the cohomological degree of the corresponding $A_\infty$ operation $A^{\otimes|V_\mathrm{open-in}|} \to A$.
\end{example}

We are now ready to relate this space to the \textsc{prop}s $Q^d$ constructed in \cref{sec:ribbonQuivers}. Note that here it is important that we assume $\QQ \subseteq \kk$.
\begin{theorem}
	The complexes $Q^{d=0}_g$ with their differential $\del$ calculate the homology $H_{-*}(\cM,\kk)$ of the corresponding classifying spaces $\cM$ for the open-closed mapping class groups.
\end{theorem}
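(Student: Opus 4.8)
The plan is to identify $Q^{d=0}_g$ with the (orbifold) cellular cochain complex of the decomposition of $\cM \times (\RR_{>0})^{|V_\mathrm{open-out}|}$ constructed in the previous subsection, and then to read off the homology of $\cM$ by Poincar\'e--Lefschetz duality. First I would discard the auxiliary Euclidean factor: since $(\RR_{>0})^{|V_\mathrm{open-out}|}$ is contractible, the projection $\cM \times (\RR_{>0})^{|V_\mathrm{open-out}|} \to \cM$ is a homotopy equivalence, so it suffices to compute the homology of the product. Working over $\kk \supseteq \QQ$, the finite isotropy groups of the orbi-cells contribute no torsion and may be ignored, so the homology is computed by a complex whose generators are cells equipped with an orientation.

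Next I would match generators and gradings. Each cell $C_{\vec\Gamma}$ is an open orbi-cell parametrized by the admissible edge-lengths of the metric marked ribbon quiver $\vec\Gamma$, of real dimension given by \cref{lem:dimension}; for fixed genus and boundary data there are finitely many cells in each dimension. By the Lemma of \cref{sec:orientations}, for $d=0$ (even) a $d$-orientation on $\vec\Gamma$ is exactly an orientation of $\mathrm{Span}_\RR(E(\Gamma))$, i.e. an orientation of the cell $C_{\vec\Gamma}$; so the generators $(\vec\Gamma,\cO)$ of $Q^0$, subject to $(\vec\Gamma,\cO^{op}) = -(\vec\Gamma,\cO)$, are precisely oriented cells. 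Using the Remark after \cref{def:degree} together with \cref{lem:dimension}, $\deg_0(\vec\Gamma)$ is the real codimension $\dim\cM - \dim C_{\vec\Gamma}$; hence placing a generator in cohomological degree $-\deg_0(\vec\Gamma) = \dim C_{\vec\Gamma} - \dim\cM$ presents $Q^0$ as the compactly-supported cellular cochain complex shifted down by $\dim\cM$. Poincar\'e--Lefschetz duality on the oriented orbifold then gives $H^{j}(Q^0,\del) \cong H_{-j}(\cM,\kk)$, which is the assertion and specializes the `cohomology with coefficients in $\cL^0$' statement of \cref{thm:MainPROP} to trivial $\cL$.

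It then remains to match the differential with the incidence structure of the decomposition. A codimension-one face of $C_{\vec\Gamma}$ is obtained by letting a single finite edge-length tend to zero; since the edge lengths are differences of values of the height function, this degeneration contracts exactly one edge and merges its endpoints into a single vertex. Read backwards, a cell $C_{\vec\Gamma'}$ has $C_{\vec\Gamma}$ as a codimension-one face precisely when $\vec\Gamma'$ arises from $\vec\Gamma$ by a single separation in the sense of \cref{def:separations}. I would check that the admissibility constraints of that definition (no vertex with $in=out=1$, the sink condition, and the special $V_\mathbf{1}$/unit rules) are exactly the conditions for the expanded metric quiver to again label a cell of the moduli space, so that $\del$ coincides, generator by generator, with the cellular coboundary incidence operator. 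I would then verify that the sign prescription of $\del$ --- inserting the new vertex and edge immediately after the $\circ$-sinks --- reproduces the induced-orientation sign on faces dictated by \cref{def:dOrientation}.

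The main obstacle is precisely this orientation and sign bookkeeping, carried out uniformly across all degenerate configurations: the conventions around $\circ$-sinks carrying a $V_\mathbf{1}$ vertex, which were tailored to encode the strict-unit relations, and the presence of free boundary circles. To control this I would lean on comparison with the established cell models --- Kontsevich's ribbon-graph decomposition in \cite{kontsevich1992intersection} and the black-and-white graph models of \cite{wahl2016hochschild,ega2015comparing}, of which our decomposition is a finer subdivision. The extra valence-two sources of a generic marked ribbon quiver merely subdivide edges at interior maxima of the height function (as noted in the Remark preceding \cref{sec:dimensionsAndOrientations}), so the individual cells remain contractible and the subdivision does not alter the homotopy type; this reduces the homological content to the coarser, already-understood decomposition and isolates the genuinely new work to the sign comparison above.
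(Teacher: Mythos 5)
Your overall strategy is the same as the paper's: identify $Q^{0}$ with the oriented-cell complex of the marked-ribbon-quiver decomposition, match $\deg_0$ with codimension, match $\del$ with the incidence (edge-contraction) operator, and conclude by Poincar\'e duality in the compactly supported form. The paper likewise works with $H^*_c(\cM,\kk)$, checks the signs by passing to vertex orderings, and treats the $V_\mathbf{1}$-terms by a local calculation near the $\circ$-sinks, exactly as you propose.

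There is, however, one step you assert rather than prove, and it is the step the paper spends most of its proof on: \emph{why} the complex spanned by the open cells of a non-compact space computes $H^*_c(\cM,\kk)$ at all. For an arbitrary decomposition into open cells this fails; one needs a compactification in which the added locus is a subcomplex, so that the cellular cochain complex of the pair computes compactly supported cohomology. This is precisely the classical subtlety in Kontsevich's ribbon-graph complex (addressed in \cite{conant2003theorem}). The paper handles it by forming $\overline\cM$ with edge lengths in $[0,+\infty]$ and checking two closure properties: an infinite-length edge cannot be removed by contracting finite-length edges, and a zero-length cycle stays zero-length under further contractions, so $\del\overline\cM$ is a subcomplex and the open-cell complex does compute $H^*_c$. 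Your writeup jumps from ``cells plus orientations'' directly to ``compactly-supported cellular cochain complex''; without the compactification argument (or an equivalent one, e.g.\ via Borel--Moore homology of the stratification), the duality step has nothing to apply to. The rest of your outline --- discarding the contractible $(\RR_{>0})^{|V_\mathrm{open-out}|}$ factor, the identification of $0$-orientations with orientations of $\mathrm{Span}_\RR(E(\Gamma))$, the matching of separations with codimension-one incidences, and the reduction of the sign check to the $V_\mathbf{1}$ and free-boundary special cases --- is consistent with what the paper does.
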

\begin{proof}
	We basically follow the argument for the analogous statement about the usual graph complex, see \cite{penner1986moduli,conant2003theorem,kontsevich1993formal}. With the degree conventions we set, it will be easier to work cohomology so we use Poincar\'e duality; since the spaces $\cM$ are not compact we must use an appropriate form of Poincar\'e duality by working in cohomology with compact supports $H^*_c(\cM,\kk)$ instead.
	
	We now construct a compactification $\overline\cM$ of $\cM$, by using the cell decomposition by \emph{ribbon graphs}, and adding cells where the length of any number of edges of finite length goes to zero or $\infty$. Each cell of $\overline\cM$ is labeled by a ribbon graph with edge lengths $\in [0,+\infty]$; the (homological) boundary differential is still given by contracting some number of the finite lengths. We note now that the boundary $\del\overline\cM$ is a subcomplex of the cell complex for $\overline\cM$; one cannot get rid of an infinite-length edge by contracting a finite-length edge, and if in $\Gamma$ a cycle has length zero, it also has length zero in any edge contraction of $\Gamma$. We thus see that the cell complex of ribbon graphs we constructed for $\cM$ computes the compactly supported rational cohomology $H^*_c(\cM,\QQ)$.
	
	To check orientations, it will be more straightforward to deal with vertex orderings instead of edge orderings; the correspondence between those is given in \cref{sec:isoEvenOdd}. The terms of the differential not involving $\mathbf{1}$-vertices are given by dual of the (homological) differential that sums over contractions of edges; such contractions correspond to making the difference in height $h(a) - h(b)$ between the source and target vertices go to zero. When this happens we delete one of the vertices (say $a$); recall that in the definition of $\del$ the induced orientation was given by putting $a$ at the start, so $\del$ is indeed the dual of the cell boundary differential
	
	As for the terms involving $V_\mathbf{1}$, these can be understood by a local calculation as the $\circ$-vertices they are attached to are not allowed to collide; we check that the differential $\del$ agrees with the required orientations.
\end{proof}

\begin{example}
	We take the surface of genus zero with two closed inputs and one closed output; each input and output is a puncture with a distinguished tangent direction. The corresponding Teichm\"uller space $\cT_{0,\vec 3}$ is homeomorphic to $(S^1)^3$. Therefore the space $\cQ(0,\{3,3,2\})^\mathrm{mStr}_{\ell = 1}$ of meromorphic Strebel differentials has (real) dimension 6, and the modified space of metric graphs $\mathrm{MetRG}'$ has dimension 5.
	
	Therefore our moduli space has $\dim_\RR \cM = 5$, and is homeomorphic to $(S^1)^3 \times (\RR_{\ge 0})^2$, retracting to the 3-torus. The following 1-cycles (given by sums of diagrams with some appropriate orientation) map to three 1-cycles spanning $H_1(\cM,\kk) \cong \kk^3$:
	\begin{align*} 
	C_1 &= \begin{tikzpicture}[auto,baseline={([yshift=-.5ex]current bounding box.center)}]
	\node [inner sep=0pt] (v1) at (0,1.5) {$\times$};
	\node [bullet] (v3) at (0,0.75) {};
	\node [inner sep=0pt] (v2) at (0,0) {$\times$};
	\node [bullet] (v4) at (0.75,0) {};
	\node [bullet] (v5) at (0,-0.75) {};
	\node [circ] (v6) at (0,-1.5) {};
	\draw [->-,shorten <=-3.5pt] (v1) to (v3);
	\draw [->-,shorten <=-3.5pt] (v2) to (v4);
	\draw [->-] (0,0.75) arc (90:0:0.75);
	\draw [->-] (0,0.75) arc (90:270:0.75);
	\draw [->-] (0.75,0) arc (0:-90:0.75);
	\draw [->-] (v5) to (v6);
	\end{tikzpicture} \quad + \quad
	\begin{tikzpicture}[auto,baseline={([yshift=-.5ex]current bounding box.center)}]
	\node [inner sep=0pt] (v1) at (1.5,0) {$\times$};
	\node [bullet] (v3) at (0,0.75) {};
	\node [inner sep=0pt] (v2) at (0,0) {$\times$};
	\node [bullet] (v4) at (0.75,0) {};
	\node [bullet] (v5) at (0,-0.75) {};
	\node [circ] (v6) at (0,-1.5) {};
	\draw [->-,shorten <=-3.5pt] (v1) to (v4);
	\draw [->-,shorten <=-3.5pt] (v2) to (v4);
	\draw [->-] (0,0.75) arc (90:0:0.75);
	\draw [->-] (0,0.75) arc (90:270:0.75);
	\draw [->-] (0.75,0) arc (0:-90:0.75);
	\draw [->-] (v5) to (v6);
	\end{tikzpicture} +
	\begin{tikzpicture}[auto,baseline={([yshift=-.5ex]current bounding box.center)}]
	\node [inner sep=0pt] (v1) at (0.75,-1.5) {$\times$};
	\node [bullet] (v3) at (0,0.75) {};
	\node [inner sep=0pt] (v2) at (0,0) {$\times$};
	\node [bullet] (v4) at (0.75,0) {};
	\node [bullet] (v5) at (0,-0.75) {};
	\node [circ] (v6) at (0,-1.5) {};
	\draw [->-,shorten <=-3.5pt] (v1) to (v5);
	\draw [->-,shorten <=-3.5pt] (v2) to (v4);
	\draw [->-] (0,0.75) arc (90:0:0.75);
	\draw [->-] (0,0.75) arc (90:270:0.75);
	\draw [->-] (0.75,0) arc (0:-90:0.75);
	\draw [->-] (v5) to (v6);
	\end{tikzpicture} +
	\begin{tikzpicture}[auto,baseline={([yshift=-.5ex]current bounding box.center)}]
	\node [inner sep=0pt] (v1) at (0.75,-1.5) {$\times$};
	\node [bullet] (v3) at (0,0.75) {};
	\node [inner sep=0pt] (v2) at (0,0) {$\times$};
	\node [bullet] (v4) at (0.75,0) {};
	\node [bullet] (v5) at (0,-0.75) {};
	\node [circ] (v6) at (0,-1.5) {};
	\draw [->-,shorten <=-3.5pt] (v1) to (v6);
	\draw [->-,shorten <=-3.5pt] (v2) to (v4);
	\draw [->-] (0,0.75) arc (90:0:0.75);
	\draw [->-] (0,0.75) arc (90:270:0.75);
	\draw [->-] (0.75,0) arc (0:-90:0.75);
	\draw [-w-] (v5) to (v6);
	\end{tikzpicture} +
	\begin{tikzpicture}[auto,baseline={([yshift=-.5ex]current bounding box.center)}]
	\node [inner sep=0pt] (v1) at (-0.75,-1.5) {$\times$};
	\node [bullet] (v3) at (0,0.75) {};
	\node [inner sep=0pt] (v2) at (0,0) {$\times$};
	\node [bullet] (v4) at (0.75,0) {};
	\node [bullet] (v5) at (0,-0.75) {};
	\node [circ] (v6) at (0,-1.5) {};
	\draw [->-,shorten <=-3.5pt] (v1) to (v5);
	\draw [->-,shorten <=-3.5pt] (v2) to (v4);
	\draw [->-] (0,0.75) arc (90:0:0.75);
	\draw [->-] (0,0.75) arc (90:270:0.75);
	\draw [->-] (0.75,0) arc (0:-90:0.75);
	\draw [->-] (v5) to (v6);
	\end{tikzpicture} \\
	C_2 & =
	\begin{tikzpicture}[auto,baseline={([yshift=-.5ex]current bounding box.center)}]
	\node [inner sep=0pt] (v1) at (-1.5,0) {$\times$};
	\node [bullet] (v3) at (0,0.75) {};
	\node [inner sep=0pt] (v2) at (0,0) {$\times$};
	\node [bullet] (v5) at (0,-0.75) {};
	\node [circ] (v6) at (0,-1.5) {};
	\node [bullet] (v7) at (-0.75,0) {};
	\draw [->-,shorten <=-3.5pt] (v1) to (v7);
	\draw [->-,shorten <=-3.5pt] (v2) to (v7);
	\draw [->-] (0,0.75) arc (90:-90:0.75);
	\draw [->-] (0,0.75) arc (90:180:0.75);
	\draw [->-] (-0.75,0) arc (180:270:0.75);
	\draw [->-] (v5) to (v6);
	\end{tikzpicture} +
	\begin{tikzpicture}[auto,baseline={([yshift=-.5ex]current bounding box.center)}]
	\node [inner sep=0pt] (v1) at (-1.5,0) {$\times$};
	\node [bullet] (v3) at (0,0.75) {};
	\node [inner sep=0pt] (v2) at (0,0) {$\times$};
	\node [bullet] (v5) at (0,-0.75) {};
	\node [circ] (v6) at (0,-1.5) {};
	\node [bullet] (v7) at (-0.75,0) {};
	\draw [->-,shorten <=-3.5pt] (v1) to (v7);
	\draw [->-,shorten <=-3.5pt] (v2) to (v3);
	\draw [->-] (0,0.75) arc (90:-90:0.75);
	\draw [->-] (0,0.75) arc (90:180:0.75);
	\draw [->-] (-0.75,0) arc (180:270:0.75);
	\draw [->-] (v5) to (v6);
	\end{tikzpicture} +
	\begin{tikzpicture}[auto,baseline={([yshift=-.5ex]current bounding box.center)}]
	\node [inner sep=0pt] (v1) at (-1.5,0) {$\times$};
	\node [bullet] (v3) at (0,0.75) {};
	\node [inner sep=0pt] (v2) at (0,0) {$\times$};
	\node [bullet] (v5) at (0,-0.75) {};
	\node [circ] (v6) at (0,-1.5) {};
	\node [bullet] (v7) at (-0.75,0) {};
	\draw [->-,shorten <=-3.5pt] (v1) to (v7);
	\draw [->-,shorten <=-3.5pt] (v2) to (v5);
	\draw [->-] (0,0.75) arc (90:-90:0.75);
	\draw [->-] (0,0.75) arc (90:180:0.75);
	\draw [->-] (-0.75,0) arc (180:270:0.75);
	\draw [->-] (v5) to (v6);
	\end{tikzpicture} \\
	C_3 &= \begin{tikzpicture}[auto,baseline={([yshift=-.5ex]current bounding box.center)}]
	\node [inner sep=0pt] (v1) at (-1.5,0) {$\times$};
	\node [bullet] (v4) at (0.75,0) {};
	\node [bullet] (v3) at (0,0.75) {};
	\node [inner sep=0pt] (v2) at (0,0) {$\times$};
	\node [bullet] (v5) at (0,-0.75) {};
	\node [circ] (v6) at (0,-1.5) {};
	\node [bullet] (v7) at (-0.75,0) {};
	\node [vertex] (v8) at (0.75,-1.5) {$1$};
	\draw [->-,shorten <=-3.5pt] (v1) to (v7);
	\draw [->-,shorten <=-3.5pt] (v2) to (v4);
	\draw [->-] (0,0.75) arc (90:0:0.75);
	\draw [->-] (0.75,0) arc (0:-90:0.75);
	\draw [->-] (0,0.75) arc (90:180:0.75);
	\draw [->-] (-0.75,0) arc (180:270:0.75);
	\draw [->-] (v5) to (v6);
	\draw [->-] (v8) to (v6);
	\end{tikzpicture}
	\end{align*}
	 
\end{example}

\subsubsection{The determinant line bundle}
Finally, let us discuss the meaning of the integer $d$ in terms of the geometry of the moduli spaces $\cM$. For that we calculate the shift in the degrees for some dimension $d$ and dimension zero. Let us take a marked ribbon quiver $\vec\Gamma$ whose unmarked sources are all of valence $2$; let $N$ be the number of those sources. Calculating the degrees, we find that the difference between the degrees for $d$ and zero is:
\[ \deg_d(\vec\Gamma) - \deg_0(\vec\Gamma) = dN \]

\begin{proposition}
	The number $N$ is equal for all such graphs with a fixed genus and boundary type, and is given by
	\[ N = 2g -2 + |V_\times| + |V_\circ| + O + F + |V_\mathrm{open-out}|  = -\chi(\Sigma_\mathsc{OC}) + |V_\mathrm{open-out}| \]
	where $O$ is again the number of boundary circles with open intervals, $F$ is the number of free boundary circles, and $\chi(\Sigma^\mathsc{OC})$ is the Euler characteristic of the corresponding open-closed surface.
\end{proposition}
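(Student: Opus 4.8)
The plan is to compute $N$ as an Euler characteristic, so that the statement reduces to two ingredients: a handshake (degree-sum) identity for the directed graph, and the identification of the ribbon graph $\Gamma$ as a spine of the underlying surface with the appropriate poles removed. The one structural input I would isolate first is that for the quivers arising in our construction \emph{every flow vertex has exactly one outgoing edge}, i.e. $out(v)=1$. This is immediate from the fact (\cref{sec:dimensionsAndOrientations}) that $\vec\Gamma$ is directed by the negative gradient of the height function $h=d(-,P)$: the descent direction of a distance function to a finite set is generically the first edge of a shortest path to $P$, hence unique, and the loci where uniqueness fails are exactly the interior maxima of $h$, which are precisely the valence-two unmarked sources counted by $N$. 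I would also note that this property is stable under the separation differential on our class: a separation $e:a\to b$ of a flow vertex must, by condition (2) of \cref{def:separations}, place the unique outgoing edge on $b$, so both $a$ and $b$ again have $out=1$ (with $in(a)\ge 2$, since otherwise $a$ would be an unmarked valence-one source, excluded in the class).

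The second step is pure bookkeeping. Since each edge is the tail of exactly one vertex, $\sum_v (out(v)-1)=E-V$. Splitting the sum over vertex types in our class: the valence-one sources $V_\times,V_\mathrm{open\text{-}in},V_\mathbf{1}$ and the flow vertices (by the lemma) each contribute $0$; the $N$ unmarked valence-two sources contribute $+1$ each; and the sinks $V_\circ\sqcup V_\mathrm{open\text{-}out}$ contribute $-1$ each. This yields
\[ N = E - V + |V_\circ| + |V_\mathrm{open\text{-}out}|. \]

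Third, I identify $V-E=\chi(\Gamma)$ topologically. By the construction of \cref{sec:Strebel}, $\Gamma$ is a spine of the underlying Riemann surface $S$ with its poles removed, except that the perimeter-shrinking procedure has filled the order-two poles corresponding to outgoing closed boundaries back in as the $\circ$-vertices. The poles that remain punctures are therefore exactly the $|V_\times|$ order-three poles (incoming closed), the $O$ higher-order poles carrying open intervals, and the $F$ free poles, so $\chi(\Gamma)=\chi(S)-(|V_\times|+O+F)=(2-2g)-(|V_\times|+O+F)$, i.e. $E-V=2g-2+|V_\times|+O+F$. Substituting gives
\[ N = 2g - 2 + |V_\times| + |V_\circ| + O + F + |V_\mathrm{open\text{-}out}|, \]
and the second expression follows from $\chi(\Sigma_\mathsc{OC})=2-2g-b$ with $b=|V_\times|+|V_\circ|+O+F$ the number of boundary components. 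As the right-hand side depends only on the genus and boundary type, $N$ is constant on the class, proving the claim.

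The main obstacle is the geometric step: making the relation $\chi(\Gamma)=\chi(S)-(|V_\times|+O+F)$ rigorous, that is, correctly tracking through the bijection of \cref{cor:perimeterShrinking} which poles survive as punctures of the spine and which are filled back in as $\circ$-vertices. Once that dictionary is pinned down, the handshake computation and the flow-vertex lemma are routine, and the only care needed is to confirm that every sink is indeed marked by $V_\circ$ or $V_\mathrm{open\text{-}out}$ so that $|\mathrm{Sink}(\vec\Gamma)|=|V_\circ|+|V_\mathrm{open\text{-}out}|$.
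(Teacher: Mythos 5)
Your route is genuinely different from the paper's: the paper establishes constancy of $N$ by appealing to connectedness of $\cM$ and the expansion/contraction moves, and then verifies the formula on a single polygon model, whereas you compute $N$ directly for an arbitrary quiver via the handshake identity $\sum_v (out(v)-1)=E-V$ together with $V-E=\chi(\Sigma_\Gamma)=2-2g-(|V_\times|+O+F)$. That computational core is correct and in several ways more informative: it isolates the quantity $\sum_{v\notin\mathrm{Sink}(\vec\Gamma)}(out(v)-1)=E-V+|V_\circ|+|V_\mathrm{open-out}|$, which equals the stated formula for \emph{every} marked ribbon quiver of the given genus and boundary type, and your identification of which poles survive as punctures (equivalently, that $\Sigma_\Gamma$ has exactly $|V_\times|+O+F$ boundary circles, the $\circ$-vertices contributing none) is the right dictionary and checks out against all the examples in the text.

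The gap is your ``structural input'' that every flow vertex has $out(v)=1$. This is false for the quivers arising in the construction: graph (5) in \cref{sec:theClosedPROP} has a flow vertex with two outgoing edges and no unmarked sources at all (so it lies, vacuously, in the class ``all unmarked sources have valence $2$''), and for it the literal count gives $N=0$ while the formula gives $1$; the same happens for the first term of the cycle $C_1$ in \cref{sec:dimensionsAndOrientations}. Your genericity argument is where this fails: non-uniqueness of the descent direction of $h=d(-,P)$ occurs not only at interior maxima of $h$ (which become valence-two sources) but also at vertices with one ascending and several descending shortest directions --- these become flow vertices with $out\ge 2$, they label positive-codimension cells, and they are precisely where the higher operations $m_{(k)}$, $k\ge 2$, are inserted, so they cannot be excluded from ``the quivers arising in our construction.'' The statement is therefore only true on the subclass where, in addition, every flow vertex has a single output (the quivers of top $0$-degree, which is what the proposition intends but does not say); you should either impose that restriction explicitly, or --- better, and essentially for free from your own computation --- state the result for the invariant $E-V+|V_\circ|+|V_\mathrm{open-out}|$, which is what actually controls the degree shift $\deg_d-\deg_0$ and which reduces to $N$ exactly on that subclass. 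Your remark on stability under the separation differential is fine as far as it goes, but it only shows the subclass is closed under separations of flow vertices with one output; it does not rescue the claim for quivers that already contain a multi-output flow vertex.
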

\begin{proof}
	The first statement follows from the fact that the space $\cM$ is connected for each fixed genus and boundary type, and the fact that any two graphs with all unmarked sources of valence $2$ are related by contracting and expanding edges.
	
	As for the formula, it is enough to construct one such quiver; we can produce a quiver with $2g$ such sources from a polygon with $4g$ edges, one $\times$-vertex and one $\circ$-vertex. One then sees by construction that each other source in $V_\times$, each other sink and each boundary component with open-ins and -outs contributes another valence 2 unmarked source.
\end{proof}

The proposition above implies that
\[ Q^d = H_c^{\dim\cM-*}(\cM, \cL^{\otimes d}) \]
for a rank one sheaf $\cL$ on $\cM$, given by a line bundle shifted in degree by $-\chi(\Sigma_\mathsc{OC}) + |V_\mathrm{open-out}|$. This agrees with the description given by Costello in \cite{costello2007topological}; the action of the open-closed \textsc{prop} in degree $d$ should be twisted by $\cL = \det$ which is a certain determinant line bundle on the open-closed moduli space. The argument in \cref{sec:isoEvenOdd} can thus be seen as a combinatorial proof that, in the case where there are no free boundary circles, the determinant line bundle is trivial up to shift, a claim which already appears in \emph{op.cit.}

\printbibliography

\end{document}